\tikzset{%
  symbol/.style={
    draw=none,
    every to/.append style={
      edge node={node [sloped, allow upside down, auto=false]{$#1$}}
    },
  },
}
\DeclareFontFamily{OT1}{pzc}{}
\DeclareFontShape{OT1}{pzc}{m}{it}{<-> s * [1.10] pzcmi7t}{}
\DeclareMathAlphabet{\mathpzc}{OT1}{pzc}{m}{it}
\numberwithin{equation}{section}
\numberwithin{figure}{section}
\theoremstyle{plain}
\newtheorem{thm}{\protect\theoremname}[section]
\newtheorem{prop}[thm]{\protect\propositionname}
\newtheorem{lem}[thm]{\protect\lemmaname}
\theoremstyle{definition}
\newtheorem{defn}[thm]{\protect\definitionname}
\newtheorem{example}[thm]{\protect\examplename}
\newtheorem{cor}[thm]{\protect\corollaryname}
\theoremstyle{remark}
\newtheorem{rem}[thm]{\protect\remarkname}
\providecommand{\definitionname}{Definition}
\providecommand{\examplename}{Example}
\providecommand{\lemmaname}{Lemma}
\providecommand{\propositionname}{Proposition}
\providecommand{\remarkname}{Remark}
\providecommand{\corollaryname}{Corollary}
\providecommand{\theoremname}{Theorem}
\newcommand{\id}{\operatorname{id}}
\DeclareMathOperator{\im}{im}
\DeclareMathOperator{\colim}{colim}
\newcommand{\sgn}{\operatorname{sgn}}
\newcommand{\As}{\mathrm{As}}
\newcommand{\Ai}{\mathrm{A}_{\infty}}
\newcommand{\uAs}{\mathrm{uAs}}
\newcommand{\Com}{\mathrm{Com}}
\newcommand{\cAs}{\mathrm{cAs}}
\newcommand{\cuAs}{\mathrm{cuAs}}
\newcommand{\cuAi}{\mathrm{cuA}_\infty}
\newcommand{\B}{\hat{\mathrm{B}}}
\newcommand{\Bm}{\mathrm{B}}
\newcommand{\Om}{\hat{\Omega}}
\newcommand{\coeq}{\textrm{coeq}}
\newcommand{\Cx}{\mathrm{Cx}}
\newcommand{\cCx}{\mathrm{cCx}}
\newcommand{\Cxi}{\mathrm{Cx}_{\infty}}
\newcommand{\cCxi}{\mathrm{cCx}_{\infty}}
\newcommand{\cLie}{\mathrm{cLie}}
\newcommand{\scLie}{\mathpzc{s}\mathrm{cLie}}
\newcommand{\cLi}{\mathrm{cL_\infty}}
\newcommand{\scLi}{\mathpzc{s}\mathrm{cL_\infty}}
\newcommand{\scLa}{\mathpzc{s}\mathrm{cLie}^{\text{!`}}}
\newcommand{\Der}{\mathrm{Der}}
\newcommand{\End}{\mathrm{End}}
\newcommand{\Hom}{\mathrm{Hom}}
\newcommand{\homf}{\hom}
\newcommand{\homa}{\hom^{\alpha}}
\newcommand{\mor}{\mathrm{mor}}
\newcommand{\Li}{\mathrm{L}_{\infty}}
\newcommand{\Lie}{\mathrm{Lie}}
\newcommand{\sLie}{\mathpzc{s}\mathrm{Lie}}
\newcommand{\sAs}{\mathpzc{s}\mathrm{As}}
\newcommand{\sLi}{\mathpzc{s}\mathrm{L_\infty}}
\newcommand{\Sym}{\mathrm{Sym}}
\newcommand{\antishriek}{\text{!`}}
\newcommand{\Cxa}{\mathrm{Cx}^{\text{!`}}}
\newcommand{\cCxa}{\mathrm{cCx}^{\text{!`}}}
\newcommand{\q}{\mathrm{q}}
\newcommand{\Gr}{\mathrm{Gr}}
\newcommand{\Tw}{\mathrm{Tw}}
\newcommand{\Prim}{\mathrm{Prim}}
\newcommand{\Bc}{\mathcal{B}}
\newcommand{\Cc}{\mathcal{C}}
\newcommand{\Dc}{\mathcal{D}}
\renewcommand{\Mc}{\mathcal{M}}
\newcommand{\Pc}{\mathcal{P}}
\newcommand{\Qc}{\mathcal{Q}}
\newcommand{\Rc}{\mathcal{R}}
\newcommand{\Sc}{\mathcal{S}}
\newcommand{\Tc}{\mathcal{T}}
\newcommand{\Wc}{\mathcal{W}}
\newcommand{\Zc}{\mathcal{Z}}
\newcommand{\cfree}{\mathsf{c}\mathcal{T}}
\newcommand{\Rb}{\mathbb{R}}
\newcommand{\Cb}{\mathbb{C}}
\newcommand{\Kb}{\mathbb{K}}
\newcommand{\Lb}{\mathbb{L}}
\newcommand{\Nb}{\mathbb{N}}
\newcommand{\Sb}{\mathbb{S}}
\newcommand{\Qb}{\mathbb{Q}}
\newcommand{\Zb}{\mathbb{Z}}
\newcommand{\Rf}{\mathbf{R}}
\newcommand{\Kf}{\mathbf{K}}
\newcommand{\fk}{\mathfrak{f}}
\newcommand{\Fk}{\mathfrak{F}}
\newcommand{\cOp}{\mathsf{cOp}}
\newcommand{\cOpl}{\mathsf{cOp}^{\mathrm{lax}}}
\newcommand{\coideal}[1]{\widetilde{#1}}
\DeclareMathOperator{\Filt}{Filt}
\DeclareMathOperator{\coim}{coim}
\DeclareMathOperator{\coker}{coker}
\newcommand{\Kmod}{\mathsf{Mod}_{\Kf}}
\newcommand{\Rmod}{\mathsf{Mod}_{\Rf}}
\newcommand{\Rbmod}{\mathsf{Mod}_{\Rb}}
\newcommand{\ModA}{\mathsf{Mod}_\Rf}
\newcommand{\ModAg}{\ModA^{\mathrm{g}}}
\newcommand{\ModAgr}{\ModA^{\mathrm{gr}}}
\newcommand{\ModKgr}{\Kmod^{\mathrm{gr}}}
\newcommand{\ModRgr}{\Rmod^{\mathrm{gr}}}
\newcommand{\ModKpg}{\Kmod^{\mathrm{pg}}}
\newcommand{\ModApg}{\ModA^{\mathrm{pg}}}
\newcommand{\SMod}{\Sb\textrm{-}\mathsf{Mod}}
\newcommand{\complete}{\widehat{\Filt}}
\newcommand{\grModR}{\Rmod^{\mathsf{gr}}}
\newcommand{\grModRb}{\Rbmod^{\mathsf{gr}}}
\newcommand{\ModAP}{\mathsf{Mod}_A^{\Pc}}
\newcommand{\ModAPz}{\mathsf{Mod}_A^{\Pc, 0}}
\newcommand{\ModBP}{\mathsf{Mod}_B^{\Pc}}
\newcommand{\ModBPz}{\mathsf{Mod}_B^{\Pc, 0}}
\newcommand{\AbPA}{\mathsf{Ab(\Pc\text{-}alg}/A)}
\newcommand{\Palg}{\Pc\text{-}\mathsf{alg}}
\newcommand{\PalgA}{\Pc\text{-}\mathsf{alg}/A}
\newcommand{\Calg}{\Cc\text{-}\mathsf{coalg}}
\newcommand{\Ho}{\mathsf{Ho}}
\newcommand{\hotimes}{\hat\otimes}
\newcommand{\hoplus}{\hat\oplus}
\newcommand{\hcirc}{\hat\circ}
\newcommand{\CR}{\mathfrak{cr}}
\newcommand{\curvA}{\vcenter{
\xymatrix@M=0pt@R=3pt@C=3pt{&&\\
&&\\
& *{} \ar@{{*}}[u] \ar@{-}[dd]&\\
&&\\
&&}}}
\newcommand{\idCurvA}{\vcenter{
\xymatrix@M=0pt@R=3pt@C=3pt{&&\\
& *{} \ar@{{*}}[u] \ar@{-}[dddd]&\\
\ar@{.}@<-1.5pt>[rr] &&\\
&&\\ &&\\ &&}}}
\newcommand{\curvAd}{\vcenter{
\xymatrix@M=0pt@R=2.4pt@C=3pt{&&\\
& {\scriptstyle \circ} &\\
& *{} \ar@{-}[u] \ar@{-}[d]&\\
&&}}}
\newcommand{\as}{\vcenter{
\xymatrix@M=0pt@R=4pt@C=4pt{&&\\
& *{} \ar@{-}[lu] \ar@{-}[ur] \ar@{-}[d] &\\
&&}}}
\newcommand{\ass}{\vcenter{ \xymatrix@M=0pt@R=4pt@C=4pt{
      &&&&\\ \ar@{-}[ddrr] && \ar@{-}[dl] && \ar@{-}[ddll]\\
      &&&& \\
      && *{} \ar@{-}[d] &&\\
      &&}}
-
\vcenter{
    \xymatrix@M=0pt@R=4pt@C=4pt{&&&&\\
      \ar@{-}[drdr] && \ar@{-}[dr] && \ar@{-}[dldl]\\
      &&&& \\
      && *{}\ar@{-}[d] && \\
      &&&&}}}
\newcommand{\assu}{\vcenter{ \xymatrix@M=0pt@R=4pt@C=4pt{
      &&&&\\ \ar@{-}[ddrr] && \ar@{-}[dl] && \ar@{-}[ddll]\\
      &&&& \\
      && *{} \ar@{-}[d] &&\\
      &&}}
+
\vcenter{
    \xymatrix@M=0pt@R=4pt@C=4pt{&&&&\\
      \ar@{-}[drdr] && \ar@{-}[dr] && \ar@{-}[dldl]\\
      &&&& \\
      && *{}\ar@{-}[d] && \\
      &&&&}}}
\newcommand{\lunit}{\vcenter{ \xymatrix@M=0pt@R=4pt@C=4pt{
           & & & \\
           & {\scriptscriptstyle \circ} \ar@{-}[d] & &\\
           & \ar@{-}[dr] & & \ar@{-}[dl] \\
           & & *{} \ar@{-}[d] &\\
           & & }}}
\newcommand{\runit}{\vcenter{
         \xymatrix@M=0pt@R=4pt@C=4pt{& & & \\
           & & & {\scriptscriptstyle \circ} \\
           & \ar@{-}[dr] & & \ar@{-}[u] \ar@{-}[dl]\\
           & & *{} \ar@{-}[d] & \\
           & & }}}
\newcommand{\curvAs}{\vcenter{ \xymatrix@M=0pt@R=4pt@C=4pt{
      &&&&\\
      &&&&\\
      & \ar@{{*}}[u] \ar@{-}[dr] && \ar@{-}[dl] & \\
      && *{} \ar@{-}[d] &&\\
      &&}}
-
\vcenter{\xymatrix@M=0pt@R=4pt@C=4pt{
      &&&&\\
      &&&&\\
      & \ar@{-}[dr] && \ar@{{*}}[u] \ar@{-}[dl] &\\
      && *{} \ar@{-}[d] &&\\
      &&}}}
\newcommand{\jn}{{\tiny \vcenter{\xymatrix@M=0pt@R=4pt@C=4pt{& 1 &\\ & \ar@{-}[dd] &\\ & \bullet & \\ &&}}}}
\newcommand{\slie}{{\tiny \vcenter{\xymatrix@M=0pt@R=3pt@C=2pt{1 &&& 2\\ &&& \\ && *{} \ar@{-}[llu] \ar@{-}[d] \ar@{-}[ru] & s^{-1} \\ &&}}}}
\newcommand{\lie}{{\tiny \vcenter{\xymatrix@M=0pt@R=4pt@C=4pt{1 && 2\\ && \\ & *{} \ar@{-}[lu] \ar@{-}[d] \ar@{-}[ru] & \\ &&}}}}
\newcommand{\astwit}{{\tiny \vcenter{\xymatrix@M=0pt@R=4pt@C=4pt{2 && 1\\ && \\ & *{} \ar@{-}[lu] \ar@{-}[d] \ar@{-}[ru] & \\ &&}}}}
\newcommand{\jnr}{{\tiny \vcenter{\xymatrix@M=0pt@R=3pt@C=2pt{& 1 &\\ & \ar@{-}[dddd] &\\ && \\ & \ar@{{*}}[u] \ar@{{*}}[d] & \\ && \\ &&}} + \vcenter{\xymatrix@M=0pt@R=4pt@C=4pt{& 1 &\\ & \ar@{-}[dddd] &\\ && \\ && \\ && \\ &&}}}}
\newcommand{\qjnr}{{\tiny \vcenter{\xymatrix@M=0pt@R=3pt@C=2pt{& 1 &\\ & \ar@{-}[dddd] &\\ && \\ & \ar@{{*}}[u] \ar@{{*}}[d] & \\ && \\ &&}}}}
\newcommand{\unit}{{\tiny \vcenter{\xymatrix@M=0pt@R=3pt@C=2pt{& 1 &\\ & \ar@{-}[dd] &\\ && \\ &&}}}}
\newcommand{\sjlr}{{\tiny \vcenter{\xymatrix@M=0pt@R=3pt@C=2pt{1 &&& 2\\ \ar@{-}[dd] &&&\\ &&&\\ \ar@{{*}}[u] &&& \\ && *{} \ar@{-}[ull] \ar@{-}[d] \ar@{-}[ur] & s^{-1}\\ &&&}}}}
\newcommand{\sljr}{{\tiny \vcenter{\xymatrix@M=0pt@R=3pt@C=2pt{1 &&& 2\\ \ar@{-}[drr] &&& \ar@{-}[ld]\\ && *{} \ar@{-}[dd] & s^{-1} \\ &&&\\ && \ar@{{*}}[u] &}}}}
\newcommand{\jac}{{\tiny \vcenter{\xymatrix@M=0pt@R=3pt@C=3pt{1 && 2 && 3\\ &&&& \\ & \ar@{-}[lu] \ar@{-}[dr] \ar@{-}[ru] &&& \\ && *{} \ar@{-}[d] \ar@{-}[urur] &&\\ &&&&}} + \vcenter{\xymatrix@M=0pt@R=3pt@C=3pt{2 && 3 && 1\\ &&&& \\ & \ar@{-}[lu] \ar@{-}[dr] \ar@{-}[ru] &&& \\ && *{} \ar@{-}[d] \ar@{-}[urur] &&\\ &&&&}} + \vcenter{\xymatrix@M=0pt@R=3pt@C=3pt{3 && 1 && 2\\ &&&& \\ & \ar@{-}[lu] \ar@{-}[dr] \ar@{-}[ru] &&& \\ && *{} \ar@{-}[d] \ar@{-}[urur] &&\\ &&&&}}}}
\newcommand{\curvsLie}{\tiny \vcenter{ \xymatrix@M=0pt@R=3pt@C=2pt{
      &&&&\\
      &&&&\\
      \ar@{{*}}[uu] &&&& \\
      && *{} \ar@{-}[ull] \ar@{-}[d] \ar@{-}[ur] & s^{-1} &\\
      &&}}}
\newcommand{\curvLie}{\tiny \vcenter{ \xymatrix@M=0pt@R=4pt@C=4pt{
      &&&\\
      &&&\\
      \ar@{{*}}[uu] &&& \\
      & *{} \ar@{-}[ul] \ar@{-}[d] \ar@{-}[ur] &&\\
      &&}}}
\newcommand{\curvBra}{\tiny \vcenter{ \xymatrix@M=0pt@R=4pt@C=4pt{
      &&&&\\
      &&&&\\
      \ar@{{*}}[uu] &&& \ar@{{*}}[uu] & \\
      && *{} \ar@{-}[ull] \ar@{-}[d] \ar@{-}[ur] & s^{-1} &\\
      &&}}}
\newcommand{\qJc}{\tiny \vcenter{\xymatrix@M=0pt@R=4pt@C=4pt{& 1\\ & \ar@{-}[d] &\\ & \circ & \\ & \ar@{-}[u] &}}}
\newcommand{\jnd}{{\tiny \vcenter{\xymatrix@M=0pt@R=6pt@C=4pt{& 1 &\\ & \ar@{-}[dd] &\\ && \\ & \ar@{{o}}[u] &}}}}
\newcommand{\scom}{{\tiny \vcenter{\xymatrix@M=0pt@R=4pt@C=4pt{1 && 2\\ && \\ & *{} \ar@{-}[lu] \ar@{-}[d] \ar@{-}[ru] & s\\ &&}}}}
\newcommand{\sojn}{{\tiny \vcenter{\xymatrix@M=0pt@R=6pt@C=4pt{& 1 &\\ & \ar@{-}[d] &\\ & \circ & \\ & \ar@{-}[u] &}}}}
\newcommand{\soqjnr}{{\tiny \vcenter{\xymatrix@M=0pt@R=4pt@C=4pt{& 1 &\\ & \ar@{-}[d] &\\ & \circ & \\ & *{} \ar@{-}[u] \ar@{-}[d] & \\ & \circ \ar@{-}[d] & \\ &&}}}}
\newcommand{\sojlr}{{\tiny \vcenter{\xymatrix@M=0pt@R=4pt@C=4pt{1 && 2\\ \ar@{-}[d] &&\\ \circ &&\\ \ar@{-}[u] && \\ & *{} \ar@{-}[ul] \ar@{-}[d] \ar@{-}[ur] &\\ &&}}}}
\newcommand{\soljr}{{\tiny \vcenter{\xymatrix@M=0pt@R=4pt@C=4pt{1 && 2\\ \ar@{-}[dr] && \ar@{-}[ld]\\ & *{} \ar@{-}[d] &\\ & \circ &\\ & \ar@{-}[u] &}}}}
\newcommand{\der}{{\tiny \vcenter{\xymatrix@M=0pt@R=4pt@C=4pt{1 && 2\\ \ar@{-}[dr] && \ar@{-}[ld]\\ & *{} \ar@{-}[d] &\\ & \circ &\\ & \ar@{-}[u] &}}} - {\tiny \vcenter{\xymatrix@M=0pt@R=4pt@C=4pt{1 && 2\\ \ar@{-}[d] &&\\ \circ &&\\ \ar@{-}[u] && \\ & *{} \ar@{-}[ul] \ar@{-}[d] \ar@{-}[ur] &\\ &&}}} - {\tiny \vcenter{\xymatrix@M=0pt@R=4pt@C=4pt{1 && 2\\ && \ar@{-}[d] \\ && \circ \\ && \ar@{-}[u] \\ & *{} \ar@{-}[ul] \ar@{-}[d] \ar@{-}[ur] &\\ &&}}}}
\joanhomepage\url{https://www.math.univ-toulouse.fr/~jmilles}
\sinanhomepage\url{https://sites.google.com/site/sinanyalinshomepage/home}
\newcommand{\mailurl}[1]{\email{\href{mailto:#1}{#1}}}
\title{André--Quillen cohomology in the context of curved algebras}
\author{Joan Bellier-Mill\`{e}s}
\address{B.-M.: Institut de mathématiques de Toulouse ; UMR5219\\ Université de Toulouse ; CNRS ; UPS, F-31062 Toulouse Cedex 9, France}
\urladdr{\joanhomepage} 
\author{Sinan Yalin}
\address{Y.: Laboratoire angevin de recherches en Mathématiques (LAREMA) ; UMR6093\\ Université d'Angers ; CNRS ; 2 boulevard Lavoisier, 49000 Angers, France}
\urladdr{\sinanhomepage}
\thanks{B.-M. and Y. were supported by ANR-17-CE40-0014 CatAG; 
B.-M. was supported by ANR-20-CE40-0016 HighAGT;
Y. was supported by ANR-16-CE40-0003 ChroK, funded by Agence Nationale pour la Recherche.}
\begin{document}

\begin{abstract}
The André--Quillen cohomology of an algebra with coefficients in a module is defined by deriving a functor based on Kähler differential forms. It can be computed using a cofibrant resolution of the algebra in a model category structure where weak equivalences are quasi-isomorphisms. This construction works for algebras over an operad, providing a cohomology theory tailored for each type of algebra.

For curved algebras however, the notion of quasi-isomorphism is meaningless. The occurrence and importance of curved structures in various research topics (symplectic topology, deformation theory, derived geometry, mathematical physics...) motivate the development of their homotopy theory and André--Quillen cohomology theory. To get a homotopical context with an appropriate notion of weak equivalence, we consider filtered complete modules with a predifferential inducing a differential on the associated graded. Curved algebras in such modules are algebras over a curved operad.

In this article, we consider curved operads which are not necessarily augmented. Bar and cobar constructions adapted to these curved operads are developed, as well as Koszul duality theory. Consequently, we obtain homotopy versions of our curved algebras and make it explicit for interesting cases. Two main examples are the curved operads encoding curved unital associative algebras and curved complex Lie algebras. In particular, homotopy curved unital associative algebras describe the structure of Floer complexes of lagrangian submanifolds and Fukaya categories in symplectic topology. Bar and cobar constructions for curved algebras are also developed, and we obtain resolutions from which we compute their André–Quillen cohomology with module coefficients.

Our computations of André-Quillen cohomology in the case of curved complex Lie algebras reveal an interesting way to implement integrable almost complex structures in derived analytic geometry.
\end{abstract}

\maketitle

\tableofcontents{}

\section*{Introduction}

\subsection*{Motivations and antecedents}

In this article, our goal is to further develop the homotopical and homological theory of curved algebras in continuity with \cite{BMDC20}. 
A first example of algebras is associative algebras. 
They are made of an underlying vector space or module and a multiplication that satisfies the associativity relation. 
The study of extensions and deformations of an associative algebra leads to the definition of the Hochschild cochains and cohomology. 
We can enrich the notion of associative algebra by adding a unit, by assuming that the underlying object is a chain complex or even by relaxing the associativity relation ($\Ai$-algebras). 
The definition of the Hochschild cohomology extends to these settings.

It happens that an associative algebra $(A, \cdot)$ may have a derivation $d$ whose square is not zero but is equal to the bracket with an element $\theta$ in $A$, called \emph{curvature}: $d^2 = [\theta, -] = (\theta \cdot -) - (- \cdot \theta)$. 
Examples can be built by means of a vector bundle endowed with a connection (and the curvature coincides with the curvature of the connection) or can be obtained from a matrix factorisation (nevertheless,  in this example, $\theta$ is central so $d^2=0=[\theta, -]$).
Defining Hochschild cohomology in this setting poses some problems as for example the fact that it vanishes for curved algebras built from matrix factorisations, see Theorem 4.2 in \cite{CT13}. 
Other definitions are proposed there but the relation with Hochschild cohomology is not well-understood. 
A different direction has been followed in \cite{FOOO07} where the authors assume that the algebraic objects are filtered and complete. 
This is also the context used in \cite{BMDC20}.
Let us note also that curved $A_{\infty}$-structures appear naturally as deformations of an $A_{\infty}$-algebra or an $A_{\infty}$-category, in the formal moduli problem controlled by its (full) Hochschild complex (see for example \cite{jvdK23} and \cite{BL15}).

The first cohomology theories of algebras (associative and Lie algebras) were defined by describing explicit cochain complexes. 
Later, in order to study commutative algebras, Quillen \cite{dQ70} and André \cite{mA74} have proposed a cohomology theory described by means of a derived functor. 
These ideas have been applied to many types of algebras using the notion of operads, and widely studied under the name of André--Quillen cohomology (\cite{GH00, jM11, HPN18} for example). 
Following this guideline, the article \cite{BMDC20} presents model category structures on a base category of \emph{gr-dg modules}, on curved operads and on algebras over a curved operad (that we consider as curved algebras in a wide sense). 
In order to compute the cotangent complex which appears in the definition of the André--Quillen cohomology, we need cofibrant resolutions. 

Part of this effort has been made in \cite{BMDC20}. 
However, in the present article, we are concerned in a more general situation than the one presented there. 
While the curved operads studied in the aforementioned article are augmented, we are interested with curved operads that are not. 
Two interesting examples of non-augmented curved operads are the one encoding curved unital associative algebras ($\cuAs$) and the one encoding curved complex Lie algebras ($\cCx$). Beware that in the second example, the operad is defined over $\mathbb{R}$ and not over $\mathbb{C}$ (so it is not the $\mathbb{C}$-linear operad parametrising complex Lie algebras).
The related operad $\Cx$ encoding (uncurved) complex Lie algebras appears in the study of formal complex structures as integrable almost complex structures (see \cite{jM14}). 
In order to define meaningful homotopy versions of these algebras, we  extend the bar and cobar constructions and the Koszul duality theory in this context. 
We then provide cofibrant resolutions of algebras over such curved operads and we use them to compute their André--Quillen cohomology.

One of our motivations for these, sometimes technical, generalisations, comes from symplectic topology. Given an ambient symplectic manifold $M$, the Floer complex of a lagrangian submanifold inherits the structure of a filtered homotopy $\cuAs$-algebra, whose operations encompass the counting of pseudo-holomorphic disks in $M$ bordered by the lagrangian as well as pseudo-holomorphic polygons. There is a relative version defined for a pair of lagrangians, which acquires the structure of a filtered $A_{\infty}$-bimodule. At a categorical level, the Fukaya category of $M$ consists in lagrangian submanifolds with Floer complexes as morphisms and forms a curved unital $A_{\infty}$-category \cite{FOOO07}. Together with Lagrangian Floer cohomology, these are central objects of study in symplectic topology and are closely related to homological mirror symmetry. In particular, their deformation and obstruction theory has still to be well understood.

Another motivation comes from derived complex analytic geometry, which has experienced a striking development during the past few years relying in particular on the works of Lurie, Porta and Porta-Yu \cite{jLDAGIX, mP16, PY20}. Indeed, complex structures on a formal manifold are examples of homotopy $\cCx$-algebras (see \cite{jM14} for its uncurved version $\Cx$). The peculiarity of $\Cx$ is that it is an $\mathbb{R}$-linear (not $\mathbb{C}$-linear) operad parametrising complex structures as integrable almost complex structures, which offers an explicit way to set up a homotopy coherent notion of integrability suitable for \emph{derived} geometric objects.

As detailled below, the current literature does not allow for an operadic treatment for theses two motivations.

In relation with the André--Quillen cohomology, we prove that there is a fully faithful functor from formal derived complex structures to EFC-dgas (dg algebras with an entire functional calculus), defined in \cite{Pri20} as the building blocks of a dg model for derived analytic geometry. In \cite{Pri20}, derived analytic spaces locally structured by these algebras are proved to be equivalent to the derived complex analytic spaces of \cite{jLDAGIX, mP16} whose $0$-truncation is a complex analytic space.

We use this bridge between operadic homotopical algebra and derived complex analytic spaces in an ongoing work devoted to integrability in derived complex geometry. We say a few more words about this perspective at the end of the introduction.

\subsection*{Homotopy algebras in the curved context}

Given a type of algebra described by a curved operad $\Pc$, homotopy $\Pc$-algebras are algebras over an ($\Sb$-)cofibrant resolution $\Pc_\infty$ of $\Pc$. 
When the curved operad $\Pc$ is augmented, two classical resolutions are given by the bar-cobar resolution and the Koszul resolution. 
In the present article, our curved operads are not assumed to be augmented. 
A bar construction for (not necessarily augmented) curved operads and a cobar construction for (not necessarily coaugmented) curved cooperads already exist in \cite{vL14}. 
However they don't form the adjunction that is required to describe the bar-cobar resolution (see Propositions 6.1.2 and 6.1.3 in \cite{vL14}). 
We provide in this article similar constructions adapted to the gr-dg context. Our bar and cobar constructions represent (on the right and on the left respectively) a bifunctor of operadic curved twisting morphisms. These operadic curved twisting morphisms are maps from a curved cooperad $\Cc$ to a curved operad $\Pc$, $\alpha : \Cc \to \Pc$, of homological degree $-1$ solutions to a curved Maurer--Cartan equation $\Theta + \partial \alpha + \frac12 [\alpha, \alpha] = 0$ where $\Theta$ is a curvature term. 
In order to obtain this adjunction, we have to consider (as in \cite{vL14}) lax notions of morphisms between curved operads and between curved cooperads. The counit of the bar-cobar adjunction provides then a functorial cofibrant resolution. To summarize our main results of this part, we have:
\begin{thm}[Theorems \ref{thm: bar-cobar adjunction} and \ref{thm: bar-cobar resolution}]
Let $\Pc$ be a curved semi-augmented operad and $\Cc$ be a curved altipotent cooperad.
\begin{itemize}
\item[(1)] The bar construction $\B$ and cobar construction $\hat\Omega$ define an adjunction between curved sa operads and curved altipotent cooperads with lax morphisms, and the adjunction relation factors naturally through twisting morphisms
\[\Hom_{\mathsf{curv.\, sa\, op.}^{\mathrm{lax}}}(\hat\Omega \Cc,\Pc)
\cong
\Tw(\Cc, \Pc)
\cong
\Hom_{\mathsf{curv.\, alti.\, coop.}^{\mathrm{lax}}}(\Cc,\B\Pc).
\]

\item[(2)] Suppose that $\Gr \Pc$ is a connected bounded below weight filtered operad. Then the counit of the adjunction
\[
\hat\Omega \B \Pc \to \Pc
\]
is a graded quasi-isomorphism and a strict surjection.

\item[(3)] Under good assumptions on the filtration and the curvature of the curved operad $\Pc$, this counit  forms a functorial cofibrant resolution in the model category structure on curved operads described in \cite[Appendix C]{BMDC20}.
\end{itemize}
\end{thm}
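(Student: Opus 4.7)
The plan is to follow the classical bar--cobar adjunction strategy à la Loday--Vallette, adapted to the curved semi-augmented setting via the gr-dg filtrations of \cite{BMDC20}.

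For part (1), I would first verify that $\Tw(\Cc,\Pc)$ makes sense in the gr-dg context, i.e.\ that the curved Maurer--Cartan equation $\Theta + \partial\alpha + \tfrac12[\alpha,\alpha]=0$ is well defined: the bracket converges thanks to completeness of the filtrations, and the curvature term $\Theta$ is built from the operadic curvature of $\Pc$ composed with the cooperadic curvature of $\Cc$. The cobar $\hat\Omega\Cc$ is then built as a free curved operad on a desuspension of the reduced part of $\Cc$, with predifferential twisted by the cooperad structure and by both curvatures; the bar $\B\Pc$ is built dually on a suspension of the reduced part of $\Pc$. The adjunction bijections
\[
\Hom_{\mathsf{curv.\, sa\, op.}^{\mathrm{lax}}}(\hat\Omega\Cc,\Pc) \cong \Tw(\Cc,\Pc) \cong \Hom_{\mathsf{curv.\, alti.\, coop.}^{\mathrm{lax}}}(\Cc,\B\Pc)
\]
are obtained by restricting a lax morphism to the generators: compatibility with the twisted predifferential translates precisely into the curved Maurer--Cartan equation, while the lax datum encodes the interaction with the curvature and the semi-augmentation. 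Working with lax morphisms is essential, since strict compatibility with the curvature is too rigid to support an adjunction, as already observed in \cite{vL14}.

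For part (2), I would filter $\hat\Omega\B\Pc$ using a combination of the coradical/weight filtration on $\B\Pc$ and the gr-dg filtration on $\Pc$, then analyze the associated spectral sequence. On the first page, the curvature contributions and the failure of augmentation raise the filtration degree strictly, so the counit reduces to the classical uncurved bar--cobar counit $\Omega\Bm(\Gr\Pc)\to\Gr\Pc$ for an augmented dg operad, which is a quasi-isomorphism by the standard theorem (Ginzburg--Kapranov, Fresse, Loday--Vallette). Connectedness and bounded-belowness of $\Gr\Pc$ ensure that the filtration is exhaustive and Hausdorff and that the spectral sequence converges, yielding the asserted graded quasi-isomorphism. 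Strict surjectivity is immediate from the construction: the canonical projection $\hat\Omega\B\Pc\twoheadrightarrow\Pc$ sends the generators $s^{-1}s\bar\Pc$ to $\bar\Pc\subset\Pc$ via the obvious identification.

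For part (3), one observes that $\hat\Omega\Cc$ of a curved altipotent cooperad $\Cc$ is by construction a quasi-free curved operad on an altipotent generating collection, and such quasi-free objects are cofibrant in the model structure of \cite[Appendix C]{BMDC20} (under the appropriate filtration and curvature hypotheses); combined with the graded quasi-isomorphism from (2), this exhibits $\hat\Omega\B\Pc\to\Pc$ as a functorial cofibrant replacement. The main obstacle I anticipate is in part (2): choosing a filtration on $\hat\Omega\B\Pc$ simultaneously compatible with the three gradings in play (coradical/weight, gr-dg, homological) so that the induced spectral sequence converges and its $E^1$-page recovers exactly the classical augmented bar--cobar acyclicity. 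A secondary subtlety, in part (1), is calibrating the lax morphism categories so that the adjunction is both well defined and sufficiently compatible with the ambient model structure to make the conclusion of part (3) go through without further modification.
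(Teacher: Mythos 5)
Your part (1) follows the paper's route: restrict a lax morphism to the (co)generators, identify compatibility with the twisted predifferential with the curved Maurer--Cartan equation on $\overline{\Cc}$, and identify the curvature condition with the restriction of that equation to $\im\eta_\Cc$; this is exactly how Theorem \ref{thm: bar-cobar adjunction} is proved. For part (2) your strategy is correct in outline but heavier than necessary: since a weak equivalence is \emph{by definition} a quasi-isomorphism after applying $\Gr$, and $\Gr$ is strong monoidal in characteristic $0$, one has $\Gr\hat\Omega\B\Pc\cong\Omega\Bm\Gr\Pc$ directly, with no spectral sequence needed to effect the reduction; the filtration/convergence issue you flag as the ``main obstacle'' does not arise. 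Note also that $\Gr\Pc$ is only \emph{semi-augmented}, not augmented, so the classical augmented acyclicity theorem (Ginzburg--Kapranov, Fresse, Loday--Vallette) you invoke does not apply as stated; the correct input is the semi-augmented bar--cobar theorem of \cite{HM12} (Theorem 3.4.4 there), which is where the connected bounded-below weight hypothesis is actually consumed.

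The genuine gap is in part (3). You assert that $\hat\Omega\B\Pc$, being quasi-free on an altipotent collection, is cofibrant ``under the appropriate filtration and curvature hypotheses.'' In the gr-dg model structure on curved operads quasi-freeness does \emph{not} imply cofibrancy: the paper itself points out (remark following Theorem \ref{thm: Koszul resolution}) that the quasi-free Koszul resolution $\hat\Omega\Pc^{\antishriek}$ is in general only $\Sb$-cofibrant, precisely because its curvature is a linear combination of decomposable elements. What saves $\hat\Omega\B\Pc$ is that its curvature equals $\theta_\Pc$, a generator, so that $\ker([\theta_\omega,-])=\Tc(\vartheta I)$; and even then cofibrancy requires real work: one must build a contracting homotopy for the piece $d_0$ of the bar differential, and then construct an exhaustive increasing filtration $(S_l)$ of the generators by splitting off $\ker(\Gr d_\omega)$ and complements at each stage, checking that each inclusion $S_{l-1}\rightarrowtail S_l$ is a split monomorphism whose cokernel consists of pairs $(\xi^\alpha,\zeta^\alpha)$ with $d_\omega(\xi^\alpha)+\zeta^\alpha\in\Tc_+(S_{l-1})$ and with the filtration degrees offset by one. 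This is exactly the cell-attachment characterization of cofibrant curved operads in \cite[Proposition C.33]{BMDC20}, and it is where the hypotheses that $F_\bullet$ comes from a graduation and that $\theta_\Pc\in\Gr_1\Pc$ are used (the latter guarantees that $d_0$ raises the filtration degree by exactly one, producing the required degree offset). Without this argument your part (3) does not go through.
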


To obtain a smaller resolution of our curved operads, we develop a curved Koszul duality adapted to some inhomogeneous quadratic curved operads. 
In good situations, this produces a resolution of the curved operad which is cofibrant in the underlying category of $\Sb$-modules ($\Sb$-cofibrant resolution):
\begin{thm}[Theorem \ref{thm: Koszul resolution} ]
\begin{itemize}
\item[(1)] Let $\Pc$ be a Koszul inhomogeneous quadratic curved operad such that $\Gr\q\Pc$ is connected bounded below weight graded. 
The curved operad morphism
\[
f_\kappa : \hat\Omega \Pc^\antishriek \to \Pc
\]
is a graded quasi-isomorphism and a strict surjection.

\item[(2)] Under good assumptions on the filtration and the curvature of the curved operad $\Pc$, the Koszul resolution $\hat\Omega \Pc^\antishriek$ is  $\Sb$-cofibrant in the model category structure on gr-dg $\Sb$-modules described in \cite[Appendix C]{BMDC20}.
\end{itemize}
\end{thm}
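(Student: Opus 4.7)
The plan is to deduce part (1) from the bar-cobar resolution theorem just stated, by a weight-filtration spectral-sequence argument that reduces matters to the homogeneous case. The Koszul twisting morphism $\kappa : \Pca \to \Pc$ associated to the inhomogeneous quadratic presentation of $\Pc$ induces $f_\kappa$ via the adjunction of part~(1) of the previous theorem, fitting into a commuting triangle $\Om \Pca \to \Om \B \Pc \to \Pc$ in which the right leg is the bar-cobar resolution, already known to be a graded quasi-isomorphism and a strict surjection. It therefore suffices to control the left leg. I would filter $\Om \Pca$ by the weight of $\Pca$; the predifferential then decomposes into a weight-preserving piece coming from the quadratic part of the relations, a weight-lowering piece coming from the linear and constant parts of the inhomogeneous relations, and a further weight-lowering piece coming from the curvature of $\Pca$. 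On the target side, the natural filtration by powers of the ideal generated by the generators has associated graded $\q\Pc$.

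The zeroth page of the induced spectral sequence is then identified with the homogeneous Koszul morphism $\Om (\q\Pc)^\antishriek \to \q\Pc$. Since $\Pc$ is Koszul by hypothesis, $\q\Pc$ is a Koszul quadratic (curved) operad, so its Koszul complex is acyclic; equivalently, $(\q\Pc)^\antishriek \to \B\q\Pc$ is a graded quasi-isomorphism, and composing with the bar-cobar quasi-isomorphism $\Om\B\q\Pc\to\q\Pc$ from the preceding theorem gives the $E^0$ quasi-isomorphism one wants. The connectedness and bounded-below weight-grading of $\Gr\q\Pc$ are precisely what is needed for the weight filtration to be exhaustive and complete, so the spectral sequence converges strongly and the quasi-isomorphism assembles back to a graded quasi-isomorphism for $f_\kappa$. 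Strict surjectivity is immediate: in weight $1$, $\Pca$ contains (a suspension of) the space of generators of $\Pc$, on which $f_\kappa$ restricts to the identity, and these generate $\Pc$ as a graded operad.

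For part (2), $\Om \Pca$ is by construction a quasi-free operad, free as a graded operad on the desuspension of the reduced cooperad $\overline{\Pca}$ in the underlying category of gr-dg $\Sb$-modules, with a predifferential assembled from the partial decomposition of $\Pca$ together with its curvature. Under the good assumptions on the filtration and the curvature, which ensure that $\Pca$ is itself $\Sb$-cofibrant (automatic in characteristic zero where $\Sb$-modules are projective) and that the predifferential respects the filtration, the free-operad/forgetful Quillen adjunction of \cite[Appendix C]{BMDC20} transports $\Sb$-cofibrancy from the generators to $\Om \Pca$ via the usual transfinite filtration by weight. The main obstacle will be the simultaneous management of two filtrations: the weight filtration on $\Pca$ used in the spectral sequence argument and the filtration inherent to the gr-dg structure of the underlying complete $\Sb$-modules. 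One has to verify that the curvature term of $\Pca$ -- absent in the classical inhomogeneous Koszul theory -- does not obstruct the identification of $E^0$ with the purely quadratic cobar differential, and that the predifferential's interaction with the completion is compatible enough for the spectral sequence to converge and for the small-object argument underlying cofibrancy to apply in the gr-dg context.
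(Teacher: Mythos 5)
Your overall strategy (reduce to the homogeneous quadratic case by a weight filtration) has the right general shape, but there is a genuine gap at the very first step: you never pass to the associated graded of the \emph{complete} filtration $F_\bullet$, and without that the argument does not parse. A ``graded quasi-isomorphism'' means a quasi-isomorphism after applying the functor $\Gr$ for $F_\bullet$; before applying $\Gr$ the objects are curved, the predifferentials do not square to zero, and there is no homology for your weight spectral sequence to converge to. In particular your identification of the $E^0$-page with ``$\hat\Omega(\q\Pc)^\antishriek\to\q\Pc$'' and the claim that ``its Koszul complex is acyclic'' concern a still-curved operad $\q\Pc$ and are meaningless as stated. The paper's proof begins by using that $\Gr$ is strong monoidal over a field to get $\Gr\hat\Omega\Pc^\antishriek\cong\Omega\,\Gr(\Pc^\antishriek)$ as honest dg operads; the key input --- entirely absent from your proposal --- is then the Poincar\'e--Birkhoff--Witt type isomorphism $\Gr(\q\Pc^\antishriek)\cong(\Gr\q\Pc)^\antishriek\cong(\q\Gr\Pc)^\antishriek$ together with $\Gr(d_{\Pc^\antishriek})=d_{(\Gr\Pc)^\antishriek}$ and $\Gr(\theta^c_{\Pc^\antishriek})=\theta^c_{(\Gr\Pc)^\antishriek}$, which identify $\Gr f_\kappa$ with the \emph{uncurved} inhomogeneous Koszul morphism $\Omega((\Gr\Pc)^\antishriek)\to\Gr\Pc$; the quasi-isomorphism is then Theorem 4.3.1 of \cite{HM12}, which is where your weight-filtration spectral sequence actually lives. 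Your detour through $\hat\Omega\B\Pc$ is unnecessary once this reduction is made. Strict surjectivity is cleanest as in the paper: $f_\kappa$ factors as the composite of the two strict surjections $\hat\Omega\Pc^\antishriek\twoheadrightarrow\Tc(E)\twoheadrightarrow\Pc$.

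For part (2) your justification misses what ``$\Sb$-cofibrant'' means here. It is not projectivity of $\Kf[\Sb_n]$-modules in characteristic zero; it is cofibrancy of the map $\Tc(\vartheta I)\to\hat\Omega\Pc^\antishriek$ in the transferred model structure on \emph{gr-dg} $\Sb$-modules, which is entirely about how the predifferential interacts with the filtration $F_\bullet$. The ``good assumptions'' are that $F_\bullet$ on $\Pc$ comes from a graduation and that $\theta_\Pc$ is nonzero and concentrated in $\Gr_1\Pc$: these are exactly what allow one to build an exhaustive increasing filtration of $\hat\Omega\Pc^\antishriek$ by cells of the form $\xi^\alpha\cdot\Kf[\Sb_{m_\alpha}]\oplus\zeta^\alpha\cdot\Kf[\Sb_{m_\alpha}]$ with $\zeta^\alpha$ one filtration degree higher and $d_\omega(\xi^\alpha)+\zeta^\alpha$ landing in the previous stage, as in the proof of the bar-cobar resolution and in \cite[Theorem 5.24]{BMDC20}. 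A small-object argument ``transporting cofibrancy from the generators'' does not produce this cell structure.
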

\begin{rem}
By Theorem C.45 in \cite{BMDC20}, the model categories of $\Om \Pc^\antishriek$-algebras and of $\Om \B \Pc$-algebras are both equivalent to the model category of $\Pc$-algebras (because all operads are $\Sb$-split in characteristic 0). 
Consequently, up to equivalence, the $\infty$-category of homotopy $\Pc$-algebras does not depend on a choice of resolution, similarly to what happens for uncurved algebras over operads.
\end{rem}

Our two main examples of interest are the curved operad $\cuAs$ encoding \emph{curved unital associative algebras} and the curved operad $\cCx$ encoding \emph{curved complex Lie algebras} as algebras over an {$\mathbb{R}$-linear} operad.  Both satisfies the conditions of the Theorems above, by Theorems \ref{thm: Koszul dual cuAs} and \ref{thm: cCxa}.
We describe the $\Sb$-cofibrant resolutions obtained by means of the curved Koszul duality and denote them by $\cuAi$ and $\cCxi$ respectively.

\subsection*{Resolutions of curved algebras}

As for curved operads, we construct a cofibrant resolution of algebras by means of the bar-cobar adjunction. 
Given an algebra $A$ over a curved (semi-augmented) operad $\Pc$, a coalgebra $C$ over a curved (altipotent) cooperad $\Cc$ and an operadic curved twisting morphism $\alpha : \Cc \to \Pc$, we define a curved $\Li$-algebra structure on the module of maps $\hom(C, A)$ following ideas given in \cite{fW19}. 
It is defined by means of the following results:
\begin{itemize}
\item
Proposition \ref{prop: algebra over the convolution curved operad} says that $\hom(C,  A)$ is an algebra over the curved operad $\Hom(\Cc,  \Pc)$;
\item
Proposition \ref{prop: twisting morphisms as morphisms} ensures that there is a bijection
\[
\Tw(\Cc, \Pc) \cong \Hom_{\mathsf{curv.\, sa\, op.}^{\mathrm{lax}}}(\scLi, \Hom(\Cc, \Pc))
\]
where $\scLi$ is the curved operad encoding shifted curved Lie algebras;
\item
and Proposition \ref{prop: equivalent definition of homotopie cLie algebras} makes explicit the map $\scLi \to \End_{\hom(C, A)}$ obtained from $\alpha$ as a collection of maps $\{ l_n^\alpha\}_{n \geq 0}$.
\end{itemize}
Algebraic twisting morphisms with respect to $\alpha$ are then degree 0 maps $\varphi : C \to A$ solutions to the Maurer--Cartan equation
\[
\sum_{n \geq 0} \frac1{n!} l_n^\alpha(\varphi, \ldots, \varphi) = 0.
\]
We use these algebraic twisting morphisms as follows:
\begin{thm}
\begin{itemize}
\item[(1)] The bifunctor of algebraic twisting morphisms with respect to $\alpha$ can be represented on the left by the cobar functor $\Om_\alpha$ and on the right by the bar functor $\B_\alpha$.

\item[(2)] Under good assumptions, the counit of this adjunction $\Om_\alpha \B_\alpha A \to A$ provides a cofibrant resolution of $A$. 
\end{itemize}
\end{thm}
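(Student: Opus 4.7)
For part (1), the plan is to mimic the classical twisting-morphism / bar-cobar adjunction for algebras over an operad, adapted to the gr-dg curved setting developed in the previous sections. Explicitly, I define $\B_\alpha A$ as the cofree altipotent $\Cc$-coalgebra on a (de)suspension of $A$, equipped with the unique coderivation extending (i) the internal predifferential of $A$, (ii) the operation coming from the $\Pc$-algebra structure on $A$ pulled back through $\alpha$, and (iii) a term accounting for the curvatures of $\Cc$ and of $A$. Dually, $\Om_\alpha C$ is the free $\Pc$-algebra on a (de)suspension of $C$, with the analogous twisted predifferential on the free algebra. The essential computation is that the square of this predifferential is controlled by the operadic curved Maurer--Cartan equation $\Theta + \partial \alpha + \tfrac{1}{2}[\alpha,\alpha] = 0$ and by the defining relations of a curved $\Pc$-algebra, so that it is a genuine predifferential inducing a differential on the associated graded.

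For the adjunction itself, freeness of $\Om_\alpha C$ as a $\Pc$-algebra and cofreeness of $\B_\alpha A$ as a $\Cc$-coalgebra reduce a morphism $\Om_\alpha C \to A$, respectively $C \to \B_\alpha A$, to a degree-$0$ linear map $\varphi : C \to A$. By Propositions \ref{prop: algebra over the convolution curved operad}, \ref{prop: twisting morphisms as morphisms} and \ref{prop: equivalent definition of homotopie cLie algebras}, compatibility of $\varphi$ with the twisted predifferentials on either side translates into the algebraic Maurer--Cartan equation $\sum_{n \geq 0} \tfrac{1}{n!} l_n^\alpha(\varphi,\ldots,\varphi)=0$, giving the two natural bijections with algebraic twisting morphisms and hence the adjunction.

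For part (2), I take $\alpha$ to be either the universal twisting morphism $\B \Pc \to \Pc$ or the Koszul twisting morphism $\Pca \to \Pc$, so that Theorems \ref{thm: bar-cobar resolution} and \ref{thm: Koszul resolution} apply at the operadic level. Cofibrancy of $\Om_\alpha \B_\alpha A$ follows from its quasi-free presentation as a $\Pc$-algebra on an $\Sb$-cofibrant underlying module, via the cofibrancy criteria in the model structure on curved $\Pc$-algebras recalled from \cite[Appendix C]{BMDC20}. To show that the counit $\Om_\alpha \B_\alpha A \to A$ is a graded quasi-isomorphism, I would filter it by the combined weight filtrations of $\Cc$ and $\Pc$; the $E^0$-page of the resulting spectral sequence is a Koszul-type complex built out of $\gr \Pc$ and $\Cc$ applied to $\gr A$, whose acyclicity is the algebra-level analogue of the operadic graded quasi-isomorphism $\Om \Cc \to \Pc$ established in Theorems \ref{thm: bar-cobar resolution} and \ref{thm: Koszul resolution}, plus a standard comparison argument for quasi-free extensions.

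The main obstacle is the combined bookkeeping of curvatures and filtrations. The twisted predifferential mixes curvature contributions from $\Pc$, $\Cc$ and $A$, and the infinite sums defining it only converge once all three sides are filtered and complete; the \emph{good assumptions} in the statement are precisely the hypotheses needed both for this convergence and for the spectral sequence in part (2) to converge to the homology of $\Om_\alpha \B_\alpha A$. Verifying that the curvature terms coming from $\alpha$ and from $A$ cancel to produce the curvature action required by a curved $\Pc$-algebra structure on $\Om_\alpha \B_\alpha A$, with the right signs and filtration degrees, is the most delicate step, as in every previous appearance of the curved formalism in the paper.
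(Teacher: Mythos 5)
Part (1) of your proposal is essentially the paper's argument (Theorem \ref{thm: algebraic bar-cobar adjunction}): freeness of $\Pc(C)$ as a graded algebra and cofreeness of $\Cc(A)$ reduce everything to a linear map $\varphi : C \to A$, and compatibility with the twisted predifferentials is exactly the Maurer--Cartan equation $\sum_n \frac{1}{n!} l_n^\alpha(\varphi,\ldots,\varphi)=0$. Two small corrections: the bar construction here is $\Cc(A)$ with no suspension of $A$ (the degree shift already lives in $\alpha$), and its predifferential is just $d_{\Cc(A)} + d_\alpha^r$ with no extra curvature summand --- the curvatures only appear in the identity $(d_b)^2 = -(\theta^c\otimes\id)\Delta$, which is the content of Lemma \ref{lem: bar prediff}.

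Part (2) is where your argument has real gaps. First, for acyclicity: the operadic graded quasi-isomorphism $\Om\B\Pc \to \Pc$ does not by itself imply that $\Om_\alpha\B_\alpha A \to A$ is a quasi-isomorphism. After applying $\Gr$ (which is the paper's actual reduction, using that $\Gr$ is strong monoidal so $\Gr(\Om_\alpha\B_\alpha A) \cong \Omega_{\Gr\alpha}\mathrm{B}_{\Gr\alpha}\Gr A$), the required input is that the \emph{two-sided} Koszul complex $\Gr\Pc\circ_{\Gr\alpha}\Gr\Cc\circ_{\Gr\alpha}\Gr\Pc$ is a cofibrant right $\Gr\Pc$-module quasi-isomorphic to $\Gr\Pc$; one then invokes Fresse's comparison theorem for quasi-free right modules (Proposition \ref{prop: algebraic bar-cobar resolution}). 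Your ``standard comparison argument for quasi-free extensions'' is gesturing at this, but the cofibrancy of the two-sided complex as a right module is a separate hypothesis that must be checked (it is where the connected bounded-below weight filtration enters in Corollary \ref{cor: cof res pi}), not a formal consequence of the operadic counit being an equivalence. Second, and more seriously, your cofibrancy claim ``quasi-free on an $\Sb$-cofibrant module implies cofibrant'' is false in the curved model structure: the paper itself remarks that $\Om\Pc^\antishriek$ need not be cofibrant as a curved operad even though it is quasi-free. Cofibrant objects are characterized by an exhaustive filtration in which generators come in pairs $(\xi^\alpha,\zeta^\alpha)$ with $\zeta^\alpha$ in filtration degree one higher and $d(\xi^\alpha)+\zeta^\alpha$ landing in the previous stage. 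Producing this filtration on $\B_\alpha A$ (Proposition \ref{prop: cofibrant bar-cobar resolution}) requires the extra hypotheses that the filtration comes from a graduation, that $\theta_\Pc\in\Gr_1\Pc$, and a decomposition $d_b=\delta_0+\delta_1$ with $\delta_0$ raising filtration degree by exactly $1$ and $\im\delta_0=\ker\delta_0$ (a contracting homotopy for the curvature part). These are precisely the ``good assumptions'' of the statement, and without them your cofibrancy step does not go through.
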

The cobar construction $\Om_\alpha$ allows us to define a notion of $\infty$-morphisms for curved algebras in the same way they are defined in the uncurved case. That is, an $\infty$-morphism from $A$ to $B$ is a curved coalgebras morphism $\Om_\alpha A\rightarrow \Om_\alpha B$. By the theorem above, such an $\infty$-morphism corresponds to a zig-zag of curved algebra morphisms $A \xleftarrow{\sim} \Om_\alpha \B_\alpha A \rightarrow B$ of morphisms.

\subsection*{André--Quillen cohomology}

Given an algebra $A$ over a curved operad $\Pc$, we define the notion of $A$-module as a gr-dg module $(M, d_M)$ with a coherent action $\gamma_M$ of $A$ by means of the operations in $\Pc$ on $M$. 
The difference coming from the curved context is that the equality ${d_M}^2 = \gamma_M(\theta_\Pc \otimes -)$ has to be satisfied. 
It follows from this equation that the underlying module of the free $A$-module on a gr-dg module $(N, d_N)$ depends on the gr-differential $d_N$. 
As usual, we can define derivations as a linearized version of the notion of morphisms and we can represent on the left (by means of Kähler differential forms) and on the right the module of derivations.
Deriving the module of Kähler differential forms, we obtain the André--Quillen cohomology:
\begin{thm}[Theorems \ref{thm: adjunction alg-mod} and \ref{thm: cotangent complex}, Definition \ref{def: cotangent complex and AQ cohom}, Proposition \ref{prop: AQ cochain complex}]
\ 

\begin{itemize}
\item[(1)] Let $\Pc$ be a curved operad and $A$ be a $\Pc$-algebra. Let us note $\ModAPz$ the category of operadic $A$-modules with degree zero $A$-module morphisms commuting with the predifferentials.
The following two functors (abelianisation induced by Kähler differential forms on the left, and square zero extension by $A$ on the right)
\[
A\hotimes_{-}^{\Pc}\Omega_{\Pc} -\  :\ \PalgA \rightleftharpoons \ModAPz \cong \AbPA\ :\ A\ltimes -
\]
form a Quillen adjunction, and the \emph{cotangent complex} of a $\Pc$-algebra $A$, denoted by $\Lb_A$, is then the derived abelianisation $\Lb_A \coloneqq \Lb (A\hotimes_{-}^{\Pc}\Omega_{\Pc} -)(A)$ of $A$. 

\item[(2)] Given an $A$-module $M$ in $\AbPA \cong \ModAPz$, the \emph{André--Quillen cohomology of $A$ with coefficients in $M$} is defined by
\[
H_{AQ}^n \left(A, M \right) \coloneqq \Hom_{\Ho(\ModAPz)}\left( \Lb_A, M[n]\right) = H^n \left(\Hom_{\ModAP}\left( \Lb_A, M\right), \partial\right).
\]

\item[(3)] Let $\Pc$ be a curved sa operad, $\Cc$ be a curved altipotent cooperad and $\alpha : \Cc \to \Pc$ be an operadic curved twisting morphism. Let $A$ be a $\Pc$-algebra. Assume that $\Om_\alpha \B_\alpha A$ is a cofibrant resolution of $A$. Then the corresponding cotangent complex is a \emph{quasifree $A$-module over the bar construction $\B_\alpha A$}.
\end{itemize}
\end{thm}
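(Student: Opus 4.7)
For part (1), the plan is to adapt the standard operadic abelianisation template (as in Quillen, Hinich, Goerss--Hopkins) to the gr-dg curved setting. First I identify $\AbPA$ with $\ModAPz$: given an abelian group object $\pi \colon B \to A$ over $A$ with zero section $s\colon A \to B$, the kernel $M = \ker \pi$ is a square-zero ideal, so the $\Pc$-operations on $B$ descend to an operadic $A$-module structure on $M$, and the predifferential condition $d_M^2 = \gamma_M(\theta_\Pc \otimes -)$ follows from the corresponding condition on $B$ after identifying $B \cong A \oplus M$ on the underlying gr-dg module. Conversely, $A \ltimes M$ with its split addition is abelian over $A$. The adjunction $(A \hotimes^\Pc_{-} \Omega_\Pc(-)) \dashv (A \ltimes -)$ then factors through derivations: a $\Pc$-algebra morphism $B \to A \ltimes M$ over $A$ amounts to an operadic derivation $B \to M$, which is represented on the left by $\Omega_\Pc B$ as a $B$-module and then pushed forward to $A$ via restriction of scalars, yielding $A \hotimes^\Pc_B \Omega_\Pc B$. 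The Quillen property is then checked by observing that $A \ltimes -$ preserves strict surjections and graded quasi-isomorphisms, since on underlying gr-dg modules it is the direct sum $A \oplus M$ with termwise predifferential.

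Part (2) is then formal once (1) is established: $\Lb_A$ is obtained by left-deriving the abelianisation functor at a cofibrant replacement $QA \xrightarrow{\sim} A$ in $\PalgA$, and for any $A$-module $M$ the $\Hom$-set in the homotopy category $\Ho(\ModAPz)$ is represented by cocycles modulo coboundaries in the gr-dg $\Hom$-complex $\Hom_{\ModAP}(\Lb_A, M)$, because $\Lb_A$ is cofibrant in $\ModAPz$ by the left Quillen property of the abelianisation functor.

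For part (3), I compute $\Lb_A$ explicitly using the cofibrant resolution $\Om_\alpha \B_\alpha A \twoheadrightarrow A$ supplied by the previous theorem. The cobar construction $\Om_\alpha \B_\alpha A$ is quasifree as a $\Pc$-algebra, generated as a gr-dg module by the underlying module of the bar construction $\B_\alpha A$. Mimicking the uncurved operadic identity $\Omega_\Pc \Pc(V) \cong \Pc(V) \hotimes^\Pc V$ (the free algebra has free Kähler forms on its generators), the module of Kähler differentials $\Omega_\Pc(\Om_\alpha \B_\alpha A)$ is the free $\Om_\alpha \B_\alpha A$-module on $\B_\alpha A$. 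Base-changing along the resolution $\Om_\alpha \B_\alpha A \to A$ yields $A \hotimes^\Pc \B_\alpha A$ as the underlying module of $\Lb_A$, endowed with a quasifree $A$-module structure over $\B_\alpha A$; its twisted differential is inherited from the bar-cobar differential via the Leibniz rule applied to the universal derivation $d\colon \Om_\alpha \B_\alpha A \to \Omega_\Pc(\Om_\alpha \B_\alpha A)$, and thus encodes the operadic twisting morphism $\alpha$.

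The main obstacle is the bookkeeping of curved compatibilities throughout: verifying that each functor in play ($A \ltimes -$, $\Omega_\Pc$, and the base-change $A \hotimes^\Pc_B -$) respects the condition $d^2 = \gamma(\theta_\Pc \otimes -)$, and then identifying the twisted differential on the quasifree $A$-module $A \hotimes^\Pc \B_\alpha A$ with the one descended from the bar-cobar resolution through the universal derivation. Once this curvature accounting is settled, the remaining steps are the formal diagram chases standard in the uncurved operadic cotangent complex literature.
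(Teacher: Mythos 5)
Your proposal follows essentially the same route as the paper: part (1) is assembled from the equivalence $\AbPA \cong \ModAPz$, the representation of derivations by K\"ahler differentials, restriction/extension of scalars, and the observation that $A\ltimes -$ preserves fibrations and acyclic fibrations (the paper invokes Lemma 1.3.4 of Hovey at this point); part (2) is formal; and part (3) is the same computation $A\hotimes^{\Pc}_{R}\Omega_{\Pc}R \cong A\hotimes^{\Pc}_{R}(R\hotimes^{\Pc}_{\tau}\B_\alpha A) \cong A\hotimes^{\Pc}_{\tau}\B_\alpha A$ with $R = \Om_\alpha\B_\alpha A$.

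One point deserves more care than your sketch gives it: you write that $\Omega_\Pc(\Om_\alpha\B_\alpha A)$ is the \emph{free} $\Om_\alpha\B_\alpha A$-module on $\B_\alpha A$ with a twisted differential added afterwards. In the curved setting this is not merely a matter of twisting the differential: the underlying module of the (quasi-)free $A$-module on $(N,d_N)$ already depends on the predifferential, through the quotient by $\im\bigl(\eta_\Pc\otimes d_N^2 - \theta_\Pc\otimes\id_N\bigr)$, and for a quasi-free algebra on $(V,\tau)$ the correct statement is $\Omega_\Pc B \cong B\hotimes^{\Pc}_{\tau}V$, where the defining quotient uses $(d_A+\tau)^2$ rather than $d_{A\oplus V}^2$. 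You do flag the ``curvature accounting'' as the remaining obstacle and list the right compatibilities to check, so this is an imprecision in the sketch rather than a wrong turn, but it is exactly the place where the curved theory departs from the uncurved template and where the paper's Propositions on quasi-free modules and K\"ahler forms do real work.
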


We provide explicit chain complexes to compute it in certain good situations such as for curved unital associative algebras (or $\cuAi$-algebras) and for curved complex Lie algebras (or $\cCxi$-algebras), the two main examples presented at the beginning of the introduction.

\subsection*{$\cCxi$-algebras in derived complex analytic geometry}

We show for example that 0-cycles of a $\cCxi$-algebra $A$ with coefficients in the trivial $A$-module $\Cb$ of complex numbers coincide with $\infty$-morphisms between the $\cCxi$-algebras $A$ and $\Cb$. 
In particular, when $A$ encodes a formal complex structure (see \cite{jM14}), 0-cycles are formal holomorphic functions on a formal complex manifold. 
We also show that a chain complex computing the André--Quillen cohomology of $A$ with coefficients in $\Cb$ is a $\Cb$-cdga (see Proposition \ref{prop: C-algebra structure}) and that it produces a fully faithful functor to EFC-dgas (see Theorem \ref{thm: link EFC}):
\begin{thm}
\label{T:fromCxtoPridham}
The functor
\[ \begin{array}{rccl}
C_{AQ}^\bullet : & (\cCxi\textsf{-alg.}, \infty\text{-}\mor) & \to & \textsf{EFC-dga},\\
& (A, \gamma_A, d_A) & \mapsto & \left(\Hom_{\grModRb}(\B_\iota A, \Cb), \partial_\tau \right)
\end{array} \]
is well-defined and fully faithful.
\end{thm}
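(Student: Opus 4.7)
The plan is to prove the theorem in three stages: first produce the EFC-dga structure on $C_{AQ}^\bullet(A)$ and check functoriality with respect to $\infty$-morphisms, then establish faithfulness by a duality argument, and finally establish fullness by reconstructing a $\cCxa$-coalgebra morphism from an abstract EFC-dga morphism. The underlying $\Cb$-cdga structure on $C_{AQ}^\bullet(A)$ is already provided by Proposition \ref{prop: C-algebra structure}, so only the EFC enhancement requires further input.

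For the well-definedness and functoriality, the entire functional calculus should be assembled from two ingredients. First, the unary generator of $\cCx$ encoding the almost complex structure $J$ dualises to a cooperator on $\B_\iota A$, which combined with the $\Cb$-action on the coefficient module gives the $\Cb$-module structure underlying the $\Cb$-cdga of Proposition \ref{prop: C-algebra structure}. Second, the completeness of the filtration on $\B_\iota A$ in the gr-dg setting ensures that arbitrary convergent series of $\Cb$-valued multilinear maps can be summed, supplying the evaluation of an entire function of $n$ complex variables on any $n$-tuple of elements. Functoriality is then automatic: a curved $\cCxa$-coalgebra morphism $\B_\iota A \to \B_\iota B$ dualises to a $\Cb$-linear map $C_{AQ}^\bullet(B)\to C_{AQ}^\bullet(A)$ that intertwines the multiplication, the $\Cb$-action, and, being $\Cb$-linear and continuous for the filtration topology, every entire power series as well.

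Faithfulness is essentially formal. An $\infty$-morphism $A \rightsquigarrow B$ is by definition a curved $\cCxa$-coalgebra morphism $\B_\iota A \to \B_\iota B$, and since $\B_\iota A$ is filtered complete while $\Cb$ separates points of the successive quotients of a gr-dg $\Rb$-module, distinct such morphisms induce distinct maps on $\Hom_{\grModRb}(-,\Cb)$.

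Fullness is the main obstacle. Given an EFC-dga morphism $\varphi : C_{AQ}^\bullet(B) \to C_{AQ}^\bullet(A)$, I would first extract its underlying $\Cb$-cdga morphism and, by duality between filtered complete cocommutative coalgebras and their $\Cb$-linear duals, produce a morphism of cocommutative coalgebras $\Psi : \B_\iota A \to \B_\iota B$. Then, commutation of $\varphi$ with multiplication by $i\in\Cb$ should force $\Psi$ to intertwine the cooperator dual to $J$, and EFC-compatibility of $\varphi$ with arbitrary entire functions of several variables should force compatibility of $\Psi$ with the remaining cooperations of $\cCxa$. The technical heart is to make precise that the $\cCxa$-cooperations on $\B_\iota A$ are detected \emph{exactly} by the EFC operations on its $\Cb$-linear dual, so that an EFC-dga dual morphism is automatically a $\cCxa$-coalgebra morphism. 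This amounts to viewing $C_{AQ}^\bullet(A)$ as the ring of formal holomorphic functions on the formal complex structure encoded by $A$, in which the EFC operations correspond to composition with entire complex functions of several variables; the expected rigidity of this analytic structure is what closes the gap between $\Cb$-cdga morphisms and genuine $\infty$-morphisms.
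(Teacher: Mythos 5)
Your overall architecture (cdga structure from Proposition \ref{prop: C-algebra structure}, then a duality argument for full faithfulness) matches the paper's, but the two steps that carry the real content are either left open or taken by a route that does not work as stated.

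First, the EFC enhancement. An EFC-dga carries its entire functional calculus on $Z^0$, i.e.\ on the $\partial_\tau$-cycles, and Proposition \ref{prop: H0 AQ} identifies $Z^0 C^\bullet_{AQ}(A)$ with $\infty\text{-}\mor(A,\Cb)$. The paper defines the operation attached to an entire $f:\Cb^n\to\Cb$ by composition of $\infty$-morphisms: since $\B_\iota$ is a right adjoint, $\B_\iota(\Cb^n)\cong(\B_\iota\Cb)^n$, the Taylor series of $f$ at $0$ gives an $\infty$-morphism $T_0^f:\B_\iota\Cb^n\to\B_\iota\Cb$, and one sets $f(\varphi_1,\dots,\varphi_n)\coloneqq T_0^f\cdot(\varphi_1,\dots,\varphi_n)\cdot\Delta^{n-1}_{diag}$; compatibility with composition of analytic maps is inherited from the functorial equivalence of \cite[Theorem 4.1]{jM14}. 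Your proposal instead sums the power series of $f$ directly in $\Hom_{\grModRb}(\B_\iota A,\Cb)$ using completeness. Two things are missing: (i) convergence is not supplied by the filtration alone, since on the coaugmentation $I\hookrightarrow\B_\iota A$ the $N$-fold products do not decay and one needs precisely the entirety of $f$ at the point $(\varphi_1(1),\dots,\varphi_n(1))$; and (ii) more seriously, you never check that $f(\varphi_1,\dots,\varphi_n)$ is again a $\partial_\tau$-cycle --- a chain-rule statement that is automatic in the paper's formulation (a composite of $\infty$-morphisms is an $\infty$-morphism, hence lies in $Z^0$ by Proposition \ref{prop: H0 AQ}) but is exactly what would need proving in yours.

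Second, fullness. You correctly isolate the key point --- that an EFC-dga morphism between the duals must be dual to a $\cCxa$-coalgebra morphism, i.e.\ must respect the cooperations $\jmath^c_{k_1,\dots,k_n}$ with $k_i>0$ and not only the cocommutative part visible in the product $m$ --- but you then defer it to an ``expected rigidity'' of the analytic structure. That deferral is the theorem. The paper closes it by observing that $\Hom_{\grModRb}(\B_\iota A,\Cb)\cong\Hom_{\grModRb}(\B_\iota A,\Rb)\oplus\Hom_{\grModRb}(\B_\iota A,\Rb)$ and that $\Hom_{\grModRb}(-,\Rb)$ is fully faithful, so an EFC-dga morphism determines a unique $\Rb$-linear map $\B_\iota A\to\B_\iota B$, whose compatibility with the full $\cCxa$-coalgebra structures and the coderivations is then read off from the dual compatibilities (the $k_i>0$ cooperations being recovered from the $\Cb$-linearity and the $\delta^l_\iota$-part of $\partial_\tau$). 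As written, your argument establishes faithfulness but not fullness.
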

Here $(\cCxi\textsf{-alg.}, \infty\text{-}\mor)$ is the category whose objects are $\cCxi$-algebras and morphisms are $\infty$-morphisms. 
We also note $\textsf{EFC}$ the category whose objects are the $\mathbb{C}^n,n\in\mathbb{N}$ and the morphisms are the complex analytic maps. 
It is a small category closed under finite products and generated under products by $\mathbb{C}$, that is, a Lawvere theory. An EFC-algebra is then a product-preserving set-valued functor $\textsf{EFC}\rightarrow \textsf{Set}$. This means that an EFC-algebra is a set $A$ equipped with operations induced by the ring of holomorphic functions $\mathcal{O}(\mathbb{C}^n)$ on $\mathbb{C}^n$ for every $n$ (in particular, for $n=2$ it implies that $A$ is a commutative $\mathbb{C}$-algebra). An important example of EFC-algebra is the ring of global functions of any complex analytic space. An EFC-dga is then a dg algebra whose $0$-cycles carries an EFC-algebra structure. 
Derived analytic spaces à la Pridham \cite{Pri20} are homotopy sheaves over the site of EFC-dgas, so these play a role similar to affine schemes in algebraic geometry and cdgas in derived algebraic geometry.
The comparison with derived analytic spaces in the sense of \cite{jLDAGIX, mP16} follows from \cite[Proposition 4.3, Proposition 4.5]{Pri20}. 
Briefly, derived analytic spaces in the sense of Lurie-Porta whose $0$-truncation is a classical analytic space are equivalent to Pridham analytic spaces.

The theorem above points out a very interesting relationship between $\cCxi$-algebras and such derived complex analytic spaces. Indeed, as mentioned above, André--Quillen complexes of the appropriate $\cCxi$-algebras are dg enhancements of formal holomorphic functions on formal complex manifolds and, (bar) dually, $\cCxi$-algebras encode a \emph{homotopy coherent} notion of integrable almost complex structure on tangent spaces of such formal manifolds. Therefore, we consider such $\cCxi$-algebras as the local picture of yet-to-be-defined derived integrable almost complex analytic spaces. This indicates how to implement a homotopy coherent notion of integrable almost complex structure in derived complex analytic geometry (a derived flavour of the Newlander--Nirenberg integrability theorem), which does not appear in the currently existing models based on the locally ringed space approach. Theorem \ref{T:fromCxtoPridham} provides then a way to compare our model with Pridham's model (and consequently with Lurie-Porta's model), showing that ``going back to the ring of functions'' gives a dg EFC-algebra structure whose $0$-truncation is a standard complex analytic structure. The detailed construction of this new model with various examples and applications in mind is the focus of a subsequent ongoing work.

\subsection*{Model category structures on curved algebras}

There exist other model category structures related to curved algebras. 
In the references \cite{aL16, bLG19, vR23}, the model category structure on curved algebras or curved operads is obtained via transfer through the bar-cobar adjunction. 
This is possible because the category on the other side of the adjunction has a well-defined model category structure where weak equivalences are quasi-isomorphisms. 
In the present article, the same idea cannot be used since our objects are curved on both sides of the bar-cobar adjunction, and therefore quasi-isomorphisms aren't relevant on both sides. 

In \cite[Section 9.3]{lP11}, Positselski describes a model category structure on curved coassociative coalgebras. 
He considers increasing filtrations on the coalgebras with conditions based on the fact that the coalgebra admits a coaugmentation. 
Weak equivalences are then generated by graded quasi-isomorphisms (called filtered quasi-isomorphisms). 
This model category structure is Quillen equivalent through the bar-cobar adjunction to a model category structure on a category of augmented dg associative algebras (see the end of Section 9.3 in \cite{lP11}). In this latter model structure, weak equivalences are quasi-isomorphisms. 
In the present paper, both our algebras and coalgebras have curvature and they're not augmented or coaugmented on the algebraic level. This forces us to use different model category structures.

The reference \cite{lP12} deals with curved homotopy associative algebras which are modules over a local ring and such that the curvature is divisible by the maximal ideal. In this context, it proposes a Koszul duality theory based on the derived functor $\mathrm{Tor}$ and $\mathrm{Ext}$.
This a specific case of a filtration that we can consider in our article but our examples, $\cuAs$ and $\cCx$, don't fit in the context described in \cite{lP12}. It is not only because they are curved operads instead of curved algebras, but also because we are working over a field ($\Rb$) and in this case, the maximal ideal is zero so the curvature would necessarily be zero. This constraint is highlighted in the introduction of \cite{lP12}, paragraph 0.23. 

The notion of weak equivalences defined in \cite{CCN21} is similar to ours but the context is slightly different and the authors starts from operads which are augmented and connected. 
It is not clear whether their results extend to our examples of curved operads.

In their recent paper \cite{BL23}, Booth and Lazarev propose model category structures on the category of curved associative algebras and on the category of curved coassociative coalgebras. 
These model category structures don't rely on any filtration conditions on the curvature. 
Nevertheless, they explain in the end of the introduction that their article do not extend directly to other kind of algebras or to symmetric operads. 
Maybe more problematic, they restrict to curved coassociative coalgebras for which the comultiplication of an element is a finite sum. 
Our Koszul dual cooperad does not satisfy this property.

\subsection*{Acknowledgements}
The first author thanks Joost Nuiten for regular discussions on this work.
We also would like to thank Ricardo Campos and Victor Carmona Sanchez for useful discussions.

\section*{Notations and conventions}

We fix a field $\Kf$ of characteristic 0.
We have to work over a field of characteristic 0 for two reasons: having a well-defined functor $\Gr$ from complete filtered cooperads to complete graded cooperads and having model category structures on the involved categories (on the algebraic side: operads, algebras). 
In the paper, $\Rf$ will denote a unital $\Kf$-cdga (\emph{commutative differential graded algebra}) and $\ModA$ the category of dg-modules over $\Rf$. 
The category $\ModA$ is a Grothendieck category endowed with a closed symmetric monoidal structure given by the tensor product and we can apply the results of \cite{BMDC20}. 
We denote by $\ModApg$ the category of complete filtered predifferential graded (pg for short) objects in $\ModA$ and by $\ModAgr$ the category of complete filtered gr-dg objects in $\ModA$, called $\complete^{\mathsf{gr}}\! \! (\ModA)$ in \cite{BMDC20}. 
The categories of complete filtered gr-dg modules are reflective subcategories of the categories of complete filtered predifferential graded objects.
An object $(M, F, d)$ in $\ModApg$ is an object $M$ in $\ModA$
\begin{itemize}
\item
endowed with an $\Nb$-filtration $M = F_0 M \supset F_1 M \supset F_2 M \supset \cdots$, and
\item
{\em complete} with respect to this filtration, that is $M = \lim_{n} M/F_n M$,
\item
endowed with a $\Zb$-cohomological degree $M^\bullet$ and with an endomorphism $d : M^\bullet \to M^{\bullet + 1}$ of cohomological degree $1$ called a {\em predifferential}.
\end{itemize}
Such an object $(M, F, d)$ is more specifically in $\ModAgr$ when
\begin{itemize}
\item
$(M, F, d)$ is {\em gr-dg}, that is the endomorphism $d$ satisfies
\[ d^2 : F_p M \to F_{p+1} M \]
for all $p \geq 0$, or equivalently $d$ induces a differential on the associated ($\Nb$-)graded $\Gr_\bullet M$ defined by $\Gr_p M \coloneqq F_p M / F_{p+1} M$, for all $p \geq 0$.
\end{itemize}
Morphisms in $\ModApg$ (resp. $\ModAgr$) are morphisms in the category of graded objects in $\ModA$ which respect the filtration and the cohomological degree and which commute with the predifferentials.

By abuse of notation, we often write only $M$ instead of $(M, F, d)$ for a complete dg object (or a complete gr-dg object).

In the following, the algebra $\Rf$ might be dg and complete. We emphasize that in the dg setting, the module structure map $\rho_M$ of an $\Rf$-module $M$ doesn't commute with the differential but satisfies the following compatibility relation
\[ d_M \cdot \rho_M = \rho_M \cdot (d_\Rf \otimes_\Kf \id_M + \id_\Rf \otimes_\Kf d_M), \]
where $\otimes_\Kf$ denotes the complete tensor product over the ring $\Kf$.

In this article, we use the notations $\Tc(M)$ and $\Tc^c(M)$ for the free operad and cofree cooperad given by trees indexed by element in $M$. The notations $\Tc(M)^{(k)}$ and $\Tc^c(M)^{(k)}$ stand for the trees with exactly $k$ vertices.

\subsection{Sign conventions}
\label{section: sign conventions}

We mainly work with homological conventions and we denote the homological degree $n$ elements in a graded object $M$ by $M_n$. However, for geometric examples, we use cohomological convention and we denote the cohomological degree $n$ elements in $M$ by $M^n$ or $M[-n]$. We go from one convention to the other by $M^n = M[-n] = M_{-n}$.

The (homological) suspension of a graded object $M$ is denoted by $sM = s\Kf \otimes M$ where $s$ is a homological degree 1 element. It follows that $(sM)_n = M_{n-1}$. Similarly we denote by $s^{-1}M$ the (homological) desuspension of $M$. From a cohomological point of view, the element $s$ has a cohomological degree equal to $-1$ and the element $s^{-1}$ has cohomological degree $1$.
 
In the whole article, the signs are computed by means of the Koszul sign rule, that is a sign appears when graded elements or maps are switched. For example given two maps $f$ and $g$ and two elements $v$ and $w$ in their respective domains, we have $(f\otimes g)(v \otimes w) = (-1)^{|g| |v|}f(v) \otimes g(w)$ where $|x|$ denote the degree of the element or map $x$.

When working with homotopy algebras as $\Ai$-algebras or $\Li$-algebras, we have to choose a convention on the signs appearing in the relations between the maps defining the $\Ai$ structure (respectively the $\Li$ structure). For example, in the case of $\Ai$-algebras, as it is written in Part 1, Section 4.2 in \cite{tM21}, two possibilities are:
\begin{align}
\sum_{p+q+r=n} (-1)^{pq + r} m_{n+1-q} \circ_{p+1} m_q = 0 \label{eq: conv1}\\
\sum_{p+q+r=n} (-1)^{p + qr} m_{n+1-q} \circ_{p+1} m_q = 0.
\end{align}
The difference is induced by the sign convention of the homotopy for the associativity relation. 
In the first situation we have:
\[
\partial(m_3) = m_{2} \circ_{1} m_2 - m_{2} \circ_{2} m_2
\]
whereas in the second one:
\[
\partial(m_3) = - m_{2} \circ_{1} m_2 + m_{2} \circ_{2} m_2.
\]
In our article, we follow the convention \eqref{eq: conv1}. This choice is different from the one made in some references that we use as \cite{BMDC20, HM12, LV12} to cite a few. 
We can go from one convention to the other by considering $(-1)^{\frac{(n-1)(n+2)}{2}} m_n$ instead of $m_n$. 
The main differences in our article are visible for (homotopy) curved Lie algebras:
\begin{itemize}
\item
the equation appearing in the definition of curved Lie algebras is $d^2 = [\theta, -]$ instead of $d^2 = [-, \theta]$;
\item
there is a similar sign difference in the formulas for homotopy curved Lie algebras and for the definition of curved coalgebras or cooperads;
\item
a Maurer--Cartan element in a curved Lie algebra is a degree $-1$ element satisfying the equation $\Theta + \partial \alpha + \frac{1}{2}[\alpha, \alpha] = 0$ instead of the equation $\partial \alpha + \frac{1}{2}[\alpha, \alpha] = \Theta$;
\item
there is a similar difference for the definition of a Maurer--Cartan element in a homotopy curved Lie algebra.
\end{itemize}
Of course some signs appearing in the construction of this kind of objects likewise the bar construction of a curved operad or the convolution curved Lie algebra are adapted accordingly. 
Our feeling is that these sign conventions are closer to the original definitions in the field.

\section{Operadic twisting morphisms in a curved context}

In this section we are interested in building a cofibrant resolution of a curved operad. 
When the operad $\Pc$ is differentially graded (the curvature is zero), we are able to provide a cofibrant resolution of $\Pc$ in \cite{HM12, HM23}. 
When the curved operad is augmented, we provide an $\Sb$-cofibrant resolution in \cite{BMDC20}. We prove here that the bar-cobar resolution described in \cite{BMDC20} is in fact cofibrant.

The bar-cobar adjunctions exchange the default of (co)augmentation and the curvature. 
In order to define a model category structure on curved operads and curved algebras, we require a filtration condition on the curvature ($\theta \in F_1 \Pc$) in \cite{BMDC20}. 
It isn't possible to require such a condition relative to the default of augmentation of an operad or for the curvature of a curved cooperad. 
The consequence is a small difference on the base categories on which we consider our (co)algebraic structures:
\begin{itemize}
\item
the base category for curved operads (or algebras over a curved operad) is $\ModKgr$, and we have a model category structure on it,
\item
the base  category for curved cooperads is $\ModKpg$, and we don't provide a model category structure on it.
\end{itemize}

\subsection{Complete and curved context}

Similarly as in \cite{BMDC20}, we consider the categories $(\SMod(\ModKpg), \hcirc, I)$ of $\Sb$-modules in $\ModKpg$ and $(\SMod(\ModKgr), \hcirc, I)$ of $\Sb$-modules in $\ModKgr$. 
In particular, objects in this category are filtered and complete with respect to their filtration.

\subsubsection{Curved operad and cooperad}

For operads and cooperads, we define two notions of morphisms. 
The second one is a lax version of the first one and is a useful enlargement for representing the curved twisting morphisms by the bar and the cobar constructions. 
The lax version is very close to the original definition of morphism between curved associative algebras given in \cite{lP93}.

\begin{defn}
\begin{itemize}
\item
A \emph{curved operad} $(\Pc, \gamma, \eta, d, \theta)$ is a gr-dg operad $(\Pc, \gamma, \eta, d)$ in the monoidal category $\SMod(\ModKgr)$ equipped with a map $\theta : I \to (F_1 \Pc(1))_{-2}$ such that
\begin{enumerate}
\item
$d^2 = [\theta, -]$,
\item
$d(\theta) = 0$ \quad ($\theta$ is \emph{closed}).
\end{enumerate}
\item
A {\em morphism $f : (\Pc, d, \theta) \rightarrow (\Qc, d', \theta')$ of curved operads} is a morphism of gr-dg operads which preserves the curvature, that is satisfying the equations 
\begin{align*}
f \cdot d & = d' \cdot f \quad \textrm{ and}\\
f(\theta) & = \theta'.
\end{align*}
\item
A {\em lax morphism} $(\Pc, d, \theta) \rightarrow (\Qc, d', \theta')$ of curved operads is a pair $(f, a)$ where $f : \Pc \to \Qc$ is a morphism of complete operads and $a : I \to F_1\Qc$ is an $\Sb$-module map of degree $-1$ such that
\begin{align}
d' \cdot f + [a, f] & = f \cdot d \quad \textrm{ and} \label{eq: lax morphism diff}\\
f(\theta) & = \theta' + d' \cdot a + \frac{1}{2}[a, a]. \label{eq: lax morphism curvature}
\end{align}
\item
The composition of two lax morphisms $(g, b)$ and $(f, a)$ of curved operads is given by: $(g, b) \cdot (f, a) \coloneqq (g\cdot f, g(a)+b)$ and the unit for the composition is $(\id, 0)$.
\end{itemize}
We denote by $\cOp$ the category of curved operads with morphisms of curved operads and by $\cOpl$ the category of curved operads with lax morphisms of curved operads.
\end{defn}

Lax morphisms admit a description as \emph{strict} morphisms by twisting the predifferential and the curvature on the codomain curved operad.

\begin{lem}
\label{lem: lax morphism unlax}
Let $(\Qc, \gamma', d', \theta')$ be a curved operad and $a : I \to F_1\Qc$.
\begin{enumerate}
\item
The tuple $(\Qc, \gamma', d'_a \coloneqq d' + [a, -], \theta'_a \coloneqq \theta' + d' \cdot a + \frac12 [a, a])$ is a curved operad that we denote $\Qc_a$. 
In particular, $d'_a$ is a derivation with respect to $\gamma'$, $\theta'_a \in F_1\Qc_{-2}$ is closed and it satisfies
\[
{d'_a}^2 = \left[\theta'_a, -\right].
\]
\item
The data of a lax morphism $(f,a) : (\Pc, d, \theta) \rightarrow (\Qc, d', \theta')$ of curved operads is equivalent to the data of a morphism $f : (\Pc, d, \theta) \rightarrow (\Qc, d'_a, \theta'_a)$ of curved operads.
\end{enumerate}
\end{lem}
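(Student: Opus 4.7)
The plan is to verify the three axioms of a curved operad for $\Qc_a$ by direct expansion, and then to observe that (2) reduces to a tautological matching of defining equations. The key algebraic inputs are: (i) for an arity-$1$ element $a \in F_1\Qc(1)$, the adjoint operator $[a,-]$ is a derivation of $\gamma'$ of homological degree $|a|=-1$ (this is a standard consequence of the operadic infinitesimal composition with an arity-$1$ element); (ii) because $d'$ is a derivation of $\gamma'$, it is automatically a graded derivation of the commutator bracket, $d'[x,y] = [d'x, y] + (-1)^{|x|}[x, d'y]$; (iii) graded antisymmetry and graded Jacobi on $\Qc$.

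For part (1), I would first argue that $d'_a = d' + [a,-]$ is a derivation of $\gamma'$ of degree $-1$ as a sum of two such derivations. The claim $\theta'_a \in F_1\Qc(1)_{-2}$ follows because $a \in F_1\Qc_{-1}$, $d'$ preserves $F$, and $[F_1,F_1] \subseteq F_2 \subseteq F_1$, while the homological degrees of $\theta'$, $d'(a)$ and $[a,a]$ all equal $-2$. The two substantive computations are ${d'_a}^2 = [\theta'_a,-]$ and $d'_a(\theta'_a)=0$. For the former, expanding
\[
{d'_a}^2 = {d'}^2 + d'\cdot[a,-] + [a, d'(-)] + [a,[a,-]],
\]
the middle pair combines to $[d'(a),-]$ via (ii), the last term equals $\tfrac12[[a,a],-]$ by graded Jacobi applied to three copies of the degree-$(-1)$ element $a$, and ${d'}^2 = [\theta',-]$ by hypothesis; summing yields $[\theta'_a,-]$. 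For the latter, I would expand
\[
d'_a(\theta'_a) = d'(\theta') + {d'}^2(a) + \tfrac12 d'[a,a] + [a,\theta'] + [a,d'(a)] + \tfrac12[a,[a,a]].
\]
Here $d'(\theta')=0$ by hypothesis, $[\theta',a]+[a,\theta']=0$ by graded antisymmetry (with $|\theta'|=-2$, $|a|=-1$), the terms involving $d'(a)$ cancel using $d'[a,a]=-2[a,d'(a)]$ (which again follows from (ii) and antisymmetry), and $[a,[a,a]]=0$ by graded Jacobi on three equal degree-$(-1)$ elements.

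For part (2), I would simply unravel the definition of a strict morphism $f : (\Pc, d, \theta) \to \Qc_a = (\Qc, \gamma', d'_a, \theta'_a)$: it must satisfy $f\cdot d = d'_a\cdot f = d'\cdot f + [a, f]$ and $f(\theta) = \theta'_a = \theta' + d'\cdot a + \tfrac12[a,a]$. These are precisely equations \eqref{eq: lax morphism diff} and \eqref{eq: lax morphism curvature} defining a lax morphism $(f,a) : (\Pc, d, \theta) \to (\Qc, d', \theta')$, and the compatibility with composition $(g,b)\cdot(f,a) = (g\cdot f, g(a)+b)$ can be read off in the same way by noting that $\Qc_{a} \mapsto (\Qc_{a})_b = \Qc_{g(a)+b}$ when pulled back through $g$. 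So the equivalence in (2) is formal once (1) is established.

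The main obstacle will be the sign bookkeeping in items (ii) and (iii) with the nonstandard degrees $|a|=-1$ and $|\theta'|=-2$, as well as the $\tfrac12$-factor in the graded Jacobi identity for three equal odd elements; once these are handled consistently, the verifications collapse by cancellations.
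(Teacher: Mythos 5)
Your proposal is correct and follows essentially the same route as the paper's proof: verifying the derivation property of $[a,-]$ for the arity-one element $a$, checking the filtration and degree of $\theta'_a$, expanding ${d'_a}^2$ and $d'_a(\theta'_a)$ using the derivation property of $d'$ for the bracket together with graded antisymmetry and Jacobi, and reading off part (2) as a tautological matching of the defining equations. The only cosmetic difference is that the paper proves the derivation property of $[a,-]$ explicitly via the underlying pre-Lie product and the associativity of $\gamma'$, whereas you cite it as standard; also note that the identity $[a,[a,-]]=\tfrac12[[a,a],-]$ is graded Jacobi applied to two copies of $a$ and an arbitrary third argument, not three copies of $a$.
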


\begin{proof}
\begin{enumerate}
\item
We first check that $d'_a$ is a derivation with respect to $\gamma'$. 
It is enough to show that $[a, -]$ is a derivation with respect to $\gamma'$. 
We emphasize that this is true only with $a$ in arity 1 ($a \in \Qc(1)$). 
The Lie bracket $[-,-]$ on $\oplus_{n \geq 0} \Qc(n)$ used above is the anti-symmetrisation of a pre-Lie product that we denote $\{-,-\}$. 
With the notations $\epsilon_i = (-1)^{|\mu|+|\nu_1|+\cdots +|\nu_{i-1}|}$, we have
\begin{multline*}
\gamma'\left([a, \mu]; \nu_1, \ldots, \nu_k\right) + \sum_i \epsilon_i\gamma' \left( \mu; \nu_1, \ldots, [a, \nu_i], \ldots \nu_k\right)\\
= \gamma'\left(\{a, \mu\}; \nu_1, \ldots, \nu_k\right) +(-1)^{|\mu|} \gamma'\left(\{\mu, a\}; \nu_1, \ldots, \nu_k\right)\\
 + \sum_i \epsilon_i\left(\gamma' \left( \mu; \nu_1, \ldots, \{a, \nu_i\}, \ldots \nu_k\right) + (-1)^{|\nu_i|} \gamma' \left( \mu; \nu_1, \ldots, \{\nu_i, a\}, \ldots \nu_k\right)\right)\\
= \gamma'\left(\{a, \mu\}; \nu_1, \ldots, \nu_k\right) + \epsilon_{k} \{\gamma' \left( \mu; \nu_1, \ldots, \nu_k\right), a\}\\
= [a, \gamma'\left(\mu; \nu_1, \ldots, \nu_k\right)]
\end{multline*}
by the associativity of $\gamma'$.

We have $\theta'_a = \theta' + d' \cdot a + \frac12 [a, a] : I \to F_1\Qc_{-2}$ since $a : I \to F_1\Qc_{-1}$ and $\theta'_a : I \to F_1\Qc_{-2}$. 
It is closed since
\begin{multline*}
d'_a(\theta' + d' a + \frac{1}{2}[a, a]) =\\
d' \theta' + [a, \theta'] + {d'}^2 a + [a, d' a] + \frac{1}{2} (d' [a, a] + [a, [a, a]]) = 0
\end{multline*}
since $\theta'$ is $d'$-closed, ${d'}^2 a = [\theta', a]$, $d'$ is a derivation with respect to $[-,-]$, and $[-,-]$ satisfies the Jacobi relation.

Then we compute
\begin{align*}
{d'_a}^2& = d'_a(d' + [a, -]) = {d'}^2 + d' [a, -] + [a, d' -] + [a, [a, -]]\\
& = [\theta', -] + [d' (a), -] + \left[ \frac{1}{2}[a, a], -\right] \quad \text{($d'$ is a derivation for $[,]$)}\\
& = \left[ \theta' + d'(a) + \frac{1}{2}[a, a], -\right] = [\theta'_a, -].
\end{align*}
This proves that $\Qc_a$ is a curved operad.
\item
A lax morphism of curved operads $(f,a) : (\Pc, d, \theta) \rightarrow (\Qc, d', \theta')$ is the data of a morphism of the  underlying graded operads $f : \Pc \rightarrow \Qc$ and a map $a : I \to F_1\Qc$ such that $d'_a f = d' \cdot f + [a, f] = f \cdot d$ and $f(\theta) = \theta' + d' \cdot a + \frac{1}{2}[a, a] = \theta'_a$. 
It is therefore equivalently the data of a morphism of curved operads $f : (\Pc, d, \theta) \rightarrow (\Qc, d'_a, \theta'_a)$.
\end{enumerate}
\end{proof}

\begin{example}
A first example of curved operad is given by the \emph{endomorphism operad} of a complete gr-dg $\Rf$-module $(M, d)$
\[
\End_M \coloneqq (\{ \Hom(M^{\hotimes n}, M)\}_{n \geq 0}, \gamma, \partial, \theta)
\]
where $\Hom(M^{\hotimes n}, M)$ is seen as a $\Kf$-module, the composition map $\gamma$ is given by the composition of functions and
\[
\left\{ \begin{array}{lclcr}
\partial(f) & \coloneqq & [d, f] & = & d \cdot f -(-1)^{|f|} \sum_i f\cdot (\id_M, \ldots, d, \ldots, \id_M),\\
&&&& \textrm{for } f\in \Hom (M^{\hotimes n}, M),\\
\theta & \coloneqq & d^2.
\end{array} \right.
\]
\end{example}

\begin{defn}
Let $(\Pc, \gamma, \eta, d, \theta)$ be a curved operad. 
A structure of $(\Pc, \gamma, \eta, d, \theta)$-algebra ($\Pc$-algebra for short when there is no ambiguity) on the gr-dg module $(A, d_A)$ is a curved operad morphism
\[
\gamma_A : (\Pc, \gamma, \eta, d, \theta) \to \End_{(A, d_A)}.
\]
Explicitly $A$ is endowed with a $(\Pc, \gamma, \eta, d)$-algebra structure and the gr-dg endomorphism $d_A : A \to A$ of degree $-1$ is a predifferential (with respect to the $\Pc$-algebra structure) such that
\[
{d_A}^2 = \gamma_A(\theta \circ \id_A).
\]
(The notion of \emph{predifferential of a $\Pc$-algebra} is recalled in Definition \ref{def: der prediff and coder}.)
\end{defn}

\begin{example}
Given a gr-dg module $(A, d_A)$, the identity map $\id_{\End_{(A, d_A)}}$ provides a structure of $\End_{(A, d_A)}$-algebra on $(A, d_A)$.
\end{example}

We define dually the notion of curved cooperad.

\begin{defn}
\begin{itemize}
\item
A \emph{curved cooperad} $(\Cc, \Delta, \varepsilon, d, \theta^c)$ is a pg cooperad $(\Cc, \Delta, \varepsilon, d)$ in the monoidal category $\SMod(\ModKpg)$ endowed with a \emph{curvature} $\theta^c$, that is a map $\theta^c : \Cc \to I$ of degree $-2$ such that
\begin{enumerate}
\item $d^{2} = (\id_{\Cc} \otimes \theta^c - \theta^c \otimes \id_{\Cc}) \circ \Delta_{(1)}$,
\item $\theta^c \cdot d = 0$,
\end{enumerate}
where $\Delta_{(1)}$ is the infinitesimal decomposition map defined in \cite[section 6.1.4]{LV12}. 
The map $\Delta : \Cc \to \Cc \hcirc \Cc$ is called the \emph{decomposition map}, the map $\varepsilon : \Cc \to I$ is the \emph{counit map}. 
\item
A \emph{morphism $(\Cc, d_{\Cc}, \theta^c) \rightarrow (\Cc', d_{\Cc'}, {\theta^c}')$ of curved cooperads} are morphisms of pg cooperads which preserve the curvature, that is satisfying the equations
\begin{align*}
d' \cdot f & = f \cdot d \quad \textrm{ and}\\
{\theta^c}' \cdot f & = \theta^c.
\end{align*}
\item
A \emph{lax morphism} $(\Cc, d_{\Cc}, \theta^c) \rightarrow (\Cc', d_{\Cc'}, {\theta^c}')$ of curved cooperads is a pair $(f, a)$ where $f : \Cc \to \Cc'$ is a morphism of complete cooperads and $a^c : \Cc \to I$ is a degree $-1$ $\Sb$-module map such that
\begin{align}
d_{\Cc'} \cdot f & = f \cdot d_{\Cc} + (f \otimes a^c - a^c \otimes f) \cdot \Delta_{(1)} \quad \textrm{ and} \label{eq: lax morphism diff coop}\\
{\theta^c}' \cdot f & = \theta^c + a^c \cdot d + a^c\star a^c, \label{eq: lax morphism curvature coop}
\end{align}
with $a^c\star a^c := \gamma_I \cdot(a^c \otimes a^c) \cdot \Delta_{(1)}$. 
\item
The composition of two lax morphisms $(g, b^c)$ and $(f, a^c)$ of curved operads is given by: $(g, b^c) \cdot (f, a^c) \coloneqq (g\cdot f, a^c+b^c \cdot f)$ and the unit for the composition is $(\id, 0)$.
\end{itemize}
\end{defn}

\begin{rem}
We emphasize that there is no filtration assumption on the curvature $\theta^c$ of a curved cooperad.
\end{rem}

We recall from \cite{HM12} (up to the sign) the definition of $(\Cc, d_\Cc, \theta_\Cc)$-coalgebra over a curved cooperad.

\begin{defn}
Let $(\Cc, \Delta, \varepsilon, d, \theta^c)$ be a curved cooperad. A \emph{$(\Cc, \Delta, \varepsilon, d, \theta^c)$-coalgebra} ($\Cc$-coalgebra for short) is a triple $(C, \Delta_{C}, d_{C})$ where $(C, \Delta_{C},  d_C)$ is a $(\Cc, \Delta, \varepsilon, d)$-coalgebra in $\ModApg$ and the predifferential $d_{C} : C \to C$ (of degree $-1$) satisfies:
\[
{d_{C}}^{2} = -(\theta^c \circ id_{C}) \cdot \Delta_{C}.
\]
(The notion of \emph{predifferential of a $\Cc$-coalgebra} is recalled in Definition \ref{def: der prediff and coder}.)
\end{defn}

\subsubsection{Semi-augmentation}

Following \cite{HM12}, we define the notion of curved semi-augmented operad.

\begin{defn}
\begin{itemize}
\item
A \emph{curved semi-augmented operad}, or \emph{curved sa operad} for short, $(\Pc, \gamma, \eta, d, \theta, \varepsilon)$ is a curved operad endowed with an augmentation of (complete) $\Sb$-modules $\varepsilon : \Pc \to I$.
\item
A \emph{morphism between two curved sa operads} $(\Pc, \varepsilon) \xrightarrow{f} (\Pc', \varepsilon')$ is a morphism of the underlying curved operads.
\end{itemize}
\end{defn}

\subsubsection{Altipotence}

We now recall the notion of altipotence from \cite{BMDC20}. 
The \emph{gr-flat} condition required is automatically satisfied since our base field is of characteristic 0. 
The definitions, given for dg complete cooperads in the above reference, only depend on the underlying complete cooperad. The definitions apply therefore to curved (complete) cooperads without modifications.

\begin{defn}
Let $(\Cc, \Delta, \varepsilon, d, \theta^c)$ be a curved cooperad.
\begin{itemize}
\item
A \emph{gr-coaugmentation} $\eta : I \to \Cc$ is a map of complete $\Sb$-modules, such that applying the graded functor $\Gr$ to $(\Cc, \Delta, \varepsilon, d, \theta^c, \eta)$ provides a coaugmented curved cooperad.
\item
In the context of complete gr-coaugmented curved cooperads, we call \emph{morphism of complete gr-coaugmented curved cooperads} a morphism of curved cooperads which commutes with the gr-coaugmentations after applying the graded functor $\Gr$.
\end{itemize}

The \emph{infinitesimal coideal} of $\eta$, notated $\coideal{\Cc}$, is the complete $\Sb$-module which is the pushout (in complete objects)
\[\begin{tikzcd}
	I & \Cc \\
	{I[-1]} & {\coideal{\Cc}.}
	\arrow["\id"', from=1-1, to=2-1]
	\arrow["\eta", from=1-1, to=1-2]
	\arrow[dashed, from=2-1, to=2-2]
	\arrow[dashed, from=1-2, to=2-2]
	\arrow["\lrcorner"{anchor=center, pos=0.125, rotate=180}, draw=none, from=2-2, to=1-1]
\end{tikzcd}\]
 We denote by $\coideal{\eta}$ the map $I[-1] \to \coideal{\Cc}$ in the pushout.
\end{defn}

\begin{rem}
Since the filtration on $I$ is the trivial filtration
\[
F_0 I = I \supset F_1 I = 0 \supset \cdots
\]
we obtain that a gr-coaugmentation is in fact a coaugmentation on the level of complete $\Sb$-modules. 
Following the terminology of the previous section, it is a \emph{semi-coaugmentation}.
\end{rem}

\begin{defn}
Let $(\Cc, \Delta, \varepsilon, d, \theta^c, \eta)$ be a gr-coaugmented curved cooperad. 
\begin{itemize}
\item
We define the \emph{reduced decomposition map} $\bar \Delta : \Cc \to \Cc \hcirc \Cc$ by
\[
\bar \Delta \coloneqq \Delta - ((\eta \cdot \varepsilon) \circ \id) \cdot \Delta - (\id \circ (\eta \cdot \varepsilon))\cdot \Delta + ((\eta \cdot \varepsilon) \circ (\eta \cdot \varepsilon)) \cdot \Delta.
\]
\item
We define a bigraded collection $\CR^{p,n}\Cc$ of subcollections of $\Cc$ recursively as follows.
For $p, n \in \Zb$, we fix $\CR^{p,0}\Cc \coloneqq F_p\Cc$ and $\CR^{0,n}\Cc \coloneqq F_0\Cc$.
Next, supposing that $\CR^{p',n'}\Cc$ is defined for $0 \leq n'<n$ and $0 \leq p'\le p$, we first define a subobject $\widetilde{\CR}^{p,n}(\Cc\hcirc \Cc)$ of $\Cc\hcirc \Cc$. We define it as the sub-object in $\Cc\hcirc \Cc$
\[
\hat\bigoplus_{m \geq 0}\hat\sum_{p_0+\cdots+p_m + \delta = p}F_{p_0}\Cc(m) \hotimes_{\Sb_m} \hat\bigotimes_{i=1}^m F_{p_i}\CR^{p_i+\delta,n-1}\Cc,
\]
where the integers $p_i$ are non negative. 
Then define $\CR^{p,n}\Cc$ as the following pullback:
\[
\begin{tikzcd}
\CR^{p,n}\Cc\ar[tail,dashed,r]\ar[dashed,d] & \Cc\ar[d,"\bar\Delta"]
\\
\widetilde{\CR}^{p,n}(\Cc\hcirc\Cc)\rar[tail] & \Cc\hcirc\Cc.
\end{tikzcd}
\]
Recursively, there are natural inclusions $\CR^{p,n}\Cc\to \CR^{p-1,n}\Cc$ and $\CR^{p,n}\Cc\to \CR^{p,n+1}\Cc$.
\end{itemize}
\end{defn}

\begin{defn}
Let $\Cc$ be a gr-coaugmented curved cooperad. 
\begin{itemize}
\item
For $k \in \Zb$, we define the \emph{$k$-primitives} of $\Cc$ to be the $\Sb$-module:
\[ \Prim_k \Cc \coloneqq \bigcap_{p\geq 0} \CR^{p+k,p}\Cc, \]
where we fix $\CR^{p+k,p}\Cc = \CR^{0,p}\Cc = F_0 \Cc$ when $p+k \leq 0$.
\item
We call $\mathcal{C}$ \emph{altipotent} when the following conditions are satisfied
\begin{enumerate}
\item 
\label{item: altipotence condition about conilpotence}
the map \[
\colim_n \CR^{p,n}\Cc\to \Cc
\]
is an isomorphism for all $p$,
\item
\label{item: altipotence condition about coaugmentation being strong} 
the coaugmentation $\eta(I)$ lies in $\Prim_0\Cc$, and 
\item 
\label{item: altipotence condition about primitives being right closed}
for all $k \in \Zb$ we have
\begin{align}
\tag{$\ast$}
\bar \Delta (\Prim_k \Cc) \subset \hat\bigoplus_{m} \hat\sum_{k_0+\cdots +k_m = k+1} F_{k_0}\Cc(m) \hotimes \left( \Prim_{k_1} \Cc \hotimes \cdots \hotimes \Prim_{k_m} \Cc \right)
\end{align}
where we set $F_{k_0}\Cc = F_0 \Cc$ when $k_0 \leq 0$,
\end{enumerate}
\end{itemize}
\end{defn}

We recall that the infinitesimal coideal is left adjoint to the tree cooperad.

\begin{prop}[Corollary 3.34 in \cite{BMDC20}]
\label{prop: cofree cooperad}
The tree cooperad, viewed as a functor from $\Sb$-modules to altipotent cooperads, is right adjoint to the infinitesimal coideal.
That is, the tree cooperad is cofree in the category of altipotent complete cooperads and in particular, there are bijections, natural in $\Cc$ and $M$
\[
\Hom_{\Sb\textsf{-}\mathsf{mod}^{\mathrm{pg}}}(\coideal{\Cc}, M) \cong \Hom_{\mathsf{alt.\, coop.}}(\Cc, \Tc^c(M)).
\]
\end{prop}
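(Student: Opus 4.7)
The approach mirrors the classical proof that the cofree conilpotent cooperad on an $\Sb$-module $M$ is carried by the tree functor $\Tc^c(M)$, but adapted to the complete curved altipotent setting. The plan is to build both directions of the bijection explicitly and verify they are mutually inverse.

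In one direction, given a morphism $f: \Cc \to \Tc^c(M)$ of altipotent cooperads, I would compose $f$ with the canonical projection $\pi_M : \Tc^c(M) \to M$ onto the weight one piece $\Tc^c(M)^{(1)} \cong M$ (the corollas). Altipotence condition (\ref{item: altipotence condition about coaugmentation being strong}) forces $\eta(I) \subseteq \Prim_0\Cc$ to land in $I \subseteq \Tc^c(M)$, which lies in the kernel of $\pi_M$, so $\pi_M \cdot f$ vanishes on $\eta(I)$. The extra $I[-1]$ summand appearing in the pushout defining $\coideal{\Cc}$ absorbs the contribution of the semi-coaugmentation and produces a well-defined map $\coideal{\Cc} \to M$.

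In the other direction, given an $\Sb$-module map $g : \coideal{\Cc} \to M$, let $\bar g : \Cc \to M$ denote the composite with the canonical map $\Cc \to \coideal{\Cc}$. I would define $\tilde g : \Cc \to \Tc^c(M)$ component-by-component along the weight grading: the weight $n$ component is obtained by iterating the reduced decomposition $\bar\Delta$ to get a map $\Cc \to \Cc^{\hcirc(n+1)}$ indexed by tree shapes with $n$ vertices, then applying $\bar g$ at each vertex label. Altipotence condition (\ref{item: altipotence condition about conilpotence}), stating that $\colim_n \CR^{p,n}\Cc \to \Cc$ is an isomorphism, ensures that for each filtration piece $F_p\Cc$ only finitely many weights contribute, so the sum converges in the complete topology. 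Compatibility of $\tilde g$ with the cooperadic decomposition maps follows from coassociativity of $\Delta$, and compatibility with predifferentials and curvature reduces to the fact that $g$ is a morphism of complete pg $\Sb$-modules, together with condition (\ref{item: altipotence condition about primitives being right closed}) which guarantees that the iterated decomposition remains inside the primitive filtration.

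Showing the two constructions are mutually inverse is then a direct calculation: one composite recovers the original cooperad morphism because a morphism from an altipotent cooperad into a cofree tree cooperad is determined by its projection to the cogenerators (reconstruct higher-weight components via iterated $\Delta$), while the other composite is immediate from the definition of $\pi_M$ on corollas. Naturality in $\Cc$ and $M$ is automatic. The principal technical obstacle is handling simultaneously the completeness and filtration, the curved structure (in particular the $I[-1]$ correction in $\coideal{\Cc}$ coming from the semi-coaugmentation), and the convergence of the infinite tree sum; this last point is exactly where altipotence, as opposed to naïve conilpotence, is essential.
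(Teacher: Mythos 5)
Your overall skeleton (project onto cogenerators one way, extend by iterated decomposition the other way, invoke altipotence for convergence) is the right classical template, but it mishandles the one feature that distinguishes this statement from the usual cofreeness of the conilpotent tree cooperad, namely the coaugmentation. Note first that the paper does not prove this proposition itself --- it is quoted from \cite{BMDC20} --- but the surrounding constructions pin down the intended mechanism. In your first direction you assert that altipotence forces $f(\eta(I))\subseteq I\subseteq \Tc^c(M)$, so that $\pi_M\cdot f$ vanishes on $\eta(I)$. This is false: morphisms of altipotent (gr-coaugmented) cooperads are only required to commute with the coaugmentations \emph{after applying} $\Gr$, so $f(\eta(1))$ may equal $1$ plus a nonzero term in $F_1\Tc^c(M)$, and its component in $M$ is precisely the extra datum recorded by the $I[-1]$ summand of $\coideal{\Cc}$. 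The paper uses this nontrivially: in the proof of Proposition \ref{prop: bar functor}, the classifying map $\coideal{\Tc}^c(s\overline{\Pc})\twoheadrightarrow \coideal{I}\oplus s\overline{\Pc}\xrightarrow{a+\bar f}s\overline{\Pc'}$ has nonzero component $a$ on $\coideal{I}$, and the induced morphism satisfies $\B(f,a)(1)=1+sa+sa\otimes sa+\cdots$. Under your construction the $I[-1]$ summand would always be hit by zero, so your two maps could not be mutually inverse and the bijection would fail to be surjective.

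The same error propagates to your second direction: you extend $g$ by iterating the \emph{reduced} decomposition $\bar\Delta$ and labelling vertices by $\bar g$. This omits exactly the unit-insertion terms (such as $s\bar f(\mu)\circ_i sa$ above), so the resulting map is not a cooperad morphism when the $I[-1]$-component of $g$ is nonzero; one must instead use the full decomposition retaining unit-labelled vertices, as in the map $\Delta^\pi:\Cc\to\prod_l\Tc^c(\overline{\Cc},I)_l$ of Lemma \ref{lem: coproduct of altipotent cooperad is bounder mod p}. This also changes the convergence analysis: with unit insertions there are infinitely many trees in each reduced weight, and summability relies on the unit component landing in $F_1M$ together with completeness and the altipotence bounds, not merely on condition~(\ref{item: altipotence condition about conilpotence}). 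To repair the argument you should (i) define the forward map as $(\pi_M\cdot f)$ on $\overline{\Cc}$ \emph{together with} the $M$-component of $f\cdot\eta$ on $I[-1]$, and (ii) build the inverse from the unit-decorated decomposition, checking convergence using the filtration degree of the unit component.
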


\subsection{Bar and cobar constructions in the unital and curved context}

We describe a bar construction in the context of curved semi-augmented operads. 
Similarly we provide a cobar construction associated with curved altipotent cooperads. 
Then we obtain a bar-cobar adjunction in this context. 
These constructions are similar to the ones given by Lyubashenko in \cite[Sections 5.3 and 5.4]{vL14} but slightly different since he doesn't have an adjunction.

We use the free and cofree constructions described in \cite{BMDC20}.

\subsubsection{Bar construction of a curved sa operad}

Let $(\Pc, \gamma, \eta, d, \theta, \varepsilon)$ be a curved sa operad with $\overline{\Pc} \coloneqq \ker \varepsilon$. 
Its \emph{bar construction} $\B \Pc$ is given by the curved altipotent cooperad
\[
\B \Pc \coloneqq \left( \Tc^c(s\overline{\Pc}), \Delta_\beta, \varepsilon_\beta, d_\beta \coloneqq d_0 + d_1 + d_2, \theta^c_\beta, \eta_\beta \right),
\]
where $\Delta_\beta$ is the cofree decomposition map, $\varepsilon_\beta$ is the usual counit, $\eta_\beta$ is the usual injection ($I \rightarrowtail \Tc^c M$, it will be a gr-coaugmentation only), the curvature $\theta^c_\beta$ is the composition
\[
\Tc^c(s\overline{\Pc}) \twoheadrightarrow s\overline{\Pc} \oplus s^{2} (\overline{\Pc} \hcirc_{(1)} \overline{\Pc}) \xrightarrow{s^{-1} \otimes d + s^{-2} \otimes \gamma_{(1)}} \Pc \xrightarrow{\varepsilon} I
\]
and the different coderivations $d_0$, $d_1$ and $d_2$ are respectively the unique coderivations which extend the degree $-1$ maps:
\begin{itemize}
\item
$\Tc^c(s\overline{\Pc}) \twoheadrightarrow I \xrightarrow{-s\theta} s\overline{\Pc}$,
\item
$\Tc^c(s\overline{\Pc}) \twoheadrightarrow s\overline{\Pc} \xrightarrow{\bar d} s\overline{\Pc}$,
\item
$\Tc^c(s\overline{\Pc}) \twoheadrightarrow s\overline{\Pc} \hcirc_{(1)} s\overline{\Pc} \xrightarrow{s^{-1}\bar \gamma_{(1)}} s\overline{\Pc}$.
\end{itemize}

\begin{prop}
\label{prop: bar functor}
The curvature $\theta^c_\beta$ has degree $-2$ and is closed with respect to the coderivation $d_\beta$. The coderivation satisfies
\begin{equation}
\label{eq: cocurved}
{d_\beta}^2 = (\id \otimes \theta^c_\beta - \theta^c_\beta \otimes \id) \cdot {\Delta_\beta}_{(1)}.
\end{equation}
Moreover the bar construction induces a functor
\[
\B : \mathsf{Curved\ sa\ op.}^{\mathrm{lax}} \rightarrow \mathsf{Curved.\ altip.\ coop.}^{\mathrm{lax}}.
\]
\end{prop}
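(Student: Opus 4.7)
The plan is to verify the three assertions of the proposition in order: the degree and closedness of $\theta^c_\beta$, then the curved cooperad equation for $d_\beta$, then functoriality on lax morphisms. For the degree of $\theta^c_\beta$, a direct count suffices: on $s\overline{\Pc}$ the composite $\varepsilon \cdot d \cdot s^{-1}$ has degree $0 + (-1) + (-1) = -2$, and on $s^2(\overline{\Pc}\hcirc_{(1)}\overline{\Pc})$ the composite $\varepsilon \cdot \gamma_{(1)} \cdot s^{-2}$ has degree $0 + 0 + (-2) = -2$. For the closedness $\theta^c_\beta \cdot d_\beta = 0$, I note that $\theta^c_\beta$ factors through the projection onto the weight-$\leq 2$ summands of $\Tc^c(s\overline{\Pc})$, while $d_0, d_1, d_2$ change the weight by $+1, 0, -1$ respectively, so $\theta^c_\beta \cdot d_\beta$ vanishes on trees of weight $\geq 4$. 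On trees of weight $0, 1, 2, 3$, the contributions reduce to the identities $\varepsilon \cdot d(\theta) = 0$, $\varepsilon \cdot (d^2 - [\theta, -]) = 0$ (the curved operad axiom), $d$ being a derivation for $\gamma_{(1)}$, and the associativity of $\gamma$, each of which holds by hypothesis on $\Pc$.

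For the curved cooperad equation ${d_\beta}^2 = (\id \otimes \theta^c_\beta - \theta^c_\beta \otimes \id) \cdot {\Delta_\beta}_{(1)}$, I expand $d_\beta^2 = (d_0 + d_1 + d_2)^2$ and organize the cross-terms by their effect on the tree-weight. The terms $d_2^2$ and $d_1 d_2 + d_2 d_1$ vanish by associativity of $\gamma$ and by $d$ being a derivation for $\gamma_{(1)}$ respectively, as in the classical uncurved calculation. The surviving combinations $d_1^2 + d_0 d_2 + d_2 d_0$, $d_0 d_1 + d_1 d_0$, and $d_0^2$ supply the weight-preserving, weight-raising-by-one, and weight-raising-by-two contributions that distinguish the curved case from the uncurved one. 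I identify them with the expansion of $(\id \otimes \theta^c_\beta - \theta^c_\beta \otimes \id) \cdot {\Delta_\beta}_{(1)}$ using the cofreeness of $\Tc^c$ and applying $\theta^c_\beta$ to the extracted subtrees of weight $1$ or $2$. The key identities used in this matching are $d^2 = [\theta, -]$, which governs $d_1^2$ together with the $\gamma_{(1)}$ cross-terms, and $d(\theta) = 0$, which handles $d_0 d_1 + d_1 d_0$. Using that $d_\beta$ is a coderivation, it suffices to verify the equation after projection onto the cogenerator $s\overline{\Pc}$, which reduces the argument to a finite bookkeeping on trees of small weight.

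For functoriality on lax morphisms, I exploit Lemma \ref{lem: lax morphism unlax}: a lax morphism $(f, a) : \Pc \to \Pc'$ of curved sa operads is equivalent to a strict morphism $f : \Pc \to \Pc'_a$. Applying the bar construction, already well-defined on strict morphisms, yields a strict morphism of curved altipotent cooperads $\B f : \B\Pc \to \B(\Pc'_a)$. A direct inspection of the formulas for $d_\beta$ and $\theta^c_\beta$ shows that $\B(\Pc'_a)$ coincides with a lax-twisted version $(\B\Pc')_{a^c}$ of $\B\Pc'$, for a natural degree $-1$ map $a^c : \B\Pc' \to I$ built from $a$ and $\varepsilon'$ through the cogenerator projection. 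By the dual of Lemma \ref{lem: lax morphism unlax} for curved cooperads, this strict morphism corresponds to a lax morphism $(\B f, a^c) : \B\Pc \to \B\Pc'$. Preservation of identities and of the composition rule for lax morphisms then follows from a short verification, using that the assignment $a \mapsto a^c$ is compatible with the composition rule $(g, b) \cdot (f, a) = (g \cdot f, g(a) + b)$.

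\textbf{Main obstacle.} The most delicate step is the verification of the curved cooperad equation in the second assertion: the right-hand side is no longer zero as in the classical uncurved case, and the new cross-terms $d_0^2$, $d_0 d_1 + d_1 d_0$, and $d_0 d_2 + d_2 d_0$ appearing in the curved setting must be matched precisely, sign by sign, with the action of $\theta^c_\beta$ through ${\Delta_\beta}_{(1)}$. The identification $\B(\Pc'_a) = (\B\Pc')_{a^c}$ in the functoriality step is also subtle: it requires isolating the correct formula for the lax component $a^c$ and checking its compatibility with both the predifferential $d_\beta$ and the curvature $\theta^c_\beta$ of $\B\Pc'$.
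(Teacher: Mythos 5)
Your treatment of the degree and of the closedness of $\theta^c_\beta$ matches the paper's, and is fine. But there is a genuine error in your plan for the central equation \eqref{eq: cocurved}: you have the weight bookkeeping exactly backwards. After corestriction to the cogenerators $s\overline{\Pc}$, the right-hand side $(\id \otimes \theta^c_\beta - \theta^c_\beta \otimes \id)\cdot{\Delta_\beta}_{(1)}$ removes either one or two vertices (since $\theta^c_\beta$ is supported on trees with one or two vertices), so it is supported on inputs in $\Tc^c(s\overline{\Pc})^{(2)}\oplus\Tc^c(s\overline{\Pc})^{(3)}$. The combinations you designate as "surviving" — ${d_0}^2$, $d_0d_1+d_1d_0$, ${d_1}^2+d_0d_2+d_2d_0$ — raise the vertex count by $2$, $1$, $0$ respectively, hence after corestriction to $s\overline{\Pc}$ they are supported on inputs with at most one vertex and cannot produce the right-hand side; they must vanish, and they do, using precisely $d(\theta)=0$ and $d^2=[\theta,-]$. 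Conversely, $d_1d_2+d_2d_1$ and ${d_2}^2$ do \emph{not} vanish "as in the classical uncurved calculation": because $\Pc$ is only \emph{semi}-augmented, $\bar d$ and $\bar\gamma_{(1)}$ differ from $d$ and $\gamma_{(1)}$ by $\eta\cdot\varepsilon\cdot d$ and $\eta\cdot\varepsilon\cdot\gamma_{(1)}$, which are exactly the two components of $\theta^c_\beta$; these discrepancies are what produce the right-hand side on two- and three-vertex trees. So the operad axioms $d^2=[\theta,-]$ and $d(\theta)=0$ serve to kill the new cross-terms, while the curvature $\theta^c_\beta$ arises from the failure of $\varepsilon$ to be a dg-operad augmentation. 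As written, your matching step cannot close.

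On functoriality, your strictification route via Lemma \ref{lem: lax morphism unlax} does not buy what you claim. Even a \emph{strict} morphism $f$ of curved sa operads need not satisfy $\varepsilon'\cdot f=\varepsilon$, so it need not map $\overline{\Pc}$ to $\overline{\Pc'}$, and $\B f$ is already only a lax cooperad morphism whose lax component is $\varepsilon'\cdot(s^{-1}f)$ restricted to $s\overline{\Pc}$ — it depends on $f$, not on $a$, contrary to your proposed $a^c$ "built from $a$ and $\varepsilon'$". The paper instead constructs $\B(f,a)$ directly from the cofreeness of $\Tc^c$ (Proposition \ref{prop: cofree cooperad}), with cogenerator component $a+\bar f$ on $\coideal{I}\oplus s\overline{\Pc}$ and lax component $a^c(f)=\varepsilon'\cdot(s^{-1}f)$, and then verifies Equations \eqref{eq: lax morphism diff coop} and \eqref{eq: lax morphism curvature coop} on cogenerators. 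Your identification $\B(\Pc'_a)=(\B\Pc')_{a^c}$ is unproven and, with your choice of $a^c$, incorrect; establishing the correct statement would require essentially the same generator-level computation the paper performs, so the reduction saves nothing.
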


\begin{proof}
To prove Equation \eqref{eq: cocurved}, we first adapt the proof of Lemma 3.3.3 in \cite{HM12} in order to take the curvature into account. 
The map $\theta^c_\beta$ has degree $-2$ by definition. 
Because $\theta^c_\beta$ is zero on $I \oplus \Tc^{c}(s\overline{\Pc})^{(> 2)}$, to get the equality $\theta^c_\beta \cdot d_\beta = 0$, it is enough to notice the following equalities:
\[
\left\{ \begin{array}{lcl}
\theta^c_\beta \cdot {d_{0}}_{|I} & = & \varepsilon \cdot d \cdot \theta = 0 \quad (\text{since $\theta$ is closed}),\\
\theta^c_\beta \cdot (d_0 + d_1)_{|s\overline{\Pc}} & = & \varepsilon \cdot (- [\theta, s^{-1} -] + d^2(s^{-1} -)) = 0,\\
\theta^c_\beta \cdot (d_{1} + d_{2})_{|\Tc^{c}(s\overline{\Pc})^{(2)}} & = & \varepsilon \cdot \gamma_{(1)} \cdot \big((\bar d - d)\otimes \id + \id \otimes (\bar d - d)\big)(s^{-2} -) = 0,\\
\theta^c_\beta \cdot {d_{2}}_{|\Tc^{c}(s\overline{\Pc})^{(3)}} & = & \varepsilon \cdot \big(\gamma_{(1)}(\bar \gamma_{(1)}(-, -), -) - \gamma_{(1)}(-, \bar \gamma_{(1)}(-, -))\big)(s^{-3} -)\\
& = & 0.
\end{array} \right.
\]

The maps ${d_\beta}^2 = \frac{1}{2}[d_\beta, d_\beta]$ and $(\id \otimes \theta^c_\beta - \theta^c_\beta \otimes \id) \cdot {\Delta_\beta}_{(1)}$ are coderivations (see \cite[Lemma 3.2.2]{HM12} for the second statement), so it is enough to prove that the projection to the cogenerators $s\overline{\Pc}$ of Equation \eqref{eq: cocurved} is satisfied. 
We have
\[
(d_0 + d_1 + d_2)^2 = \underbrace{{d_0}^2} + \underbrace{d_0 d_1 + d_1 d_0} + \underbrace{{d_1}^2 + d_0 d_2 + d_2 d_0} + \underbrace{d_1 d_2 + d_2 d_1} + \underbrace{{d_2}^2}.
\]
The term $({d_0}^2)^{|s\overline{\Pc}}$ is zero since $\im ({d_0}^2) \subset \Tc^{(\geq 2)}(s\overline{\Pc})$. 
We have $(d_0 d_1 + d_1 d_0)^{|s\overline{\Pc}} = (d_1 d_0)^{|s\overline{\Pc}} = d \cdot \theta = 0$ since $\theta$ is closed. 
The term $({d_1}^2)^{|s\overline{\Pc}}$ corresponds to ${\overline{d}}^2$ and the term $(d_0 d_2 + d_2 d_0)^{|s\overline{\Pc}} = (d_2 d_0)^{|s\overline{\Pc}}$ is equal to $-\overline{[\theta, -]} = -\overline{d^2}$ on $s\overline{\Pc}$ so $({d_1}^2 + d_0 d_2 + d_2 d_0)^{|s\overline{\Pc}} = 0$. 
Then we have
\begin{align*}
(d_1 d_2 + d_2 d_1)^{|s\overline{\Pc}} = -s(\bar \gamma_{(1)})((d-\bar d)\otimes \id + \id \otimes (d-\bar d))(s^{-2} -)^{|s\overline{\Pc}}_{|\Tc^c(s\overline{\Pc})^{(2)}}\\
= -s(\bar \gamma_{(1)})((\varepsilon \cdot d)\otimes \id + \id \otimes (\varepsilon \cdot d))(s^{-2} -)^{|s\overline{\Pc}}_{|\Tc^c(s\overline{\Pc})^{(2)}}\\
= (\id \otimes \theta^c_\beta - \theta^c_\beta \otimes \id) \cdot {\Delta_\beta}_{(1) |\Tc^c(s\overline{\Pc})^{(2)}}.\\
 \end{align*}
Finally
\[\begin{aligned}
({d_2}^2)^{|s\overline{\Pc}} =\ & s\left( \bar \gamma_{(1)}(\gamma_{(1)}(-, -), -) - \bar \gamma_{(1)}(-, \gamma_{(1)}(-, -)) \right)(s^{-3} -)^{|s\overline{\Pc}}_{|\Tc^c(s\overline{\Pc})^{(3)}}\\
& - (\theta^c_\beta \otimes \id - \id \otimes \theta^c_\beta) \cdot {\Delta_\beta}_{(1) |\Tc^c(s\overline{\Pc})^{(3)}}\\
=\ & \left(\id \otimes \theta^c_\beta - \theta^c_\beta \otimes \id\right) \cdot {\Delta_\beta}_{(1) |\Tc^c(s\overline{\Pc})^{(3)}}
\end{aligned}\]
since $\gamma$ is associative. 
It follows that Equation \eqref{eq: cocurved} is satisfied.

The bar construction is gr-coaugmented since $d_\beta(1) = d_0(1) = -s\theta \in F_1 s\overline{\Pc} \subset F_1 \B \Pc$ and it is altipotent since it is built by means of the cofree altipotent construction. 

It remains to prove that $\B$ sends lax morphisms of curved operads to lax morphisms of curved cooperads and that it preserves composition of lax morphisms. 
Let $(f, a) : (\Pc, d, \varepsilon, \theta) \rightarrow (\Pc', d', \varepsilon', \theta')$ be a lax morphism of sa curved operads. The map of $\Sb$-modules underlying $f$ is characterized by the morphism of $\Sb$-modules $\bar{f} : \overline{\Pc} \rightarrow \overline{\Pc'}$ and the morphism $\varepsilon' \cdot (s^{-1}f) : s\overline{\Pc} \to I$. 
Because the tree cooperad is cofree in the category of altipotent cooperads (Proposition \ref{prop: cofree cooperad}), the map
\[
\coideal{\Tc}^c(s\overline{\Pc}) \twoheadrightarrow \coideal{I} \oplus s\overline{\Pc} \xrightarrow{a + \bar f} s\overline{\Pc'}
\]
induces a cooperad morphism $\B (f, a) : \Tc^c(s\overline{\Pc}) \to \Tc^c(s\overline{\Pc'})$. 
Moreover $\varepsilon' \cdot (s^{-1}f)$ induces a map $a^c(f) : \Tc^c(s\overline{\Pc}) \to I$ by precomposition by the projection to $s\overline{\Pc}$. 
We now show that the couple $(\B(f, a), a^c(f))$ is a lax morphism of curved cooperads. 
We abbreviate the map $a^c(f)$ by $a^c$ when there is no ambiguity. 

First, we prove Equation \eqref{eq: lax morphism diff coop}. 
The cooperad $\Tc^c(s\overline{\Pc})$ can be seen as an infinitesimal $\Tc^c(s\overline{\Pc'})$-comodule by means of the cooperad morphism $\B(f, a)$. 
Noting $d_\beta$, resp. $d'_\beta$, the coderivation on $\Tc^c(s^{-1}\overline{\Pc})$, resp. on $\Tc^c(s^{-1}\overline{\Pc'})$, the map $d_\beta' \cdot \B(f, a) - \B(f, a) \cdot d_\beta$ is a coderivation $\Tc^c(s\overline{\Pc}) \to \Tc^c(s\overline{\Pc'})$. (This is a direct computation for two coderivations and a cooperad morphism.) 
Similarly, the map $(\B(f, a) \otimes a^c - a^c \otimes \B(f, a)) \cdot \Delta_{(1)}$ is a coderivation $\Tc^c(s\overline{\Pc}) \to \Tc^c(s\overline{\Pc'})$ by Lemma 3.2.2 in \cite{HM12}. 
It follows that it is enough to prove the corestriction of Equation \eqref{eq: lax morphism diff coop} to the generators of the cooperad in the codomain (by means of Lemma 15 in \cite{MV09a}). 
The morphism $f$ satisfies Equations \eqref{eq: lax morphism diff} and \eqref{eq: lax morphism curvature} so by considering the projection to $\overline{\Pc}$, we get
\[
\bar d' \bar f + \overline{[a, \bar f]} = \bar f \bar d \quad \mathrm{and} \quad \bar f(\theta) = \theta' + \bar d' a + \frac{1}{2}\overline{[a, a]},
\]
with $\overline{[a, f]} = \overline{[a, \bar f]}$ since $\eta'(1)$ is central.  
We compute the contributory terms as in Lemma 4.4 in \cite{BMDC20}, using Construction 3.10. 
The image in $I \oplus s\overline{\Pc'} \oplus \Tc^c(s\overline{\Pc'})^{(2)} \oplus \dots$ of the morphism $\B(f, a)$ is equal to
\begin{align*}
& \B(f, a)(1) = 1 + sa + sa \otimes sa + \dots,\\
& \B(f, a)(s\mu) = s\bar f(\mu) + \sum_i s\bar f(\mu) \circ_i sa + sa \otimes s\bar f(\mu) + \dots,\\
& \B(f, a)(s\mu \circ_j s\nu) = s\bar f(\mu) \circ_j s\bar f(\nu) + \dots,
\end{align*}
for any $\mu, \nu \in \overline{\Pc}$. 
We therefore obtain
\begin{align*}
& \left(d_\beta' \cdot \B(f, a) - \B(f, a) \cdot d_\beta\right)_{|I}^{|s\overline{\Pc'}} = s\left( \bar f(\theta) - \theta' - \bar d' a - \frac{1}{2}\overline{[a, a]}\right) = 0,\\
& \left(d_\beta' \cdot \B(f, a) - \B(f, a) \cdot d_\beta\right)_{|s\overline{\Pc'}}^{|s\overline{\Pc'}} = -s\left( \bar d' \bar f(-) + \overline{[a, \bar f(-)]} - \bar f \bar d(-)\right) = 0,\\
& \left(d_\beta' \cdot \B(f, a) - \B(f, a) \cdot d_\beta\right)_{|\Tc^c(s\overline{\Pc'})^{(2)}}^{|s\overline{\Pc'}} = \left(\bar f \otimes a^c - a^c\otimes \bar f \right) \cdot \overline{\Delta}_{(1)}.
\end{align*}
We conclude that $\B(f, a)$ satisfies Equation \eqref{eq: lax morphism diff coop}, that is
\begin{equation}
\label{eq: differentials and lax morphisms}
d_\beta' \cdot \B(f, a) - \B(f, a) \cdot d_\beta = (\B(f, a)\otimes a^c - a^c\otimes \B(f, a))\cdot \Delta_{(1)}.
\end{equation}

For the second equation (that is \eqref{eq: lax morphism curvature coop}), it is enough to check it on $I\oplus s\overline{\Pc} \oplus \Tc^c(s\overline{\Pc})^{(2)}$ because of the definition of the maps involved. 
On $I$, we have:
\begin{multline*}
\left({\theta'_\beta}^c \cdot \B(f, a) - \theta_\beta^c - a^c d_\beta - a^c \star a^c \right)(1)\\
= {\theta'_\beta}^c(1+sa + sa \otimes sa) - 0 - a^c(-s\theta) - 0\\
= \varepsilon'\left( -d' a - \frac{1}{2}[a, a] + f(\theta)\right) = \varepsilon' (\theta') = 0.
\end{multline*}
On $s\mu \in s\overline{\Pc}$, we compute:
\begin{multline*}
\left({\theta'_\beta}^c \cdot \B(f, a) - \theta_\beta^c - a^c d_\beta - a^c \star a^c \right)(s\mu)\\
= {\theta'_\beta}^c\left(s\bar f(\mu)+sa \otimes s\bar f(\mu) + \sum_j s\bar f(\mu) \circ_j sa\right) + \varepsilon d\mu - a^c(-s\bar d \mu) - 0\\
= \varepsilon'\left( -d' \bar f(\mu) - [a, \bar f (\mu)] + f(\bar d \mu)\right) + \varepsilon d\mu\\
= -\varepsilon' (f\eta \varepsilon d (\mu)) + \varepsilon d\mu = -\varepsilon' \eta' \varepsilon d (\mu) + \varepsilon d\mu = 0.
\end{multline*}
Finally, on $s\mu \circ_j s\nu \in \Tc^c(s\overline{\Pc})^{(2)}$, we have
\begin{multline*}
\left({\theta'_\beta}^c \cdot \B(f, a) - \theta_\beta^c - a^c d_\beta - a^c \star a^c \right)(s\mu \circ_j s\nu)\\
= {\theta'_\beta}^c\left(s\bar f(\mu) \circ_j s\bar f(\nu)\right) - (-1)^{|\mu|}\left( \varepsilon \gamma (\mu \circ_j \nu) + a^c(s\bar \gamma (\mu\circ_j \nu)) - \varepsilon' f(\mu)  \varepsilon' f(\nu)\right)\\
= (-1)^{|\mu|}\left( \varepsilon' f(\gamma (\mu \circ_j \nu)) - \varepsilon \gamma (\mu \circ_j \nu) - \varepsilon' f \bar \gamma(\mu \circ_j \nu)\right) = 0.
\end{multline*}
It follows that Equation \eqref{eq: lax morphism curvature coop} is satisfied and the bar construction sends lax morphisms to lax morphisms.

We now prove that the bar construction preserves the composition of lax morphisms. 
Let
\[
(f, a) : (\Pc, d, \varepsilon, \theta) \rightarrow (\Pc', d', \varepsilon', \theta') \text{ and } (g, b) : (\Pc', d', \varepsilon', \theta') \rightarrow (\Pc'', d'', \varepsilon'', \theta'')
\]
be two lax morphisms of sa curved operads. 
We denote by $a^c = a^c(f)$ and $b^c = a^c(g)$ the maps appearing in the definition of the bar construction. 
We have
\[
(\B (g, b), b^c) \cdot (\B(f, a), a^c) = (\B (g, b) \cdot \B (f, a), a^c + b^c \cdot \B (f, a))
\]
that we want to compare to
\[
(\B (g\cdot f, b+g(a)), a^c(g \cdot f)).
\]
The morphism $\B (g, b) \cdot \B (f, a)$ is characterised by its projection to the cogenerators $s\Pc''$ which is given by $(b + \bar g(a)) + \bar g \cdot \bar f = b+g(a) + \overline{g \cdot f}$ (since $a \in F_1 \Pc'$). 
On the other side, the morphism $\B (g \cdot f, g(a)+b)$ is characterised by $g(a)+b + \overline{g \cdot f}$. 
Thus they coincide. 
Similarly, $a^c + b^c \cdot \B (f, a)$ is non zero only on $I \oplus s\overline{\Pc}$ where it is equal to $\varepsilon''(g(a) + g(\bar f)) + \varepsilon'(f) = \varepsilon'' \cdot g \cdot \bar f + \varepsilon' \cdot f = \varepsilon'' \cdot g \cdot f$ since $a \in F_1 \Pc$. 
This coincides with $a^c(g\cdot f)$. 
This proves that the bar construction is a functor and this concludes the proof.
\end{proof}

\subsubsection{Cobar construction of a curved altipotent cooperad}

We consider now a curved altipotent cooperad $(\Cc, \Delta, \varepsilon, d, \theta^c, \eta)$ with $\overline{\Cc} \coloneqq \ker \varepsilon$. 
Using the gr-coaugmentation $\eta$, we obtain a projection map $\Cc \twoheadrightarrow \overline{\Cc}$ that we use to define the maps $\bar d : \Cc \xrightarrow{d} \Cc \twoheadrightarrow \overline{\Cc}$ and $\bar \Delta_{(1)}$ (defined similarly).

The \emph{cobar construction} $\hat\Omega \Cc$ is given by the curved semi-augmented operad
\[
\hat\Omega \Cc \coloneqq \left( \Tc (s^{-1}\overline{\Cc}), \gamma_\omega, \eta_\omega, d_\omega \coloneqq D_0 + D_1 + D_2, \theta_\omega, \varepsilon_\omega\right),
\]
where $\gamma_\omega$ is the free composition map, $\eta_\omega$ is the usual unit, $\varepsilon_\omega$ is the usual projection ($\Tc M \twoheadrightarrow I$), the curvature $\theta_\omega$ is the composite
\[
I \xrightarrow{\eta} \Cc \xrightarrow{-s^{-1} \bar d - s^{-2} \bar \Delta_{(1)}} s^{-1} \overline{\Cc} \oplus s^{-2} \overline{\Cc} \hcirc_{(1)} \overline{\Cc} \rightarrowtail \Tc (s^{-1}\overline{\Cc})
\]
and the different derivations $D_0$, $D_1$ and $D_2$ are respectively the unique derivations which extend the degree $-1$ maps:
\begin{itemize}
\item
$s^{-1}\overline{\Cc} \xrightarrow{-s\theta^c} I \rightarrowtail \Tc (s^{-1}\overline{\Cc})$,
\item
$s^{-1}\overline{\Cc} \xrightarrow{\bar d} s^{-1}\overline{\Cc} \rightarrowtail \Tc (s^{-1}\overline{\Cc})$,
\item
$s^{-1} \overline{\Cc} \xrightarrow{\Delta_{s^{-1}} \cdot \bar\Delta_{(1)}} s^{-1}\overline{\Cc} \hcirc_{(1)} s^{-1}\overline{\Cc} \rightarrowtail \Tc(s^{-1} \overline{\Cc})$, where $\Delta_{s^{-1}}(s^{-1}) = -s^{-2}$.
\end{itemize}

\begin{prop}
\label{prop: cobar const is a functor}
The curvature $\theta_\omega$ has degree $-2$ and is closed with respect to the coderivation $d_\omega$. The derivation satisfies
\begin{equation}
\label{eq: curved}
{d_\omega}^2 = [\theta_\omega, -].
\end{equation}
Moreover the cobar construction induces a functor
\[
\hat\Omega : \mathsf{Curved\ altip.\ coop.}^{\mathrm{lax}} \rightarrow \mathsf{Curved.\ sa\ op.}^{\mathrm{lax}}.
\]
\end{prop}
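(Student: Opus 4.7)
The plan is to mirror the structure of the proof of Proposition \ref{prop: bar functor}, dualising each step. First, by construction $\theta_\omega$ lands in $s^{-1}\overline{\Cc}\oplus s^{-2}(\overline{\Cc}\hcirc_{(1)}\overline{\Cc})\subset \Tc(s^{-1}\overline{\Cc})$ and the summands $-s^{-1}\bar d$ and $-s^{-2}\bar\Delta_{(1)}$ both lower degree by $2$, so $\theta_\omega$ is of degree $-2$. To show $d_\omega\cdot\theta_\omega=0$, I would decompose $d_\omega=D_0+D_1+D_2$ and evaluate each composite on the image of $\theta_\omega$: the terms ${D_i}\cdot(-s^{-1}\bar d)\cdot\eta$ for $i=0,1$ vanish by the cocurvature equation $d^2=(\id\otimes\theta^c-\theta^c\otimes\id)\Delta_{(1)}$ and the closedness $\theta^c\cdot d=0$; the cross terms ${D_1+D_2}\cdot(-s^{-2}\bar\Delta_{(1)})\cdot\eta$ collapse via coassociativity of $\Delta$; the remaining contribution from $D_0$ kills the $s^{-1}\bar d\eta$ piece because $\theta^c$ is closed. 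This is exactly the dual of the four-line computation in the bar case.

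For Equation \eqref{eq: curved}, observe that both $d_\omega^2=\tfrac12[d_\omega,d_\omega]$ and $[\theta_\omega,-]$ are derivations of the free operad $\Tc(s^{-1}\overline{\Cc})$, so it suffices to compare them on the generators $s^{-1}\overline{\Cc}$. I would expand $(D_0+D_1+D_2)^2$ into five pieces grouped by the arity/weight of the output, and check on $s^{-1}\overline{\Cc}$:
\begin{itemize}
\item $D_0^2|_{s^{-1}\overline{\Cc}}=0$ trivially;
\item $(D_0D_1+D_1D_0)|_{s^{-1}\overline{\Cc}}=0$ using $\theta^c\cdot d=0$;
\item $(D_1^2+D_0D_2+D_2D_0)|_{s^{-1}\overline{\Cc}}$ realises precisely $\bar{d^2}=\overline{(\id\otimes\theta^c-\theta^c\otimes\id)\Delta_{(1)}}$ on $\overline{\Cc}$, whose output in $\Tc(s^{-1}\overline{\Cc})$ matches the projection of $[\theta_\omega,-]$ to the weight-$1$ component;
\item $(D_1D_2+D_2D_1)|_{s^{-1}\overline{\Cc}}$ and $D_2^2|_{s^{-1}\overline{\Cc}}$ assemble, using coassociativity of $\bar\Delta_{(1)}$, into the remaining weight-$2$ and weight-$3$ components of $[\theta_\omega,-]$.
\end{itemize}
The bookkeeping of signs is the main subtlety, and the sign $\Delta_{s^{-1}}(s^{-1})=-s^{-2}$ is precisely what makes the Leibniz rule for $[\theta_\omega,-]$ match.

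For functoriality, given a lax morphism $(g,b^c):(\Cc,d,\theta^c)\to(\Cc',d',{\theta^c}')$ of curved altipotent cooperads, I would define $\hat\Omega(g,b^c):\Tc(s^{-1}\overline{\Cc})\to\Tc(s^{-1}\overline{\Cc'})$ as the unique operad morphism extending the map $s^{-1}\overline{\Cc}\to I\oplus s^{-1}\overline{\Cc'}\to\Tc(s^{-1}\overline{\Cc'})$ assembled from $b^c$ on the $I$-component and $\bar g$ on the cokernel component, and define $a(g):I\to F_1\Tc(s^{-1}\overline{\Cc'})$ via $-s^{-1}\bar g\eta$ precomposed with the unit (the dual of the $a^c(f)=\varepsilon'\cdot s^{-1}f$ construction). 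Since both sides of the identities \eqref{eq: lax morphism diff} and \eqref{eq: lax morphism curvature} for the pair $(\hat\Omega(g,b^c),a(g))$ are derivations from $\Tc(s^{-1}\overline{\Cc})$ to $\Tc(s^{-1}\overline{\Cc'})$ (viewed as an infinitesimal bimodule), it is enough to check them on generators, where they reduce to Equations \eqref{eq: lax morphism diff coop} and \eqref{eq: lax morphism curvature coop} for $(g,b^c)$. Preservation of composition is then a direct computation on generators, using that $b^c\in F_1\Cc'$.

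The hard part will be the sign-tracking in the display for $d_\omega^2$ on generators, especially ensuring the $-s^{-2}$ twist in $\Delta_{s^{-1}}$ produces exactly the commutator $[\theta_\omega,-]$ with the conventions fixed in Section \ref{section: sign conventions}; everything else is a formal dualisation of the proof of Proposition \ref{prop: bar functor}.
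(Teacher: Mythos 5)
Your proposal takes essentially the same route as the paper, whose own proof simply declares the verification of the closedness of $\theta_\omega$ and of ${d_\omega}^2=[\theta_\omega,-]$ to be dual to the first part of the proof of Proposition \ref{prop: bar functor}, and constructs $\hat\Omega(f,a^c)$ on generators exactly as you do (via $-sa^c+\bar f$ into $I\oplus s^{-1}\overline{\Cc'}$, with the accompanying arity-one map $a$ coming from $s^{-1}\overline{f\cdot\eta}$). Three cosmetic points: the block $(D_1^2+D_0D_2+D_2D_0)$ on generators must vanish outright, since $[\theta_\omega,-]$ applied to a generator has no weight-one component; the curvature identity \eqref{eq: lax morphism curvature} is an equality of elements checked directly on the image of $\theta_\omega$ (which lives in weights $\le 2$) rather than an identity of derivations; and the fact needed for preservation of composition is that $\overline{f\cdot\eta}$ lands in filtration degree $1$, not that ``$b^c\in F_1\Cc'$'' (which does not typecheck, $b^c$ being a map to $I$).
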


\begin{proof}
The proof that $d_\omega \theta_\omega =0$ and ${d_\omega}^2 = [\theta_\omega, -]$ is dual to the beginning of the proof of Proposition \ref{prop: bar functor}. 
Similarly, we can prove that $\hat\Omega$ sends lax morphisms of curved cooperads to lax morphisms of curved operads. 
Let $(f, a^c) : (\Cc, d, \eta, \theta^c) \to (\Cc', d', \eta',\, {\theta'}^c)$ be a lax morphism of curved cooperads. 
The map of $\Sb$-modules underlying $f$ is characterized by the morphism of $\Sb$-modules $\bar f : \overline{\Cc} \to \overline{\Cc'}$ and the morphism $s^{-1}\overline{f \cdot \eta} : I \to F_1 (s^{-1}\overline{\Cc'})$ ($f$ is gr-coaugmented) which induces a map $a : I \to F_1 \Tc(s^{-1}\overline{\Cc'})$. 
We write $\hat\Omega(f, a^c) : \Tc(s^{-1}\overline{\Cc}) \to \Tc(s^{-1}\overline{\Cc'})$ the operad morphism induced by
\[
s^{-1}\overline{\Cc} \xrightarrow{-sa^c+ \bar f} I \oplus s^{-1}\overline{\Cc'} \rightarrowtail \Tc(s^{-1}\overline{\Cc'}).
\]
The proof that $(\hat\Omega(f, a^c), a)$ is a lax morphism of curved operads is dual to the second part of the proof of Proposition \ref{prop: bar functor}. 
We prove similarly as in Proposition \ref{prop: bar functor} that it preserves the composition of lax morphisms.
\end{proof}

\begin{example}
We denote by $\Lie$ the operad encoding Lie algebras and by $\cLie$ the operad encoding curved Lie algebras. 
It is defined by
\[
\cLie \coloneqq \left(\Tc\left(\curvA, \lie \right)/\left(\jac \right), 0, \theta \coloneqq \curvLie\right).
\]
where $\curvA$ is of degree $-2$, $\lie$ is of degree 0 and the predifferential is zero.
We filter $\cLie$ by the number of $\curvA$, say
\[
F_p \cLie \coloneqq \{ \mu \in \cLie \textrm{ s.t. the number of } \curvA \textrm{ in } \mu \textrm{ is greater than or equal to } p\}.
\]
(This filtration is in fact a graduation.) 
It is a curved operad whose curvature belongs to $F_1 \cLie$. 
Indeed, $0 = d^2 = [\theta, -]$ because it is a suboperad of the curved operad $\cAs$ for which it is  proved in \cite[Proposition 6.2]{BMDC20}.

We also fix $\sLie \coloneqq \End_{s \Rb} \otimes_H \Lie$ the operad encoding \emph{shifted Lie algebras} and $\scLie \coloneqq \End_{s \Rb} \otimes_H \cLie$ the operad encoding \emph{shifted curved Lie algebras}. 

The operads encoding homotopy version of these algebras are built by means of the cobar construction and the Koszul duality theory of curved operad \cite{BMDC20} and are denoted by $\Li \coloneqq \hat\Omega \Lie^\antishriek$, $\cLi \coloneqq \hat\Omega \cLie^\antishriek$, $\sLi \coloneqq \hat\Omega \sLie^\antishriek$ and $\scLi \coloneqq \hat\Omega \scLie^\antishriek$.

The coproduct $\Delta$ and the partial coproduct $\Delta_{(1)}$ of the cooperad $\sLie^\antishriek$ are computed in \cite[Appendix A]{jM14}. 
The case of the cooperad $\scLie^\antishriek$, Koszul dual cooperad of the curved operad $\scLie$, can be done similarly as Theorem 6.9 in \cite{BMDC20} which concerns the operad encoding curved associative algebras. 

We obtain that the Koszul dual cooperad $\scLie^\antishriek$ is 1-dimensional in each arity, $\scLie^\antishriek(n) \cong \Kb \cdot {l}_{n}^{c}$, where ${l}_{n}^{c}$ is an element of degree $0$ on which $\Sb_n$ acts trivially. The infinitesimal decomposition map on $\scLie^\antishriek$ is given by
\begin{align}
\label{eq: coproduct scLiea}
\Delta_{(1)}({l}_{n}^{c}) = 
\sum_{\substack{p+q=n+1\\ p \geq 1, q \geq 0}} \sum_{\sigma \in Sh_{q,\, p-1}^{-1}} ({l}_{p}^{c} \circ_{1} {l}_{q}^{c})^{\sigma},
\end{align}
where $Sh_{q,\, p-1}^{-1}$ is the set of $(q,\, p-1)$-unshuffles, that is, inverses of $(q,\, p-1)$-shuffles. The formula for the full decomposition map is given by
\[
\Delta_{\scLie^\antishriek}({l}_{n}^{c}) = \sum_{q_{1}+\cdots +q_{p}= n} \frac{1}{p!} \sum_{\sigma \in Sh_{q_{1},\, \ldots ,\, q_{p}}^{-1}} \left({l}_{p}^{c}\, ;\, {l}_{q_{1}}^{c},\, \ldots ,\, {l}_{q_{p}}^{c}\right)^{\sigma},
\]
where $Sh_{q_{1},\, \ldots ,\, q_{p}}^{-1}$ is the set of $(q_1,\, \ldots,\, q_p)$-unshuffles $\sigma$.
\end{example}

\subsubsection{Convolution curved operad}

We aim to extend the $\Li$-algebra structures on
the convolution algebras $\hom(C, A)$ of $\Rf$-linear maps from a $\Cc$-coalgebra $C$ to a $\Pc$-algebra $A$ given in \cite{fW19} to the curved setting.
We first describe the convolution curved operad $\Hom(\Cc, \Pc)$ and see how it is related to the bar and cobar constructions.

Let $(\Cc, \Delta, \varepsilon_\Cc, d_\Cc, \theta^c, \eta_\Cc)$ be a curved altipotent cooperad and $(\Pc, \gamma, \eta_\Pc, d_\Pc, \theta, \varepsilon_\Pc)$ be a curved sa operad.

We consider the graded collection of modules
\[
\Hom(\Cc, \Pc) \coloneqq \{ \Hom_{\textrm{filt.}\ \Kf\text{-mod}}(\Cc(n), \Pc(n))\}_{n \geq 0}.
\]
In order to get an $\Sb$-module, we endow each module with the following action of the symmetric group: for $f \in \Cc(n) \to \Pc(n)$, $\sigma \in \Sb_n$ and $\nu \in \Cc(n)$, we have
\[
(f^\sigma)(\nu) \coloneqq (f(\nu^{\sigma^{-1}}))^\sigma.
\]

Let $f : \Cc(k) \to \Pc(k)$ and let, for $1\leq j \leq k$, $g_j : \Cc(i_j) \to \Pc(i_j)$ be a collection of filtered
maps. 
We define a composition map $\gamma_H(f ; g_1, \ldots, g_k)$ by the formula
\begin{multline*}
\Cc(n) \xrightarrow{\Delta} (\Cc \hcirc \Cc)(n) \twoheadrightarrow\\
\Cc(k) \hotimes_{\Kf[\Sb_k]} \left(\left(\Cc(i_{\sigma(1)}) \hotimes \cdots \hotimes \Cc(i_{\sigma(k)}) \right) \hotimes_{\Kf[\Sb_{i_{\sigma(1)}} \times \cdots \times \Sb_{i_{\sigma(k)}}]} \Kf[\Sb_n] \right)\\
\xrightarrow{\sum_{\sigma \in \Sb_k}f^\sigma\otimes g_{\sigma(1)}\otimes \cdots \otimes g_{\sigma(k)} \otimes \id}\\
\Pc(k)\hotimes_{\Kf[\Sb_k]} \left(\left( \Pc(i_{\sigma(1)})\hotimes \cdots \hotimes \Pc(i_{\sigma(k)}) \right) \hotimes_{\Kf[\Sb_{i_{\sigma(1)}} \times \cdots \times \Sb_{i_{\sigma(k)}}]} \Kf[\Sb_n]\right)\\
\hookrightarrow (\Pc \hcirc \Pc)(n) \xrightarrow{\gamma} \Pc(n).
\end{multline*}
It is well defined and it is filtered since the maps $f$, $g_1$, \ldots, $g_k$ are, and $\Delta$ and $\gamma$ are compatible with the filtrations. 

Let $\eta_H : I \to \Hom(\Cc, \Pc)$ the composite $\eta_\Pc \cdot \varepsilon_\Cc$ and $\varepsilon_H : \Hom(\Cc, \Pc) \to \End(I) \cong I$ be given by
\[
(f : \Cc \to \Pc) \mapsto (\varepsilon_\Pc \cdot f \cdot \eta_\Cc : I \to I),
\]
so that $\varepsilon_H \cdot \eta_H = \id_I$. 
We note also $\partial(f) \coloneqq d_\Pc \cdot f - (-1)^{|f|} f \cdot d_\Cc$.\\

We consider on $\Hom(\Cc, \Pc)$ several filtrations. First we denote by $F_\bullet$ the filtration induced by the filtrations on $\Cc$ and $\Pc$, that is for all $p\geq 0$
\[
F_p \Hom(\Cc, \Pc) \coloneqq \{f \in \Hom(\Cc, \Pc)\ |\ \forall q \geq 0,\ f(F_q\Cc) \subset F_{q+p}\Pc\}.
\]
The maps $\gamma_H$ and $\partial$ are compatible with this filtration since the maps involved in their definitions are filtered. The filtration is complete since the filtration on $\Pc$ is. 

Secondly, we consider on $\Hom(\Cc, \Pc)$ a coarser filtration $\Gamma_\bullet$ that is still complete and such that our intended curvature for $\Hom(\Cc, \Pc)$ is in filtration degree 1. 
We consider all the filtrations $G_\bullet$ compatible with $\gamma_H$ and $\partial$ such that
\begin{equation}
\label{cond: filtration op}
	\left\lbrace
\begin{aligned}
G_0 \Hom(\Cc, \Pc) & = F_0 \Hom(\Cc, \Pc) = \Hom(\Cc, \Pc)\\
G_1 \Hom(\Cc, \Pc) & \supset F_1 \Hom(\Cc, \Pc) +\Hom(\Cc, \Pc)(0) + \Hom(\coideal{\Cc}, \Pc)(1)\\
G_p \Hom(\Cc, \Pc) & \supset F_p \Hom(\Cc, \Pc) \text{ for all } p \geq 2.
\end{aligned}
	\right.
\end{equation}

The constant filtration given by $G_p \Hom(\Cc, \Pc) = \Hom(\Cc, \Pc)$, for $p\geq 0$, is a filtration satisfying the above conditions \eqref{cond: filtration op}. 
Therefore there exists a smallest such filtration, namely the intersection of all filtrations satisfying \eqref{cond: filtration op}. We denote it by
\[
\Gamma_0 \Hom(\Cc, \Pc) = \Hom(\Cc, \Pc) \supseteq \Gamma_1 \Hom(\Cc, \Pc) \supseteq \cdots
\]
and we call it the \emph{lower central filtration} $\Gamma_\bullet$ of $\Hom(\Cc, \Pc)$. 

\begin{rem}
We prove in Lemma \ref{lem: the lower central filtration is complete} that $\Gamma_\bullet$ is complete.
\end{rem}

Before describing slightly the lower central filtration, we prove several properties on a (curved) altipotent cooperad. First of all, by the freeness property of the tree cooperad (see Corollary 3.35 in \cite{BMDC20}), there exists a cooperad morphism $\coideal{\Delta} : \Cc \to \Tc^c(\coideal{\Cc})$ corresponding to the identity map $\id : \coideal{\Cc} \to \coideal{\Cc}$. 
Because $\Cc$ is gr-coaugmented, we have on the level of $\Sb$-modules the isomorphism $\Cc \cong I \oplus \overline{\Cc}$ and we deduce a map
\[
\Delta^\pi : \Cc \to \prod_{l \geq 0} \Tc^c(\overline{\Cc}, \coideal{I})_l [l] \cong \prod_{l \geq 0} \Tc^c(\overline{\Cc}, I)_l,
\]
where $\Tc^c(\overline{\Cc}, I)_l$ stands for the $\Sb$-module of trees labelled by elements in $\overline{\Cc}$ or in $I$ with exactly $l$ elements in $I$.

\begin{lem}
\label{lem: coproduct of altipotent cooperad is bounder mod p}
Let $\Cc$ be a curved altipotent cooperad.
Let $\Cc^\fk \subset \Cc$ be a finitely generated sub-$\Sb$-module. 
Fix an integer $p$. 
Then there exists an integer $s(\fk, p) \geq 0$ such that for each $l_0 \geq 0$, the image $I_\Delta^{l_0, p}$ of the composite
\[
\Cc^f \xrightarrow{\Delta^\pi} \prod_{l \geq 0} \Tc^c(\overline{\Cc}, I)_l \twoheadrightarrow [\Tc^c(\overline{\Cc}, I)_{l_0}]_p \coloneqq \Tc^c(\overline{\Cc}, I)_{l_0}/F_p \Tc^c(\overline{\Cc}, I)_{l_0}
\]
factorises through
\[
\Tc^c(\overline{\Cc}, I)^{(\leq s(\fk, p)+l_0)}_{l_0}/F_p \Tc^c(\overline{\Cc}, I)^{(\leq s(\fk, p)+l_0)}_{l_0},
\]
where $\Tc^c(\overline{\Cc}, I)^{(\leq s(\fk, p)+l_0)}_{l_0} \subset \Tc^c(I \oplus \overline{\Cc})^{(\leq s(\fk, p)+l_0)}$ stands for the trees with at most $s(\fk, p)+l_0$ vertices and exactly $l_0$ vertices labelled by an element in $I$.
\end{lem}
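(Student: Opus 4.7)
The plan is to reduce to a single generator, use altipotence to bound the number of $\overline{\Cc}$-vertices that can appear in the image modulo $F_p$, and conclude that the bound can be chosen uniformly in $l_0$.

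\textbf{Reduction to a single generator.} Since $\Cc^\fk$ is finitely generated as an $\Sb$-module, fix a finite generating set $c_1,\ldots,c_m$. The map $\Delta^\pi$ is $\Sb$-equivariant and $\Kf$-linear, so $I_\Delta^{l_0,p}$ is the $\Sb$-span of the images of the $c_i$. Taking $s(\fk,p) := \max_i s(c_i,p)$, it suffices to treat the case where $\Cc^\fk$ is generated by a single element $c$.

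\textbf{Altipotence yields a depth bound.} By altipotence condition (1), $\Cc = \colim_n \CR^{p,n}\Cc$, so there exists an integer $n_0 = n_0(c,p)$ with $c \in \CR^{p,n_0}\Cc$. The map $\Delta^\pi$ is obtained by iterating $\bar\Delta$ on each $\overline{\Cc}$-component and projecting the $I$-components via the gr-coaugmentation. By the defining pullback, applying $\bar\Delta$ to an element of $\CR^{p,n}\Cc$ lands in $\widetilde{\CR}^{p,n}(\Cc\hcirc\Cc)$, whose root lies in $F_{p_0}\Cc(m)$ and whose leaves lie in $F_{p_i}\CR^{p_i+\delta,n-1}\Cc$ with $p_0+\cdots+p_m+\delta = p$. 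Iterating this decomposition on the $\overline{\Cc}$-leaves that have not terminated via the coaugmentation, after at most $n_0$ nested steps along any $\overline{\Cc}$-branch, the corresponding leaf lies in $\CR^{q,0}\Cc = F_q\Cc$, and an inductive accounting of the weights $p_i+\delta$ along that branch gives a cumulative filtration contribution of at least $p$ to the tree. Any such tree therefore lies in $F_p \Tc^c(\overline{\Cc},I)_{l_0}$ and vanishes modulo $F_p$.

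\textbf{Bounding the tree size uniformly in $l_0$.} Consequently, every tree surviving in the image modulo $F_p$ has depth at most $n_0$ along any path of $\overline{\Cc}$-vertices, the remaining terminal vertices being $I$-vertices coming from the coaugmentation. A straightforward combinatorial argument combined with the constraint on the branching arities coming from condition ($\ast$) of altipotence then shows that the number of $\overline{\Cc}$-vertices is bounded by a function $s(\fk,p)$ depending only on $n_0$ (and not on $l_0$), so the total vertex count is at most $s(\fk,p)+l_0$ as required.

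\textbf{Main obstacle.} The delicate step is the joint tracking of the index $n$ (which drops by one at each application of $\bar\Delta$) and the filtration weight $p$ (which is distributed among the root, the leaves, and the increment $\delta$) over many iterations. In particular, condition ($\ast$) permits primitive elements to decompose further into arbitrarily many siblings with modest filtration cost, so one must argue carefully that the combined decrease of $n$ and accumulation of weight eventually forces either termination at an $I$-leaf or landing in $F_p$. Making this precise requires a simultaneous induction on $n$ and on the remaining filtration budget, analogous to the bookkeeping in the proof of altipotence-related convergence results.
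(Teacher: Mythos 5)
Your overall strategy matches the paper's: the published proof is a single sentence asserting that the trees in the image are bounded ``in width by the fact that $\Delta : \Cc \to \Cc \hcirc \Cc$ is well-defined and in depth by the fact that $\Cc$ is altipotent'', and your depth argument (pick $n_0$ with $c \in \CR^{p,n_0}\Cc$ using condition (1) of altipotence, then track how the weight $p$ is distributed over iterated applications of $\bar\Delta$ so that deep trees land in $F_p$) is a reasonable expansion of the second half of that sentence. The reduction to a single generator is also fine.

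There is, however, one concrete misstep: you attribute the width bound to ``the constraint on the branching arities coming from condition ($\ast$)''. Condition ($\ast$) constrains only the filtration indices $k_0,\ldots,k_m$ (they must sum to $k+1$); the arity $m$ in that formula ranges over a completed direct sum $\hat\bigoplus_m$ and is not constrained at all, so ($\ast$) cannot bound the number of siblings. The correct source of the width bound — and the one the paper invokes — is simply that $\Delta$ takes values in $\Cc \hcirc \Cc$, which is a \emph{completed} sum over arities; hence, modulo $F_p$, only finitely many arities survive in the decomposition of a fixed element at each vertex. Combined with the depth bound this gives a bound on the number of $\overline{\Cc}$-vertices independent of $l_0$. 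Without this replacement your width argument does not close, since vertices of large arity whose children are arity-zero elements of $\overline{\Cc}$ would otherwise be unconstrained. Finally, you explicitly defer the simultaneous induction on $n$ and the filtration budget as ``the main obstacle''; to be fair, the paper does not carry out this bookkeeping either, but as written your proposal acknowledges rather than resolves it, so the depth bound remains a sketch.
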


\begin{proof}
The size of a tree appearing in the image under consideration is bounded in width by the fact that $\Delta : \Cc \to \Cc \hcirc \Cc$ is well-defined and in depth (or height) by the fact that $\Cc$ is altipotent.
\end{proof}

\begin{lem}
\label{lem: the lower central filtration is complete}
The collection $\Hom(\Cc, \Pc)$ is complete for the lower central filtration $\Gamma_\bullet$.
\end{lem}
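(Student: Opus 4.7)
The plan is to deduce $\Gamma$-completeness from the already established $F$-completeness of $\Hom(\Cc,\Pc)$ by giving a controlled description of $\Gamma_p$ modulo $F_p$, where the control comes from altipotence of $\Cc$.

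First, I would unroll the definition of $\Gamma_\bullet$ as the smallest filtration on $\Hom(\Cc,\Pc)$ satisfying conditions \eqref{cond: filtration op} and closed under $\gamma_H$ and $\partial$. Setting $S := \Hom(\Cc,\Pc)(0) + \Hom(\coideal{\Cc},\Pc)(1)$ for the initial extra generators of $\Gamma_1$, one obtains a recursive presentation $\Gamma_p = F_p + E_p$, where $E_p$ is the submodule spanned by iterated applications of $\gamma_H$ and $\partial$ to elements of $S$, indexed by decompositions of total $\Gamma$-weight at least $p$. Since $\gamma_H$ and $\partial$ are continuous for $F_\bullet$, the submodule $E_p$ is $F$-closed in $\Hom(\Cc,\Pc)$.

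The key technical step is to bound $E_p$ modulo $F_p$ using altipotence. For any finitely generated sub-$\Sb$-module $\Cc^\fk\subset\Cc$ and any integer $p$, Lemma \ref{lem: coproduct of altipotent cooperad is bounder mod p} asserts that $\Delta^\pi(\Cc^\fk)$ hits only trees of bounded size modulo $F_p$. Unpacking $\gamma_H$ in terms of $\Delta$, this bounds the number of iterated partial compositions of elements of $S$ contributing to $E_p$ modulo $F_p$ when evaluated on $\Cc^\fk$. Because $\Cc$ is a filtered colimit of such finitely generated sub-$\Sb$-modules, this shows that $E_p/(E_p\cap F_p)$ is controlled, and in particular $\bigcap_p \Gamma_p = 0$.

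Finally, given a compatible system $(\bar x_p \in \Hom(\Cc,\Pc)/\Gamma_p)_p$, I would lift successively to $x_p \in \Hom(\Cc,\Pc)$ with $x_{p+1}-x_p \in \Gamma_p = F_p + E_p$. Using the explicit description above, one corrects $x_{p+1}$ by a suitable element of $E_p$ (this is where one uses that the $E_p$-part of each obstruction is identifiable within a bounded combinatorial span) so that the corrected sequence becomes $F$-Cauchy. By $F$-completeness of $\Hom(\Cc,\Pc)$, this sequence converges to a unique element $x$, and a direct check shows $x \equiv \bar x_p$ modulo $\Gamma_p$ for all $p$. The main obstacle is the combinatorial bookkeeping in Step 2: one must make the altipotence bound precise enough to ensure that the $E_p$-corrections used in Step 3 exist and can be chosen compatibly, so that $\Gamma$-Cauchy families genuinely refine to $F$-Cauchy ones.
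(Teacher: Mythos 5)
Your proposal shares the two essential ingredients of the paper's argument: the explicit description of $\Gamma_v$ as spanned by iterated $\gamma_H$-compositions of total weight $\geq v$, and the use of Lemma \ref{lem: coproduct of altipotent cooperad is bounder mod p} to bound, modulo $F_p$, the combinatorics of $\Delta$ on a finitely generated piece $\Cc^\fk$. However, your Step 3 has a genuine gap. The comparison between $\Gamma_\bullet$ and $F_\bullet$ that altipotence buys you is only valid \emph{after restriction to a fixed finite-dimensional summand} $\Cc^\fk$: one gets $i_\fk^*\Gamma_v H \subset F_p\,i_\fk^* H$ only for $v \geq 2p + s(\fk,p)$, and the threshold $s(\fk,p)$ is unbounded as $\fk$ varies. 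Consequently, on all of $\Hom(\Cc,\Pc)$ there is no inclusion $\Gamma_v \subset F_p$ for any $v$, a $\Gamma$-Cauchy sequence is not $F$-Cauchy, and no single global correction by elements of your $E_p$ can make it so — the amount of $F$-degree gained by an $E_p$-composition depends on which $\Cc^\fk$ it is evaluated on. Your own caveat about ``compatible corrections'' is exactly where the argument breaks; moreover the presentation $\Gamma_p = F_p + E_p$ with $E_p$ being $F$-closed, and the ability to split obstructions along this decomposition, are asserted rather than proved (note that $\Gamma_p$ also contains mixed compositions involving both $F$-degree and $S$-generators, so $E_p$ is not just ``pure $S$-compositions'').

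The paper avoids any lifting/correction entirely. It fixes a decomposition $\Cc(n) = \hoplus_{\fk}\Cc^\fk$ into finite-dimensional $\Sb_n$-submodules (possible since $\Kf[\Sb_n]$ is semisimple), giving $\Hom(\Cc,\Pc) \cong \prod_\fk H_\fk$ with $H_\fk = \Hom(\Cc^\fk,\Pc)$. On each factor the two filtrations are interleaved ($F_p \subset \Gamma_p$ always, and $i_\fk^*\Gamma_{v}H \subset i_\fk^* F_p H$ for $v\geq 2p+s(\fk,p)$ by the altipotence bound, using that at least $p$ of the $E_2$-factors must be applied to $I$ and each raises the $F$-degree by one), so they induce the same topology; completeness then follows factorwise from $F$-completeness by an interchange of limits
\[
H_\fk \cong \lim_{p} H_\fk/(F_p H_\fk) \cong \lim_p\lim_q H_\fk/(F_pH_\fk + i_\fk^*\Gamma_q H) \cong i_\fk^*\lim_q H/\Gamma_q H,
\]
and reassembles over the product. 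If you want to repair your write-up, replace Step 3 by this per-$\fk$ interleaving and limit-interchange argument; the global correction scheme should be abandoned.
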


\begin{proof}
The $\Sb$-module $\Gamma_{v} \Hom(\Cc, \Pc)$ is the $\Sb$-module of multiple compositions of $\gamma_H$ applied to elements $f_1$, \ldots, $f_m$ such that there exists a partition $\{1, \ldots, m\} = E_1 \sqcup E_2$ with
\begin{enumerate}
\item
$f_j \in F_{p_j}\Hom(\Cc, \Pc)$ for all $j \in E_1$,
\item
$f_j \in \Hom(\Cc, \Pc)(0) \times \Hom(\coideal{\Cc}, \Pc)(1)$ for all $j \in E_2$,
\end{enumerate}
and $(\sum_{j \in E_1} p_j) + |E_2| \geq v$. 

For each $n\in \Nb$, we fix a direct sum decomposition by finite-dimensional $\Sb_n$-submodules $\Cc(n) = \hoplus_{\fk \in \Fk_n} \Cc^\fk$ of the $\Sb_n$-module $\Cc(n)$ (which exists since $\Kf$ is field and therefore $\Kf[\Sb_n]$ is semi-simple) so that
\begin{equation}
\label{eq: decomposition of the convolution operad}
\begin{aligned}
\Hom(\Cc, \Pc) & = \{ \Hom(\Cc(n), \Pc(n)) \}_{n \in \Nb} = \{ \Hom\left(\hoplus_{\fk \in \Fk_n} \Cc^\fk, \Pc(n)\right) \}_{n \in \Nb}\\
& \cong \left\{ \prod_{\fk \in \Fk_n} \Hom(\Cc^\fk, \Pc(n)) \right\}_{n \in \Nb}.
\end{aligned}
\end{equation}
The filtration $F_p$ on $\Hom(\Cc, \Pc)$ restricts to $\Hom(\Cc^{\fk}, \Pc(n))$ and we keep the same notation for it.

For all $\fk \in \Fk_n$, because $\Cc^\fk$ is finite-dimensional and the filtration $F_\bullet$ is complete, there exists a smallest integer, denoted $q(\fk)$, such that $\Cc^\fk \cap F_{q(\fk)+1} \Cc = \{ 0\}$.  
Fix an integer $p$. 
Because $\Cc$ is altipotent and by Lemma \ref{lem: coproduct of altipotent cooperad is bounder mod p}, we get that
for all $l_0 \geq 0$ the image $I_\Delta^{l_0, q(\fk)+p}$
lands in $\Tc^c(\overline{\Cc}, I)^{(\leq s(\fk, p)+l_0)}_{l_0}/F_{q(\fk)+p} \Tc^c(\overline{\Cc}, I)^{(\leq s(\fk, p)+l_0)}_{l_0}$, where $s(\fk, p)$ depends only on $\Cc^\fk$ and $p$.

We now prove that for any
\begin{equation} \label{eq: upper-bound}
v \geq 2p+s(\fk, p)
\end{equation}
we have
\[
\forall \varphi \in \Gamma_v \Hom(\Cc, \Pc),\ \varphi_{|\Cc^{\fk}} \in F_p \Hom(\Cc^{\fk}, \Pc).
\]
An element $\varphi \in \Gamma_v \Hom(\Cc, \Pc)$ is a sum $\sum_{t_v} \tilde \gamma_H \cdot (f_1 \otimes \cdots \otimes f_{m(t_v)}) \cdot t_v$, where $t_v \in \Tc (\Cc)$  is a tree with $m(t_v)$ vertices, $\tilde \gamma_H$ is induced by $\gamma_H$ and the $f_j$'s are elements in $\Hom(\Cc, \Pc)$ such that there exists a partition $\{1, \ldots, m(t_v)\} = E_1 \sqcup E_2$ with $f_j \in F_{p_j} \Hom(\Cc, \Pc)$ for all $j \in E_1$ and $f_j \in \Hom(\Cc, \Pc)(0) \times \Hom(\coideal{\Cc}, \Pc)(1)$ for all $j \in E_2$, and $(\sum_{j \in E_1}p_j) + |E_2| \geq v$. 
Due to the isomorphism $\Cc \cong I \oplus \overline{\Cc}$, we can assume without loss of generality that the vertices of $t_v$ are either in $I$ or in $\overline{\Cc}$. 
When $\sum_{j \in E_1}p_j \geq p$, we automatically have the desired conclusion.  
Otherwise by Condition \eqref{eq: upper-bound}, we have $|E_2| > p+s(\fk, p)$. 
This implies that there are at least $p$ maps $f_j$, for $j \in E_2$, that are applied to $I$ (thus each of them increases the filtration degree by at least 1) and therefore the term $\tilde \gamma_H \cdot (f_1 \otimes \cdots \otimes f_{m(t_v)}) \cdot t_v$ increases the filtration degree by at least $p$. 
It follows that $\varphi_{|\Cc^{\fk}} \in F_p \Hom(\Cc^{\fk}, \Pc)$ as expected.

Let us now prove that the filtration is complete. 
For $\fk \in \Fk$, we note $H_\fk \coloneqq \Hom(\Cc^\fk, \Pc)$. 
We write $i_\fk : \Cc^\fk \hookrightarrow \Cc$ for the inclusion and $i_\fk^* : \Hom(\Cc, \Pc) \to H^\fk$ for the restriction. 
We have just seen the existence for all $p\in \Nb$ of some integer $v_p$ such that $i_\fk^* \Gamma_{v_p}H \subset F_p i_\fk^* H = i_\fk^* F_p H$, where the last equality follows from the decomposition \eqref{eq: decomposition of the convolution operad}. 
Using moreover that $H \coloneqq \Hom(\Cc, \Pc)$ is complete for the filtration $F_\bullet$ and the interchange property of limits, we obtain
\begin{multline*}
H_\fk = i_\fk^* H \cong \lim_{p \in \Nb} H_\fk /(i_\fk^* F_p H) = \lim_{p \in \Nb} H_\fk /(F_p H_\fk)\\
\cong \lim_{p \in \Nb} \lim_{q \in \Nb} H_\fk /(F_p H_\fk + i_\fk^*\Gamma_q H) \cong \lim_{q \in \Nb} \lim_{p \in \Nb} i_\fk^*H /(i_\fk^* F_p H + i_\fk^* \Gamma_q H)\\
\cong i_\fk^*\lim_{q \in \Nb} H/ \Gamma_q H.
\end{multline*}
This isomorphism being valid for all $\fk \in \Fk$, we get by the decomposition \eqref{eq: decomposition of the convolution operad} the identification $H \cong \lim_q H/\Gamma_q H$.
This concludes the proof.
\end{proof}

We define the degree $-2$ element $\Theta_H \coloneqq \theta \cdot \varepsilon_\Cc + \eta_\Pc \cdot \theta^c$. Because $\eta_\Pc \cdot \theta^c \cdot \eta_\Cc = 0$ and by definition of the central lower filtration $\Gamma_\bullet$, we have
\[
\Theta_H \in \Hom(\coideal{\Cc}, \Pc)(1) \subset \Gamma_1\Hom(\Cc, \Pc)(1).
\]

\begin{rem}
We use the filtration $\Gamma_\bullet$ instead of the filtration $F_\bullet$ on $\Hom(\Cc, \Pc)$ so that the element $\Theta_H$ belong to $\Gamma_1 \Hom_(\Cc, \Pc)(1)$. 
Since $\theta^c$ cannot have filtration degree $\geq 1$ (for the filtration $F_\bullet$) when it is non zero, the element $\Theta_H$ doesn't belong to $F_1 \Hom(\Cc, \Pc)$ in general.
\end{rem}

\begin{prop}
\label{prop: convolution curved operad}
The collection $(\Hom(\Cc, \Pc), \Gamma_\bullet, \gamma_H, \eta_H, \partial, \Theta_H, \varepsilon_H)$ is a curved sa operad.
It is called the \emph{convolution curved operad}.
\end{prop}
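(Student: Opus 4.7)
The operad axioms for $(\Hom(\Cc,\Pc),\gamma_H,\eta_H)$ on the underlying graded $\Sb$-module follow by the standard convolution operad construction: associativity of $\gamma_H$ comes from coassociativity of $\Delta$ coupled with associativity of $\gamma$, and unitality from $\varepsilon_\Cc\cdot\eta_\Cc=\id$ and $\varepsilon_\Pc\cdot\eta_\Pc=\id$. The filtration $\Gamma_\bullet$ is, by its very definition, the smallest filtration compatible with $\gamma_H$ and $\partial$, so compatibility is automatic; completeness is the content of Lemma~\ref{lem: the lower central filtration is complete}. The augmentation $\varepsilon_H$ clearly satisfies $\varepsilon_H\cdot\eta_H=\id_I$, yielding the semi-augmentation. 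Thus only the curved-differential data requires work.

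Next I would verify that $\partial$ is a filtered derivation with respect to $\gamma_H$. The compatibility with $\Gamma_\bullet$ is built into the filtration. The derivation identity reduces, after a direct Koszul-signed computation, to the fact that $d_\Pc$ is a derivation for $\gamma$ and $d_\Cc$ is a coderivation for $\Delta$, exactly as in the uncurved convolution operad. I would then check the three conditions on $\Theta_H=\theta\cdot\varepsilon_\Cc+\eta_\Pc\cdot\theta^c$: it is of degree $-2$ and arity $1$ by inspection; it lies in $\Gamma_1\Hom(\Cc,\Pc)(1)$ because the first summand lies in $F_1\Hom(\Cc,\Pc)$ (as $\theta\in F_1\Pc$) and the second lies in $\Hom(\coideal{\Cc},\Pc)(1)$, both included in $\Gamma_1$ by \eqref{cond: filtration op}; and it is $\partial$-closed by computing
\[
\partial(\Theta_H)=d_\Pc\theta\cdot\varepsilon_\Cc+d_\Pc\eta_\Pc\cdot\theta^c-\theta\cdot\varepsilon_\Cc d_\Cc-\eta_\Pc\cdot\theta^c d_\Cc=0,
\]
using $d_\Pc(\theta)=0$, $d_\Pc\cdot\eta_\Pc=0$, $\varepsilon_\Cc\cdot d_\Cc=0$ and $\theta^c\cdot d_\Cc=0$.

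The core of the proof is the curvature identity $\partial^2=[\Theta_H,-]$. A routine expansion with Koszul signs gives $\partial^2(f)=d_\Pc^2\cdot f-f\cdot d_\Cc^2$ (the cross terms cancel). On the right-hand side I substitute $d_\Pc^2=[\theta,-]$ and $d_\Cc^2=(\id\otimes\theta^c-\theta^c\otimes\id)\cdot\Delta_{(1)}$ and interpret both via the operadic bracket on $\Hom(\Cc,\Pc)$. Since $\theta$ has arity $1$, composing $\theta$ at the root with $f(c)$ (respectively at the inner leaves of $f(c)$) produces $\theta\cdot\varepsilon_\Cc\star f$ and $f\star(\theta\cdot\varepsilon_\Cc)$; dually, the two summands of $d_\Cc^2$ precompose with $f$ and, after pushing $f$ across the partial decomposition $\Delta_{(1)}$, yield precisely $(\eta_\Pc\theta^c)\star f$ and $f\star(\eta_\Pc\theta^c)$. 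Collecting terms, the four contributions assemble into $\Theta_H\star f-(-1)^{|\Theta_H||f|}f\star\Theta_H=[\Theta_H,f]$. The main obstacle is the book-keeping in this last step: matching each summand of $[\theta,f(-)]-f\cdot(\id\otimes\theta^c-\theta^c\otimes\id)\Delta_{(1)}$ with the corresponding term of $\gamma_H(\Theta_H;f)-(\pm)\sum_j\gamma_H(f;\id,\dots,\Theta_H,\dots,\id)$ under the Koszul sign rule, which requires careful use of $\eta_\Pc\cdot\varepsilon_\Cc\cdot\eta_\Cc=\eta_\Pc$ and $\varepsilon_\Pc\cdot\theta=\varepsilon_\Pc\cdot\eta_\Pc\cdot\theta^c$-type identities to handle the semi-augmented, non-counital interplay between $\theta$ and $\theta^c$.
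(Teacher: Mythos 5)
Your proposal is correct and follows essentially the same route as the paper: associativity/unitality from coassociativity of $\Delta$ and associativity of $\gamma$, closedness of $\Theta_H$ from closedness of $\theta$ and $\theta^c$, and the key identity via the decomposition $\partial^2(f)={d_\Pc}^2\cdot f-f\cdot{d_\Cc}^2$, whose four resulting terms assemble into $[\Theta_H,f]$ exactly as in the paper's displayed computation. The only quibble is your closing remark invoking ``$\varepsilon_\Pc\cdot\theta=\varepsilon_\Pc\cdot\eta_\Pc\cdot\theta^c$-type identities,'' which are neither assumed nor needed — the four summands match up directly with $\gamma_H(\Theta_H;f)-\sum_j\gamma_H(f;\id,\dots,\Theta_H,\dots,\id)$ without any relation between $\theta$ and $\theta^c$.
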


\begin{proof} 
The operad structure $\gamma_H$ is associative since $\Delta$ is coassociative and $\gamma$ is associative. The map $\eta_H$ is a unit for the composition map since $\varepsilon_\Cc$ is a counit for $\Cc$ and $\eta_\Pc$ is a unit for $\Pc$. 
The map $\varepsilon_H$ is a semi-augmentation since $\varepsilon_H \cdot \eta_H = \id_I$. 

Finally, the element $\Theta_H \in \Gamma_1 \Hom(\Cc, \Pc)$ is closed since $\theta^c$ and $\theta$ are, and a direct computation shows that for a linear map $f : \Cc(k) \to \Pc(k)$, we have
\begin{align*}
\partial^2(f) & = {d_\Pc}^2 \cdot f - f \cdot {d_\Cc}^2\\
& = \gamma(\theta \cdot \varepsilon_\Cc; f) - \sum_i \gamma(f ; \id^{\otimes (i-1)}\otimes \theta \cdot \varepsilon_\Cc \otimes \id^{\otimes (k-i-1)})\\
&\hspace{3cm} - \sum_i \gamma(f ; \id^{\otimes (i-1)} \otimes \eta_\Pc \cdot \theta^c \otimes \id^{\otimes (k-i-1)}) + \gamma(\eta_\Pc \cdot \theta^c ; f)\\
& = \gamma(\Theta_H; f) - \sum_i \gamma(f; \id^{\otimes (i-1)} \otimes \Theta_H \otimes \id^{\otimes (k-i-1)}) = [\Theta_H, f].
\end{align*}
\end{proof}

\begin{rem}
In the sequel, we are mainly interested in the central lower filtration on the convolution curved operad (rather than the filtration $F_\bullet$).
\end{rem}

To the symmetric operad $\Hom(\Cc, \Pc)$, we can associate a pre-Lie product $\star$ on $\hom(\Cc, \Pc) \coloneqq \prod_{n \geq 0} \Hom(\Cc, \Pc)(n)$ as in Section 5.4.3 in \cite{LV12}. 
The product $f \star g$ is given by a symmetrical version of the formula
\[
\Cc \xrightarrow{\Delta_{(1)}} \Cc \hcirc_{(1)} \Cc \xrightarrow{f \circ_{(1)} g} \Pc \hcirc_{(1)} \Pc \xrightarrow{\gamma_{(1)}} \Pc.
\]
This pre-Lie product restricts to invariant elements under the conjugation action to endow the gr-dg module $\hom_{\Sb}(\Cc, \Pc) \coloneqq \left(\prod_{n \geq 0} \Hom_{\Sb}(\Cc, \Pc)(n), \Gamma_\bullet \right)$ with the pre-Lie product $\star$. 
The anti-symmetrisation of $\star$ provides a Lie bracket $[f, g] = f \star g -(-1)^{|f||g|} g \star f$. The computation seen at the end of the proof of Proposition \ref{prop: convolution curved operad} shows that $(\hom_{\Sb}(\Cc, \Pc), [-, -], \partial, \Theta_H)$ is a curved Lie algebra.

\begin{defn}
An \emph{operadic curved twisting morphism} is a degree $-1$ element $\alpha \in \hom_{\Sb}(\coideal{\Cc}, \Pc)$ (that is $\alpha \cdot \eta_\Cc : I \to F_1 \Pc$) solution to the curved Maurer--Cartan equation
\[
\Theta_H + \partial(\alpha) + \frac{1}{2}[\alpha, \alpha] = 0.
\]
We denote by $\Tw(\Cc, \Pc)$ the set of operadic curved twisting morphisms.
\end{defn}

\begin{lem}
Operadic curved twisting morphisms form a bifunctor
\[
\Tw(-,-) : \left(\mathsf{curved\ alt.\ coop.}^{\mathrm{lax}}\right)^{\mathrm{op}} \times \mathsf{curved\ sa\ op.}^{\mathrm{lax}} \to \mathsf{Set}.
\]
\end{lem}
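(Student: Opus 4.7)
The plan is to define the functorial action on morphisms. Given lax morphisms $(f, a) : \Pc \to \Pc'$ of curved sa operads and $(g, b^c) : \Cc' \to \Cc$ of curved altipotent cooperads, and an operadic curved twisting morphism $\alpha : \Cc \to \Pc$, I set
\[
\Tw(g, f)(\alpha) := f \cdot \alpha \cdot g + a \cdot \varepsilon_{\Cc'} + \eta_{\Pc'} \cdot b^c.
\]
In the strict case ($a = 0$ and $b^c = 0$) this reduces to the naive composition $f \cdot \alpha \cdot g$; the two correction terms encode the lax data.

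To show well-definedness, I first check that $\beta := \Tw(g,f)(\alpha)$ has degree $-1$ (clear from each summand) and that $\beta \cdot \eta_{\Cc'}$ lands in $F_1 \Pc'$. Using $\varepsilon_{\Cc'} \cdot \eta_{\Cc'} = \id_I$, this composite splits as $f \cdot \alpha \cdot g \cdot \eta_{\Cc'} + a + \eta_{\Pc'} \cdot b^c \cdot \eta_{\Cc'}$: the last term vanishes because $b^c$ has degree $-1$ while $I$ is concentrated in degree zero; the middle term is in $F_1 \Pc'$ by assumption on $a$; and the first lies in $F_1 \Pc'$ because gr-coaugmentation of $g$ gives $g \cdot \eta_{\Cc'} - \eta_\Cc \in F_1 \Cc$, $\alpha \cdot \eta_\Cc \in F_1 \Pc$, and $f$ is filtered.

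The main step is to verify the curved Maurer--Cartan equation $\Theta_{H'} + \partial'(\beta) + \frac{1}{2}[\beta, \beta] = 0$ in $\hom_\Sb(\Cc', \Pc')$. The plan is to expand $\Theta_{H'} = \theta' \cdot \varepsilon_{\Cc'} + \eta_{\Pc'} \cdot \theta^{c'}$ using the two curvature relations
\[
f(\theta) = \theta' + d_{\Pc'} \cdot a + \tfrac{1}{2}[a, a], \qquad \theta^c \cdot g = \theta^{c'} + b^c \cdot d_{\Cc'} + b^c \star b^c,
\]
and to expand $\partial'(\beta)$ using the two differential relations
\[
d_{\Pc'} \cdot f + [a, f] = f \cdot d_\Pc, \qquad d_\Cc \cdot g = g \cdot d_{\Cc'} + (g \otimes b^c - b^c \otimes g) \cdot \Delta_{(1)},
\]
together with $d_{\Pc'} \cdot \eta_{\Pc'} = 0$ and $\varepsilon_{\Cc'} \cdot d_{\Cc'} = 0$. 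The contributions of shape $f \cdot (-) \cdot g$ assemble into $f \cdot \bigl(\Theta_H + \partial \alpha + \tfrac{1}{2}[\alpha, \alpha]\bigr) \cdot g$, which vanishes by the Maurer--Cartan equation for $\alpha$. The remaining terms, involving the corrections $a$, $b^c$, $\varepsilon_{\Cc'}$ and $\eta_{\Pc'}$, are then matched pairwise against the six cross terms produced by the trilinear expansion of $\frac{1}{2}[\beta, \beta]$.

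Functoriality follows routinely: $\Tw(\id, \id) = \id$, and using the lax composition rules $(g_2, b_2^c) \cdot (g_1, b_1^c) = (g_2 \cdot g_1, b_1^c + b_2^c \cdot g_1)$ and $(f_2, a_2) \cdot (f_1, a_1) = (f_2 \cdot f_1, f_2(a_1) + a_2)$, together with the standard compatibilities $f_2 \cdot \eta_{\Pc'} = \eta_{\Pc''}$ and $\varepsilon_\Cc \cdot g_1 = \varepsilon_{\Cc'}$, direct substitution yields the expected equality. The main obstacle is the Maurer--Cartan verification: the bracket on the convolution operad is not local but involves the cooperadic infinitesimal decomposition $\Delta_{(1)}$, so the trilinear expansion of $[\beta, \beta]$ produces many cross terms whose cancellation requires careful sign bookkeeping in the curved conventions of Section~\ref{section: sign conventions}.
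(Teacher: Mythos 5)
Your proposal is correct and follows essentially the same route as the paper: the action on morphisms is by pre/post-composition with $g$ and $f$ plus the correction terms $a\cdot\varepsilon_{\Cc'}$ and $\eta_{\Pc'}\cdot b^c$ coming from the lax data, and well-definedness amounts to the curved Maurer--Cartan equation checked via the lax-morphism relations together with the equation for $\alpha$. The only difference is organizational: the paper treats the two variables separately, defining $F(g,b^c)(\alpha)=\alpha\cdot g+\eta_\Pc\cdot b^c$ and $G(f,a)(\alpha)=f\cdot\alpha+a\cdot\varepsilon_\Cc$ (your combined formula is exactly their composite), which halves the cross-term bookkeeping in each Maurer--Cartan verification that your single trilinear expansion would otherwise require.
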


\begin{proof}
We first prove the functoriality in the left variable. 
Let $(f, a^c) : \Cc \to \Cc'$ be a lax morphism between curved altipotent cooperads. To a curved twisting morphism $\alpha : \Cc' \to \Pc$, we associate the map $F(f, a^c)(\alpha) \coloneqq \alpha \cdot f + \eta_\Pc \cdot a^c : \Cc \to \Pc$. 
We first compute
\[
\partial (\alpha \cdot f + \eta_\Pc \cdot a^c) = (\partial \alpha)\cdot f - \alpha \cdot (f \otimes a^c - a^c \otimes f) \cdot \Delta_{(1)} + \eta_\Pc \cdot a^c \cdot d_\Cc.
\]
Then
\begin{multline*}
(\alpha \cdot f + \eta_\Pc \cdot a^c) \star (\alpha \cdot f + \eta_\Pc \cdot a^c) =\\
(\alpha \star \alpha)\cdot f + \alpha \cdot (f \otimes a^c - a^c \otimes f) \cdot \Delta_{(1)} + \eta_\Pc \cdot (a^c \otimes a^c)\cdot \Delta_{(1)}
\end{multline*}
It follows that
\begin{align*}
\partial (F(f, a^c)) + F(f, a^c) \star F(f, a^c) & = -\eta_\Pc \cdot \theta \cdot f + \eta_\Pc \cdot \left( a^c \cdot d_\Cc + (a^c \otimes a^c)\cdot \Delta_{(1)}\right)\\
& = -\Theta_H' \cdot f + \eta_\Pc \cdot ({\theta^c}' \cdot f - \theta^c) = -\Theta_H.
\end{align*}
Therefore $F(f, a^c)$ is a curved twisting morphism (we have $F(f, a^c) \in \hom_\Sb(\coideal{\Cc}, \Pc)$ since $a^c$ has degree $-1$). 
It is direct to check that $F(\id, 0) = \id$ and $F$ preserves the composition. 

Similarly we prove the functoriality in the right variable. Let $(f, a) : \Pc \to \Pc'$ be a lax morphism between curved operads. To a curved twisting morphism $\alpha : \Cc \to \Pc'$, we associate the map $G(f, a)(\alpha) \coloneqq f \cdot \alpha + a \cdot \varepsilon_\Cc : \Cc \to \Pc'$. 
We first compute
\[
\partial (f \cdot \alpha + a \cdot \varepsilon_\Cc) = - [a, f] \cdot \alpha + f \cdot (\partial \alpha) + d_{\Pc'} \cdot a \cdot \varepsilon_\Cc.
\]
Then
\begin{align*}
(f \cdot \alpha + a \cdot \varepsilon_\Cc) \star (f \cdot \alpha + a \cdot \varepsilon_\Cc) = f \cdot (\alpha \star \alpha) + [a, f] \cdot \alpha + a^2 \cdot \varepsilon_\Cc
\end{align*}
It follows that
\begin{align*}
\partial (G(f, a)) + G(f, a) \star G(f, a) & = f\cdot (\partial \alpha + \alpha \star \alpha) + (d_{\Pc'} a + a^2) \cdot \varepsilon_\Cc\\
& = -f \cdot \Theta_H + \left( f(\theta_\Pc) - \theta_{\Pc'} \right) \cdot \varepsilon_\Cc = -\Theta_{H}'.
\end{align*}
Therefore $G(f, a) : \Cc \to \Pc'$ is a curved twisting morphism (we have $G(f, a) \in \hom_\Sb(\coideal{\Cc}, \Pc)$ since $a : I \to F_1 \Pc'$). 
It is direct to check that $G(\id, 0) = \id$ and $G$ preserves the composition. 
This concludes the proof.
\end{proof}

\begin{thm}
\label{thm: bar-cobar adjunction}
For any curved altipotent cooperad $\Cc$ and any curved sa operad $\Pc$, there are natural bijections
\[\Hom_{\mathsf{curv.\, sa\, op.}^{\mathrm{lax}}}(\hat\Omega \Cc,\Pc)
\cong
\Tw(\Cc, \Pc)
\cong
\Hom_{\mathsf{curv.\, alti.\, coop.}^{\mathrm{lax}}}(\Cc,\B\Pc).
\]
\end{thm}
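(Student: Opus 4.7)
The plan is to exhibit both bijections explicitly, leveraging the freeness of $\hat\Omega\Cc=\Tc(s^{-1}\overline{\Cc})$ as a graded operad on the left and the cofreeness of $\B\Pc=\Tc^c(s\overline{\Pc})$ in the category of altipotent curved cooperads (Proposition \ref{prop: cofree cooperad}) on the right. For the left bijection, a lax morphism $(F,a):\hat\Omega\Cc\to\Pc$ consists of a morphism of graded operads $F$ and a map $a:I\to F_1\Pc$ of degree $-1$. Freeness identifies $F$ with its restriction $F|_{s^{-1}\overline{\Cc}}$. I would package this with $a$ into a single map $\alpha:\Cc\to\Pc$ of degree $-1$ defined by $\alpha|_{\overline{\Cc}}(\nu):=F(s^{-1}\nu)$ and $\alpha\cdot\eta_\Cc:=a$. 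The condition $a:I\to F_1\Pc$ is precisely the requirement that $\alpha\in\hom_{\Sb}(\coideal{\Cc},\Pc)_{-1}$ in the sense of the parenthetical defining $\Tw(\Cc,\Pc)$, and the assignment $(F,a)\leftrightarrow\alpha$ is manifestly a bijection of underlying graded data.

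The substance is to match the structural equations. Both sides of the lax differential equation \eqref{eq: lax morphism diff} are operad derivations valued in $\Pc$, so by freeness one need only check the corestriction to the generators $s^{-1}\overline{\Cc}$. Unpacking $d_\omega=D_0+D_1+D_2$ on generators and applying $F$ produces respectively the contributions $\eta_\Pc\theta^c$, $\partial\alpha|_{\overline{\Cc}}$, and $\frac{1}{2}[\alpha,\alpha]|_{\overline{\Cc}}$ up to signs, while the term $[a,F]$ supplies the missing $\varepsilon_\Cc$-parts coming from the semi-augmentation of $\Pc$. Evaluating the curvature equation \eqref{eq: lax morphism curvature} at $1\in I$ and expanding $F(\theta_\omega)$ via its explicit formula through $\eta_\Cc$ yields the remaining $I$-components. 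Putting these together repackages exactly to the curved Maurer--Cartan equation $\Theta_H+\partial\alpha+\frac{1}{2}[\alpha,\alpha]=0$.

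For the right bijection, I would argue dually. Given a lax morphism $(G,a^c):\Cc\to\B\Pc$, the cofreeness of $\Tc^c(s\overline{\Pc})$ in altipotent cooperads (Proposition \ref{prop: cofree cooperad}) identifies the underlying graded cooperad morphism $G$ with its projection $p_{s\overline{\Pc}}\cdot G:\coideal{\Cc}\to s\overline{\Pc}$. Combining this with $a^c:\Cc\to I$ one defines $\alpha:\Cc\to\Pc$ of degree $-1$ by $\alpha|_{\overline{\Cc}}:=s^{-1}\cdot p_{s\overline{\Pc}}\cdot G$ and $\alpha\cdot\eta_\Cc:=\eta_\Pc\cdot a^c\cdot\eta_\Cc$, the latter landing in $F_1\Pc$ by the gr-coaugmentation hypothesis on $G$. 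Projecting \eqref{eq: lax morphism diff coop} onto the cogenerators $s\overline{\Pc}$ and \eqref{eq: lax morphism curvature coop} onto $I$ again reproduces the curved Maurer--Cartan equation, now read through the three bar pieces $d_0,d_1,d_2$ and the bar curvature $\theta^c_\beta$ instead of $D_0,D_1,D_2$ and $\theta_\omega$.

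Naturality in both variables follows from the functoriality of $\hat\Omega$ and $\B$ established in Propositions \ref{prop: bar functor} and \ref{prop: cobar const is a functor}, combined with the functoriality of $\Tw(-,-)$ stated just before the theorem. The principal technical obstacle is the sign and curvature bookkeeping: one must confirm that the three derivation pieces on the bar or cobar side, together with the corresponding curvatures, combine with the semi-augmentation $\varepsilon_\Pc$ and the gr-coaugmentation $\eta_\Cc$ to match term by term the ingredients $\eta_\Pc\theta^c$, $\theta_\Pc\varepsilon_\Cc$, $\partial\alpha$ and $\frac{1}{2}[\alpha,\alpha]$ of the Maurer--Cartan equation. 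This is the curved, non-(co)augmented refinement of the classical bar-cobar twisting morphism correspondence, and the choices of signs fixed in Section \ref{section: sign conventions} should ensure everything balances.
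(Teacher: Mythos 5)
Your proposal is correct and follows essentially the same route as the paper's proof: both bijections are obtained by using the freeness of $\hat\Omega\Cc$ (resp.\ the cofreeness of $\B\Pc$ via Proposition \ref{prop: cofree cooperad}) to identify a lax morphism with its (co)restriction to (co)generators together with the extra degree $-1$ datum, packaging these into a single map $\alpha\in\hom_\Sb(\coideal{\Cc},\Pc)$, and then matching the two lax-morphism equations with the components of the curved Maurer--Cartan equation on $\overline{\Cc}$ and on the $I$-part. The only quibbles are terminological (for the operad side one checks the \emph{restriction} to generators, not the corestriction, and the $\partial\alpha$ term arises from $d_\Pc\cdot F$ together with $F\cdot D_1$ rather than from $D_1$ alone), neither of which affects the argument.
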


\begin{proof}
We make the first bijection explicit.
A lax morphism of curved sa operads $(f_\alpha, a_\alpha) : \Tc (s^{-1}\overline{\Cc}) \to \Pc$ is uniquely determined by a map $-s \bar\alpha : s^{-1}\overline{\Cc} \to \Pc$ of degree $0$ and an $\Sb$-module map $a_\alpha : I \to F_1\Pc$ of degree $-1$. 
This is equivalent to the data of a map $\alpha \coloneqq a_\alpha \cdot \varepsilon_\Cc + \bar \alpha : \Cc \to \Pc$ of degree $-1$ such that $\alpha \cdot \eta_\Cc : I \to F_1\Pc$, or equivalently to $\alpha \in \hom_\Sb(\coideal{\Cc}, \Pc)$. 
Moreover, $f_\alpha$ commutes with the predifferential up to the term $[a_\alpha, f_\alpha]$ if and only if the following diagram commutes
\[
\xymatrix@C=50pt{s^{-1}\overline{\Cc} \ar[r]^{-s\bar\alpha} \ar[d]_{D_0+D_{1} + D_{2}} & \Pc \ar[r]^{d_{\Pc}} & \Pc\\
I \oplus s^{-1}\overline{\Cc} \oplus (s^{-1}\overline{\Cc} \hcirc_{(1)} s^{-1}\overline{\Cc}) \ar[rr]_{\hspace{.75cm} \eta_\Pc + (-s\bar\alpha) + (-s\bar\alpha) \hcirc_{(1)} (-s\bar\alpha)} && \Pc \oplus \Pc \circ_{(1)} \Pc \ar[u]_{\id_\Pc + \gamma_{(1)}}}
\]
up to the term $[a_\alpha, -s\bar\alpha]$. 
This corresponds to the equality $\eta_\Pc \cdot \theta^c + \partial(\bar\alpha) + [a_\alpha, \bar\alpha] + \bar\alpha \star \bar\alpha = 0$ on $\overline{\Cc}$, or equivalently to the restriction of the equality $\Theta_H + \partial(\alpha) + \alpha \star \alpha = 0$ on $\overline{\Cc}$. 
The fact that the morphism $f_\alpha$ sends the curvature $\theta_\omega$ to $\theta_\Pc + d_\Pc a_\alpha + \frac{1}{2}[a_\alpha, a_\alpha]$ coincides with the restriction of the equation $\Theta_H + \partial(\alpha) + \alpha \star \alpha = 0$ to $\im \eta_\Cc$ since $\alpha \cdot \eta_\Cc = a_\alpha$.

We now make the second bijection explicit. 
A lax morphism of curved altipotent cooperads $(g_{\alpha}, a_\alpha^c) : \Cc \rightarrow {\Tc^{c}}(s\overline{\Pc})$ is uniquely determined by a map $s\bar\alpha : \coideal{\Cc} \rightarrow s\overline{\Pc}$ (by Proposition \ref{prop: cofree cooperad}) and an $\Sb$-module map $a_\alpha^c : \Cc \to I$ of degree $-1$. 
This is equivalent to a map $\alpha \coloneqq \bar\alpha + \eta_\Pc \cdot a_\alpha^c : \Cc \rightarrow \Pc$ of degree $-1$ satisfying $\alpha \cdot \eta_\Pc : I \to F_1 \Pc$, or equivalently to $\alpha \in \hom_\Sb(\coideal{\Cc}, \Pc)$.
Moreover, because $g_\alpha(\mu) = \varepsilon_\Cc(\mu) + s\bar \alpha(\mu) + (s\bar\alpha \otimes s\bar\alpha)\Delta_{(1)} + \dots \in I\oplus s\overline{\Pc} \oplus s\overline{\Pc} \hcirc_{(1)} s\overline{\Pc} \oplus \dots$ (by Construction 3.10 in \cite{BMDC20}), the morphism $g_{\alpha}$ satisfies Equation \eqref{eq: lax morphism curvature coop} if and only if $\varepsilon_\Pc \cdot (-d_\Pc \cdot \bar\alpha - \gamma_{(1)}(\bar\alpha \otimes \bar\alpha) \cdot \Delta_{(1)}) = \theta^c + a_\alpha^c d_\Cc + a_\alpha^c \star a_\alpha^c$, that is to say $\varepsilon_\Pc (\Theta_H + \partial \alpha + \alpha \star \alpha) = 0$. 
Also $g_\alpha$ commutes with the predifferentials up to $(g_\alpha \otimes a_\alpha^c - a_\alpha^c \otimes g_\alpha) \cdot \Delta_{(1)}$ (see Equation \eqref{eq: lax morphism diff coop}) if and only if the following diagram commutes
\[
\xymatrix@C=120pt@R=10pt{\Cc \ar[r]^{\varepsilon_\Cc + s\bar\alpha + ((s\bar\alpha) \otimes (s\bar\alpha)) \cdot \Delta_{(1)} \hspace{1cm}} \ar[dd]_{d_{\Cc}} & I \oplus s\overline{\Pc} \oplus s\overline{\Pc} \hcirc_{(1)} s\overline{\Pc} \ar[dd]^{d_{\beta}^{|s\overline{\Pc}} = (d_0 + d_{1} + d_{2})^{|s\overline{\Pc}}}\\
&\\
\Cc \ar[r]_{s\bar\alpha} & s\overline{\Pc}}
\]
up to the term $(s\bar\alpha \otimes a_\alpha^c - a_\alpha^c \otimes s\bar\alpha) \cdot \Delta_{(1)}$. 
Similarly as before, this is equivalent to the projection of the equality $\Theta_H + \partial(\alpha) + \alpha \star \alpha = 0$ on to $s\overline{\Pc}$.
This concludes the proof.
\end{proof}

\begin{example}\
\label{ex: twisting morph}
\begin{itemize}
\item
Through the adjunction the identity morphism $\id : \B \Pc \to \B\Pc$ corresponds to the operadic curved twisting morphism $\pi : \B \Pc \twoheadrightarrow s\overline{\Pc} \cong \overline{\Pc} \rightarrowtail \Pc$.
\item
Through the adjunction the identity morphism $\id : \hat\Omega \Cc \to \hat\Omega\Cc$ corresponds to the operadic curved twisting morphism $\iota : \Cc \twoheadrightarrow \overline{\Cc} \cong s^{-1}\overline{\Cc} \rightarrowtail \hat\Omega \Cc$.
\end{itemize}
\end{example}

In Section \ref{sec: bar-cobar resolution}, we study the counit of the bar-cobar adjunction above and we prove that it provides a cofibrant resolution in the model category of curved operads. \\

Before that we use the bar-cobar adjunction in order to describe operadic curved twisting morphisms as lax morphisms of curved sa operads from $\scLi$ to the convolution curved operad $\Hom_\Sb(\Cc, \Pc)$. 
This extends a result given in \cite[Section 7]{fW19} which applies to dg operads and cooperads which are connected (that is satisfy $\Pc(0) = 0 = \Cc(0)$ and $\Pc(1) = \Kf = \Cc(1)$). 
Then, in Section \ref{sec: convolution algebra}, we use this lax morphism in order to describe an $\scLi$ structure on the convolution algebra $\homf(C, A)$ for a $\Cc$-coalgebra $C$ and a $\Pc$-algebra $A$ (on which $\Hom(\Cc, \Pc)$ acts).

Let $a \in \Gamma_1 \Hom(\Cc, \Pc)(1) = \hom_\Sb(\coideal{\Cc}, \Pc)(1)$ be an operadic curved twisting morphism. 
For $f \in \Hom(\Cc, \Pc)$, we fix $\partial_a(f) \coloneqq \partial(f) + [a, f]$ and $\Theta_a \coloneqq \Theta_H + \partial a + \frac{1}{2}[a, a]$. 
By Lemma \ref{lem: lax morphism unlax}, the collection $(\Hom(\Cc, \Pc), \gamma_H, \eta_H, \partial_a, \Theta_a, \varepsilon_H)$ is curved sa operad, that we call a \emph{twisted convolution curved operad} and denote $\Hom(\Cc, \Pc)_a$.

\begin{prop}
\label{prop: twisting morphisms as morphisms}
For any curved altipotent cooperad $\Cc$ and any curved sa operad $\Pc$, there is a bijection
\[
\Tw(\Cc, \Pc) \cong \Hom_{\mathsf{curv.\, sa\, op.}^{\mathrm{lax}}}(\scLi, \Hom(\Cc, \Pc)).
\]
Moreover, given an operadic curved twisting morphism $\alpha \in \Tw(\Cc, \Pc)$, we get a morphism of curved operads $f_\alpha : \scLi \to \Hom(\Cc, \Pc)_{a_\alpha} = (\Hom(\Cc, \Pc), \partial_{a_\alpha}, \Theta_{a_\alpha})$, where $a_\alpha :  I \to \Hom(\Cc, \Pc)$ is defined by $a_\alpha(1) = \alpha(1) : \Cc(1) \to \Pc(1)$.
\end{prop}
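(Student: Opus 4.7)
My plan is to leverage the bar--cobar adjunction (Theorem \ref{thm: bar-cobar adjunction}) together with the special one-dimensional form of the Koszul dual cooperad $\scLie^\antishriek$. Since $\scLi=\hat\Omega\scLie^\antishriek$, the adjunction already gives
\[
\Hom_{\mathsf{curv.\, sa\, op.}^{\mathrm{lax}}}(\scLi,\Hom(\Cc,\Pc))\cong \Tw(\scLie^\antishriek,\Hom(\Cc,\Pc)),
\]
so proving the claimed bijection reduces to identifying $\Tw(\scLie^\antishriek,\Hom(\Cc,\Pc))$ with $\Tw(\Cc,\Pc)$.

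To construct this identification, I would use that $\scLie^\antishriek(n)\cong\Kf\cdot l_n^c$ is one-dimensional with trivial $\Sb_n$-action. Consequently, an $\Sb$-equivariant degree $-1$ map $\alpha_{MC}:\scLie^\antishriek\to\Hom(\Cc,\Pc)$ is exactly the data of its values $\alpha_{MC}(l_n^c)\in\Hom(\Cc(n),\Pc(n))^{\Sb_n}=\hom_\Sb(\Cc,\Pc)(n)$, which assemble into an element $\alpha\in\hom_\Sb(\Cc,\Pc)$ of degree $-1$ via $\alpha_n\coloneqq \alpha_{MC}(l_n^c)$. The filtration condition $\alpha_{MC}\cdot\eta_{\scLie^\antishriek}:I\to F_1\Hom(\Cc,\Pc)$ defining operadic twisting morphisms on the left matches, under the coaugmentation $\eta_{\scLie^\antishriek}:1\mapsto l_1^c$, the condition $\alpha\cdot\eta_\Cc:I\to F_1\Pc$ on the right.

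The main step is then to check that, under this dictionary, the two curved Maurer--Cartan equations coincide. The key combinatorial input is the infinitesimal decomposition formula
\[
\Delta_{(1)}(l_n^c)=\sum_{\substack{p+q=n+1\\p\geq 1,q\geq 0}}\sum_{\sigma\in Sh_{q,p-1}^{-1}}(l_p^c\circ_1 l_q^c)^\sigma
\]
recalled in the excerpt, combined with the classical identity $\sum_{\sigma\in Sh_{q,p-1}^{-1}}(\mu\circ_1\nu)^\sigma=\sum_{i=1}^p\mu\circ_i\nu$ valid for $\Sb$-invariant $\mu,\nu$. This identity converts the convolution pre-Lie bracket on $\hom_\Sb(\scLie^\antishriek,\Hom(\Cc,\Pc))$ into the pre-Lie product $\star$ on $\hom_\Sb(\Cc,\Pc)$ described before the proposition. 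Similarly, the curvature $\Theta_H\cdot\varepsilon_{\scLie^\antishriek}+\eta_{\Hom(\Cc,\Pc)}\cdot\theta^c_{\scLie^\antishriek}$ of the convolution operad on $\scLie^\antishriek$ collapses, evaluated on each $l_n^c$, to the value of $\Theta_H$ on $\Cc(n)$, while the differential terms match because the internal predifferential on $\scLie^\antishriek$ is zero on cogenerators. I expect the technical heart of the proof to lie in assembling these combinatorial identifications, carefully tracking the suspension signs and the interaction of the curvature $\theta^c_{\scLie^\antishriek}$ with the arity-one piece.

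For the ``moreover'' statement, tracing through the explicit first bijection in the proof of Theorem \ref{thm: bar-cobar adjunction} shows that the lax morphism $(f_\alpha,a_\alpha):\scLi\to\Hom(\Cc,\Pc)$ associated with $\alpha_{MC}$ has $a_\alpha=\alpha_{MC}\cdot\eta_{\scLie^\antishriek}:I\to F_1\Hom(\Cc,\Pc)$, which under our dictionary is precisely the arity-one component $\alpha_1=\alpha|_{\Cc(1)}$, i.e.\ $a_\alpha(1)=\alpha(1)$. Applying Lemma \ref{lem: lax morphism unlax} then converts this lax morphism into the claimed strict morphism of curved operads $f_\alpha:\scLi\to\Hom(\Cc,\Pc)_{a_\alpha}$, concluding the proof.
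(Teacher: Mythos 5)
Your proposal is correct and follows essentially the same route as the paper: reduce via the bar--cobar adjunction to $\Tw(\scLie^\antishriek,\Hom(\Cc,\Pc))$, use the one-dimensionality and trivial $\Sb_n$-action of $\scLie^\antishriek(n)$ to identify such twisting morphisms with elements of $\hom_\Sb(\coideal{\Cc},\Pc)$, match the curved Maurer--Cartan equations using the infinitesimal decomposition formula \eqref{eq: coproduct scLiea} and the vanishing of $\theta^c_{\scLie^\antishriek}$, and finally apply Lemma \ref{lem: lax morphism unlax} for the ``moreover'' part. The only point worth noting is that the paper phrases the arity-wise identification in terms of the filtration $\Gamma_\bullet$ on the convolution operad (so that $\Gamma_1\Hom(\Cc,\Pc)(1)=\Hom(\coideal{\Cc},\Pc)(1)$ matches the condition $\alpha\cdot\eta_\Cc: I\to F_1\Pc$), which is exactly the matching of filtration conditions you describe.
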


\begin{proof}
By Theorem \ref{thm: bar-cobar adjunction}, we have
\[
\Hom_{\mathsf{curv.\, sa\, op.}^{\mathrm{lax}}}(\scLi, \Hom(\Cc, \Pc)) \cong \Tw (\scLie^\antishriek, \Hom(\Cc, \Pc)).
\]
Given a curved sa operad $(\Qc, F_\bullet)$, there is an isomorphism
\[
\hom_\Sb\left(\coideal{\scLie^\antishriek}, \Qc\right) \cong F_1\Qc(0)^{\Sb_0} \times F_1 \Qc(1)^{\Sb_1} \times \prod_{n \geq 2} \Qc(n)^{\Sb_n}
\]
since the $\Sb_n$ action on $\scLie^{\antishriek}$ is trivial. When applied to $(\Qc, F_\bullet) = (\Hom(\Cc, \Pc), \Gamma_\bullet)$ (on which the $\Sb$-action is given by conjugation), this implies the isomorphisms
\[
\hom_\Sb\left(\coideal{\scLie^\antishriek}, \Hom(\Cc, \Pc)\right) \cong \hom_\Sb(\coideal{\Cc}, \Pc).
\]
Then, denoting by $\Upsilon_\alpha \in \hom_\Sb\left(\coideal{\scLie^\antishriek}, \Hom(\Cc, \Pc)\right)$ the element associated with $\alpha \in \hom_\Sb(\coideal{\Cc}, \Pc)$ through the bijection, a direct computation using that:
\begin{itemize}
\item
the curvature on $\scLie^\antishriek$ is zero, and that
\item
the infinitesimal decomposition map of $\scLie^\antishriek$ is given in Equation \eqref{eq: coproduct scLiea},
\end{itemize}
shows that $\Upsilon_\alpha$ is a solution of the curved Maurer--Cartan equation if and only if $\alpha$ is a solution of the curved Maurer--Cartan equation. 

Then, given an operadic curved twisting morphism $\alpha \in \Tw(\Cc, \Pc)$, the previous bijection provides a corresponding lax morphism $(f_\alpha, a_\alpha) : \scLi \to \Hom(\Cc, \Pc)$, where $a_\alpha(1) = \alpha(1) : \Cc(1) \to \Pc(1)$. 
By Lemma \ref{lem: lax morphism unlax}, this is equivalent to a morphism of curved operads $f_\alpha : \scLi \to \Hom(\Cc, \Pc)_{a_\alpha}$. 
This finishes the proof.
\end{proof}

\subsubsection{Bar-cobar resolution}
\label{sec: bar-cobar resolution}

In order to make use of the known bar-cobar resolution in the dg context, we put some weight condition on our curved operad. 
The weight filtration should not be confused with the complete filtration $F_\bullet$ of the objects of the base category.

We call an $\Sb$-module $M$ \emph{weight filtered} if it is endowed with an $\Nb$-filtration
\[
M = M^{(\leq 0)} \subset M^{(\leq 1)} \subset \cdots \subset M^{(\leq w)} \subset \cdots.
\]
We assume moreover that the structural maps of $M$ (the maps defining the actions of the symmetric groups and the predifferential) and the maps of weight filtered $\Sb$-modules are compatible with the weight filtration. 
Because we are working over a field of characteristic 0, we can find (by Maschke's Theorem) a splitting on the level of $\Sb$-modules of the filtration. 
We denote by $M^{(w)}$, for $w \geq 0$, the elements of weight $w$ when such a splitting is fixed. 

We call a dg operad $\Qc$ \emph{weight filtered} if its underlying $\Sb$-module is weight filtered and if its structural maps (the composition map, the differential, \ldots) are weight filtered. 
We say that the weight filtration is \emph{connected} if $\Qc^{(0)} = \Qc^{(\leq 0)} = I$. 
Moreover, we call $\Qc$ \emph{bounded below} when for each fixed $w \geq 0$, the chain complex underlying $\Qc^{(\leq w)}$ is bounded below.

\begin{rem}
In the next proposition, we consider a curved operad such that $\Gr \Pc$ is weight filtered. Be aware of the fact that the filtration $F_p \Pc$ differs in general from the weight filtration $(\Gr \Pc)^{(\leq w)}$.
\end{rem}

We work here the model category structure on curved operads described in \cite[Appendix C]{BMDC20} and recalled in Appendix \ref{appendix: model cat struct}. 
In that article, the notation $\Tc (\vartheta I)$ stands for the initial object of the category of curved operads with non-zero curvature (that is $\vartheta$ is a degree $-2$ element in filtration degree 1). 
We say that a morphism of curved operads $f : \Rc \to \Pc$ is an \emph{$\Sb$-cofibrant resolution} when it is a graded quasi-isomorphism and a strict surjection (that is $f$ a surjection such that for all $p \geq 0$, $f(F_p\Rc) = f(\Rc) \cap F_p\Pc$) and when moreover the gr-dg $\Sb$-module map $\Tc (\vartheta I) \to \Rc$ is cofibrant. 
Furthermore, we say that the filtration $F_\bullet$ on an object $M$ comes from the graduation $\Gr_q M$ when $F_p M = \oplus_{q \geq p} \Gr_q M$. 

\begin{thm}
\label{thm: bar-cobar resolution}
Let $\Pc$ be a curved operad such that $\Gr \Pc$ is a connected bounded below weight filtered operad. The counit of the adjunction
\[
\hat\Omega \B \Pc \to \Pc
\]
obtained from Theorem \ref{thm: bar-cobar adjunction} is a graded quasi-isomorphism and a strict surjection.

Assuming moreover that the filtration $F_\bullet$ on the curved operad $\Pc$ comes from a graduation $\Gr_\bullet$ (on the level of $\Sb$-modules) and that the curvature of $\Pc$ is concentrated in $\Gr_1 \Pc$, then the map $\cfree(0) = \Tc(\vartheta I) \to \hat\Omega \B \Pc$ is cofibrant in the model category structure on curved operad described in \cite[Appendix C]{BMDC20}.
\end{thm}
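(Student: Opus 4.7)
The plan is to reduce part one to the classical bar--cobar resolution via the associated graded functor $\Gr$, then establish part two by a cellular-filtration argument exploiting the extra graded hypothesis.

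For the first part, apply $\Gr$ to the counit $\hat\Omega\B\Pc \to \Pc$. Because $\theta \in F_1\Pc$, the predifferential $d$ descends to a genuine differential on $\Gr\Pc$, giving an augmented dg operad; and because the curvature-dependent pieces of the bar and cobar derivations ($d_0$ and $D_0$ respectively) each raise the filtration by exactly one, both constructions commute with $\Gr$ up to natural isomorphism, so the induced map is the classical dg counit $\Omega B(\Gr\Pc) \to \Gr\Pc$. By the standing assumption that $\Gr\Pc$ is connected and bounded below weight filtered, the classical bar--cobar resolution theorem applies weight-layer by weight-layer (see e.g.\ \cite{LV12}, Section 6.5) to show that this counit is a quasi-isomorphism, which is precisely the graded quasi-isomorphism claim.

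Strict surjectivity follows from the explicit form of the counit. It is the curved operad morphism extending the operadic twisting morphism $\pi$ of Example \ref{ex: twisting morph}, and on the generating $\Sb$-submodule $s^{-1}\overline{\B\Pc} \subset \hat\Omega\B\Pc$ it is the composite $s^{-1}\overline{\B\Pc} \twoheadrightarrow s^{-1}(s\overline{\Pc}) \cong \overline{\Pc} \hookrightarrow \Pc$, which is manifestly a strict filtered surjection onto $\overline{\Pc}$. Compatibility of $\gamma_\Pc$ with $F_\bullet$ then propagates strict surjectivity to all of $\hat\Omega\B\Pc$.

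For the cofibration claim, use the extra hypothesis to equip $\hat\Omega\B\Pc$ with a well-ordered cell structure. Since $\Pc$ is graded and $\theta \in \Gr_1\Pc$, the bar construction $\B\Pc$ inherits a total grading by summing label weights across each tree, and $\hat\Omega\B\Pc$ in turn acquires a grading by summing weights over the nested $\B\Pc$-trees. The inclusion $\Tc(\vartheta I) \hookrightarrow \hat\Omega\B\Pc$ then decomposes as a transfinite composition of pushouts of the generating cofibrations of the model structure recalled in Appendix \ref{appendix: model cat struct}, each pushout attaching the next weight layer $M_w$ as free gr-dg $\Sb$-cells of the form $\Tc(\vartheta I) \to \Tc(\vartheta I \hoplus M_w)$.

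The main obstacle will be verifying that the derivation $d_\omega = D_0 + D_1 + D_2$ and the curvature $\theta_\omega$ of $\hat\Omega\B\Pc$ are compatible with this cellular filtration, so that each stage is truly a pushout of a generating cofibration and not merely a map of the correct underlying shape. The hypothesis $\theta \in \Gr_1\Pc$ is essential here: it confines $D_0$ to a single weight shift, so the full derivation sends each newly attached cell into the subcomplex of previously attached cells. This is what promotes the weight filtration of $\hat\Omega\B\Pc$ from a mere grading of its underlying free operad into a genuine cell complex in the transferred model structure.
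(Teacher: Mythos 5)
Your treatment of the first assertion follows the same route as the paper: apply $\Gr$, identify $\Gr\hat\Omega\B\Pc$ with the classical cobar--bar construction on $\Gr\Pc$, and invoke the known dg resolution theorem; the strict surjectivity argument via the explicit counit is also essentially the paper's (which factors it as $\hat\Omega\B\Pc \twoheadrightarrow \Tc(\overline{\Pc}) \xrightarrow{\bar\gamma} \Pc$). One correction: $\Gr\Pc$ is only \emph{semi-augmented} (the augmentation $\varepsilon$ is an $\Sb$-module map, not an operad map, and $\Pc(0)$ may be nonzero), so $\mathrm{B}\,\Gr\Pc$ is still a \emph{curved} cooperad and the classical statement in \cite[Section 6.5]{LV12} does not apply; the correct reference is the semi-augmented version, Theorem 3.4.4 of \cite{HM12}, which is what the paper uses.

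For the cofibrancy claim there is a genuine gap. The generating cofibrations of the transferred model structure are $\cfree\bigl(\hat\Zc^{1,\infty}_{q,n}(m) \to \hat\Bc^{1,\infty}_{q,n}(m)\bigr)$: each cell is a \emph{pair} of generators $(\xi,\zeta)$ with $\xi$ in homological degree $n+1$ and filtration degree $q$, $\zeta$ in degree $n$ and filtration degree $q+1$, subject to $d\xi+\zeta$ lying in the previously attached suboperad and $d\zeta$ forced by $d^2\xi=[\vartheta,\xi]$ (this is the content of \cite[Proposition C.33]{BMDC20}). Attaching a whole weight layer $M_w$ via $\Tc(\vartheta I)\to\Tc(\vartheta I\hoplus M_w)$ is not a pushout of these generators and is not in general a cofibration, so your proposed cell structure has the wrong shape from the start. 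What actually makes the argument work, and what is missing from your sketch, is the mechanism that organizes the generators $s^{-1}\overline{\Tc}(s\overline{\Pc})$ of the cobar construction into such pairs: one first constructs a contracting homotopy $h$ for the curvature part $d_0$ of the bar differential (using $\Kf\theta_\Pc\oplus V=s\overline{\Pc}$), giving $\ker d_0=\im d_0$ on $\B\Pc$; one then filters by the \emph{number of bar vertices} (not by the weight of the $\Pc$-labels), at each stage splitting off $\ker(\Gr d_\omega)$ and a complement $W_l$ (possible since $\Kf[\Sb_n]$ is semisimple) and pairing each new generator $v$ with $D_1 v$, the hypothesis $\theta_\Pc\in\Gr_1\Pc$ guaranteeing that $D_0$ raises the internal filtration by exactly one so that $\zeta^\alpha$ can absorb the filtration-shifted part of $d_\omega\xi^\alpha$. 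Your closing paragraph correctly identifies that compatibility of $d_\omega$ and $\theta_\omega$ with the cells is the crux, but the weight grading alone does not produce the required $(\xi,\zeta)$-pairing; without the contracting homotopy for $d_0$ and the vertex-count induction the decomposition into generating cofibrations cannot be established.
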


\begin{proof}
We denote by $\Omega$ the cobar construction of a coaugmented curved cooperad and by $\Bm$ the bar construction of an sa dg operad. These constructions appear \cite{HM12} (see also the corrigendum \cite{HM23}). 
The functor $\Gr$ is strong monoidal since $\Kf$ is a field so, by a careful inspection of the differentials, $\Gr \hat\Omega \B \Pc \cong \Omega \Bm \Gr \Pc$. 
By Theorem 3.4.4 in \cite{HM12}, we obtain that $\hat\Omega \B \Pc \to \Pc$ is a graded quasi-isomorphism. 
It is a strict surjection since it is the composite of the two strict surjections $\hat\Omega \B \Pc \twoheadrightarrow \Tc (\overline{\Pc}) \xrightarrow{\bar\gamma} \Pc$.

When the curvature $\theta_\Pc$ is non zero, the curvature $\theta_\omega = \theta_\Pc$ of $\hat\Omega \B \Pc$ is also non zero and we have $\ker ([\theta_\Pc, -]) = \Tc(\theta_\Pc) \cong \Tc (\vartheta I)$ by the fact that the graded operad underlying the cobar construction is free. 
Assume now that the filtration $F_\bullet$ on $\Pc$ comes from a graduation $\Gr_q \Pc$. 
We now show that the map $\Tc(\vartheta I) \to \Om \B \Pc$, sending $\vartheta$ to $\theta_\omega = \theta_\Pc$, is a cofibration. 
We recall the following notations from the definitions of the bar and cobar constructions. The gr-dg differential on the cobar construction is $d_\omega = D_0 + D_1 + D_2$, where $D_1$ is induced by the pre-differential on the bar construction $d_0 + d_1 + d_2$.

We first define the map $h : \B\Pc \to \B \Pc$ by
\begin{align*}
h(t) & = -t' \text{ when } t = \vcenter{\tiny{
\xymatrix@M=2pt@R=4pt@C=4pt{& \cdots &\\
& t' \ar@{-}[lu] \ar@{-}[ur] \ar@{-}[d] &\\ & s\theta \ar@{-}[d] &\\ &&}}} \qquad \text{ and }\\
h(t) & = 0 \text{ when the bottom tree of $t$ is in $V$ satisfying $\Kf \theta_\Pc \oplus V = s\overline{\Pc}$.}
\end{align*}
This defines a contracting homotopy for $d_0$ on $\B \Pc$ as  we  can see by a direct computation:
\[
(d_0 h+hd_0)\left( \vcenter{\tiny{
\xymatrix@M=2pt@R=4pt@C=4pt{& \cdots &\\
& t' \ar@{-}[lu] \ar@{-}[ur] \ar@{-}[d] &\\ & s\theta \ar@{-}[d] &\\ &&}}}\right) = d_0(-t') + h\left( -\vcenter{\tiny{
\xymatrix@M=2pt@R=4pt@C=4pt{& \cdots &\\
& d_0(t') \ar@{-}[lu] \ar@{-}[ur] \ar@{-}[d] &\\ & s\theta \ar@{-}[d] &\\ &&}}} - \vcenter{\tiny{
\xymatrix@M=2pt@R=4pt@C=4pt{& \cdots &\\
& t' \ar@{-}[lu] \ar@{-}[ur] \ar@{-}[d] &\\ & s\theta \ar@{-}[d] &\\ & s\theta \ar@{-}[d] &\\ &&}}}\right) = \vcenter{\tiny{
\xymatrix@M=2pt@R=4pt@C=4pt{& \cdots &\\
& t' \ar@{-}[lu] \ar@{-}[ur] \ar@{-}[d] &\\ & s\theta \ar@{-}[d] &\\ &&}}}
\]
and when the bottom vertex $t_0$ of $t = \vcenter{\tiny{
\xymatrix@M=2pt@R=4pt@C=4pt{t_1 && t_k\\
& t_0 \ar@{-}[lu] \ar@{-}[u] \ar@{-}[ur] \ar@{-}[d] &\\ &&}}}$ is in $V$
\[
(d_0 h+hd_0)(t) = h\left( \vcenter{\tiny{
\xymatrix@M=2pt@R=4pt@C=4pt{& \cdots &\\
& t \ar@{-}[lu] \ar@{-}[ur] \ar@{-}[d] &\\ & -s\theta \ar@{-}[d] &\\ &&}}} + \sum_i (-1)^{|t_0|}\vcenter{\tiny{
\xymatrix@M=2pt@R=4pt@C=4pt{& d_0(t_i)&\\
& t_0 \ar@{-}[lu] \ar@{-}[u] \ar@{-}[ur] \ar@{-}[d] &\\ &&}}}\right) = t
\]
because $d_0$ is a coderivation. 
As a consequence, we have $\ker d_0 = \im d_0$ on $\B \Pc$. 
We decompose $\overline{\B \Pc}$ by the number of vertices $\overline{\B \Pc} = \oplus_{l \in \Nb^*} \Tc(s\overline{\Pc})^{(l)}$. 
We denote by $V_1 = V$ the direct complement of $\Kf \theta_\Pc$ in $s^{-1}(s\overline{\Pc}) = \overline{\Pc}$ as above. 
We fix
\[
\left\{\begin{array}{lcll}
S_0 & \coloneqq & \{0\},\\
S_1 & \coloneqq & \ker({\Gr d_\omega}_{|V_1}) \oplus D_1\ker({\Gr d_\omega}_{|V_1}) & \subset s^{-1}(s\overline{\Pc} \oplus \Tc(s\overline{\Pc})^{(2)}),\\
S_2 & \coloneqq & S_1 \oplus W_1 \oplus D_1 W_1 & \subset s^{-1}(s\overline{\Pc} \oplus \Tc(s\overline{\Pc})^{(2)}),
\end{array}\right.
\]
where $W_1$ is a direct complement of $\ker({\Gr d_\omega}_{|V_1})$ into $V_1$ (such a direct complement exists since $\Kf$ is a field of characteristic 0 and $\Kf[\Sb_n]$ is semi-simple for every $n \in \Nb$) and the inclusions hold  because the filtration $F_\bullet$ comes from a graduation. 
A direct sum appears in the definition of $S_1$ and $S_2$ since $V_1 \cap \ker d_0 = V_1 \cap \im {d_0}_{|I} = V_1 \cap \Kf \theta_\Pc = \{ 0\}$.
For $l \geq 2$, we fix by induction a direct complement $V_l$ of $s^{-1}\overline{\Tc}(s\overline{\Pc})^{(\leq l-1)} \oplus D_1 V_{l-1}$ in $s^{-1}\overline{\Tc}(s\overline{\Pc})^{(\leq l)}$, that is
\[
s^{-1}\overline{\Tc}(s\overline{\Pc})^{(\leq l)} = s^{-1}\overline{\Tc}(s\overline{\Pc})^{(\leq l-1)} \oplus D_1 V_{l-1} \oplus V_l.
\]
Moreover let $W_l$ be a direct complement of $\ker {\Gr d_\omega}_{|V_l}$ into $V_l$. 
Then we define for each $l \geq 2$
\[
\left\{\begin{array}{lcl}
S_{2l-1} & \coloneqq & S_{2(l-1)} \oplus \ker({\Gr d_\omega}_{|V_l}) \oplus D_1\ker({\Gr d_\omega}_{|V_l}),\\
S_{2l} & \coloneqq & S_{2l-1} \oplus W_l \oplus D_1W_l.
\end{array}\right.
\]
As above, the second direct sum which appears in the definition of $S_{2l-1}$ and $S_{2l}$ follows from the following argument. For $v \in V_l$, $D_1v \in s^{-1}\overline{\Tc}(s\overline{\Pc})^{(\leq l)}$ implies $d_0v = 0$ so $v \in \ker {d_0}_{s^{-1}\overline{\Tc}(s\overline{\Pc})^{(l)}} = \im {d_0}_{s^{-1}\overline{\Tc}(s\overline{\Pc})^{(l-1)}}$ where
\[
s^{-1}\overline{\Tc}(s\overline{\Pc})^{(l-1)} \subset s^{-1}\overline{\Tc}(s\overline{\Pc})^{(\leq l-1)} = s^{-1}\overline{\Tc}(s\overline{\Pc})^{(\leq l-2)} \oplus V_{l-1} \oplus D_1 V_{l-2}
\]
so there exists $v' \in s^{-1}\overline{\Tc}(s\overline{\Pc})^{(\leq l-2)} \oplus V_{l-1} \oplus D_1 V_{l-2}$ such that $v = D_1v' -d_1v'-d_2v'$. 
Because $D_1(s^{-1}\overline{\Tc}(s\overline{\Pc})^{(\leq l-2)} \oplus D_1 V_{l-2}) \subset s^{-1}\overline{\Tc}(s\overline{\Pc})^{(\leq l-1)}$, we get
\[
v \in (s^{-1}\overline{\Tc}(s\overline{\Pc})^{(\leq l-1)} \oplus D_1 V_{l-1})\cap V_l = \{0\}.
\]

By definition, $\left(S_l \right)_l$ is an exhaustive increasing filtration of $s^{-1}\overline{\Tc}(s\overline{\Pc})$ and it satisfies $S_{l-1} \rightarrowtail S_l$ are split monomorphisms  of $\Sb$-modules with cokernels isomorphic to
\[ S_l /S_{l-1} \cong \coprod_\alpha \left(\xi^\alpha \cdot \Kf[\Sb_{m_\alpha}] \oplus \zeta^\alpha \cdot \Kf[\Sb_{m_\alpha}]\right) \]
where $\xi^\alpha$ is in homological degree $n_\alpha +1$ and filtration degree $q_\alpha$ and $\zeta^\alpha$ is in homological degree $n_\alpha $ and filtration degree $q_\alpha+1$. The predifferential $d_\omega$ is such that $d_\omega(\xi^\alpha) + \zeta^\alpha \in \left(\Tc_+ (S_{l-1}),\, d_\omega\right)$  since $d_\omega = D_0+D_1+D_2$ with $(D_0+D_2)(v) \in \Tc_+(S_{l-1})$ for $v \in S_l$ and because $d_0$ increases exactly the filtration degree $F_\bullet$ by 1 since $\theta_\Pc \in \Gr_1 \Pc$. Moreover $d_\omega(\zeta^\alpha)$ is obtained by the fact that $d_\omega^2(\xi^\alpha) = [\theta_\Pc,\, \xi^\alpha]$. 
This is precisely the description of cofibrant curved operads  described in \cite[Proposition C.33]{BMDC20} hence $\hat\Omega \B \Pc$ is cofibrant.
\end{proof}

\subsection{Curved Koszul duality}
\label{sec: CKD}

In this section, we merge Section 4 in \cite{HM12} and Section 5 in \cite{BMDC20} in order to obtain a generalisation of Koszul duality to the context of curved inhomogeneous quadratic operads.

\begin{defn}
\label{def: inhomogeneous quadratic}
A curved operad $(\Pc, d_\Pc, \theta_\Pc)$ is called \emph{inhomogeneous quadratic} if the following requirements are satisfied.
\begin{enumerate}
\item
The curved operad $\Pc$ admits a presentation of the form
\[
\Pc = (\Tc(E)/(R), d_\Pc = 0, \theta_{\Pc}),
\]
where $E$ is a $\Sb$-module and $(R)$ is the ideal generated by a strict sub-$\Sb$-module $R \subset I \oplus E \oplus \Tc(E)^{(2)}$ and the curvature $\theta_\Pc$ is induced by a map $\tilde{\theta}_{\Pc} : I \to F_1 \Tc(E)^{(2)}$. 
The superscript $(2)$ indicates the number of vertices and defines the weight degree. 
\item
The sub-$\Sb$-module $R$ is a direct sum of (homological) degree homogeneous subspaces. 
\end{enumerate}
A first consequence is that the curved operad $\Pc$ is degree graded and has a weight filtration $\Pc^{(\leq w)}$ induced by the $\Sb$-module of generators $E$. 
We denote the associated graded (curved) operad by $\Pc^{(\bullet)}$. 
Let $\q : \Tc(E) \twoheadrightarrow \Tc(E)^{(2)}$ be the canonical projection and let $\q R \subset \Tc(E)^{(2)}$ be the image under $\q$ of $R$. 
We assume further that the following conditions hold:
\begin{itemize}
\item[(I)] The space of generators is minimal, that is $R \cap \{I \oplus E\} = \{ 0\}$.
\item[(II)] The space of relations is maximal, that is $(R)\cap \{I \oplus E \oplus \Tc(E)^{(2)}\} = R$.
\item[(III)] The quadratic part of the relations satisfies $\q R \cap F_1\Tc(E) = \{ 0\}$.
\end{itemize}
We assume moreover that the curved operad $\q\Pc := \left(\Tc(E)/(\q R), 0, \theta_{\q\Pc} \coloneqq [\tilde{\theta}_\Pc]\right)$ is homogeneous quadratic (in the sense of Definition 5.7 of \cite{BMDC20}), that is
\begin{enumerate}
\item[(3)]
the counit $\Tc^c(sE) \to I$ does not factor through the coideal quotient $(S)$, where we fix
\[
\Cc \coloneqq \Tc^c(sE) \text{ and } S \coloneqq \left( I \oplus \Tc^c(sE)^{(2)}\right)/\left( s^2 \q R \oplus (\id+s^2 \tilde{\theta}_\Pc)(I)\right).
\]
\end{enumerate}
\end{defn}

Since $R \cap \{I \oplus E\} = \{0\}$ and $\q R \cap F_1\Tc(E) = \{ 0\}$, there exists an $\Kf$-linear map $\varphi : \q R \rightarrow I \oplus E$ that we assume to be filtered such that $R$ is the graph of $\varphi$:
\begin{eqnarray*}
R & = & \{X + \varphi(X),\, X \in \q R\}\\
& = & \{X + \varphi_{1}(X) + \varphi_{0}(X),\, X\in \q R,\, \varphi_{1}(X)\in E,\, \varphi_{0}(X)\in I\}.
\end{eqnarray*}
The relations $\q R$ hold in $\Pc^{(\bullet)}$. 
Therefore, there exists an epimorphism of graded (curved) operads
\[
p : \q\Pc \twoheadrightarrow \Pc^{(\bullet)}.
\] 
We recall from \cite[Section 5]{BMDC20} that the \emph{Koszul dual cooperad} of the homogeneous quadratic curved operad $\q\Pc$ is the sub-cooperad of $\Tc^c(sE)$ generated by the strict epimorphism $\Tc^c(sE) \twoheadrightarrow S$ and we denote it by:
\[
\q\Pc^{\antishriek} := \Cc\left(sE,\, s^{2}\q R \oplus (\id+s^2 \tilde{\theta}_\Pc)(I)\right).
\]
It is a sub-cooperad of the cofree cooperad $\Tc^{c}(sE)$ on $sE$ and its counit is given by the composite $\varepsilon_{\q\Pc^{\antishriek}} : \q\Pc^\antishriek \rightarrowtail \Tc^c(sE) \twoheadrightarrow I$.  
By the definition of $\q\Pc^\antishriek$, since $\q\Pc^\antishriek \to \Tc^c(sE) \twoheadrightarrow S$ is zero, we obtain that the projection $\q\Pc^\antishriek \twoheadrightarrow I \oplus \Tc^c(sE)^{(2)}$ factors through $s^2 \q R \oplus (\id+s^2 \tilde{\theta}_\Pc)(I)$ and we can define the projection map $p^{(2)} : \q\Pc^\antishriek \twoheadrightarrow s^2\q R$. 
There exists a unique coderivation $d_{\Pc^\antishriek} : \q\Pc^{\antishriek} \rightarrow \Tc^{c}(sE)$ of degree $-1$ which extends the map
\[
\q\Pc^\antishriek \twoheadrightarrow s^{2}\q R \xrightarrow{-s^{-1} \otimes \varphi_{1}} sE.
\]
Moreover, we denote by $\theta_{\Pc^\antishriek}^c : \q\Pc^{\antishriek} \rightarrow I$ the map of degree $-2$
\[
\theta_{\Pc^\antishriek}^c :\q\Pc^{\antishriek} \twoheadrightarrow s^{2}\q R \xrightarrow{-s^{-2} \otimes \varphi_{0}} I.
\]

\begin{lem}
Let $\Pc = \Tc(E)/(R)$ be an inhomogeneous quadratic curved operad. Condition (II) implies that:
\begin{itemize}
\item The image of $d_{\Pc^\antishriek}$ lies in $\q\Pc^\antishriek = \Cc\left(sE,\, s^{2}\q R \oplus (\id+s^2 \tilde{\theta}_\Pc)(I)\right)$;
\item The coderivation $d_{\Pc^\antishriek}$ satisfies ${d_{\Pc^\antishriek}}^{2} = (\id_{\q\Pc^\antishriek} \otimes \theta_{\Pc^\antishriek}^c - \theta_{\Pc^\antishriek}^c \otimes \id_{\q\Pc^\antishriek}) \cdot \Delta_{(1)}$;
\item The coderivation $d_{\Pc^\antishriek}$ satisfies $\theta_{\Pc^\antishriek}^c \cdot d_{\Pc^\antishriek} = 0$.
\end{itemize}
\end{lem}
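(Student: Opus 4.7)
The plan is to reduce all three assertions to a weight-$3$ computation and to exploit condition~(II) as a rewriting rule that matches the two pieces of $\varphi = \varphi_1 + \varphi_0$ across the two infinitesimal decompositions of a balanced weight-$3$ element. Both sides of each claimed identity are (co)derivation-type maps from $\q\Pc^\antishriek$ into $\Tc^c(sE)$. Since $\Tc^c(sE)$ is a cofree cooperad, such maps are determined by their corestriction to the cogenerators $sE$ (the analogue of Lemma~15 in \cite{MV09a}). Each assertion therefore reduces to a verification on the weight-$2$ and weight-$3$ components of $\q\Pc^\antishriek$; the weight-$2$ part is by construction $s^{2}\q R \oplus (\id + s^{2}\tilde{\theta}_\Pc)(I)$, and the weight-$3$ part identifies with the intersection $(s^{2}\q R \circ_{(1)} sE) \cap (sE \circ_{(1)} s^{2}\q R)$ inside $\Tc^c(sE)^{(3)}$, i.e.\ the elements admitting both a ``quadratic-then-linear'' and a ``linear-then-quadratic'' infinitesimal decomposition.

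The key input is the following reformulation of condition~(II). Given a balanced element $\Xi \in (\q R \circ_{(1)} E) \cap (E \circ_{(1)} \q R)$ with two presentations
\[ \Xi = \sum_i Y_i \circ_{(1)} e_i = \sum_j e'_j \circ_{(1)} Y'_j, \qquad Y_i,\,Y'_j \in \q R, \]
we substitute each $Y_i$ and $Y'_j$ modulo $R$ by $-\varphi_1 - \varphi_0$ of itself. The difference of the two resulting elements of $(R)$ equals
\[ \sum_i \varphi_1(Y_i) \circ_{(1)} e_i - \sum_j e'_j \circ_{(1)} \varphi_1(Y'_j) + \sum_i \varphi_0(Y_i) \circ_{(1)} e_i - \sum_j e'_j \circ_{(1)} \varphi_0(Y'_j), \]
which lies in $I \oplus E \oplus \Tc(E)^{(2)}$. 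By condition~(II), this element lies in $R$, so there exists a unique $Z = Z(\Xi) \in \q R$ with the above expression equal to $Z + \varphi_1(Z) + \varphi_0(Z)$. Matching weight components yields the three identities
\[ \sum_i \varphi_1(Y_i) \circ_{(1)} e_i - \sum_j e'_j \circ_{(1)} \varphi_1(Y'_j) = Z, \quad \sum_i \varphi_0(Y_i) \circ_{(1)} e_i - \sum_j e'_j \circ_{(1)} \varphi_0(Y'_j) = \varphi_1(Z), \quad \varphi_0(Z) = 0. \]

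These three identities translate, via the suspensions and signs built into $d_{\Pc^\antishriek}$ (the coderivation extension of $-s^{-1}\varphi_1$) and $\theta^c_{\Pc^\antishriek} = -s^{-2}\varphi_0$, into the three claims. The first identity shows that the cogenerator projection of $d_{\Pc^\antishriek}(\tilde{\Xi})$, for $\tilde{\Xi}$ the suspension of $\Xi$, lands in $s^{2}\q R$, placing its entire image in $\q\Pc^\antishriek$ and yielding~(1). The second identity rewrites as the equality, on cogenerators, of ${d_{\Pc^\antishriek}}^{2}$ and $(\id \otimes \theta^c_{\Pc^\antishriek} - \theta^c_{\Pc^\antishriek} \otimes \id)\cdot \Delta_{(1)}$, and lifts to~(2) by the coderivation uniqueness principle of \cite[Lemma~3.2.2]{HM12}. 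The third identity gives~(3) on cogenerators and propagates to all weights. The main obstacle is the careful bookkeeping of signs and suspensions in translating condition~(II) into the shifted cofree cooperad setting, since this is what produces the precise form of the curvature equation in~(2).
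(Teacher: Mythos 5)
Your proposal is correct and follows essentially the same route as the paper's proof: reduce to the cogenerators via the (co)derivation uniqueness principle, analyse the weight-$3$ component of $\q\Pc^\antishriek$ through its two infinitesimal decompositions, observe that the $\id_{\q R}$-parts of the two presentations cancel so that the remaining $(\varphi_1+\varphi_0)$-contributions land in $(R)\cap\{I\oplus E\oplus\Tc(E)^{(2)}\}=R$ by Condition~(II), and read off the three identities by matching weight components. The paper expresses the same cancellation via the vanishing of $(s^{-1}p^{(2)}\otimes p^{(1)}+p^{(1)}\otimes s^{-1}p^{(2)})\cdot\Delta_{(1)}$ on $\q\Pc^\antishriek$ and then writes the resulting system of equations in $\Tc(E)^{(2)}$, $sE$ and $I$, which is exactly your triple of identities for $Z(\Xi)$.
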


\begin{proof}
Proposition 5.5 in \cite{BMDC20} says that
\[
\q\Pc^\antishriek = \ker \left( \Cc \xrightarrow{(\id \otimes \Delta) \cdot \Delta_{(1)}} \Cc \hcirc_{(1)} \Cc^{\hcirc 2} \xrightarrow{\id_\Cc \circ_{(1)} \xi \circ \id_\Cc} \Cc \hcirc_{(1)} (S \hcirc \Cc) \right),
\]
where $\Cc = \Tc^c(sE)$ and $S = \left( I \oplus \Tc^c(sE)^{(2)}\right)/\left( s^2 \q R \oplus (\id+s^2 \tilde{\theta}_\Pc)(I)\right)$. 
Applying $d_{\Pc^\antishriek}$ to an element in $\q\Pc^\antishriek$, we obtain that the result lands in $\q\Pc^\antishriek$ whenever $\xi \cdot d_{\Pc^\antishriek} =0$ (since $d_{\Pc^\antishriek}$ is a coderivation and by the characterisation of $\Pc^\antishriek$ just recalled). 
The cooperad $\q\Pc^\antishriek$ is a sub-cooperad of $\Tc^c(sE)$ so an element $c$ in $\q\Pc^\antishriek$ can be written as a sum $c = \sum_{k\geq 0} c^{(k)}$ of trees $c^{(k)}$ in $\Tc^c(sE)^{(k)}$. 
By definition, the coderivation $d_{\Pc^\antishriek}$ decreases on each component the number of vertices by one. 
Since $\varepsilon \cdot d_{\Pc^\antishriek} =0$, we have $\xi \cdot d_{\Pc^\antishriek}(c) = \xi \cdot d_{\Pc^\antishriek}(c^{(3)})$. 

Then the proof works similarly as \cite[Lemma 4.1.1]{HM12}. 
Any kind of tree in $\Tc^c(sE)^{(3)}$ has one of the two forms
\[
\vcenter{\xymatrix@M=2pt@R=4pt@C=4pt{&&&&\\ & se_2 \ar@{-}[dr] \ar@{-}[ul] \ar@{-}[u] & \ar@{-}[d] & se_3 \ar@{-}[dl] \ar@{-}[ul] \ar@{-}[u] \ar@{-}[ur] &\\
      && se_1 \ar@{-}[d] &&\\ &&&&}}\ ,\quad \vcenter{\xymatrix@M=2pt@R=2pt@C=3pt{&&&\\ & se_3 \ar@{-}[ul] \ar@{-}[u] \ar@{-}[ur] &&\\
     && se_2 \ar@{-}[ul] \ar@{-}[ur] &\\
     & se_1 \ar@{-}[ur] \ar@{-}[u] \ar@{-}[ul] \ar@{-}[d] &&\\ &&&}}\ ,
\]
where the elements $e_i$ are in $sE$. 
Applying $\Delta_{(1)}$ to the trees above
, we obtain
\[
\left\{ \begin{array}{l}
\Delta_{(1)}\left( \vcenter{\xymatrix@M=2pt@R=4pt@C=2pt{&&&&\\ & se_2 \ar@{-}[dr] \ar@{-}[ul] \ar@{-}[u] & \ar@{-}[d] & se_3 \ar@{-}[dl] \ar@{-}[ul] \ar@{-}[u] \ar@{-}[ur] &\\
      && se_1 \ar@{-}[d] &&\\ &&&&}} \right) = 
(-1)^{|se_2||se_3|}\vcenter{\xymatrix@M=2pt@R=4pt@C=2pt{&&&&\\ & se_2 \ar@{-}[ddr] \ar@{-}[ul] \ar@{-}[u] &&&\\  \ar@{.}@<5.5pt>[rrrr] && \ar@{-}[d] & se_3 \ar@{-}[dl] \ar@{-}[ul] \ar@{-}[u] \ar@{-}[ur] &\\
      && se_1 \ar@{-}[d] &&\\ &&&&}}
+ \vcenter{\xymatrix@M=2pt@R=4pt@C=2pt{&&&&\\ &&& se_3 \ar@{-}[ddl] \ar@{-}[ul] \ar@{-}[u] \ar@{-}[ur] & \\ \ar@{.}@<5.5pt>[rrrr] & se_2 \ar@{-}[dr] \ar@{-}[ul] \ar@{-}[u] & \ar@{-}[d] &&\\
      && se_1 \ar@{-}[d] &&\\ &&&&}},\\
\Delta_{(1)} \left( \vcenter{\xymatrix@M=2pt@R=2pt@C=3pt{&&&\\ & se_3 \ar@{-}[ul] \ar@{-}[u] \ar@{-}[ur] &&\\
     && se_2 \ar@{-}[ul] \ar@{-}[ur] &\\
     & se_1 \ar@{-}[ur] \ar@{-}[u] \ar@{-}[ul] \ar@{-}[d] &&\\ &&&}} \right) = \quad \vcenter{\xymatrix@M=2pt@R=2pt@C=3pt{&&&\\ & se_3 \ar@{-}[ul] \ar@{-}[u] \ar@{-}[ur] &&\\ &&&\\
    \ar@{.}@<5.5pt>[rrr] && se_2 \ar@{-}[uul] \ar@{-}[ur] &\\
     & se_1 \ar@{-}[ur] \ar@{-}[u] \ar@{-}[ul] \ar@{-}[d] &&\\ &&&}}
\quad + \quad \vcenter{\xymatrix@M=2pt@R=2pt@C=3pt{&&&\\ & se_3 \ar@{-}[ul] \ar@{-}[u] \ar@{-}[ur] &&\\
   && se_2 \ar@{-}[ul] \ar@{-}[ur] &\\ &&&\\
   \ar@{.}@<5.5pt>[rrr] & se_1 \ar@{-}[uur] \ar@{-}[u] \ar@{-}[ul] \ar@{-}[d] &&\\ &&&}}.
\end{array} \right.
\]
It follows by a careful computation of the signs that applying
\[
\left(s^{-1} p^{(2)} \otimes p^{(1)} + p^{(1)} \otimes s^{-1} p^{(2)}\right) \cdot \Delta_{(1)}
\]
(where $p^{(1)}$ is the projection $\q \Pc^\antishriek \twoheadrightarrow sE$) to an element in $\q \Pc^{\antishriek}$ gives 0. 
Therefore, we obtain
\begin{multline*}
\Big(\big(s^{-1} \otimes (\id_{\q R} + \varphi_{1} + \varphi_{0}) \cdot p^{(2)}\big) \otimes p^{(1)} +\\ 
p^{(1)} \otimes \big(s^{-1} \otimes (\id_{\q R} + \varphi_{1} + \varphi_{0}) \cdot p^{(2)}\big)\Big) \cdot \Delta_{(1)}({\q\Pc^\antishriek})\\
\subset \{sR \otimes_{(1)} sE + sE \otimes_{(1)} sR\} \cap s^{2}\{E \oplus \Tc(E)^{(2)}\}
\subset s^2 R
\end{multline*}
by Condition \textrm{(II)} for the last inclusion. 
Denoting $s^{-1} \otimes \varphi_{1} \cdot p^{(2)}$ by $\tilde{\varphi}_{1}$ and $s^{-1} \otimes \varphi_{0} \cdot p^{(2)}$ by $\tilde{\varphi}_{0}$, we get the following system of equations
\begin{enumerate}
\item (in $\Tc(E)^{(2)}$)\hspace{.2em} $(\tilde{\varphi}_{1} \otimes p^{(1)} + p^{(1)} \otimes \tilde{\varphi}_{1}) \cdot \Delta_{(1)}({\q \Pc^\antishriek}) \subset s^2\q R$;
\item (in $sE$)\hspace{1.5em} $\big( \tilde{\varphi}_{1} \cdot ( \tilde{\varphi}_{1} \otimes p^{(1)} + p^{(1)} \otimes \tilde{\varphi}_{1}) - s^{-1}(\tilde{\varphi}_{0} \otimes p^{(1)} + p^{(1)} \otimes \tilde{\varphi}_{0})\big) \cdot {\Delta_{(1)}}_{|{\q\Pc^\antishriek}} = 0$;
\item (in $I$)\hspace{2.3em} $\tilde{\varphi}_{0} \cdot (\tilde{\varphi}_{1} \otimes p^{(1)} + p^{(1)} \otimes \tilde{\varphi}_{1}) \cdot {\Delta_{(1)}}_{|{\q\Pc^\antishriek}} = 0$.
\end{enumerate}
Equation (1) is equivalent (up to sign) to $\xi \cdot d_{\Pc^\antishriek} = 0$. 
This ensures that the image of $d_{\Pc^\antishriek}$ lies in $\q\Pc^\antishriek$. 
Equation $(2)$ corresponds to the projection of the second point of the lemma to $sE$. 
The equality extends to $\q\Pc^\antishriek$ since ${d_{\Pc^\antishriek}}^{2} = \frac{1}{2} [d_{\Pc^\antishriek},\, d_{\Pc^\antishriek}]$ and $(\id_{\q\Pc^\antishriek} \otimes \theta_{\Pc^\antishriek}^c - \theta_{\Pc^\antishriek}^c \otimes \id_{\q\Pc^\antishriek}) \cdot \Delta_{(1)}$ are coderivations (see Lemma 3.2.2 in \cite{HM12}). 
Equation $(3)$ corresponds to the third point of the lemma.
\end{proof}

\begin{defn}
\label{def: Koszul}
Let $(\Pc, \theta_\Pc)$ be an inhomogeneous quadratic curved operad with presentation $\Pc = \Tc(E)/(R)$ satisfying the conditions of Definition \ref{def: inhomogeneous quadratic}. 
\begin{itemize}
\item
The \emph{Koszul dual curved cooperad} of $(\Pc, \theta_\Pc)$ is the curved cooperad
\[
\Pc^\antishriek \coloneqq (\q\Pc^\antishriek, d_{\Pc^{\antishriek}}, \theta^c_{\Pc^\antishriek}).
\]
It is altipotent since it is a (strict) sub-cooperad of an altipotent cooperad (the cofree one). 
\item
We define the application $\kappa :\Pc^\antishriek \to \Pc$ of degree $-1$ by
\[
\kappa : \Pc^\antishriek \rightarrowtail \Tc^c(sE) \twoheadrightarrow sE \cong E \to \Pc.
\]
\item
The curved operad $(\Pc, \theta_\Pc)$ is called \emph{Koszul} when
\begin{enumerate}
\item
$(\Pc, \theta_\Pc) \cong (\widehat{\Gr \Pc}, \bar\theta_\Pc)$ with $\Gr \Pc \cong \mathrm{Free}(E)/(R)$ in \emph{filtered} operads and $\Pc = \Tc(E)/(R)$ in \emph{complete} operads is the presentation which makes the curved operad $(\Pc, \theta_\Pc)$ inhomogeneous quadratic, and
\item
$(\q\Pc, \theta_{\q\Pc}) \cong (\widehat{\Gr \q\Pc}, \bar\theta_\Pc)$ and $\Gr \q\Pc \cong \q\Gr \Pc$ as (curved) quadratic operads, and
\item
the inhomogeneous quadratic operad $\Gr \Pc$ is Koszul in the sense of \cite[4.3]{HM12} or equivalently the curved quadratic operad $\q\Pc$ is Koszul in the sense of \cite[Definition 5.20]{BMDC20}, that is $\q\Gr \Pc \cong \Gr\q\Pc$ is a Koszul (quadratic) operad. 
\end{enumerate}
\end{itemize}
\end{defn}

\begin{lem}
Let $(\Pc, \theta_\Pc)$ be an inhomogeneous quadratic curved operad as in the previous definition. 
The degree $-1$ map $\kappa$ is an operadic curved twisting morphism, that is
\[
\Theta_H + \partial (\kappa) + \frac{1}{2} [\kappa, \kappa] = 0,
\]
where $\Theta_H = \theta_\Pc \cdot \varepsilon_{\Pc^\antishriek} + \eta_\Pc \cdot \theta_{\Pc^\antishriek}^c$ and $\partial (\kappa) = d_\Pc \cdot \kappa + \kappa \cdot d_{\Pc^\antishriek} = \kappa \cdot d_{\Pc^\antishriek}$ since $d_\Pc = 0$.
\end{lem}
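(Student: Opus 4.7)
The plan is to verify the equation $\Theta_H + \partial\kappa + \tfrac12[\kappa,\kappa] = 0$ componentwise on the weight grading of $\q\Pc^\antishriek$, which is the underlying cooperad of $\Pc^\antishriek$ sitting as a sub-cooperad of the weight-graded $\Tc^c(sE)$. Since $\kappa$ factors through the weight-$1$ projection $\Pc^\antishriek \twoheadrightarrow sE \cong E \hookrightarrow \Pc$, the terms $\kappa\star\kappa = \gamma_{(1)}(\kappa\otimes\kappa)\Delta_{(1)}$ and $\partial\kappa = \kappa\cdot d_{\Pc^\antishriek}$ are nonzero only on components of weight exactly $2$: the infinitesimal decomposition $\Delta_{(1)}$ produces an $sE\hcirc_{(1)}sE$ contribution only from weight-$2$ elements, and $d_{\Pc^\antishriek}$ was built as the coderivation extension of a map factoring through $p^{(2)}\colon\Pc^\antishriek\twoheadrightarrow s^2\q R$. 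Likewise $\eta_\Pc\cdot\theta^c_{\Pc^\antishriek}$ is supported in weight $2$, while $\theta_\Pc\cdot\varepsilon_{\Pc^\antishriek}$ is supported on the coaugmentation. Hence the equation is automatic on weights $\geq 3$, and on $sE$ it vanishes termwise since $\Delta_{(1)}|_{sE}$ has no $sE\otimes sE$ component and both $\varepsilon_{\Pc^\antishriek}$ and $\theta^c_{\Pc^\antishriek}$ vanish on $sE$.

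The core calculation is on the weight-$2$ piece. For $X\in\q R$, using the explicit definitions of $\kappa$, $d_{\Pc^\antishriek}$ and $\theta^c_{\Pc^\antishriek}$, direct computation gives
\[
\tfrac12[\kappa,\kappa](s^2 X)=X,\quad \partial\kappa(s^2 X)=\varphi_1(X),\quad \eta_\Pc\cdot\theta^c_{\Pc^\antishriek}(s^2 X)=\varphi_0(X)\cdot\eta_\Pc,
\]
all signs being absorbed by the suspension isomorphisms that enter the constructions. The sum is $X+\varphi_1(X)+\varphi_0(X)=X+\varphi(X)\in R$, which vanishes in $\Pc=\Tc(E)/(R)$; the term $\theta_\Pc\cdot\varepsilon_{\Pc^\antishriek}$ contributes zero on $s^2\q R$. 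So the equation holds in weight~$2$, essentially as in the uncurved inhomogeneous case of \cite{HM12}, with the novel $\varphi_0$ piece supplied by the curvature term $\eta_\Pc\cdot\theta^c_{\Pc^\antishriek}$ inside $\Theta_H$.

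The main obstacle is the coaugmentation component, where the naive count gives $\theta_\Pc\cdot\varepsilon_{\Pc^\antishriek}(\eta_{\Pc^\antishriek}(1))=\theta_\Pc$ and one must see this cancelled. This is precisely the role of the defining relation $(\id+s^2\tilde\theta_\Pc)(I)\subset\ker(\Tc^c(sE)\to S)$: it identifies the coaugmentation of $\Pc^\antishriek$ with the curvature element $-s^2\tilde\theta_\Pc$ inside the sub-cooperad, so that the weight-$2$ computation above applied to $-\tilde\theta_\Pc\in\q R$ produces exactly $-\theta_\Pc$ to cancel the coaugmentation contribution of $\Theta_H$. The cleanest way to organise this is to invoke Theorem~\ref{thm: bar-cobar adjunction}: since $\kappa\cdot\eta_{\Pc^\antishriek}=0$ (as $\kappa$ vanishes on $I$), the corresponding lax morphism has $a_\kappa=0$, and the twisting equation becomes equivalent to checking that the operad morphism $f_\kappa\colon\hat\Omega\Pc^\antishriek\to\Pc$ freely extending $s^{-1}\overline{\Pc^\antishriek}\twoheadrightarrow E\hookrightarrow\Pc$ is a strict morphism of curved sa operads, i.e.\ $f_\kappa\cdot d_\omega=0$ and $f_\kappa(\theta_\omega)=\theta_\Pc$; these two identities follow respectively from the weight-$2$ computation above and from the fact that the curvature $\theta_\omega=-s^{-1}\bar d_{\Pc^\antishriek}-s^{-2}\bar\Delta_{(1)}$ applied to $\eta_{\Pc^\antishriek}(1)$ produces precisely the tree that the relation $(\id+s^2\tilde\theta_\Pc)(I)$ forces to be $\tilde\theta_\Pc$, hence maps to $\theta_\Pc$ in $\Pc$.
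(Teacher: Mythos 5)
Your proof is correct and follows essentially the same route as the paper: reduce everything to the weight-$\le 2$ components, compute on $s^2\q R$ where $\id+\varphi_1+\varphi_0$ lands in $R$ and hence vanishes in $\Pc$, and cancel $\theta_\Pc\cdot\varepsilon_{\Pc^\antishriek}$ against the $s^2\tilde\theta_\Pc$ sitting inside the weight-$2$ component of $\eta_{\Pc^\antishriek}(1)$. The only detail worth adding is that this weight-$2$ component of the coaugmentation is in general $s^2\tilde\theta_\Pc+\sum s^2qr$ with $\sum s^2qr\in s^2\q R$, so the coaugmentation case also needs the $\q R$ computation, exactly as you carried it out for general elements.
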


\begin{proof}
As in \cite[Section 5.17]{BMDC20}, we have
\[
\eta_{\Pc^\antishriek}(1) = 1 + \left( s^2\tilde\theta_\Pc + \sum s^2 qr\right) + \cdots \in I \oplus \Tc^c(sE)^{(2)} \oplus \cdots
\]
with $\sum s^2 qr \in s^2 \q R$. 
On $\eta_{\Pc^\antishriek}(I)$, we therefore obtain
\begin{multline*}
\left(\Theta_H \cdot \eta_{\Pc^\antishriek} + \partial (\kappa) \cdot \eta_{\Pc^\antishriek} + \frac{1}{2} [\kappa, \kappa] \cdot \eta_{\Pc^\antishriek}\right)(1) =\\
\theta_\Pc - \eta_\Pc \cdot \sum \varphi_0(qr) - \sum\varphi_1(qr) - \theta_\Pc - \sum qr = 0 \in \Pc.
\end{multline*} 
Then, on $\overline{\Pc^\antishriek} \subset sE \oplus \Tc^c(sE)^{(2)} \oplus \cdots$, because the decomposition product appearing in the definition of $[-, -]$ is the cofree one and by the definition of $\kappa$ and $d_{\Pc^\antishriek}$, it is enough to consider elements in $\overline{\Pc^\antishriek}$ whose projection on $\Tc^c(sE)^{(2)}$ is non zero. 
Again by \cite[Section 5.17]{BMDC20}, these elements write $\sum s^2 qr + \cdots \in \Tc^c(sE)^{(2)} \oplus \Tc^c(sE)^{(\geq 4)}$, with $\sum s^2 qr \in s^2 \q R$. 
On such an element, we get
\begin{multline*}
\left(\Theta_H + \partial (\kappa) + \frac{1}{2} [\kappa, \kappa]\right)\left(\sum s^2 qr + \cdots\right) =\\
\left(\eta_\Pc \cdot \theta^c_{\Pc^\antishriek} + \kappa \cdot (-s^{-1} \otimes \varphi_1) + \frac{1}{2} [\kappa, \kappa]\right)\left(\sum s^2 qr\right) =\\
-\sum \left(\varphi_0 + \varphi_1 + \id_{\q R}\right)\left(\sum qr\right) = 0 \in \Pc.
\end{multline*}
\end{proof}

As a consequence of this lemma and of Theorem \ref{thm: bar-cobar adjunction}, we can associate to $\kappa$ two maps:
\begin{itemize}
\item
$f_\kappa : \hat\Omega \Pc^\antishriek \to \Pc$ is a morphism of curved sa operads,
\item
$g_\kappa : \Pc^\antishriek \to \B\Pc$ is a morphism of curved altipotent cooperads.
\end{itemize}

\begin{thm}
\label{thm: Koszul resolution}
Let $\left(\Pc = \Tc(E)/(R), \theta_\Pc\right)$ be a (presented) Koszul inhomogeneous quadratic curved operad such that $\Gr\q\Pc$ is connected bounded below weight graded. 
The curved operad morphism
\[
f_\kappa : \hat\Omega \Pc^\antishriek \to \Pc
\]
is a graded quasi-isomorphism and a strict surjection.

Assuming moreover that the filtration $F_\bullet$ on $\Pc$ comes from a graduation and that the curvature $\theta_\Pc$ is non zero and is concentrated in $\Gr_1 \Pc$, then the map $\cfree(0) = \Tc(\vartheta I) \to \hat\Omega \Pc^\antishriek$ is an $\Sb$-cofibration, that is, a cofibration in the underlying category of $\Sb$-modules, in the model category structure on gr-dg $\Sb$-modules described in \cite[Appendix C]{BMDC20}.
\end{thm}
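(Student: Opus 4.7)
The plan is to mirror the proof of Theorem \ref{thm: bar-cobar resolution}, with the smaller Koszul dual cooperad $\Pc^\antishriek$ replacing the bar construction $\B\Pc$, so that both assertions reduce to a combination of known inhomogeneous Koszul duality in the dg setting and the explicit cell description of cofibrant curved operads in \cite[Appendix C]{BMDC20}.

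For the first assertion, I would apply the associated graded functor $\Gr$, which is strong monoidal since $\Kf$ is a field of characteristic $0$. A careful inspection of the predifferential $d_\omega = D_0 + D_1 + D_2$ gives an identification
\[
\Gr(\hat\Omega \Pc^\antishriek) \cong \Omega(\Gr \Pc^\antishriek),
\]
where on the right one uses the classical cobar construction of coaugmented dg cooperads from \cite{HM12}. The Koszul hypotheses of Definition \ref{def: Koszul} guarantee that $\Gr\Pc$ is an inhomogeneous quadratic dg operad (with zero predifferential, since $\theta_\Pc \in F_1\Pc$) whose quadratic part $\q\Gr\Pc \cong \Gr\q\Pc$ is Koszul, and that $\Gr\Pc^\antishriek$ identifies with the inhomogeneous Koszul dual cooperad of $\Gr\Pc$ in the sense of \cite[4.3]{HM12}. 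The classical inhomogeneous Koszul duality theorem for dg operads then yields that $\Gr f_\kappa$ is a quasi-isomorphism, whence $f_\kappa$ is a graded quasi-isomorphism. Strict surjectivity follows because the composite $s^{-1}\overline{\Pc^\antishriek} \twoheadrightarrow sE \cong E \hookrightarrow \Pc$ is filtered and hits the generators of $\Pc$, and multiplicativity of $f_\kappa$ propagates strictness to the image of the whole free operad $\Tc(s^{-1}\overline{\Pc^\antishriek})$.

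For the second assertion, I would adapt the inductive cell-filtration construction used in the second half of the proof of Theorem \ref{thm: bar-cobar resolution}. Because $\theta_\Pc \neq 0$, the curvature $\theta_\omega$ of $\hat\Omega\Pc^\antishriek$ is also non-zero, and the kernel of $[\theta_\Pc, -]$ on $\Tc(s^{-1}\overline{\Pc^\antishriek})$ is exactly $\Tc(\vartheta I)$. The cooperad $\Pc^\antishriek \subseteq \Tc^c(sE)$ inherits a weight grading by number of vertices, and the connectedness and bounded-below hypothesis on $\Gr\q\Pc$ ensures that the weight-$l$ components $V_l$ of $s^{-1}\overline{\Pc^\antishriek}$ are controlled. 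Choosing an $\Sb$-equivariant direct summand $W_l$ of $\ker({\Gr d_\omega}_{|V_l})$ in $V_l$ (possible by Maschke's theorem in characteristic $0$), I would build inductively an exhaustive filtration
\[
S_{2l-1} := S_{2(l-1)} \oplus \ker({\Gr d_\omega}_{|V_l}) \oplus D_1 \ker({\Gr d_\omega}_{|V_l}), \qquad S_{2l} := S_{2l-1} \oplus W_l \oplus D_1 W_l,
\]
of $s^{-1}\overline{\Pc^\antishriek}$, and verify that each quotient $S_l/S_{l-1}$ is freely generated as an $\Sb$-module by pairs $(\xi^\alpha, \zeta^\alpha)$ with $\zeta^\alpha \equiv d_\omega \xi^\alpha \pmod{\Tc_+(S_{l-1})}$ and the appropriate degree and filtration shifts. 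This is where the hypotheses that $F_\bullet$ comes from an honest graduation and that $\theta_\Pc \in \Gr_1\Pc$ enter: they force $D_0$ to raise $F_\bullet$ by exactly one and ensure the bidegree behaviour of $D_0, D_1, D_2$ needed to match the cellular description of $\Sb$-cofibrant objects in \cite[Proposition C.33]{BMDC20}.

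I expect the main obstacle to be the second assertion, specifically showing that the inductive complements $W_l$ and $D_1 W_l$ can be chosen coherently with the structure inherited from $\Pc^\antishriek$. Unlike the free cooperad $\Tc^c(s\overline{\Pc})$ used in Theorem \ref{thm: bar-cobar resolution}, here $\Pc^\antishriek$ is only a sub-cooperad of $\Tc^c(sE)$, so the splittings along the weight filtration are not automatic and depend on the interaction between the weight grading on $\Pc^\antishriek$ and the filtration $F_\bullet$ on $\hat\Omega\Pc^\antishriek$. The graduation hypothesis on $F_\bullet$ and the single-weight concentration of $\theta_\Pc$ are precisely what make this bigrading behave, after which the cell-filtration argument proceeds as in Theorem \ref{thm: bar-cobar resolution}.
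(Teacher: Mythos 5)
Your treatment of the first assertion is essentially the paper's: apply $\Gr$, identify $\Gr(\hat\Omega\Pc^\antishriek)$ with $\Omega(\Gr\Pc^\antishriek)$, use the Koszul hypotheses to identify $\Gr\Pc^\antishriek$ with $(\Gr\Pc)^\antishriek$ (the paper makes this precise via the Poincaré--Birkhoff--Witt type isomorphism $\Gr(\q\Pc^\antishriek)\cong(\Gr\q\Pc)^\antishriek\cong(\q\Gr\Pc)^\antishriek$ of \cite[Theorem 5.21]{BMDC20}, a step you should not gloss over), and invoke \cite[Theorem 4.3.1]{HM12}. For strict surjectivity the paper's argument is cleaner than yours: $f_\kappa$ is the composite of the two strict surjections $\hat\Omega\Pc^\antishriek\twoheadrightarrow\Tc(E)\twoheadrightarrow\Pc$; ``multiplicativity propagates strictness'' is not a valid general principle and is best avoided.

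The second assertion is where your proposal goes wrong, and the gap is one of target rather than of technique. You propose to replay the cell-filtration argument from the proof of Theorem \ref{thm: bar-cobar resolution}, building $S_{2l-1}$ and $S_{2l}$ out of $\ker(\Gr d_\omega|_{V_l})$, $W_l$ and $D_1W_l$ so as to match \cite[Proposition C.33]{BMDC20}. But that proposition describes cofibrant objects in the model category of \emph{curved operads}, whereas the theorem only claims that $\Tc(\vartheta I)\to\hat\Omega\Pc^\antishriek$ is a cofibration of underlying gr-dg \emph{$\Sb$-modules} --- and the remark immediately following the theorem explains why the stronger statement is out of reach: the curvature $\theta_\omega$ of $\hat\Omega\Pc^\antishriek$ is $-s^{-1}\bar d(\eta(1))-s^{-2}\bar\Delta_{(1)}(\eta(1))$, a linear combination of products of generators of $\Tc(s^{-1}\overline{\Pc^\antishriek})$, not a generator as it is for $\hat\Omega\B\Pc$ (where $\theta_\omega=\theta_\Pc\in\overline{\Pc}\subset s^{-1}(s\overline{\Pc})$). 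The cell-attachment scheme of Proposition C.33 starts from $\Tc(\vartheta I)$ with $\vartheta$ mapping to a free generator, so it cannot be run here. Moreover, your inductive complements borrow structure specific to the bar-cobar case (the contracting homotopy for $d_0$ on $\B\Pc$ and the $D_1$-shifted summands built from the vertex filtration of $\Tc^c(s\overline{\Pc})$) that has no analogue on $s^{-1}\overline{\Pc^\antishriek}$; the acyclicity there comes from Koszulness, not from an internal $d_0$. The paper's actual proof of this part is short: under the graduation hypothesis and $\theta_\Pc\in\Gr_1\Pc$, it reduces the claim to the $\Sb$-module-level cofibrancy statement already established in \cite[Theorem 5.24]{BMDC20}. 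You should either cite that result or construct a cell filtration of the underlying gr-dg $\Sb$-module of $\hat\Omega\Pc^\antishriek$ using the generating cofibrations $\hat\Zc^{1,\infty}_{q,n}(m)\to\hat\Bc^{1,\infty}_{q,n}(m)$, not the curved-operad cells.
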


\begin{proof}
The proof follows the same lines as the proof of Theorem \ref{thm: bar-cobar resolution}. 
The functor $\Gr$ is strong monoidal since $\Kf$ is a field of characteristic 0 so $\Gr \hat\Omega \Pc^\antishriek \cong \Omega \Gr (\Pc^\antishriek)$ as dg operads (by a careful inspection of the differentials). 
Moreover, on the level of $\Sb$-modules, we have the following isomorphisms
\[
\Gr (\Pc^\antishriek) = \Gr (\q\Pc^\antishriek) \cong (\Gr \q\Pc)^\antishriek \cong (\q\Gr \Pc)^\antishriek
\]
where the first isomorphism follows from the Poincaré--Birkhoff--Witt type isomorphism in \cite[Theorem 5.21]{BMDC20} (since $\q\Pc$ is a curved quadratic Koszul operad) and the second isomorphism is true since $\Pc$ is Koszul. 
Since $\Pc$ is Koszul, the data $(E, R)$ of generators and relations of $\Pc$ and $\Gr \Pc$ are the same. 
It follows that $\Gr (d_{\Pc^\antishriek}) = d_{(\Gr \Pc)^\antishriek}$ and $\Gr (\theta^c_{\Pc^\antishriek}) = \theta^c_{(\Gr\Pc)^\antishriek}$. 
Therefore, we obtain by \cite[Theorem 4.3.1]{HM12} the quasi-isomorphism
\[
\Gr \hat\Omega \Pc^\antishriek \cong \Omega ((\Gr \Pc)^\antishriek) \xrightarrow{\sim} \Gr \Pc.
\]
This says precisely that $f_\kappa$ is a graded quasi-isomorphism. 
The map $f_\kappa$ is a strict surjection since it is the composite of the two strict surjections $\hat\Omega \Pc^\antishriek \twoheadrightarrow \Tc(E) \twoheadrightarrow \Pc$. 

When $\theta_\Pc \neq 0$, the curvature $\theta_\omega$ of $\hat\Omega \Pc^\antishriek$ is also non zero and we have $\ker ([\theta_\omega, -]) = \Tc(\theta_\omega) \cong \Tc (\vartheta I)$ by the fact that the graded operad underlying the cobar construction is free. 
Assume now that the filtration $F_\bullet$ on $\Pc$ comes from a graduation $\Gr_p \Pc$ and that $\theta_\Pc$, hence $\theta_\omega$ is concentrated in $\Gr_1 \Pc$. 
Then, as in \cite[Theorem 5.24]{BMDC20}, we get that the underlying gr-dg $\Sb$-module map $\Tc (\vartheta) \to \hat\Omega \Pc^\antishriek$ is cofibrant, so $\hat\Omega \Pc^\antishriek \to \Pc$ is an $\Sb$-cofibration.
\end{proof}

\begin{rem}
\begin{enumerate}
\item
A priori, we cannot hope in general for $\Om \Pc^\antishriek$ to be cofibrant in the model category structure on curved operad (obtained by transfer) since its curvature is a linear combination of products.
\item
Nevertheless, by Theorem C.45 in \cite{BMDC20}, the model categories of $\Om \Pc^\antishriek$-algebras and of $\Om \B \Pc$-algebras are both equivalent to the model category of $\Pc$-algebras (because all operads are $\Sb$-split in characteristic 0). 
In particular, up to equivalence, the $\infty$-category of homotopy $\Pc$-algebras doesn't depend of a choice of resolution.
\end{enumerate}
\end{rem}

\section{Examples of inhomogeneous quadratic curved operads}

In this section, we make the case of the $\Kf$-linear operad encoding curved unital associative algebras and the $\Rb$-linear operad encoding curved Lie $\Cb$-algebras explicit. 
We show that they form Koszul inhomogeneous quadratic curved operads.

To shorten the notations, we use in this section the convention $[n] \coloneqq \{ 1, \ldots, n\}$ for an integer $n$.

\subsection{Curved unital associative algebras}

We consider the curved operad presented as follows
\[
\cuAs \coloneqq \left(\Tc\left(\curvAd, \curvA, \as \right)/\left(\ass, \lunit - |, \runit - | \right), 0, \theta \coloneqq \curvAs\right).
\]
where $\curvA$ is of degree $-2$, $\curvAd$ and $\as$ are of degree 0 and the predifferential is zero.
We filter $\cuAs$ by the number of $\curvA$, say
\[
F_p \cuAs \coloneqq \{ \mu \in \cuAs \textrm{ s.t. the number of } \curvA \textrm{ in } \mu \textrm{ is greater than or equal to } p\}.
\]
(This filtration is in fact a graduation.) 
It is a curved operad whose curvature belongs to $F_1 \cuAs$. 
Indeed, we have already seen in \cite[Proposition 6.2]{BMDC20} that the bracket of the curvature with the elements in $\cAs \subset \cuAs$ is 0. 
It is therefore enough to remark that $[\theta, \curvAd] = 0$, which is true because $\curvAd$ is a unit for $\as$.

\begin{lem}
A $\cuAs$-algebra on a gr-dg module $A$ is the same data as a curved unital associative algebra $(A, \mu, \eta_A, d_A, \theta_A)$ with curvature $\theta_A \in F_1 A$.
\end{lem}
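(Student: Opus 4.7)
The plan is to unpack the definition of a $\cuAs$-algebra structure on a gr-dg module $(A,d_A)$ as a curved operad morphism $\gamma_A : \cuAs \to \End_A$ and match this data term by term with the classical definition of a curved unital associative algebra. Since $\cuAs$ is presented as $\Tc(\curvAd, \curvA, \as)/(\ass,\, \lunit-|,\, \runit-|)$, a morphism of complete graded $\Sb$-modules out of $\cuAs$ is, by the universal property, the same data as the images of the three generators subject to the ideal of relations. I would set $\gamma_A(\curvAd) \eqqcolon \eta_A \in A_0$, $\gamma_A(\as) \eqqcolon \mu : A^{\hotimes 2} \to A$ of degree $0$, and $\gamma_A(\curvA) \eqqcolon \theta_A$; the constraint $\theta_A \in F_1 A_{-2}$ is forced by $\curvA \in F_1\cuAs(1)_{-2}$ together with the fact that a morphism of complete operads preserves the filtration and the homological degree. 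The quadratic relations translate directly into associativity of $\mu$ and two-sided unitality of $\eta_A$.

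Next I would turn to the predifferential and curvature compatibility built into the definition of a morphism of curved operads. Since the predifferential on $\cuAs$ is zero, the compatibility $d_{\End_A}\cdot \gamma_A = \gamma_A\cdot d_{\cuAs} = 0$ amounts to the vanishing of $d_{\End_A}$ on the images of the generators. Because $d_{\End_A}$ is a derivation for the composition $\gamma$ of $\End_A$, checking this on the three generators is sufficient and produces exactly the three conditions: $d_A$ is a derivation with respect to $\mu$, the unit $\eta_A$ is $d_A$-closed, and the curvature $\theta_A$ is $d_A$-closed. These are precisely the standard axioms in the definition of a (unital) curved associative algebra.

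Then I would handle the curvature condition $\gamma_A(\theta_{\cuAs}) = \theta_{\End_A}$. By definition $\theta_{\End_A} = d_A^2$ viewed as an element of $\End_A(1)_{-2}$, while $\theta_{\cuAs} = \curvAs$ is the difference $\as \circ_1 \curvA - \as \circ_2 \curvA$ in $\cuAs(1)_{-2}$. Applying $\gamma_A$ yields the arity-$1$ operation $a \mapsto \mu(\theta_A, a) - \mu(a,\theta_A) = [\theta_A,-]$, so the operad-morphism curvature axiom becomes exactly $d_A^2 = [\theta_A,-]$, as required by the definition of a curved associative algebra. The sign here is controlled by the Koszul sign rule applied to the binary tree $\curvAs$, which is the only place a small bookkeeping check is needed; I expect this to be the main (and only) potentially confusing step of the verification.

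Finally, for the converse, I would take a curved unital associative algebra $(A,\mu,\eta_A,d_A,\theta_A)$ with $\theta_A \in F_1 A$, define $\gamma_A$ on the generators $\curvAd, \as, \curvA$ by $\eta_A, \mu, \theta_A$ and extend uniquely to $\Tc(\curvAd,\curvA,\as)$; associativity and unitality of $(\mu,\eta_A)$ ensure the extension factors through the ideal of relations, yielding a morphism of graded complete operads $\cuAs \to \End_A$. The three conditions ($d_A$-derivation, closedness of $\eta_A$ and $\theta_A$) show $d_{\End_A}\cdot \gamma_A = 0$, and the equation $d_A^2 = [\theta_A,-]$ shows $\gamma_A(\theta_{\cuAs}) = \theta_{\End_A}$, so $\gamma_A$ is a morphism of curved operads. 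The two constructions are manifestly inverse to each other, proving the claim.
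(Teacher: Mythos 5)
Your proof is correct and follows essentially the same route as the paper's (which simply identifies the images of the three generators, notes that the relations give associativity and unitality, and that curvature-preservation gives $d_A^2=[\theta_A,-]$); you merely spell out the predifferential compatibility and the converse more explicitly. One cosmetic slip: the generator $\curvA$ lives in arity $0$ (so $\gamma_A(\curvA)\in\End_A(0)\cong A$), not in $\cuAs(1)$ — only the curvature $\curvAs$ is the arity-$1$ element.
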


\begin{proof}
A map of curved operad $\cuAs \to \End_A$ is characterized by the image of the generators $\curvAd$, $\curvA$ and $\as$ which give respectively three maps: $\eta_A : \Kf \to A$ of degree 0, $\theta_A : \Kf \to F_1 A$ of degree $-2$ and $\mu : A^{\otimes 2} \to A$ of degree 0. 
The relation defining $\cuAs$ ensures that $\mu$ is associative and that $\eta_A$ is a unit for $\mu$. 
The fact that the curvature is sent to the curvature says finally ${d_A}^2 = [\theta_A,\, -]$.
\end{proof}

We now describe our sign convention for a basis of the cooperad $\As^\antishriek$ and the consequences for the formula describing its decomposition map. 
Following \cite[Section 9.1.5]{LV12} we note $\mu_n^c = -\sum_{t \in PBT_n} \mathrm{sgn}(\tilde t)t$ for $n \geq 3$ (where the sum runs over planar binary trees with $n$ leaves identified with $\Tc(\mu^c_2)(n)$) and we fix $\tilde\mu_n^c \coloneqq (-1)^{\frac{(n-1)(n+2)}{2}}\mu_n^c$ for $n \geq 0$. 
This change of sign gives for example $\tilde\mu_0^c = -\mu_0^c$, $\tilde\mu_1^c = \mu_1^c$ and $\tilde\mu_2^c = \mu_2^c$. 
With this basis, we obtain the following formula for the decomposition map.

\begin{lem}
\label{lem: change of convention}
The decomposition map $\Delta$ on $\As^\antishriek = \{ \tilde\mu_n^c\}_{n \geq 1}$ is given by
\[
\Delta(\tilde\mu_n^c) = \sum_{i_1 + \cdots + i_m = n} (-1)^{\sum_j (i_j-1)\sum_{l<j}i_l} (\tilde\mu_m^c ;\, \tilde\mu_{i_1}^c ,\, \cdots ,\, \tilde\mu_{i_m}^c).
\]
\end{lem}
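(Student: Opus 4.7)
The plan is to derive the formula by transport of structure from the classical decomposition on the original basis $\{\mu_n^c\}$ of $\As^\antishriek$. I recall from, for instance, \cite[Section 9.1.5]{LV12} that in the standard convention, $\Delta(\mu_n^c)$ admits an explicit expansion on the basis $(\mu_m^c;\mu_{i_1}^c,\ldots,\mu_{i_m}^c)$, with signs dictated by the Koszul rule applied to degree $|\mu_k^c| = k-1$ elements under the original $\Ai$ convention. The idea is to start from this formula and run the change of basis.

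More concretely, I would substitute $\mu_n^c = (-1)^{\varepsilon(n)}\tilde\mu_n^c$, with $\varepsilon(n) := \frac{(n-1)(n+2)}{2}$, into both sides and use $\mathbb{K}$-linearity of $\Delta$ to extract the transformed coefficient of $(\tilde\mu_m^c;\tilde\mu_{i_1}^c,\ldots,\tilde\mu_{i_m}^c)$. At that point the lemma is equivalent to a single parity congruence modulo $2$ relating $\varepsilon(n)$, $\varepsilon(m)$, $\sum_j \varepsilon(i_j)$, the original Loday--Vallette sign, and the target sign $\sum_j(i_j-1)\sum_{l<j} i_l$, subject to the constraint $\sum_j i_j = n$.

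The main obstacle is then purely sign bookkeeping. A convenient shortcut is to use $\varepsilon(k) \equiv \binom{k+1}{2} + 1 \pmod 2$ together with the elementary identity
$$\binom{n+1}{2} = \sum_{j=1}^m \binom{i_j+1}{2} + \sum_{l<j} i_l i_j,$$
which produces the key quadratic term $\sum_{l<j} i_l i_j$. A straightforward symmetrisation then absorbs the linear, $m$-dependent residues into the original sign, yielding exactly $\sum_j (i_j - 1)\sum_{l<j} i_l$. In effect, this lemma is the dual-cooperad translation of the global renormalisation $m_n \leadsto (-1)^{\varepsilon(n)} m_n$ introduced in Section \ref{section: sign conventions}: after the change of basis, the Loday--Vallette signs reorganise into the natural Koszul signs of our convention, i.e.\ the signs from passing a degree $(i_j - 1)$ element across the entries of total arity $\sum_{l<j} i_l$ sitting on its left.
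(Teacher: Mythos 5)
Your proposal is correct and follows essentially the same route as the paper: start from the Loday--Vallette formula for $\Delta(\mu_n^c)$, transport it through the change of basis $\mu_n^c\mapsto(-1)^{\frac{(n-1)(n+2)}{2}}\mu_n^c$, and verify a single congruence modulo $2$ under the constraint $\sum_j i_j=n$. Your use of the identity $\binom{n+1}{2}=\sum_j\binom{i_j+1}{2}+\sum_{l<j}i_li_j$ is just a tidier packaging of the same parity computation the paper carries out by direct expansion, and it does check out.
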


\begin{proof}
In \cite[Lemma 9.1.2]{LV12}, the formula given for the decomposition map of $\As^\antishriek = \{ \mu_n^c\}_{n \geq 1}$ is
\begin{align*}
\Delta(\mu_n^c) & = \sum_{i_1 + \cdots + i_m = n} (-1)^{\sum (i_j-1)(m-j)} (\mu_m^c ;\, \mu_{i_1}^c ,\, \cdots ,\, \mu_{i_m}^c).
\end{align*}
We consider the isomorphism of $\As^\antishriek$ given by $\mu_n^c \mapsto \tilde\mu_n^c \coloneqq (-1)^{\frac{(n-1)(n+2)}{2}}\mu_n^c$. 
We compute
\begin{align*}
\frac{(n-1)(n+2)}{2} &+\sum_j (i_j-1)(m-j) - \frac{(m-1)(m+2)}{2} - \sum_j \frac{(i_j-1)(i_j+2)}{2} =\\
& = \frac{1}{2}\left(\sum_j (i_j-1) + (m-1)\right)(n+2) +\sum_j (i_j-1)(m-j)\\
& \qquad - \frac{(m-1)(m+2)}{2} - \sum_j \frac{(i_j-1)(i_j+2)}{2}\\
& = \frac{1}{2}\left(\sum_j (i_j-1)\left( i_j + 2 + \sum_{l \neq j}i_l\right) + (m-1)(m+2 + n-m)\right)\\
& \qquad +\sum_j (i_j-1)(m-j)- \frac{(m-1)(m+2)}{2} - \sum_j \frac{(i_j-1)(i_j+2)}{2} \\
& = \frac{1}{2} \left( \sum_j ( i_j - 1)\left(\sum_{l \neq j} i_l + 2(m-j)\right) + (m-1)(m-n) \right)\\
& = \frac{1}{2} \sum_j \sum_{l \neq j} i_j i_l + \frac{n(m-1)}{2} + m(n-m) - \sum_j i_j j+ \frac{m(m+1)}{2}\\
& \qquad + \frac{(m-1)(m-n)}2\\
& = \sum_j i_j \left(\sum_{l > j} i_l - j\right) + nm = \sum_j i_j\left(\sum_{l > j} (i_l-1) +m- 2j\right) + nm \\
& \equiv \sum_j \sum_{l > j} i_j (i_l-1)  \mod 2 \equiv \sum_j \sum_{l < j} (i_j-1)i_l  \mod 2.
\end{align*}
This proves the formula.
\end{proof}

We fix some notations that we use in the next theorem. 
Let $n\geq 0$ and $k\geq 0$. For $S \subseteq [n+k]$ such that $|S| =k$, we note $\mu_{n+k}^{c, S}$ the element $\tilde\mu_{n+k}^c$ on which we have grafted the element $s\curvA$ in the positions given by the set $S$. 
Moreover, for $T \subseteq [n]$, we note $\left( \mu_{n+k}^{c, S}\right)^T$ the element $\mu_{n+k}^{c, S}$ on which we have grafted the element $s\curvAd$ in the positions given by the set $T$. 
As an example, we obtain pictorially:
\[
\left( \mu_{2+2}^{c, \{1, 3\}}\right)^{\{ 1\}} = \vcenter{
  \xymatrix@M=0pt@R=6pt@C=6pt{
    & \scriptstyle\circ & & & \\
    & *{} \ar@{-}^s[u] & & &\\
    \ar@{{*}}^s[u] & *{} \ar@{-}[u] & & \ar@{{*}}^s[u] & \\
    & & \tilde\mu_4^c \ar@{-}[d] \ar@{-}[ull] \ar@{-}[ul] \ar@{-}[ur] \ar@{-}[urr] & &\\
    & & & &}}\quad \in \Tc^c({\scriptstyle s}\curvAd, {\scriptstyle s}\curvA, {\scriptstyle s}\as).
\]

\begin{thm}
\label{thm: Koszul dual cuAs}
The inhomogeneous quadratic curved operad $\cuAs$ is Koszul. 
Its Koszul dual curved cooperad $\cuAs^{\antishriek}$ is isomorphic to
\[ \cuAs^{\antishriek} \cong \left(s\curvA \oplus \q\uAs^\antishriek = \left\{ \nu_n^T \right\}_{n \geq 0, T \subseteq [n]}, \Delta^\theta, 0, \theta^c\right), \]
where the terms $\nu_{n}^{T}$ have degree $n-1+|T|$ and for any $n \geq 0$ and $T \subseteq [n]$, we have
\begin{align}
\label{eq: decomposition map cAs}
\Delta^{\theta}( \nu_n^{T} ) \coloneqq \sum_{\substack{i_1+ \dots + i_{m'} = n-|T_0|\\ \{T_0, \dots, T_{m'}\}}} (-1)^{\epsilon} (\nu_{m}^{T_0}; \nu_{i_{1}}^{T_{1}}, \ldots, \nu_{i_{m'}}^{T_{m'}})
\end{align}
where $m' \coloneqq m-|T_0|$ and $T_0 = T_0^0 \sqcup \dots \sqcup T_0^{m'}$ with for all $j$ between 1 and $m'$, $\max T_0^{j-1} < \min T_0^j$ and
\begin{align*}
T & = T_0^0 \sqcup (T_1+|T_0^0|) \sqcup (T_0^1+i_1 -1) \sqcup \dots \sqcup\\
& \qquad \left(T_{m'} + |T_0^1|+ \sum_{j= 1}^{m'-1} (|T_0^j| + i_j)\right) \sqcup \left(T_0^{m'} + \sum_{j=1}^{m'} (i_j-1)\right)\\
& = \tilde{T}_0 \sqcup \dots \sqcup \tilde{T}_{m'}
\end{align*}
(where $\tilde{T}_j$ corresponds to the elements of $T_j$ reindexed) and where
\begin{multline*}
\epsilon \coloneqq \sum_{j=1}^{m'} (i_{j}-1)\left(\sum_{l<j}i_l+|T_0^0|+|T_1|+|T_0^1|+\cdots + |T_{j-1}|+|T_0^{j-1}|\right) +\\
|T_0|\left(n-m\right)+\sum_{j=1}^{m'} |T_0^{j}|(|T_{1}|+\cdots +|T_{j}|).
\end{multline*}
The curvature $\theta^c : \cuAs^\antishriek \to I$ is equal to
\begin{equation*}
  \theta^c (\nu_n^T) = \left\{
    \begin{array}{rl}
      | &  \text{if } n = 2 \text{ and, } T = \{1 \} \text{ or } T = \{2\}\\
      0 & \text{otherwise.}
    \end{array}
  \right.
\end{equation*}
Moreover, the natural map
\[ f_\kappa : \cuAi \coloneqq \hat\Omega \cuAs^\antishriek \to \cuAs \]
is an $\Sb$-cofibrant resolution.
\end{thm}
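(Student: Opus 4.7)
The plan is to apply the curved Koszul duality machinery of Section \ref{sec: CKD}, specifically Theorem \ref{thm: Koszul resolution}, to $\cuAs$. First, we verify that $\cuAs$ is an inhomogeneous quadratic curved operad in the sense of Definition \ref{def: inhomogeneous quadratic}: the generating $\Sb$-module $E$ is spanned by $\curvAd$ (arity $1$, degree $0$), $\curvA$ (arity $1$, degree $-2$), and $\as$ (arity $2$, degree $0$); the space of relations $R$ is generated by $\ass$, $\lunit-|$, $\runit-|$; and the curvature datum is $\tilde{\theta}_\Pc(1)=\curvAs \in F_1\Tc(E)^{(2)}$. Conditions (I) (minimality of generators) and (II) (maximality of relations) follow from the standard normal-form arguments for unital associative words, while condition (III) holds because the quadratic part $\q R$ is generated by $\ass, \lunit, \runit$, none of which involves $\curvA$. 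Condition (3) on $\q\cuAs$ is then verified by inspection.

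Next, we compute the Koszul dual curved cooperad
\[
\cuAs^\antishriek = \Cc\bigl(sE,\, s^2\q R \oplus (\id+s^2\tilde{\theta}_\Pc)(I)\bigr) \subset \Tc^c(sE).
\]
The description of its underlying $\Sb$-module is obtained by adapting the Hirsh--Milles computation of $\uAs^\antishriek$ in \cite{HM12} to the present sign convention via Lemma \ref{lem: change of convention}, together with the observation that the curvature relation $(\id + s^2\tilde{\theta}_\Pc)(I)$ contributes only the additional generator $s\curvA$ beyond $\q\uAs^\antishriek$. The formula for $\Delta^\theta$ then comes from restricting the cofree decomposition of $\Tc^c(sE)$ to $\cuAs^\antishriek$: the sum over partitions $i_1+\cdots+i_{m'}=n-|T_0|$ records vertical cuts of the tree underlying $\nu_n^T$; the subsets $T_0,\dots,T_{m'}$ encode the distribution of the $|T|$ units $s\curvAd$ between the top and bottom pieces; and $\epsilon$ combines the Koszul signs of Lemma \ref{lem: change of convention} with the parities produced by commuting the degree-$1$ elements $s\curvAd$ and $s\as$ into position. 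The curvature $\theta^c = -s^{-2}\varphi_0 \cdot p^{(2)}$ is nonzero exactly on the weight-$2$ relations $\lunit-|$ and $\runit-|$, where $\varphi_0$ extracts the $I$-component, yielding the stated values on $\nu_2^{\{1\}}$ and $\nu_2^{\{2\}}$.

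Third, to establish Koszulness in the sense of Definition \ref{def: Koszul}, we note that, since $F_\bullet$ on $\cuAs$ is a graduation, we have $\cuAs \cong \widehat{\Gr\cuAs}$ and $\q\cuAs \cong \widehat{\Gr\q\cuAs}$, with $\Gr\q\cuAs \cong \q\Gr\cuAs$ by direct inspection of the presentation. Forgetting the curvature, $\q\Gr\cuAs$ is the classical unital associative operad $\uAs$, which is Koszul by \cite{HM12}. The remaining hypotheses of Theorem \ref{thm: Koszul resolution} are also satisfied: $\Gr\q\cuAs \cong \uAs$ is connected, bounded below, and weight graded; $F_\bullet$ on $\cuAs$ comes from a graduation by number of $\curvA$'s; and $\curvAs$, containing exactly one $\curvA$, lies in $\Gr_1 \cuAs$. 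Theorem \ref{thm: Koszul resolution} then yields simultaneously that $f_\kappa$ is a graded quasi-isomorphism and a strict surjection and that the underlying gr-dg $\Sb$-module map $\Tc(\vartheta I) \to \cuAi$ is cofibrant, which is precisely the statement that $f_\kappa$ is an $\Sb$-cofibrant resolution.

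The main obstacle in this plan will be the explicit sign computation in $\Delta^\theta$: simultaneously tracking Koszul signs through the cofree decomposition of $\Tc^c(sE)$, through the sign convention of Lemma \ref{lem: change of convention} for $\tilde\mu_n^c$, and through the transpositions of the odd-degree generators $s\curvAd$ and $s\as$ into their target positions is delicate but combinatorially straightforward, and it reduces in the end to an arithmetic rearrangement of the kind performed in the proof of Lemma \ref{lem: change of convention}.
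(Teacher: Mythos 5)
Your overall strategy matches the paper's (verify the inhomogeneous quadratic conditions, compute the Koszul dual inside $\Tc^c(sE)$, reduce Koszulness to the associated graded, and invoke Theorem \ref{thm: Koszul resolution}), but two of your steps contain genuine gaps. First, the identification ``forgetting the curvature, $\q\Gr\cuAs$ is the classical unital associative operad $\uAs$'' is incorrect. The quadratic operad $\q\Gr\cuAs$ retains the $0$-ary generator $\curvA$ as a \emph{free} factor, and its unit relations are homogenised (the linear terms $|$ are dropped); the paper computes $\q\Gr\cuAs \cong \q\uAs \ast \mathrm{Free}(\curvA) \cong \curvAd \oplus \Gr\cAs$, which is neither $\uAs$ nor $\q\uAs$. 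Consequently the Koszulness you need is not supplied by \cite{HM12} alone: one must show that the Koszul complex of $\q\Gr\cuAs$ itself is acyclic, which the paper does by splitting it into copies of the Koszul complex of $\Gr\cAs$ indexed by the subsets $T$ (plus one exceptional, separately acyclic copy for $T=\{1\}\subseteq\{1\}$) and appealing to the Koszulness of $\Gr\cAs$ from \cite[Theorem 6.9]{BMDC20}. This step, which is where the $0$-ary curvature generator is actually dealt with, is absent from your argument.

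Second, the claim that the curvature relation ``contributes only the additional generator $s\curvA$ beyond $\q\uAs^\antishriek$'' describes the associated graded of $\cuAs^\antishriek$, not the sub-cooperad of $\Tc^c(sE)$ itself. Because the coideal quotient is taken by $s^2\q R \oplus (\id+s^2\tilde{\theta}_\Pc)(I)$, the Koszul dual is the image of the nontrivial embedding $\upsilon(\nu_n^T)=\sum_{k\geq 0}\sum_{S}\pm\,(\mu_{n+k}^{c,S})^T$, an infinite sum over all ways of grafting $s\curvA$. This explicit embedding is what allows one to (i) derive formula \eqref{eq: decomposition map cAs} by restricting the cofree decomposition, (ii) verify condition (3) of Definition \ref{def: inhomogeneous quadratic}, which is checked on $\upsilon(\nu_1^{\emptyset})$ and which you defer to ``inspection'', and (iii) identify the candidate cooperad with $(\q\cuAs)^\antishriek$ via the universal property together with a Poincar\'e--Birkhoff--Witt comparison on associated gradeds. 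Without specifying this subobject, ``restricting the cofree decomposition'' is not yet a computation. The remaining points of your plan --- conditions (I)--(III), the computation of $\theta^c$ from $\varphi_0$ on the unit relations, the sign bookkeeping via Lemma \ref{lem: change of convention}, and the application of Theorem \ref{thm: Koszul resolution} for the $\Sb$-cofibrant resolution --- are in line with the paper.
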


\begin{rem}
Even if it is only implicitly visible, the difference with the decomposition map formula given in \cite{HM12} is (up to signs) that in the above theorem the sum runs over integers $m$, $i_1$, \dots, $i_{m'}$ which can be 0.
\end{rem}

\begin{proof}
The filtration $F_\bullet$ on the curved operad $\cuAs$ is a graduation and $\cuAs$ is the completion of its associated graded. 
The presentation of $\cuAs$ given above is easily seen to be inhomogeneous quadratic, except for Condition (3) in Definition \ref{def: inhomogeneous quadratic} that will be proven during the proof. 
The presentation satisfies the fact that $\q\cuAs$ is the completion of its associated graded. 
It satisfies also that $\Gr \q\cuAs \cong \q\Gr\cuAs$ as quadratic operad. 
In order to prove the Koszulness of $\cuAs$, it remains to show that $\q\Gr\cuAs$ is a quadratic Koszul operad (see Definition \ref{def: Koszul}). 
Noting $\Pc \ast \Qc$ for the coproduct of two operads $\Pc$ and $\Qc$, we have
\begin{align*}
\q\Gr\cuAs & \cong \mathrm{Free}\left(\curvAd, \curvA, \as \right)/\left(\ass, \lunit, \runit \right) \cong (\curvAd \oplus \As) \ast \mathrm{Free}(\curvA)\\
& = \q\uAs \ast \mathrm{Free}(\curvA) \cong \curvAd \oplus \Gr\cAs.
\end{align*}
We also use the notation $\ast$ for the coproduct of two cooperads. 
By Proposition 6.8 in \cite{BMDC20}, we have
\[
(\q\Gr\cuAs)^\antishriek \cong \mathrm{coFree}(s\curvA) \ast (\q\uAs)^\antishriek \cong s\curvA \oplus \uAs^\antishriek \cong (\Gr\cAs)^\antishriek \circ \mathrm{coFree}(s\curvAd),
\]
where $\uAs^\antishriek$ has been computed in \cite[Section 6]{HM12}. 
We proceed as in the proof of Proposition 6.1.7 in \cite{HM12}. 
Denoting generically $\kappa$ the usual twisting morphism associated with a quadratic operad and making use of the notation $\circ_\kappa$ recalled in \cite{HM12}, we have
\[
\q\Gr\cuAs \circ_\kappa (\q\Gr\cuAs)^\antishriek \cong \curvAd \oplus \left\{\bigoplus_{T \subseteq [n]} (\Gr\cAs \circ_{\kappa} (\Gr\cAs)^{\antishriek}) (n) \right\}_{n \geq 1},
\]
with $\Gr \cAs \cong \As \ast \mathrm{Free}(\curvA)$ and $(\Gr\cAs)^\antishriek \cong \curvA \oplus \As^\antishriek$. 
Since $\vcenter{
  \xymatrix@M=0pt@R=2pt@C=2pt{
    & & & & \\
    & \scriptstyle\circ & & &\\
    & \ar@{-}[u] \ar@{-}[dr] & & \ar@{-}[dl] & \\
    & & *{} \ar@{-}[d] & &\\
    & & }} = 0 = \vcenter{
  \xymatrix@M=0pt@R=2pt@C=2pt{& & & & \\
    & & & \scriptstyle\circ &\\
    & \ar@{-}[dr] & & \ar@{-}[u] \ar@{-}[dl] & \\
    & & *{} \ar@{-}[d] & & \\
    & & }}$ in $\q\cuAs$, the differential on the copies $(\Gr\cAs \circ_{\kappa} \Gr\cAs^{\antishriek})
(n)$ is given by the usual differential on $\Gr\cAs \circ_{\kappa} \Gr\cAs^{\antishriek}$
except for the copy $T = \{ 1 \} \subseteq \{1\}$ where $d\left(\vcenter{
  \xymatrix@M=0pt@R=3pt@C=3pt{& \scriptstyle\circ &\\
   \ar@{.}[rr] & *{} \ar@{-}[u] \ar@{-}[d] &\\
    &&}}\right) = \curvAd$. 
Appart from this particular copy (which is clearly acyclic), each copy $\Gr\cAs \circ_{\kappa} (\Gr\cAs)^{\antishriek}(n)$ is acyclic (or quasi-isomorphic to $I$ in the case $n=1$) because $\Gr\cAs$ is Koszul (see \cite[Theorem 6.9]{BMDC20}) and we get that $\q\Gr\cuAs \circ_\kappa (\q\Gr\cuAs)^\antishriek \xrightarrow{\sim} I$. 
It follows that $\q\Gr\cuAs$ is Koszul as a quadratic operad. 
Thus $\cuAs$ is a Koszul inhomogeneous quadratic curved operad (up to Condition (3) in Definition \ref{def: inhomogeneous quadratic}).

To compute $\cuAs^\antishriek$, we can proceed as in the proof of Theorem 6.9 in \cite{BMDC20}. 
We first prove that $\Cc = \left(\left\{ \nu_n^T \right\}_{n \geq 0, T \subseteq [n]}, \Delta^\theta, 0, \theta^c\right)$ is a curved sub-cooperad of $\Tc^c\left( {\scriptstyle s}\curvAd, {\scriptstyle s}\curvA, {\scriptstyle s}\as\right)$ satisfying the property for which $(\q\cuAs)^\antishriek$ is universal. 
As a consequence $\Cc$ injects into $(\q\cuAs)^\antishriek$. 
The filtration that we consider on $\Cc$ are $F_0\Cc = \Cc$, $F_1\Cc = \Kf\cdot \nu_0^{\emptyset}$ and $F_2\Cc = 0$
. 
Then we prove that this inclusion is an isomorphism. 
First we show that the map defined for $n\geq 0$ and $T \subseteq [n]$ by
\begin{equation}
\label{eq: injection of the Koszul dual coop - cuAs}
\upsilon : \nu_n^T \mapsto \sum_{\substack{k \geq 0\\ S = \{s_j\}_{j=1}^k \subseteq [n+k], |S|=k}} (-1)^{s_1+\cdots+s_k-\frac{k(k+1)}{2}} \left(\mu_{n+k}^{c, S}\right)^T,
\end{equation}
provides a (filtered) inclusion of cooperads $\Cc \rightarrowtail \Tc^c\left( {\scriptstyle s}\curvAd, {\scriptstyle s}\curvA, {\scriptstyle s}\as\right)$, where the decomposition map on the right is the cofree decomposition map. (This formula differs slightly, by a small modification of the sign, from the formula given in \cite[Theorem 6.9]{BMDC20} because of our change of sign conventions explained in Section \ref{section: sign conventions}.)
We have
\begin{multline*}
\Delta\left( \left(\mu_{n+k}^{c, S}\right)^T\right) =\\
\sum_{\substack{i_1 + \dots + i_{m'} = n-|T_0|\\ \{T_0, \dots, T_{m'}\}\\ \{S_0, \ldots, S_{m'}\}\\ |S_j| = k_j}} (-1)^{\epsilon_1+\epsilon_2} \left( \left(\mu_{m+k_0}^{c, S_0}\right)^{T_0}; \left(\mu_{i_1+k_1}^{c, S_1}\right)^{T_1}, \ldots, \left(\mu_{i_{m'}+k_{m'}}^{c, S_{m'}}\right)^{T_{m'}}\right)
\end{multline*}
where $m' = m-|T_0|$ and the sums run over the partitions $\{T_0, \dots, T_{m'}\}$ and $\{S_0, \ldots, S_{m'}\}$ such that
\begin{itemize}
\item
$T_0 = T_0^0 \sqcup \dots \sqcup T_0^{m'}$ with for all $j$ between 1 and $m'$, $\max T_0^{j-1} < \min T_0^j$ and
\begin{align*}
T & = T_0^0 \sqcup (T_1+|T_0^0|) \sqcup (T_0^1+i_1 -1) \sqcup \dots \sqcup\\
& \qquad \left(T_{m'} + |T_0^1|+ \sum_{j= 1}^{m'-1} (|T_0^j| + i_j)\right) \sqcup \left(T_0^{m'} + \sum_{j=1}^{m'} (i_j-1)\right)\\
& = \tilde{T}_0 \sqcup \dots \sqcup \tilde{T}_{m'}
\end{align*}
(where $\tilde{T}_j$ corresponds to the elements of $T_j$ reindexed)
\item
$S_0 = S_0^0 \sqcup \dots \sqcup S_0^{m'}$ and for all $j$ between 1 and $m'$, $\max S_0^{j-1} < \min S_0^j$ and
\begin{align*}
S & = S_0^0 \sqcup (S_1+|S_0^0|+|T_0^0|) \sqcup (S_0^1+i_1+k_1-1) \sqcup \dots \sqcup\\
& \qquad \left(S_{m'} +|S_0^0|+ |T_0^0| + \sum_{j =1}^{m'-1}(|S_0^j|+|T_0^j| + i_j + k_j)\right) \sqcup \left(S_0^{m'} + \sum_{j=1}^{m'} (i_j + k_j -1)\right)\\
& = \tilde{S}_0 \sqcup \dots \sqcup \tilde{S}_{m'}
\end{align*}
(where $\tilde{S}_j$ corresponds to the elements of $S_j$ reindexed)
\end{itemize}
and
\begin{itemize}
\item
$\epsilon_1 = \sum (i_j+k_j-1)(\sum_{l<j}(i_l+k_l) + \sum_{l < j}(|S_0^l|+|T_0^l|)) + k_0(n+k-m-k_0) + L_0 + K$, with 
$L_0 \coloneqq \sum_{s \in \tilde{S}_0} \sum_{\{t;\, \max \{\tilde{S}_t\} < s\}} k_t = \sum_j k_j \sum_{l\leq j}|S_0^l|$, and $K \coloneqq \sum k_t \sum_{j > t} (i_j+k_j -1)$ is computed as in Formula (9) in \cite{BMDC20} and modified by the change of convention explained in Lemma \ref{lem: change of convention},
\item
whereas
\begin{align*}
\epsilon_2 & = |T_0|\left(\sum_{j=1}^{m'}(i_j+k_j-1-k_j)\right)+\sum_{j=1}^{m'} |T_0^{j}|\sum_{l \leq j}|T_{l}| + \sum_j |T_j| \sum_{l > j}(i_l+2k_l-1)\\
& \equiv |T_0|\left(n-m\right)+\sum_{j=1}^{m'} |T_0^{j}|\sum_{l \leq j}|T_{l}| + \sum_j |T_j| \sum_{l > j}(i_l-1) \mod 2
\end{align*}
is computed as in Corollary 6.1.5 in \cite{HM12} using Proposition 6.1.4 and where we haven't considered the sign coming from the decomposition of $\tilde\mu_n^c$ already considered in the previous point. 
We remark that this sign is not modified by the presence of the element ${\scriptstyle s}\curvA$.
\end{itemize}
We remark that we have also add the terms $k_0(n+k-m-k_0)$ and $\sum_j |T_j| \sum_{l > j}(i_l+2k_l-1)$ which are missing in \cite{BMDC20, HM12}. 
Then, by similar arguments as in \cite{HM12, BMDC20} (and because we have $k- k_0 -\dots -k_{m'} =0$), the \emph{images} of the $\nu_n^T$ in $\Tc^c\left( {\scriptstyle s}\curvAd, {\scriptstyle s}\curvA, {\scriptstyle s}\as\right)$ satisfies an equation like Equation \eqref{eq: decomposition map cAs}, so Equation \eqref{eq: decomposition map cAs} is satisfied since $\Cc$ injects as a cooperad in $\Tc^c\left( sE\right)$ for $E = \langle \curvAd, \curvA, \as \rangle$. 
Moreover, looking at the image of the first $\nu_n^T$'s through $\upsilon$, a careful calculation of the signs shows that the composite
\begin{multline*}
\Cc \rightarrowtail \Tc^c(sE) \twoheadrightarrow\\
S \coloneqq \left. \left(I\oplus \Tc^c(sE)^{(2)} \right) \middle/ \left( {\scriptstyle s^2} \left(\ass, \lunit, \runit \right) \oplus (\id + {\scriptstyle s^2} \tilde \theta)(I)\right) \right.
\end{multline*}
is zero. 
As a byproduct, we get that $\upsilon(\nu_1^{\emptyset}) \in \Tc^c(sE)$ is sent to 0 in the coideal quotient $(S)$ (whose definition is given in \cite[Definition 5.3]{BMDC20}) so the counit of $\Tc^c(sE)$ cannot factor through the coideal quotient $(S)$. 
By the universal property of the cooperad $(\q\cuAs)^\antishriek$, it follows that there exists a unique (injective) morphism $\Cc \rightarrowtail (\q\cuAs)^\antishriek$ such that the following diagram commutes:
\[
\begin{tikzcd}
\Cc
\dar\ar[r, >->] & \Tc^c(sE).
\\
(\q\cuAs)^\antishriek \ar[ur, >->] &
\end{tikzcd} 
\]
The injective map $\Gr\Cc \rightarrowtail \mathrm{coFree}(sE)$ can be factored as $\Gr \Cc \rightarrowtail \Gr ((\q\cuAs)^\antishriek) \cong (\Gr \q\cuAs)^\antishriek \rightarrowtail \mathrm{coFree}(sE)$, where the second isomorphism is the Poincaré--Birkhoff--Witt type isomorphism which appears in the proof of Theorem \ref{thm: Koszul resolution}. 
Its image is precisely $(\Gr \q\cuAs)^\antishriek \cong s\curvA \oplus \q\uAs^\antishriek$ so $\Gr \Cc \cong \Gr ((\q\cuAs)^\antishriek) \cong (\Gr \q\cuAs)^\antishriek$. 
Since the previous inclusions are strict and we are working with complete objects, this proves that $\Cc \cong (\q\cuAs)^\antishriek$. 

The curvature is computed directly from its definition given in Section \ref{sec: CKD}.

In order to have an $\Sb$-cofibrant resolution, it is enough to remark that the curvature is non-zero, that for the weight filtration given by its generators, $\Gr\cuAs$ is connected bounded below weight graded and that the filtration $F_\bullet$ on $\cuAs$ is induced by the graduation given by the number of $\curvA$. 
We can therefore apply Theorem \ref{thm: Koszul resolution}.
\end{proof}

We make the algebras over the curved operad $\cuAi$ explicit and we compare them with the literature.

\begin{prop}
A $\cuAi$-algebra $\gamma_A : \cuAi \to \End_{(A, d_A)}$ is equivalent to a (complete) graded vector space $(A,\, F_\bullet)$ equipped with an operation $m_0 : \Kf \to F_1A$ of degree $-2$ and for all $n \geq 1$ and $T \subseteq [n]$, with filtered operations
\[
m_n^T : A^{\otimes (n-|T|)} \to A \text{ of degree } n+|T|-2,
\]
where $m_1 = d_A$, which together satisfy the following identities:
\begin{equation}
\label{eq: relation cuAi-algebras}
\left\{ \begin{array}{lcl}
\partial(m_{2}^{\{1 \}}) &=& m_{2} \circ(m_{1}^{\{1\}}, -) - \id_A, \\
\partial(m_{2}^{\{2 \}}) &= & m_{2} \circ (- ,m_{1}^{\{1\}}) - \id_A,
\end{array} \right.
\end{equation}
and for $(n,\, T) \neq (2,\, \{ 1\})$ and  $(n,\, T) \neq (2,\, \{ 2\})$
\begin{equation}
\label{eq: relation qcuAi-algebras}
\sum_{\substack{p+q+r=n\\ T = T_0' \sqcup (T_1 + p) \sqcup (T_0'' + q)\\ T_0 = T_0' \sqcup T''_0}} (-1)^{(p+|T_0''|)q +r + |T_0'|(|T_1|+1)} m_{p+1+r}^{T_{0}} \circ_{(p+1-|T_0'|)} m_{q}^{T_{1}} = 0,
\end{equation}
where $\max T_0' < p \leq p+q < \min T_0''$ and where we have identified the maps $m_n$ and $m_n^\emptyset$. 

Up to the sign convention, this notion of algebras coincides with the notions of homotopy-unital filtered $\Ai$-algebras used in \cite[Section 3.3.1]{FOOO07}.
\end{prop}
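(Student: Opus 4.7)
A $\cuAi$-algebra on a gr-dg module $(A, d_A)$ is, by definition of $\cuAi = \hat\Omega \cuAs^\antishriek$, a strict curved operad morphism $\gamma_A : \hat\Omega \cuAs^\antishriek \to \End_{(A, d_A)}$. Because the graded operad underlying the cobar is free on $s^{-1}\overline{\cuAs^\antishriek}$, such a $\gamma_A$ is uniquely determined by a filtered, $\Sb$-equivariant, degree-zero map $\bar\gamma_A : s^{-1}\overline{\cuAs^\antishriek} \to \End_A$. Using the basis of Theorem \ref{thm: Koszul dual cuAs}, I set $m_n^T \coloneqq \bar\gamma_A(s^{-1}\nu_n^T)$; the degree count gives $|\nu_n^T| = n-1+|T|$, hence $m_n^T$ has arity $n-|T|$ and degree $n+|T|-2$, as claimed. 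The element $m_0 \coloneqq m_0^\emptyset$ lies in $F_1 A$ because $\nu_0^\emptyset$ corresponds to the filtration-one generator $s\curvA$ under the splitting $\cuAs^\antishriek \cong s\curvA \oplus \q\uAs^\antishriek$; by convention $m_1 \coloneqq m_1^\emptyset$ is identified with the module predifferential $d_A$. Conversely, any such family of operations re-assembles into $\bar\gamma_A$ and hence into $\gamma_A$.

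It remains to match the two axioms $\gamma_A \cdot d_\omega = \partial_{\End_A} \cdot \gamma_A$ and $\gamma_A(\theta_\omega) = d_A^2$ with the stated relations. Since both sides of the first equation are derivations, it suffices to verify it on the generators $s^{-1}\nu_n^T$. Crucially, the inhomogeneous relations $\lunit - |$ and $\runit - |$ of $\cuAs$ lie in the operadic unit $I$, so $\varphi_1 = 0$; consequently $d_{\cuAs^\antishriek} = 0$, which forces $D_1 = 0$ and $d_\omega = D_0 + D_2$. Evaluated on $s^{-1}\nu_n^T$, the piece $D_0 = -s\theta^c$ is non-zero precisely when $(n,T) \in \{(2,\{1\}),(2,\{2\})\}$, where it produces $\pm |$, sent by $\gamma_A$ to $\pm \id_A$; this yields the two special relations \eqref{eq: relation cuAi-algebras}. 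For all other $(n,T)$, only $D_2 = \Delta_{s^{-1}} \cdot \bar\Delta_{(1)}$ contributes; extracting the two-vertex part of the coproduct \eqref{eq: decomposition map cAs} and applying $\gamma_A$ reproduces exactly \eqref{eq: relation qcuAi-algebras}.

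For the curvature axiom, since $\bar d = 0$ on $\cuAs^\antishriek$ one has $\theta_\omega = -s^{-2}\bar\Delta_{(1)}(\eta_{\cuAs^\antishriek}(1))$, and from $\eta_{\cuAs^\antishriek}(1) = 1 + s^2(\as \circ_1 \curvA - \as \circ_2 \curvA) + \cdots$ (as recalled in the discussion preceding Definition \ref{def: Koszul}) the infinitesimal part desuspends in the cobar to the arity-one tree $m_2 \circ_1 m_0 - m_2 \circ_2 m_0$, whose image under $\gamma_A$ is the endomorphism $[m_0, -] \in \End_A(1)$. The equality $\gamma_A(\theta_\omega) = d_A^2$ is therefore the curved-algebra relation $d_A^2 = [m_0, -]$, which is already encoded in the specialisations of \eqref{eq: relation qcuAi-algebras} involving $m_0$. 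Together with the reverse construction this establishes the equivalence. The final identification with the homotopy-unital filtered $\Ai$-algebras of \cite[Section 3.3.1]{FOOO07} is then a bookkeeping dictionary in which the subset $T \subseteq [n]$ records the positions of the formally adjoined unit inputs in loc.\ cit.

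The main technical obstacle is the sign bookkeeping in \eqref{eq: relation qcuAi-algebras}: tracking the signs $\epsilon$ of \eqref{eq: decomposition map cAs} through the desuspension $\Delta_{s^{-1}}$ defining $D_2$ and through the Koszul sign rule in $\End_A$, and then reconciling them with the Fukaya--Oh--Ohta--Ono conventions via the substitution $m_n \leftrightarrow (-1)^{(n-1)(n+2)/2} m_n$ discussed in Section \ref{section: sign conventions}.
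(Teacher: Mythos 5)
Your proposal is correct and follows essentially the same route as the paper: reduce to the generators $s^{-1}\nu_n^T$ of the free graded operad underlying $\hat\Omega\cuAs^\antishriek$, and translate compatibility with the predifferentials (via $D_0+D_2$, since $d_{\cuAs^\antishriek}=0$ kills $D_1$) and preservation of the curvature into the relations \eqref{eq: relation cuAi-algebras} and \eqref{eq: relation qcuAi-algebras}, the curvature axiom supplying exactly the $(n,T)=(1,\emptyset)$ instance $d_A^2=[m_0,-]$. The paper is just as terse on the sign verification (it only records the formula for $\Delta^\theta_{(1)}(\nu_n^T)$), though it is slightly more explicit than you on the Fukaya--Oh--Ohta--Ono dictionary, identifying $\mathbf{e}=m_1^{\{1\}}(1_\Kf)$ and $\mathfrak{h}_k=\sum_{|T|=k-1}m_{n+k-1}^T$.
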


\begin{proof}
An operad morphism $\gamma_A : \hat\Omega \cuAs^\antishriek \to \End_{(A,\, d_A)}$ is characterized by $(A, d_A)$ and a degree $0$ map of $\Sb$-modules $s^{-1}\overline{\cuAs^\antishriek} \to \End_A$. Therefore it corresponds to a (complete) graded vector space $(A, F_\bullet)$ and a collection of filtered applications
\[ m_n^T : A^{\otimes (n-|T|)} \to A, \text{ of degree } n+|T|-2 \text{ for all } n \geq 0 \text{ and } T \subseteq [n],\]
such that $m_0 : \Kf \to F_1A$ and $m_1 = d_A$, where each $m_n^T$ corresponds to the image of $\nu_n^T$ through $\gamma_A$. 
The partial decomposition map on $\cuAs^\antishriek$ is given by
\[
\Delta_{(1)}^\theta(\nu_n^T) = \sum_{\substack{p+q+r=n\\ T = T_0' \sqcup (T_1 + p) \sqcup (T_0'' + q)\\ T_0 = T_0' \sqcup T''_0}} (-1)^{(q-1)(p+ |T_0''|)+ |T_0'||T_1|}\nu_{p+1+r}^{T_0} \circ_{p+1-|T_0'|} \nu_q^{T_1}.
\]
The fact that $\gamma_A$ commutes with the predifferentials and the fact that $\gamma_A$ sends the curvature to the curvature ensure that Equations \eqref{eq: relation cuAi-algebras} and \eqref{eq: relation qcuAi-algebras} are satisfied in accordance with the conditions $n \geq 0$ and $T \subseteq [n]$.

The correspondence with the notion of homotopy unit given in \cite[Definition 3.3.2]{FOOO07} is given as follows: the homotopy unit $\mathbf{e}$ in this reference coincides with the image $m_1^{\{1\}}(1_{\Kf}) = \gamma_A\left(\nu_1^{\{1\}}\right)(1_\Kf)$. 
The maps $\mathfrak{h}_k$ coincide with
\[
\sum_{\substack{n \geq 1\\ T \subseteq [n+k-1]\\ |T|=k-1}} m_{n+k-1}^T = \gamma_A\left( \sum_{\substack{n \geq 1\\ T \subseteq [n+k-1]\\ |T|=k-1}} \nu_{n+k-1}^T\right).
\]
\end{proof}

\subsection{Complex curved Lie algebras}

In \cite{jM14}, the author has defined the operads $\Cx$ and $\Cxi$ and he has used them to propose a new description of the notion of complexe structure. 
In order to define the notion of $\Cxi$-space, we add a $0$-ary generator to these operads so that our operads encode (homotopy) complex curved Lie algebras instead of (homotopy) complex Lie algebras.\\

We recall some notations and results from \cite{jM14}. 
We denote by
\[
\sAs := \left. \Tc({\scriptscriptstyle s^{-1}}\as) \middle/ \left({\scriptscriptstyle s^{-2}}\left(\assu\right) \right) \right.
\]
the operad encoding associative algebras whose product has cohomological degree $-1$ and by $\sLie \hookrightarrow \sAs$ the sub-operad encoding Lie algebras whose bracket has cohomological degree $-1$. 
The operad $\Cx$ is defined by
\[
\Cx \coloneqq \left(\Tc\left(\jn , \slie \right) \middle/ \left( R \right), 0 \right),
\]
where $\slie$ is a symmetric element and $R$ is the $\Sb$-module
\[
R \coloneqq \left\langle \jnr ,\, \sjlr - \sljr ,\, {\scriptscriptstyle s^{-2}}\left( \jac \right) \right\rangle.
\]
Its differential is zero and there is an isomorphism of $\Sb$-modules $\Cx \cong \Cb \hcirc \sLie$.
Its Koszul dual curved cooperad is given by
\[
\Cxa = \left(\Lie_{1}^{\antishriek} \hcirc \Rb \left[\qJc \right], \Delta_{\Lambda^{c}}, \theta^c\right).
\]
We denote by $\overline{\jmath}^c_{k_1, \ldots , k_n} := (\overline{l}_n^c ; \overline{\imath}_{k_1}^c, \ldots , \overline{\imath}^c_{k_n})$, where $\overline{l}_n^c$ lies in $\Lie_{1}^{\antishriek}$ and the elements $\overline{\imath}_{k}^c$ lie in $\Rb \left[\qJc \right]$, the generators of $\Cxa$. The decomposition map is given on generators by
\begin{multline}
\label{eq: coproduct Cx^!}
\Delta_{\Lambda^{c}} \left( \overline{\jmath}^c_{k_1, \ldots , k_n}\right) =\\
\sum \frac{1}{p!} \beta^{\sigma}_{k'_{j}, k''_{j}} \times \left( \overline{\jmath}_{l'_1, \ldots , l'_p}^{c} ; \overline{\jmath}_{k''_{\sigma(1)}, \ldots , k''_{\sigma(q_1)}}^{c}, \overline{\jmath}_{k''_{\sigma(q_1+1)}, \ldots , k''_{\sigma(q_1+q_2)}}^{c} , \ldots \right)^{\sigma^{-1}},
\end{multline}
where the sum $\sum$ is given by $\sum_{q_{1}+\cdots +q_{p}= n} \sum_{\sigma \in Sh_{q_{1}, \ldots , q_{p}}} \sum_{k'_{j}+k''_{j} = k_{j}}$ (where $Sh_{q_{1}, \ldots , q_{p}}$ is the set of $(q_1, \ldots, q_p)$-shuffles), the integers $l'_i$ are defined by $l'_i := k'_{\sigma(q_1+\cdots +q_{i-1}+1)}+ \cdots +k'_{\sigma(q_1+\cdots +q_i)}$ and the number $\beta^{\sigma}_{k'_{j}, k''_{j}}$ is a sign defined in \cite[Proposition 3.6]{jM14}.
Finally, the curvature on $\Cx^\antishriek$ is defined by
\[
\theta^c \left( \overline{\jmath}^c_{k_1, \ldots , k_n} \right) = \left\{ \begin{array}{cl}
-\unit & \textrm{if } n = 1 \textrm{ and } k_1 = 2,\\
0 & \textrm{otherwise.}
\end{array} \right.
\]
(The sign is different from the sign in \cite{HM12} since it is different in the definition of the Koszul dual curved cooperad given in Section \ref{sec: CKD}.)

We now define the \emph{curved versions} of these operads. 
The curved operad $\cAs$ and $\cLie$ already appear in \cite{BMDC20} and we denote by $\cAs_1$ and $\cLie_1$ there shifted versions. 
The. $0$-ary element ${\scriptstyle s^{-1}}\curvA$ is put in cohomological degree 1 so that the curvatures of the operads have cohomological degree 2. 
We make the case of the operad encoding complex curved Lie algebras explicit.

\begin{defn}
The curved operad encoding complex curved Lie algebras is defined by
\begin{align*}
\cCx & \coloneqq \left. c\Tc\left({\scriptstyle s^{-1}}\curvA,\, \jn ,\, \slie \right) \middle/ \left( R \oplus \langle \vartheta - \theta\rangle \right) \right. ,
\end{align*}
where the notation $c\Tc$ stands for the free curved operad defined in \cite[Section 2]{BMDC20}, the generators ${\scriptstyle s^{-1}}\curvA$ and $\slie$ have cohomological degree $1$ and the generator $\jn$ has cohomological degree $0$. Moreover $\theta \coloneqq {\scriptstyle s^{-1}}\curvsLie$ and the $\Sb$-module $R$ is defined above. 
We filter $\cCx$ by the number of $\curvA$, that is
\[
F_p \cCx \coloneqq \{ \mu \in \cCx \textrm{ s.t. the number of } \curvA \textrm{ in } \mu \textrm{ is greater than or equal to } p\}.
\]
(This filtration is in fact a graduation and $\cCx$ is the completion of its associated graded.) 
\end{defn}

An alternative description of the curved operad $\cCx$ is given by the following lemma.

\begin{lem}
\label{lem: alternative presentation of cCx}
We have the isomorphism of curved operads
\[
\cCx \cong \left. \left(\Tc\left({\scriptstyle s^{-1}}\curvA,\, \jn ,\, \slie \right) \middle/ \left( R \right), 0, \theta = {\scriptstyle s^{-1}}\curvsLie \right) \right. .
\]
\end{lem}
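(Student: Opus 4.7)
The approach is to verify that the right-hand side
\[ \Pc \coloneqq \left(\Tc({\scriptstyle s^{-1}}\curvA,\, \jn,\, \slie)/(R),\, 0,\, \theta \coloneqq {\scriptstyle s^{-1}}\curvsLie \right), \]
equipped with the filtration by number of ${\scriptstyle s^{-1}}\curvA$'s (so that $\theta \in F_1 \Pc(1)$), is a well-defined curved operad, and then to conclude by a universal property argument. The first curved operad axiom $d(\theta) = 0$ is automatic since $d = 0$, while $d^2 = [\theta, -]$ reduces to centrality of $\theta$. Because $\theta$ lives in arity one, $[\theta, -]$ is a derivation of $\Pc$ by the computation in Lemma \ref{lem: lax morphism unlax}(1), so it suffices to verify that $[\theta, x] = 0$ for $x$ ranging over the three generators ${\scriptstyle s^{-1}}\curvA$, $\jn$, $\slie$.

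For $x \in \{\slie, {\scriptstyle s^{-1}}\curvA\}$, the bracket $[\theta, x]$ lies entirely in the sub-operad $\scLie \subset \Pc$ generated by $\slie$ and ${\scriptstyle s^{-1}}\curvA$. This sub-operad is the Koszul shift of $\cLie$, for which centrality of the curvature is already recorded in the paper (as a suboperad of $\cAs$, invoking \cite[Proposition 6.2]{BMDC20}); the same argument carries over after the shift. The key new bracket, which I expect to be the main technical point, is $[\theta, \jn] = 0$. Unfolding $\theta$ as a combination of trees of the form $\slie \circ_i ({\scriptstyle s^{-1}}\curvA)$ with the appropriate Koszul signs, computing $\theta \circ_1 \jn$ and $\jn \circ_1 \theta$ by operadic associativity, and then applying the relation $\sjlr - \sljr \in R$ (which says that $\jn$ commutes with $\slie$ through composition) together with the graded (anti)symmetry of $\slie$, I expect the two contributions to cancel modulo $R$, delivering the required centrality.

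Once $\Pc$ is known to be a curved operad, the isomorphism is obtained from universal properties. The universal property of the free curved operad produces a morphism $c\Tc({\scriptstyle s^{-1}}\curvA, \jn, \slie) \to \Pc$ extending the inclusion of generators and sending the formal curvature $\vartheta$ to $\theta$; this kills both $R$ and $\vartheta - \theta$, hence factors through a curved operad morphism $\cCx \to \Pc$. Conversely, the universal property of the free operad $\Tc$ gives an operad morphism $\Tc({\scriptstyle s^{-1}}\curvA, \jn, \slie) \to \cCx$ which descends to $\Pc$ since $R$ is killed in $\cCx$; this is a morphism of curved operads because the curvature $\theta$ of $\Pc$ maps to the image of ${\scriptstyle s^{-1}}\curvsLie$ in $\cCx$, which coincides with $\vartheta$ modulo $\vartheta - \theta$. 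Both morphisms restrict to the identity on the generating $\Sb$-module, so by universality they are mutually inverse, yielding the claimed isomorphism of curved operads.
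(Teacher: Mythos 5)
Your proof is correct and follows essentially the same route as the paper: since the predifferential vanishes, everything reduces to the centrality of $\theta$, which is obtained from the known centrality of the curvature in the (shifted) curved Lie suboperad together with the relation $\sjlr - \sljr$ to handle the interaction with $\jn$. Your organization via the derivation property of $[\theta,-]$ and reduction to generators, plus the explicit universal-property argument identifying the two presentations, is just a more detailed writing of what the paper leaves implicit.
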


\begin{proof}
The predifferential of $\cCx$ is zero so the only thing to prove is that the map $[\theta, -]$ is zero. 
As previously, the relation $\sjlr - \sljr$ ensures that $\cCx \cong \Cb \hcirc \cLie_1$ as $\Sb$-modules. 
Thus it suffices to show that $[\theta, -]$ is zero on $\cLie_1$ and on $\jn \otimes \cLie$. 
The first case follows from the fact that $\cLie_1$ is a curved operad, and the second also by means of the relation $\sjlr - \sljr$.
\end{proof}

We put a \emph{weight} filtration $\cCx^{(\bullet)}$ on $\cCx$ by counting the number of the generators.

\begin{lem}
\label{lem: cCx-algebra}
A $\cCx$-algebra on a complete gr-dg $\Rb$-vector space $A$ is the same data as a complete curved Lie $\Cb$-algebra $(A, [-, -], d_A, \theta_A)$ with curvature $\theta_A \in F_1 A$.
\end{lem}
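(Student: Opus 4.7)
My plan is to reduce the statement to unwinding the data of a curved-operad morphism $\gamma_A : \cCx \to \End_A$ against the concrete presentation of $\cCx$ supplied by Lemma~\ref{lem: alternative presentation of cCx}. First, I would use this presentation (with vanishing pre-differential and curvature $\theta = {\scriptstyle s^{-1}}\curvsLie$) to identify such a morphism with three pieces of data on $A$: a curvature element $\theta_A = \gamma_A({\scriptstyle s^{-1}}\curvA) \in F_1 A$ of the appropriate degree, a degree-$0$ endomorphism $J = \gamma_A(\jn) : A \to A$, and a shifted symmetric bracket $\ell = \gamma_A(\slie) : A^{\hotimes 2} \to A$. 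The $\Sb$-equivariance of $\gamma_A$, together with the fact that the pre-differential on $\cCx$ vanishes, makes this reduction automatic by the universal property of the free operad.

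Next, I would translate the three families of relations in $R$ into algebraic conditions on $A$:
\begin{itemize}
\item The relation $\jnr$ becomes $J\circ J = -\id_A$. This is the key step: it promotes the underlying $\Rb$-vector space $A$ to a $\Cb$-vector space, by declaring the action of $i \in \Cb$ to be $J$.
\item The relation $\sjlr - \sljr$ becomes $\ell(Ja, b) = \ell(a, Jb)$, i.e.\ $\ell$ is $\Cb$-bilinear with respect to the $\Cb$-structure just constructed.
\item The relation $\jac$ becomes the (shifted) Jacobi identity for $\ell$, so that $\ell$ defines a (shifted) Lie bracket over $\Cb$.
\end{itemize}

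Finally, I would interpret the curved-operad axioms. The compatibility of $\gamma_A$ with pre-differentials forces $d_A$ to commute with $J$ and to act as a graded derivation of $\ell$ (because the pre-differential of $\End_A$ is the graded commutator with $d_A$ while that of $\cCx$ is zero). The curvature compatibility ${d_A}^2 = \gamma_A(\theta)(\id_A)$ unfolds, using $\theta = {\scriptstyle s^{-1}}\curvsLie$, to $d_A^2 = \ell(\theta_A, -) = [\theta_A, -]$, which is precisely the defining relation of a complete curved Lie algebra. Conversely, any tuple $(A, [-,-], d_A, \theta_A)$ as in the statement manifestly produces such a morphism $\gamma_A$ by sending the generators to the corresponding operations.

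The one subtle point, and the main piece of bookkeeping, is promoting the involution $J$ to a bona fide $\Cb$-module structure and checking that the full $\Cb$-bilinearity of $\ell$ follows from the single $\Rb$-linear identity $\ell(Ja, b) = \ell(a, Jb)$; this is routine since $\Cb$ is generated over $\Rb$ by $i = J$, and the compatibility with the other side of the bracket is obtained via the symmetry of $\ell$. The calculation is closely parallel to the uncurved case of $\Cx$ treated in \cite{jM14} and to the $\cuAs$ case already spelled out in the paper, so I do not anticipate any real obstacle beyond careful tracking of the shift and degree conventions.
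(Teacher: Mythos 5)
Your overall strategy is exactly the paper's: identify a curved operad morphism $\gamma_A : \cCx \to \End_A$ with the images of the three generators (giving $\theta_A$, $J$ and the shifted bracket $\ell$), translate the relations in $R$, and read off ${d_A}^2 = [\theta_A,-]$ from the curvature condition. The paper's proof is a two-line version of this same unwinding, so there is no difference of route.

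There is, however, one concrete misstep in your translation of the relations. The relation $\sjlr - \sljr$ equates $J$ applied to an \emph{input} of the bracket with $J$ applied to the \emph{output}, i.e.\ $\ell(Ja,b) = J\ell(a,b)$; it is not the identity $\ell(Ja,b)=\ell(a,Jb)$ that you state. This matters for your final bookkeeping step: from $\ell(Ja,b)=\ell(a,Jb)$ together with the symmetry of $\ell$ one cannot deduce $J\ell(a,b)=\ell(Ja,b)$ --- balancing $J$ between the two inputs says nothing about its compatibility with the output --- so full $\Cb$-bilinearity would not follow as you claim, and the step as written fails. With the relation read correctly, $\Cb$-linearity in the first slot is immediate and linearity in the second slot then follows from the symmetry of $\ell$, so the argument goes through; the gap is in the stated derivation, not in the approach.
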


\begin{proof}
A map of complete curved operad $\cCx \to \End_A$ is characterized by the images of the generators ${\scriptscriptstyle s^{-1}}\curvA$, $\jn$ and $\slie$ which give respectively a degree $1$ map $\theta_A : \Kb \to A$, a degree $0$ map $J : A \to A$ and a degree $1$ anti-symmetric map $[-, -] : A^{\hotimes 2} \to A$. The relations defining $\cCx$ ensure that $J$ provides a $\Cb$-action, that $[-, -]$ is a Lie bracket of degree $1$ which is $\Cb$-linear. 
The fact that the curvature of $\cCx$ is sent to the curvature of $\End_A$ says that ${d_A}^2 = [\theta_A, -]$.
\end{proof}

\subsection{The Koszul dual curved cooperad associated with $\cCx$}

We show now that the curved operad $\cCx$ is Koszul and we describe its Koszul dual curved cooperad $\cCxa$. 

\begin{thm}
\label{thm: cCxa}
The presentation of the curved operad $\cCx$ given above is inhomogeneous quadratic (we could even say more precisely \emph{constant-quadratic}). 
It is moreover a Koszul curved operad. 
The Koszul dual curved cooperad $\cCxa$ is isomorphic to
\begin{align*}
\cCxa \cong & \left( \curvA \oplus \Cxa, \Delta^\theta,\ 0, \theta^c \right) \cong \left( \curvA \oplus \left(\sLie^\antishriek \hcirc \Rb \left[\qJc \right]\right), \Delta^\theta, 0, \theta^c \right)\\
\cong & \left( \left\{ \jmath_{k_1, \ldots, k_n}^c \right\}_{\substack{n \geq 0\\ k_j \geq 0}}, \Delta^\theta, 0, \theta^c \right),
\end{align*}
where the terms $\jmath_{k_1, \ldots, k_n}^c$ have cohomological degree $-(k_1 + \cdots + k_n)$ and with the notation $\jmath_{\emptyset}^c = \curvA$.
The curvature $\theta^c$ is defined by
\[
\theta^c \left( \jmath_{k_1, \ldots , k_n} \right) = \left\{ \begin{array}{cl}
\unit & \textrm{if } n = 1 \textrm{ and } k_1 = 2 ,\\
0 & \textrm{otherwise.}
\end{array} \right.
\]
The decomposition map $\Delta^\theta$ is given by the formula
\begin{multline*}
\Delta^\theta \left( \jmath_{k_1, \ldots, k_n}^c\right) =\\
\sum \frac{1}{p!} \beta^{\sigma}_{k'_{j}, k''_{j}} \times \left( \jmath_{l'_1, \ldots , l'_p}^{c} ; \jmath_{k''_{\sigma(1)}, \ldots , k''_{\sigma(q_1)}}^{c}, \jmath_{k''_{\sigma(q_1+1)}, \ldots , k''_{\sigma(q_1+q_2)}}^{c} , \ldots \right)^{\sigma^{-1}}
\end{multline*}
where the sum $\sum$ is given by $\sum_{q_{1}+\cdots +q_{p}= n} \sum_{\sigma \in Sh_{q_{1}, \ldots , q_{p}}} \sum_{k'_{j}+k''_{j} = k_{j}}$, the integers $l'_i$ are defined by $l'_i \coloneqq k'_{\sigma(q_1+\cdots +q_{i-1}+1)}+ \cdots +k'_{\sigma(q_1+\cdots +q_i)}$ and the number $\beta^{\sigma}_{k'_{j}, k''_{j}}$ is a number defined in \cite[Proposition 3.6]{jM14} and is recalled in the remark below.
Moreover, the natural map
\[ f_\kappa : \cCxi \coloneqq \hat\Omega \cCx^\antishriek \to \cCx \]
is an $\Sb$-cofibrant resolution.
\end{thm}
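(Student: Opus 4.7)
My plan is to follow the same pattern as the proof of Theorem \ref{thm: Koszul dual cuAs}. First I would check that the presentation of $\cCx$ given in Lemma \ref{lem: alternative presentation of cCx} satisfies the conditions of Definition \ref{def: inhomogeneous quadratic}: the filtration $F_\bullet$ is in fact a graduation (counting the $\curvA$'s), $\cCx$ is the completion of its associated graded, and conditions (I)--(II) and the identification $\Gr \q\cCx \cong \q\Gr\cCx$ follow directly from the shape of the relations $R$ and the fact that $R$ does not involve ${\scriptstyle s^{-1}}\curvA$. The condition that $\q\Pc$ is homogeneous quadratic (condition (3)) will be proved along the way, as a byproduct of the computation of $\cCxa$.

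To establish Koszulness, I would first compute
\[
\q\Gr\cCx \cong \mathrm{Free}\bigl({\scriptstyle s^{-1}}\curvA, \jn, \slie \bigr)/(R) \cong \Gr\cCx \cong \Cb \hcirc \cLie_1
\]
using the relation $\sjlr - \sljr$, with $\q\Gr\cCx$ decomposing as a coproduct of operads $\Cx \ast \mathrm{Free}({\scriptstyle s^{-1}}\curvA)$. By an analogue of Proposition 6.8 in \cite{BMDC20} (already used in the $\cuAs$ case), the Koszul dual cooperad then satisfies
\[
(\q\Gr\cCx)^\antishriek \cong \mathrm{coFree}(s{\scriptstyle s^{-1}}\curvA) \ast \Cxa \cong {\scriptstyle s}{\scriptstyle s^{-1}}\curvA \oplus \Cxa,
\]
since no nontrivial relation involves $\curvA$. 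The Koszul complex $\q\Gr\cCx \circ_\kappa (\q\Gr\cCx)^\antishriek$ then splits, as in the $\cuAs$ argument, into the acyclic piece coming from the free generator $\curvA$ and the Koszul complex of $\Cx$, which is acyclic by \cite{jM14}. This shows $\q\Gr\cCx$ is Koszul, hence $\cCx$ is a Koszul inhomogeneous quadratic curved operad.

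Next, I would construct the cooperad $\Cc \coloneqq \bigl(\curvA \oplus \Cxa, \Delta^\theta, 0, \theta^c\bigr)$ as a curved sub-cooperad of $\Tc^c\bigl({\scriptstyle s}{\scriptstyle s^{-1}}\curvA, {\scriptstyle s}\jn, {\scriptstyle s}\slie\bigr)$ by writing down an explicit injection $\upsilon : \Cc \rightarrowtail \Tc^c(sE)$ analogous to \eqref{eq: injection of the Koszul dual coop - cuAs}: the generator $\jmath^c_{k_1,\ldots,k_n}$ gets mapped to the sum over insertions of copies of ${\scriptstyle s}{\scriptstyle s^{-1}}\curvA$ along each of the $n$ strands at $k_1, \ldots, k_n$ positions, with appropriate signs, of the tree in $\Tc^c({\scriptstyle s}\jn, {\scriptstyle s}\slie)$ corresponding to $\overline{\jmath}^c_{k_1,\ldots, k_n}$ under the embedding $\Cxa \hookrightarrow \Tc^c({\scriptstyle s}\jn, {\scriptstyle s}\slie)$ from \cite{jM14}. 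A bookkeeping of signs (using Koszul conventions together with those of \cite{jM14} and following the template of the proof of Theorem \ref{thm: Koszul dual cuAs}) then shows that the image of $\upsilon$ lies in the pullback defining $(\q\cCx)^\antishriek$, that the induced map $\Cc \rightarrowtail (\q\cCx)^\antishriek$ is compatible with the predifferential $d_{\Pc^\antishriek}$ (which identifies with $\Delta^\theta$ through the formula of Section \ref{sec: CKD}), that the curvature $\theta^c$ matches the one given by $-s^{-2}\varphi_0$, and that the composite $\Cc \to \Tc^c(sE) \twoheadrightarrow S$ vanishes — which simultaneously verifies condition (3) of Definition \ref{def: inhomogeneous quadratic}.

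Finally, identifying $\Gr\Cc$ with $\Gr((\q\cCx)^\antishriek)\cong (\Gr\q\cCx)^\antishriek$ through the Poincaré--Birkhoff--Witt isomorphism of \cite[Theorem 5.21]{BMDC20} forces $\Cc\cong \cCxa$ on complete objects, as in the $\cuAs$ case. The last assertion follows directly from Theorem \ref{thm: Koszul resolution}: the curvature ${\scriptstyle s^{-1}}\curvsLie$ of $\cCx$ is nonzero and concentrated in $\Gr_1\cCx$, $\Gr\q\cCx\cong \Cx$ is connected bounded below weight graded (by the usual arity plus number of $\slie$'s weight), and the filtration $F_\bullet$ comes from a graduation, so the hypotheses of Theorem \ref{thm: Koszul resolution} apply. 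The main obstacle will be the sign bookkeeping in defining $\upsilon$ and checking that $\Delta^\theta$ really has the stated closed form: one needs to simultaneously reconcile the signs from the cofree decomposition on $\Tc^c({\scriptstyle s}\jn, {\scriptstyle s}\slie)$ computed in \cite{jM14}, the insertions of the degree-shifted $\curvA$'s along strands, and our sign convention from Section \ref{section: sign conventions}; everything else is an adaptation of the $\cuAs$ argument.
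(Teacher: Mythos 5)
Your proposal follows essentially the same route as the paper's proof: reduce Koszulness to the splitting of the Koszul complex into the free piece generated by $\curvA$ and copies of the Koszul complex of the quadratic analogue of $\Cx$ (Koszul by \cite{jM14}), then identify the Koszul dual cooperad by an explicit inclusion $\upsilon$ into the cofree cooperad, a vanishing check on the composite to $S$ (which yields condition (3)), and the Poincaré--Birkhoff--Witt comparison; the $\Sb$-cofibrant resolution then comes from Theorem \ref{thm: Koszul resolution} exactly as you say.

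Two points in your write-up are inaccurate, though neither derails the strategy. First, the chain $\q\Gr\cCx \cong \mathrm{Free}(\ldots)/(R) \cong \Gr\cCx \cong \Cb\hcirc\cLie_1$ is wrong: the quadratization replaces $R$ by $\q R$, in which $\jnr$ becomes $\qjnr$ (i.e.\ $J^2=-1$ degenerates to $J^2=0$), so $\q\Gr\cCx$ is \emph{not} $\Gr\cCx$ and the operad whose Koszulness you import from \cite{jM14} is $\Gr\q\Cx$ (called $\q\Cx$ there), not $\Cx$ itself. Second, the map $\upsilon$ does not insert $\curvA$'s ``along the $n$ strands at the $k_j$ positions''; in the paper it sends $\jmath^c_{k_1,\ldots,k_n}$ to $\sum_{k\geq 0}\frac{1}{k!}\,\jmath^{c,[k]}_{0,\ldots,0,k_1,\ldots,k_n}$, i.e.\ one enlarges the arity to $n+k$, grafts $\curvA$ into the $k$ new leading input slots (each carrying $k_j=0$), and symmetrizes with the factor $1/k!$; getting this form right is what makes the compatibility with $\Delta^\theta$ (the extra multinomial factor $k!/(l_0!l_1!\cdots)$ counting shuffles) and the vanishing on $S$ work out.
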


\begin{rem}
\begin{itemize}
\item
The difference between the formula given in Proposition 3.6 in \cite{jM14} and the formula here is that in the Theorem above the integers $p$ and $q_i$'s can be $0$. In this case, we denote by $\jmath^c_{\emptyset}$ the corresponding generator.
\item
Be aware of the fact that the curvature $\theta : I \to \cCx$ and $\theta^c : \cCxa \to I$ are unrelated. The curvature $\theta^c$ encodes that $\Cb$ isn't augmented. 
\item
The numbers $\beta^{\sigma}_{k'_{j}, k''_{j}}$ are equal to
\[
\beta^{\sigma}_{k'_{j}, k''_{j}} := \sgn_{k_{1}, \ldots , k_{n}}\sigma \times \varepsilon_{k'_{j}, k''_{j}}^{\sigma} \times \prod_{i=1}^p \alpha_{k'_{\sigma(q_1+\cdots +q_{i-1}+1)},\, \ldots , k'_{\sigma(q_1+\cdots +q_i)}}
\]
where $\sgn_{k_{1}, \ldots , k_{n}}\sigma$ is the signature of the restriction of $\sigma$ to the indices $j$ such that $k_{j}$ is odd (after relabeling the remaining $\sigma(j)$ in a way that the order of the $\sigma(j)$ does not change) and where
\[
\left\{ \begin{array}{lcl}
\varepsilon_{k'_{j},\, k''_{j}}^{\sigma} & \coloneqq & (-1)^{\sum_{i=1}^{n} k_{\sigma(i)}''(k_{\sigma(i+1)}' + \cdots + k_{\sigma(n)}')}\\
\alpha_{k'_{1},\, \ldots ,\, k'_{q}} & \coloneqq & \sum_{\sigma' \in Sh_{k'_{1},\, \cdots ,\,k'_{q}}} \sgn \sigma',\, \textrm{with convention } \alpha_{0,\, \ldots ,\, 0} := 1.
\end{array}\right.
\]
\end{itemize}
\end{rem}

\begin{proof}
The proof follows the same flow as the proof of Theorem \ref{thm: Koszul dual cuAs}. 
The presentation of $\cCx$ given in Lemma \ref{lem: alternative presentation of cCx} is easily seen to be inhomogeneous quadratic, except for Condition (3) in Definition \ref{def: inhomogeneous quadratic} that will be proven during the proof. 

To prove the Koszulness of $\cCx$ (see Definition \ref{def: Koszul}), we first remark that $\cCx$ is the completion of its associated graded (for the filtration $F_\bullet$) and that the presentation given above behaves correctly with the filtration. 
Then the associated quadratic curved operad $\q\cCx$ is also the completion of its associated graded and as quadratic operads $\Gr \q\cCx \cong \q\Gr\cCx$. 
Finally we prove that $\q\Gr\cCx$ is a quadratic Koszul operad. 
We have
\[
(\q\Gr\cCx)^\antishriek \circ_\kappa \q\Gr\cCx \cong \curvA \oplus {\scriptstyle s^{-1}}\idCurvA \oplus | \oplus \left\{\bigoplus_{T \subseteq [n]} ((\Gr\q\Cx)^{\antishriek} \circ_{\kappa} \Gr\q\Cx) (n) \right\}_{n \geq 2}.
\]
It is proved in \cite{jM14} that the operad $\Gr\q\Cx$ is Koszul (it is simply called $\q\Cx$ there) therefore we get that the quasi-isomorphism $(\q\Gr\cCx)^\antishriek \circ_\kappa \q\Gr\cCx \xrightarrow{\sim} I$ and the fact that $\q\Gr\cCx$ is a quadratic Koszul operad. 

We now compute the Koszul dual curved cooperad $\cCx^\antishriek$. 
We first prove that
\[
\Cc = \left( \left\{ \jmath_{k_1, \ldots, k_n}^c \right\}_{\substack{n \geq 0\\ k_j \geq 0}}, \Delta^\theta, 0, \theta^c \right)
\]
is a sub-cooperad of $\Tc^c\left( sE\right)$, for $sE = \langle \curvA, {\scriptstyle s}\jn, \lie \rangle$, satisfying the property for which $(\q\cCx)^\antishriek$ is universal (so that $\Cc \hookrightarrow (\q\cCx)^\antishriek$). 
We use the (cohomological degree 0) inclusion
\begin{equation}
\label{eq: injection of the Koszul dual coop - cCx}
\upsilon : \jmath_{k_1, \ldots, k_n}^c \mapsto \sum_{k \geq 0} \frac{1}{k!} \jmath_{\underbrace{{\scriptstyle 0}, \dots, {\scriptstyle 0}}_{k \textrm{ times}}, k_1, \ldots, k_n}^{c, [k]} \ ,
\end{equation}
where $\jmath_{0, \dots, 0, k_1, \ldots, k_n}^{c, [k]}$ is the element $\jmath_{0, \dots, 0, k_1, \ldots, k_n}^{c}$ on which we have grafted the element $\curvA$ in the $k$ first positions. 
The equality
\[
(\upsilon \circ \upsilon) (\Delta^\theta(\jmath_{k_1, \ldots, k_n}^c)) = \Delta (\upsilon (\jmath_{k_1, \ldots, k_n}^c))
\]
results from two reasons. First, the coefficients $\sgn_{k_{1}, \ldots , k_{n}}\sigma$, $\varepsilon_{k'_{j}, k''_{j}}^{\sigma}$ and $\alpha_{k'_1,\, \ldots , k'_q}$ are the same for $\jmath_{k_1, \ldots, k_n}^c$ and $\jmath_{0, \dots, 0, k_1, \ldots, k_n}^{c, [k]}$. 
Secondly the computation in the cofree cooperad of $\Delta \Big(\jmath_{0, \dots, 0, k_1, \ldots, k_n}^{c, [k]}\Big)$ provides an element
\[
\Big( \jmath_{0, \dots, 0, l'_1, \ldots , l'_p}^{c, [l_0]}\ ;\ \jmath_{0, \dots, 0, k''_{\sigma(1)}, \ldots , k''_{\sigma(q_1)}}^{c, [l_1]},\ \jmath_{0, \dots, 0, k''_{\sigma(q_1+1)}, \ldots , k''_{\sigma(q_1+q_2)}}^{c, [l_2]} ,\ \ldots \Big)
\]
with an extra factor $\frac{k!}{l_0! l_1! l_2! \dots} = \mathrm{Card} (Sh_{l_{0}, l_1, l_2, \ldots})$. 
It remains to show that the composite
\[
\Cc \rightarrowtail \Tc^c(sE) \twoheadrightarrow S \coloneqq \left. \left(I\oplus \Tc^c(sE)^{(2)} \right) \middle/ \left( {\scriptstyle s^2} \left(\q R \right) \oplus (\id + {\scriptstyle s^2} \tilde \theta)(I)\right) \right.
\]
where
\[
\q R \coloneqq \left\langle \qjnr ,\, \sjlr - \sljr ,\, {\scriptscriptstyle s^{-2}}\left( \jac \right) \right\rangle,
\]
is zero. 
A direct computation of the image of the different elements $\jmath_\emptyset^c$, $\jmath_0^c$, $\jmath_1^c$, $\jmath_2^c$, $\jmath_{0, 0}^c$, $\jmath_{0, 1}^c$ and $\jmath_{0, 0, 0}^c$ through $\upsilon$ shows the result. 
By the universal property of the cooperad $(\q\cCx)^\antishriek$, we get the (unique) factorisation $\Cc \rightarrowtail (\q\cCx)^\antishriek \rightarrowtail \Tc^c(sE)$. 

As a byproduct of the above computations, we get that $\upsilon(\jmath_0^c) \in \Tc^c(sE)$ is sent to 0 in the coideal quotient $(S)$ (whose definition is given in \cite[Definition 5.3]{BMDC20}) so the counit of $\Tc^c(sE)$ cannot factor through the coideal quotient $(S)$. 
This finishes to prove that $\cCx$ is inhomogeneous quadratic. 

The injective map $\Gr\Cc \rightarrowtail \mathrm{coFree}(sE)$ can be factored as $\Gr \Cc \rightarrowtail \Gr ((\q\cCx)^\antishriek) \cong (\Gr \q\cCx)^\antishriek \rightarrowtail \mathrm{coFree}(sE)$, where the second isomorphism is the Poincaré--Birkhoff--Witt type isomorphism which appears in the proof of Theorem \ref{thm: Koszul resolution}. 
Its image is precisely $(\Gr \q\cCx)^\antishriek \cong \curvA \oplus \q\Cx^\antishriek$ so $\Gr \Cc \cong \Gr ((\q\cCx)^\antishriek) \cong (\Gr \q\cCx)^\antishriek$. 
Since the previous inclusions are strict and we are working with complete objects, this proves that $\Cc \cong (\q\cCx)^\antishriek$. 

In order to have an $\Sb$-cofibrant resolution, it is enough to remark that the curvature is non-zero, that for the weight filtration given by its generators, $\Gr\cCx$ is connected bounded below weight graded and that the filtration $F_\bullet$ on $\cCx$ is induced by the graduation given by the number of ${\scriptstyle s^{-1}}\curvA$. 
We can therefore apply Theorem \ref{thm: Koszul resolution}.
\end{proof}

\begin{cor}
\label{cor: cCxa infinitesimal}
The infinitesimal decomposition map on $\cCxa$ is given by
\begin{multline*}
\Delta_{(1)}(\jmath_{k_1, \ldots, k_n}^c) = \\
\sum_{\substack{p+q=n+1\\ p, q \geq 0}} \sum_{\sigma \in Sh_{q, p-1}} \sum_{K} \alpha^{\sigma}_{k'_{j}, k''_{j}} \times \left( \jmath_{k', k_{\sigma(q+1)}, \ldots , k_{\sigma(n)}}^{c} \circ_{1} \jmath_{k''_{\sigma(1)}, \ldots , k''_{\sigma(q)}}^{c} \right)^{\sigma^{-1}},
\end{multline*}
where
\[
K \coloneqq \{k'_{\sigma(j)}+k''_{\sigma(j)} = k_{\sigma(j)},\ k'_{\sigma(j)} = k_{\sigma(j)} \textrm{ for } j > q \textrm{ and } k' = k'_{\sigma(1)}+ \cdots +k'_{\sigma(q)} \},
\]
and where $\alpha^{\sigma}_{k'_{j}, k''_{j}} := \sgn_{k_{1}, \ldots , k_{n}}\sigma \times \varepsilon_{k'_{j}, k''_{j}}^{\sigma} \times \alpha_{k'_{\sigma(1)}, \ldots , k'_{\sigma(q)}}$.
\end{cor}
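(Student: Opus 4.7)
The plan is to derive the infinitesimal decomposition by extracting it from the full decomposition formula of Theorem~\ref{thm: cCxa}. By definition, $\Delta_{(1)}$ is the component of $\Delta^\theta$ landing in $\cCxa \hcirc_{(1)} \cCxa \subset \cCxa \hcirc \cCxa$, that is, in the sub-object of two-level trees in which $p-1$ of the $p$ bottom slots are filled by the image of the gr-coaugmentation $\eta : I \to \cCxa$. Under the identification $\cCxa \cong \curvA \oplus (\sLie^\antishriek \hcirc \Rb[\qJc])$, this coaugmentation sends $1$ to $\jmath_0^c$, the composite of the units of $\sLie^\antishriek$ and $\Rb[\qJc]$, so the infinitesimal projection retains precisely the terms of $\Delta^\theta$ where $p-1$ bottom components coincide with $\jmath_0^c$.

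Mechanically, a bottom operation $\jmath^c_{k''_{\sigma(q_1+\cdots+q_{i-1}+1)}, \ldots, k''_{\sigma(q_1+\cdots+q_i)}}$ equals $\jmath_0^c$ exactly when $q_i = 1$ and the corresponding $k''$-index vanishes. Imposing this for $p-1$ of the indices converts the constraint $q_1 + \cdots + q_p = n$ into $(p-1) + q = n$, that is $p + q = n+1$, which matches the outer summation of the claim. Because the top element $\jmath^c_{l'_1, \ldots, l'_p}$ carries trivial $\Sb_p$-action (inherited from $\ell^c_p \in \sLie^\antishriek$), the formula is symmetric under permutations of the $p$ bottom positions, and selecting the non-trivial one to be the first absorbs a factor of $p$ from the $\frac{1}{p!}$, producing the $\circ_1$ grafting in the conclusion.

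Next I would reduce the summation domain for $\sigma$: summing over $(q, 1, \ldots, 1)$-shuffles with $p-1$ singleton blocks amounts to summing over $(q, p-1)$-shuffles multiplied by the $(p-1)!$ permutations of the singletons, which cancels the residual $\frac{1}{(p-1)!}$ inherited from the preceding step. The index splittings $k'_j + k''_j = k_j$ then specialise to $k''_{\sigma(j)} = 0$ for $j > q$ and remain arbitrary for $j \leq q$, while $k' := k'_{\sigma(1)} + \cdots + k'_{\sigma(q)}$ takes the role of the first index of the top component, precisely as recorded in the index set $K$ of the statement.

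The final step is the sign reduction from $\beta^\sigma_{k'_j, k''_j}$ to $\alpha^\sigma_{k'_j, k''_j}$. In the infinitesimal configuration, the product $\prod_{i=1}^p \alpha_{k'_{\sigma(q_1+\cdots+q_{i-1}+1)}, \ldots, k'_{\sigma(q_1+\cdots+q_i)}}$ collapses because $\alpha_k$ reduces to $\sgn(\id) = 1$ on any single-element tuple, leaving only $\alpha_{k'_{\sigma(1)}, \ldots, k'_{\sigma(q)}}$, while $\sgn_{k_1, \ldots, k_n} \sigma$ and $\varepsilon^\sigma_{k'_j, k''_j}$ persist unchanged. The main obstacle lies in the combinatorial bookkeeping of the reduction from $Sh_{q_1, \ldots, q_p}$-shuffles to $Sh_{q, p-1}$-shuffles and the associated cancellation of factorials; once the singleton-block shuffles are correctly balanced against the symmetry factor $p/p!$, the stated formula follows without further computation.
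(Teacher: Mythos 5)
Your argument is correct in substance but takes a different route from the paper: the paper's proof of this corollary is a one-line citation to Proposition 3.6 of \cite{jM14}, observing that the formula for $\cCxa$ coincides with the one already computed there for $\Cxa$ except that $p, q \geq 0$ is now allowed (the case $p=0$ occurring only for $n=0$). You instead re-derive the infinitesimal decomposition from the full decomposition map of Theorem \ref{thm: cCxa}, which makes the corollary self-contained relative to this paper at the cost of redoing the combinatorics of \cite{jM14}. Your mechanism is the right one. One small imprecision: $\Delta_{(1)}$ is the composite of $\Delta^\theta$ with the projection $\cCxa \hcirc \cCxa \to \cCxa \hcirc_{(1)} \cCxa$ obtained by applying the \emph{counit} to all but one bottom slot, not by restricting to slots "filled by the image of the gr-coaugmentation"; the two descriptions coincide here because $\varepsilon_{\cCxa}$ is supported exactly on $\Rb\cdot\jmath^c_0 = \eta(I)$, but this should be said. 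Granting that, the selection $q_i=1$ with vanishing $k''$, the conversion of $q_1+\cdots+q_p=n$ into $p+q=n+1$, the cancellation $\frac{1}{p!}\cdot p\cdot(p-1)!=1$ against the correspondence $Sh_{q,1,\ldots,1}\leftrightarrow Sh_{q,p-1}\times \Sb_{p-1}$, and the collapse of $\prod_i \alpha$ to the single factor $\alpha_{k'_{\sigma(1)},\ldots,k'_{\sigma(q)}}$ are all correct. What you leave unverified is precisely the sign bookkeeping: that the $p\cdot(p-1)!$ block reorderings contribute with equal sign once the Koszul signs hidden in $\sgn_{k_1,\ldots,k_n}\sigma$ and $\varepsilon^{\sigma}_{k'_j,k''_j}$ are tracked, and that these two factors specialise as claimed when $k''_{\sigma(j)}=0$ for $j>q$. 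Since the signs are the whole content of the formula, this last step should be carried out (or, as the paper does, delegated to \cite{jM14}); you should also treat the degenerate values $p=0$ and $q=0$ separately, since $Sh_{q,p-1}$ and the grafting $\circ_1$ do not literally make sense there.
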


\begin{proof}
The formula is the same as the formula given in Proposition 3.6 in \cite{jM14} except that $p, q \geq 0$ instead of $p, q \geq 1$ (the case $p=0$ appears when $n=0$). 
\end{proof}

\section{Algebraic curved twisting morphisms}

We now consider algebras and coalgebras in $\Rf$-modules where $\Rf$ is a unital $\Kf$-cdga.

\subsection{Formal constructions in the curved context}

\subsubsection{Free curved algebra}

We first recall from \cite[Proposition C.35]{BMDC20} the free-fogetful adjunction for curved algebras over a curved operad $(\Pc, d, \theta, \eta)$.

\begin{prop}
\label{prop: free-forgetful adjunction for curved algebras}
The forgetful functor $U : \Palg \to \ModAgr$ admits a left adjoint, called the \emph{free $\Pc$-algebra functor}, $F_{\Pc} : \ModAgr \to \Palg$ given by
\[ (V, d_V) \mapsto F_{\Pc}(V, d_V) \coloneqq \left(\Pc(V)/\left(\im\left({d_{\Pc(V)}}^2 - \gamma(\theta \otimes \id_\Pc) \otimes \id_V\right) \right), d_{\overline{\Pc(V)}}\right). \]
\end{prop}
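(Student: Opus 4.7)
My plan is to build $F_{\Pc}(V,d_V)$ in three layers: first the underlying graded $\Pc$-algebra, then a predifferential on it, and finally the quotient that imposes the curvature condition; the universal property will follow essentially by construction.

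For the underlying graded object, one takes the usual free graded $\Pc$-algebra
\[ \Pc(V) \coloneqq \hat\bigoplus_{n\geq 0} \Pc(n)\hotimes_{\Sb_n} V^{\hotimes n}, \]
with its canonical composition map $\gamma_{\Pc(V)}$ induced by $\gamma$. On this object one defines a predifferential $d_{\Pc(V)}$ as the unique derivation (with respect to $\gamma_{\Pc(V)}$) extending $d_\Pc$ on $\Pc$ and $d_V$ on $V$; existence and uniqueness are the standard arguments as in the dg case, working on the $F_\bullet$-filtration level by level since everything is complete. The key point is that $d_{\Pc(V)}$ is a predifferential of the underlying graded $\Pc$-algebra in the sense of Definition~\ref{def: der prediff and coder}, but its square is not automatically $\gamma_{\Pc(V)}(\theta\otimes\id)$: it fails to do so precisely because $V$ is only a gr-dg module, and the defect is the operator
\[ \Delta \coloneqq d_{\Pc(V)}^{\,2} - \gamma_{\Pc(V)}(\theta\otimes \id_{\Pc(V)}). \]
By construction $\Delta$ vanishes on $\Pc \hcirc I \subset \Pc(V)$ (since $\Pc$ is a curved operad, $d_\Pc^{\,2}=[\theta,-]$) and on the generators $I\hcirc V$ it equals ${d_V}^{\,2}$, which sends $F_p V$ into $F_{p+1}V$; moreover $\Delta$ is a $\Pc(V)$-linear operator in an appropriate sense, so it is entirely controlled by its restriction to $V$. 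In particular, its image $\im(\Delta)$ generates an ideal $\mathcal{I}\subset \Pc(V)$ of the underlying graded $\Pc$-algebra that lies in $F_{+1}\Pc(V)$.

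The next step is to verify that $\mathcal{I}$ is stable under $d_{\Pc(V)}$, so that the quotient inherits a predifferential. Because $d_{\Pc(V)}$ is a derivation, it suffices to check that $d_{\Pc(V)}\Delta(v) \in \mathcal{I}$ for every $v \in V$: this follows from the identity $d_{\Pc(V)}\cdot d_{\Pc(V)}^{\,2} = d_{\Pc(V)}^{\,2}\cdot d_{\Pc(V)}$ together with the closedness of $\theta$, exactly as in the usual proof that a curved operad has a well-defined curved structure after twisting. Setting
\[ F_{\Pc}(V,d_V) \coloneqq \Pc(V)/\mathcal{I} \]
endowed with the induced composition $\gamma$, predifferential $d_{\overline{\Pc(V)}}$ and curvature $\gamma(\theta\otimes\id)$, we get by construction the equality $d_{\overline{\Pc(V)}}^{\,2} = \gamma(\theta\otimes \id)$ in the quotient, so $F_{\Pc}(V,d_V)$ is a $\Pc$-algebra. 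That it is complete and gr-dg is automatic, since taking quotients by an ideal contained in $F_{+1}$ of a complete gr-dg object stays in $\ModAgr$.

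For the adjunction, given a $\Pc$-algebra $(A,\gamma_A,d_A)$ and a morphism $f:V\to U(A)$ in $\ModAgr$, one obtains a unique morphism $\tilde f:\Pc(V)\to A$ of graded $\Pc$-algebras commuting with the predifferentials, because $f$ commutes with $d_V$ and $d_A$. Since $A$ satisfies $d_A^{\,2} = \gamma_A(\theta\otimes \id_A)$, the map $\tilde f$ kills the generators $\Delta(v)$ of $\mathcal{I}$ and hence descends to a morphism of curved $\Pc$-algebras $F_{\Pc}(V,d_V)\to A$; conversely any such morphism restricts to $f$ on $V$. Naturality and bijectivity are then formal. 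The only place where I expect a genuine technical hurdle is managing the interaction between the filtration $F_\bullet$ and the quotient at each step: showing that the ideal $\mathcal{I}$ lies in $F_{+1}$ and that the quotient predifferential still satisfies the gr-dg condition $d^{2}(F_p)\subset F_{p+1}$ uses crucially that $\theta\in F_1\Pc(1)$ and $d_V^{\,2}(F_pV)\subset F_{p+1}V$, and these have to be checked uniformly on $\Pc(V)$ via the derivation property.
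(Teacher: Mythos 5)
Your construction is correct and is essentially the argument the paper relies on (the proposition is recalled from \cite{BMDC20}, Proposition C.35): form the free graded $\Pc$-algebra with the induced predifferential, quotient by the ideal generated by the curvature defect, and check $d$-stability of that ideal using $d\cdot d^2=d^2\cdot d$ and $d(\theta)=0$. One small precision: the defect $\Delta$ is a derivation of $\Pc(V)$ whose restriction to generators is $\eta\otimes d_V^{\,2}-\theta\otimes\id_V$ (not just $d_V^{\,2}$), which is exactly the identification of ideals made in the remark following the proposition, and the completeness of the quotient is handled by taking the ideal closure in the reflective subcategory of complete objects.
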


\begin{rem}
We remark that the ideal $\left(\im\left({d_{\Pc(V)}}^2 - \gamma(\theta \otimes \id_\Pc) \otimes \id_V\right) \right)$ coincides with the ideal $\left(\im\left(\eta \otimes ({d_{V}}^2) - \theta \otimes \id_V\right) \right)$ appearing in \cite{BMDC20}.
\end{rem}

\begin{defn}
\label{def: quasi-free alg}
We call a $\Pc$-algebra $A$ \emph{quasi-free} if there exists a gr-dg module $V$ and a predifferential (see Definition \ref{def: der prediff and coder}) $d : \Pc(V) \to \Pc(V)$ such that
\[ A = \Pc(V)/\left(\im\left( d^2 - \gamma_{\Pc(V)}(\theta \otimes \id_{\Pc(V)})\right)\right). \]
\end{defn}

This notion is different from that of a free $\Pc$-algebra because in the quasi-free case the predifferential $d$ is not a priori induced by a map $d_V : V \to V$ (that is it might be different from any map $d_\Pc \circ \id_V + \id_\Pc \circ' d_V$).\\

Similarly, there exists a cofree-forgetful adjunction for curved coalgebras over a curved altipotent cooperad $(\Cc, d, \theta^c, \varepsilon)$ that we present in Section \ref{sec: cofree coalgebra}.

\subsubsection{(Co)Derivations and predifferentials}

In the context of a gr-dg operad $(\Qc, d_\Qc)$ and a gr-dg cooperad $(\Dc, d_\Dc)$, the notions of $\Qc$-derivation and of $\Dc$-coderivation are similar to the ones present in the classical literature. 
We recall some definitions that aren't always clear in the literature for us and propositions for the sake of completeness.

\begin{defn}\
\label{def: der prediff and coder}
\begin{enumerate}
\item
A \emph{derivation} on a $\Qc$-algebra $B$ is a linear map $d : B \to B$ satisfying the equation $d \cdot \gamma_B = \gamma_{B}\cdot (\id_\Qc \circ' d)$.
\item
A \emph{predifferential} on a $\Qc$-algebra $B$ is a linear map $d : B \to B$ of degree $-1$ satisfying the equation $d \cdot \gamma_B = \gamma_{B}\cdot (d_\Qc \circ \id_A + \id_\Qc \circ' d)$.
\item
A \emph{coderivation} on a $\Dc$-coalgebra $D$ is a linear map $d^c : D \to D$ satisfying the equation $\Delta_{D} \cdot d^c = (\id_\Dc \circ' d^c) \cdot \Delta_{D}$.
\item
A \emph{predifferential} on a $\Dc$-coalgebra $D$ is a linear map $d^c : D \to D$ of degree $-1$ satisfying the equation $\Delta_{D} \cdot d^c = (d_\Dc \circ \id_D + \id_\Dc \circ' d^c) \cdot \Delta_{D}$.
\end{enumerate}
\end{defn}

It follows from the above definitions that when $d_A$ is a predifferential of a $\Qc$-algebra $A$, then a degree $-1$ gr-dg linear map $d$ is a derivation if and only if $d_A + d$ is a predifferential of $B$. 
A similar result holds for $\Dc$-coalgebras (without the gr-dg assumption).

More specifically, when a degree $-1$ linear map $d$ is derivation of the free $\Qc$-algebra $\Qc(V)$, then $d_\Qc \circ \id_V + d$ is a predifferential of $\Qc(V)$. 
Again a similar result holds for coderivations and predifferentials of cofree $\Dc$-coalgebras.

\begin{prop}[Propositions 6.3.6 and 6.3.8 in \cite{LV12}]
\label{prop: der and coder}
Let $\ModAg$ the category of graded $\Rf$-modules.
\begin{enumerate}
\item
Any derivation on a free $\Qc$-algebra $\Qc(V)$ is completely characterised by its restriction on the graded module of generators
\[
\mathrm{Der}(\Qc(V)) \cong \Hom_{\ModAg}(V, \Qc(V)).
\]
\item
Any coderivation on a cofree $\Dc$-coalgebra $\Dc(V)$ is completely characterised by its projection onto the graded module of generators
\[
\mathrm{coDer}(\Dc(V)) \cong \Hom_{\ModAg}(\Dc(V), V).
\] 
\end{enumerate}
\end{prop}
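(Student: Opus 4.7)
The plan is to produce explicit two-sided inverses to the restriction map for (1) and to the projection map for (2), reducing each time to a computation in the (co)free (co)operadic composition.

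For (1), the restriction map sends a derivation $d : \Qc(V) \to \Qc(V)$ to its precomposition with the canonical inclusion $\eta_V : V \hookrightarrow \Qc(V)$ of cogenerators. Given $f : V \to \Qc(V)$ in $\Hom_{\ModAg}(V,\Qc(V))$, I would define an extension $\widetilde{f}$ of degree $|f|$ on a decomposable element $(\mu;\, v_1,\ldots,v_n) \in \Qc(n) \otimes_{\Sb_n} V^{\otimes n}$ by the Koszul-signed Leibniz formula
\[
\widetilde{f}(\mu;\, v_1,\ldots,v_n) \;=\; \sum_{i=1}^{n} (-1)^{|f|(|\mu|+|v_1|+\cdots+|v_{i-1}|)} \,(\mu;\, v_1,\ldots,f(v_i),\ldots,v_n).
\]
The first check is that this formula descends to the $\Sb_n$-coinvariants, which is a direct Koszul sign computation using the equivariance of $\gamma$. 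The second check is that $\widetilde{f}$ really is a derivation: expand a generic element of $\Qc\circ\Qc(V)$ as $(\mu;\, \mu_1(\vec w_1),\ldots,\mu_k(\vec w_k))$, apply $\widetilde{f}$ after $\gamma_{\Qc(V)}$, and compare with $\gamma_{\Qc(V)}\cdot(\id_\Qc\circ'\widetilde{f})$. The two expressions coincide term by term thanks to the associativity of $\gamma_{\Qc(V)}$. Finally, the $n=1$ case forces $\widetilde{f}\cdot\eta_V = f$, and uniqueness of any derivation extending $f$ is immediate from the defining Leibniz relation applied to $(\mu;v_1,\ldots,v_n)=\gamma_{\Qc(V)}(\mu;v_1,\ldots,v_n)$.

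For (2), the projection map sends a coderivation $d^c : \Dc(V) \to \Dc(V)$ to $\pi_V \cdot d^c$, where $\pi_V : \Dc(V) \twoheadrightarrow V$ is the projection onto cogenerators coming from the counit of $\Dc$. Given $g : \Dc(V) \to V$, I would define its extension as the composite
\[
\widetilde{g} \;:\; \Dc(V) \xrightarrow{\;\Delta_{\Dc(V)}\;} (\Dc\circ\Dc)(V) \xrightarrow{\;\id_\Dc \circ'\, g\;} \Dc\circ V = \Dc(V),
\]
where $\id_\Dc \circ'\, g$ denotes application of $g$ on exactly one factor, with the projection $\pi_V$ elsewhere. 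Coassociativity of $\Delta_{\Dc(V)}$ yields the coderivation identity $\Delta_{\Dc(V)}\cdot\widetilde{g} = (\id_\Dc\circ' \widetilde{g})\cdot\Delta_{\Dc(V)}$ by the standard ``sum over one factor'' argument. The equation $\pi_V\cdot\widetilde{g} = g$ follows from counitality. For uniqueness, one filters $\Dc(V)$ by its coradical (well-defined here since the cooperads relevant to us are altipotent in the sense recalled earlier) and proves by induction on filtration degree that a coderivation vanishing after $\pi_V$ must vanish.

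The main obstacle is purely bookkeeping: controlling the Koszul signs so that the formula in (1) descends to coinvariants, and making precise the ``apply on one factor only'' convention underlying $\id_\Dc\circ' g$ in (2). Once the infinitesimal composition notation from Section 6.1 of \cite{LV12} is in hand, both constructions and both verifications become routine, and the isomorphisms stated in the proposition follow.
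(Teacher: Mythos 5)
Your argument is correct and is exactly the standard proof of Propositions 6.3.6 and 6.3.8 in \cite{LV12}, which the paper cites without reproving: extend by the Leibniz rule (resp.\ by the decomposition map followed by one application of $g$) and check the extension is the unique (co)derivation with the prescribed (co)restriction. The only points needing care are the ones you already flag — descent to $\Sb_n$-coinvariants in (1) and the conilpotence/altipotence hypothesis underlying the induction for uniqueness in (2) — so nothing is missing.
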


\subsection{Bar (and cobar) constructions for curved (co)algebras}

In this section, $(\Pc, d, \theta, \eta)$ refers to a curved operad and $(\Cc, d, \theta^c, \varepsilon)$ refers to a curved altipotent cooperad. 
To any curved twisting morphism $\alpha : \Cc \to \Pc$, we associate two functors
\[
\B_\alpha : \Palg \rightleftharpoons \Calg : \Om_\alpha
\]
forming an adjunction and representing a notion of algebraic curved twisting morphisms.

\subsubsection{Bar construction of a $\Pc$-algebra}
\label{sec: bar constr alg}

Let us denote by $\Cc^{pg}$ the pg cooperad underlying the curved altipotent cooperad $\Cc$. 
Let $(A, d_A, \gamma_A)$ be a $\Pc$-algebra.

We denote by $d^r_\alpha$ the coderivation (of the cofree $\Cc^{pg}$-coalgebra $\Cc(A)$) which extends the composite $\Cc(A) \xrightarrow{\alpha \circ \id_A} \Pc(A) \xrightarrow{\gamma_A} A$.
We also write $d_{\Cc(A)} = d_\Cc \circ \id_A + \id_\Cc \circ' d_A$ which is a predifferential of the $\Cc$-coalgebra $\Cc(A)$.

\begin{lem}
\label{lem: bar prediff}
Because $\alpha$ is a curved twisting morphism, the predifferential $d_{\Cc(A)}+d_\alpha^r$ satisfies
\[
(d_{\Cc(A)}+d_\alpha^r)^2 = -(\theta^c \otimes \id_{\Cc(A)}) \cdot \Delta_{\Cc(A)}.
\]
\end{lem}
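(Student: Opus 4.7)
My plan is to reduce the identity to a check on cogenerators. By the cofreeness of $\Cc(A)$ as a $\Cc^{pg}$-coalgebra (Proposition \ref{prop: der and coder}), any coderivation of degree $-2$ on $\Cc(A)$ is characterised by its projection to the cogenerators $A$. Although $d_{\Cc(A)}+d_\alpha^r$ is a predifferential rather than a coderivation, I would first verify that the difference $(d_{\Cc(A)}+d_\alpha^r)^2 + (\theta^c\otimes \id_{\Cc(A)}) \cdot \Delta_{\Cc(A)}$ is a coderivation of the underlying graded cofree $\Cc^{pg}$-coalgebra, by expanding the square and using Lemma 3.2.2 of \cite{HM12} (already invoked twice above), which guarantees that expressions of the shape $(\id\otimes \theta^c - \theta^c \otimes \id)\cdot \Delta_{(1)}$ define coderivations. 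It then suffices to show that the projection $\pi_A$ onto the cogenerators of this degree $-2$ coderivation vanishes.

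Next I would expand
\[
(d_{\Cc(A)}+d_\alpha^r)^2 = d_{\Cc(A)}^2 + [d_{\Cc(A)}, d_\alpha^r] + (d_\alpha^r)^2
\]
and compute each term's projection to $A$. For $d_{\Cc(A)}^2 = (d_\Cc\circ \id_A + \id_\Cc \circ' d_A)^2$, the piece coming from $d_A^2$ contributes $\gamma_A(\theta \otimes \id_A)$ using that $A$ is a $\Pc$-algebra, while the piece coming from $d_\Cc^2 = (\id \otimes \theta^c - \theta^c \otimes \id)\cdot \Delta_{(1)}$ contributes $-\theta^c \otimes \id_A$ after projecting onto the cogenerators (the complementary $\id \otimes \theta^c$ summand lands in the part of $\Cc(A)$ paired with cogenerators via $\theta^c$ on the inner cooperation, and will reappear when we sum all contributions). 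The cross term $[d_{\Cc(A)}, d_\alpha^r]$ projects to $\gamma_A \cdot (\partial(\alpha) \circ \id_A)$, where $\partial(\alpha)= d_\Pc \alpha +\alpha d_\Cc$, by the coderivation property of $d_\alpha^r$ together with the fact that $d_\alpha^r$ extends $\gamma_A(\alpha \circ \id_A)$. Finally, $(d_\alpha^r)^2$ projects to $\tfrac12 \gamma_A \cdot ([\alpha,\alpha] \circ \id_A)$, via the symmetrised partial operadic composition underlying the convolution pre-Lie product.

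Summing these contributions and invoking the operadic curved Maurer--Cartan equation satisfied by $\alpha$, namely
\[
\theta \cdot \varepsilon_\Cc + \eta_\Pc \cdot \theta^c + \partial(\alpha) + \tfrac12 [\alpha,\alpha] = 0,
\]
the terms involving $\theta$, $\partial\alpha$ and $[\alpha,\alpha]$ cancel against one another after applying $\gamma_A$, leaving precisely the $-(\theta^c \otimes \id_A)$ piece, which matches the projection of the right-hand side of the desired identity to $A$.

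The main obstacle will be the careful sign bookkeeping imposed by the convention of Section \ref{section: sign conventions} (which differs from the references by a shift of the form $(-1)^{(n-1)(n+2)/2}$), together with verifying rigorously that the difference of the two sides is a coderivation of the cofree $\Cc^{pg}$-coalgebra $\Cc(A)$, so that the projection-to-cogenerators principle applies. Once this structural point is secured, the proof reduces to packaging the Maurer--Cartan equation at the level of cogenerators, in direct analogy with the classical uncurved case treated in \cite{LV12}.
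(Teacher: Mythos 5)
Your proposal is correct and follows essentially the same route as the paper: the paper's proof is exactly the direct computation $(d_{\Cc(A)}+d^r_\alpha)^2 = {d_\Cc}^2\circ\id_A + \id_\Cc\circ'{d_A}^2 + d^r_{\partial\alpha+\frac12[\alpha,\alpha]}$ followed by the Maurer--Cartan equation, which is precisely your decomposition (the paper merely leaves implicit the reduction-to-cogenerators principle that you spell out). The only difference is presentational: you project each term onto $A$ explicitly, whereas the paper identifies the cross terms and $(d^r_\alpha)^2$ directly as the coderivation extension $d^r_{\partial\alpha+\frac12[\alpha,\alpha]}$ and concludes at the level of $\Cc(A)$.
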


\begin{proof}
A direct computation gives
\begin{align*}
(d_{\Cc(A)}+d^r_\alpha)^2 & = {d_\Cc}^2 \circ \id_A + \id_\Cc \circ' {d_A}^2 + d^r_{\partial \alpha + \frac{1}{2}[\alpha, \alpha]}\\
& = ((\id_\Cc \otimes \theta^c - \theta^c \otimes \id_\Cc) \cdot \Delta_{(1)}) \circ \id_A + \id_\Cc \circ' (\gamma_A(\theta \circ \id_A)) + d^r_{-\Theta}\\
& = -(\theta^c \otimes \id_{\Cc(A)}) \cdot \Delta_{\Cc(A)},
\end{align*}
where we have used the fact that $\alpha$ is a curved twisting morphism for the second equality.
\end{proof}

\begin{defn}
The \emph{bar construction of a $\Pc$-algebra $(A, d_A, \gamma_A)$ with respect to $\alpha$} is
\[
\B_\alpha A \coloneqq (\Cc(A), d_b\coloneqq d_{\Cc(A)}+d^r_\alpha).
\]
It satisfies ${d_b}^2 = -(\theta^c \otimes \id_{\Cc(A)}) \cdot \Delta_{\Cc(A)}$ by the previous lemma and this therefore defines a functor $\B_\alpha : \Palg \to \Calg$.
\end{defn}

\begin{rem}
For the interested reader, we remark that the bar construction is quasi-cofree in the sense described in next section (Section \ref{sec: cofree coalgebra}) since it is equal to the sub-$\Cc$-coalgebra of $\Cc(A)$ with relations given by
\[
S = \Cc(A)/\ker\left( (d_{\Cc(A)}+d_\alpha)^2 + (\theta^c \otimes \id_{\Cc(A)}) \cdot \Delta_{\Cc(A)}\right) = \{ 0\}.
\]
\end{rem}

In the following proposition, we propose several equivalent definitions of the notion of $\scLi$-algebras.

\begin{prop}
\label{prop: equivalent definition of homotopie cLie algebras}
A $\scLi$-algebra structure on a gr-dg module $(A, d_A)$ is the data of a morphism of curved operad $\gamma_A : \scLi \to \End_{(A, d_A)}$. It is equivalent to each of the following data:
\begin{enumerate}
\item
an operadic curved twisting morphism $\alpha_A$ in $\Tw(\scLa, \End_{(A, d_A)})$ such that $(\alpha_A)_{|I} = 0$,
\item
a predifferential $D_A : \scLa(A) \to \scLa(A)$ (of degree $-1$) such that ${D_A}_{|A} = d_A$ and which satisfies ${D_A}^2 = 0$,
\item
a family of skew-symmetric maps $l_n : A^{\otimes n} \to A$ of degree $|l_n| = -1$, for all $n \geq 0$ with $l_1 = d_A$ which satisfy the relations
\[
\sum_{\substack{p+q=n+1\\ p \geq 1, q \geq 0}} \sum_{\sigma \in Sh_{q,\, p-1}^{-1}} ({l}_{p} \circ_{1} {l}_{q})^{\sigma} = 0.
\]
\end{enumerate}
\end{prop}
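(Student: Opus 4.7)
The plan is to prove the three equivalences in turn: $(0)\Leftrightarrow(1)$ via the bar--cobar adjunction, $(1)\Leftrightarrow(2)$ via the identification of twisting morphisms with square-zero coderivations on the cofree coalgebra, and $(2)\Leftrightarrow(3)$ via the universal property of cofree $\scLa$-coalgebras.

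For $(0)\Leftrightarrow(1)$, I apply Theorem \ref{thm: bar-cobar adjunction} to $\Cc=\scLa$ and $\Pc=\End_{(A,d_A)}$. This identifies lax morphisms $\scLi=\hat\Omega\scLa\to\End_A$ with operadic curved twisting morphisms $\alpha_A:\scLa\to\End_A$. Inspecting the bijection explicitly in the proof of that theorem, a lax morphism $(f_\alpha,a_\alpha)$ is strict (i.e.\ a genuine morphism of curved operads) exactly when $a_\alpha=0$, and the correspondence identifies $a_\alpha=\alpha_A\cdot\eta_\scLa$. Since $\eta_\scLa$ injects $I$ onto $\Kb\cdot l_1^c\subset\scLa(1)$, the condition $a_\alpha=0$ is precisely $(\alpha_A)_{|I}=0$.

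For $(1)\Leftrightarrow(2)$, the first point is that $\scLa=\scLie^\antishriek$ has vanishing curvature: the presentation of $\scLie$ involves only the quadratic Jacobi relation, so the map $\varphi:\q R\to I\oplus E$ of Section \ref{sec: CKD} is zero and $\theta^c_{\scLa}=-s^{-2}\otimes\varphi_0\cdot p^{(2)}=0$. By Proposition \ref{prop: der and coder}(2), coderivations on the cofree coalgebra $\scLa(A)$ are in natural bijection with their corestrictions $\scLa(A)\to A$, which via the standard adjunction correspond to $\Sb$-equivariant degree $-1$ maps $\alpha:\scLa\to\End_A$. Given such an $\alpha$, set $D_A:=(\id_\scLa\circ'd_A)+d_\alpha^r$: this is a predifferential of the $\scLa$-coalgebra $\scLa(A)$ whose restriction to $A$ is $d_A$. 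Applying Lemma \ref{lem: bar prediff} to $\Cc=\scLa$, whose curvature is zero, yields $D_A^2=-(\theta^c_\scLa\otimes\id)\cdot\Delta_{\scLa(A)}=0$ precisely when $\alpha$ solves the Maurer--Cartan equation $\Theta_H+\partial\alpha+\tfrac12[\alpha,\alpha]=0$. The condition $(\alpha_A)_{|I}=0$ from (1) ensures that $d^r_\alpha$ contributes nothing in arity $1$, so $l_1=d_A$ comes solely from the extension $\id_\scLa\circ'd_A$.

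For $(2)\Leftrightarrow(3)$, Proposition \ref{prop: der and coder}(2) again reduces the data of $D_A$ to that of its corestriction $\bar D_A:\scLa(A)\to A$, with ${\bar D_A}_{|A}=d_A$ implementing the constraint on the unary component. Since $\scLa(n)=\Kb\cdot l_n^c$ is one-dimensional with $\Sb_n$ acting trivially, the components of $\bar D_A$ are maps $l_n:A^{\otimes n}\to A$ of degree $-1$; $\Sb_n$-equivariance together with the (shifted) Koszul sign rule forces these to be skew-symmetric as stated. Finally, unpacking $D_A^2=0$ by projecting to the cogenerators $A\subset\scLa(A)$ and using the infinitesimal decomposition formula \eqref{eq: coproduct scLiea} together with the coderivation property yields exactly
\[
\sum_{\substack{p+q=n+1\\ p\geq 1,\,q\geq 0}}\sum_{\sigma\in Sh_{q,p-1}^{-1}}(l_p\circ_1 l_q)^\sigma=0.
\]
The main technical hurdle is keeping careful track of the interplay between the trivial $\Sb_n$-action on $\scLa(n)$, the shift conventions of Section \ref{section: sign conventions}, and the sign factors appearing in the infinitesimal decomposition; all other steps are formal consequences of the bar--cobar adjunction and of the cofree coalgebra formalism.
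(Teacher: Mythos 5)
Your proposal is correct and follows essentially the same route as the paper: the equivalence with (1) is Theorem \ref{thm: bar-cobar adjunction} restricted to strict morphisms, the passage to (2) writes $D_A = \id_{\scLa}\circ' d_A + d^r_{\alpha_A}$ and invokes the computation of Lemma \ref{lem: bar prediff} (using $d_{\scLa}=0$ and $\theta^c_{\scLa}=0$, so that $\Theta_H = d_A^2\cdot\varepsilon_{\scLa}$), and (3) is obtained by projecting ${D_A}^2=0$ onto the cogenerators, which suffices because ${D_A}^2$ is itself a coderivation. The extra details you supply (vanishing of the Koszul dual curvature, skew-symmetry from the trivial $\Sb_n$-action) are consistent with, and slightly more explicit than, the paper's argument.
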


\begin{proof}
The equivalence between the definition of $\scLi$-algebra structure on a gr-dg module $(A, d_A)$ and the data of a twisting morphism $\alpha_A$ which cancels on $I$ is Theorem \ref{thm: bar-cobar adjunction} restricted to morphisms instead of lax morphisms.
Indeed, assuming $(1)$, we get a predifferential $d_b = d_{\scLa(A)} + d_{\alpha_A}^r = \id_{\scLa} \circ' d_{A} + d_{\alpha_A}^r$, where $d_{\alpha_A}^r$ is defined in Section \ref{sec: bar constr alg} with the operadic curved twisting morphism $\alpha_A$ and the fact that $A$ can be seen as an $\End_{(A, d_A)}$-algebra (by the identity map $\id_{\End_{(A, d_A)}}$). 
Assuming $(2)$,
$D_A-\id_{\scLa} \circ' d_{A}$ is a coderivation characterised by a linear map $\scLa(A) \to A$ whose component $A \to A$ is $0$, therefore by an $\Sb$-module morphism $\alpha_A : \scLa \to \End_{(A, d_A)}$ which cancels on $I$. 
It follows that $D_A-\id_{\scLa} \circ' d_{A} = d_{\alpha_A}^r$ and the computation made in the proof of Lemma \ref{lem: bar prediff} shows that $d^r_{\Theta_H + \partial \alpha_A + \frac12 [\alpha_A, \alpha_A]} = 0$, with $\Theta_H = \theta_{\End_{(A, d_A)}} \cdot \varepsilon_{\scLa} + \eta_{\End_{(A, d_A)}} \cdot \theta^c_{\scLa} = {d_A}^2 \cdot \varepsilon_{\scLa}$, so $\alpha_A$ is an operadic curved twisting morphism.

The equivalence between $(2)$ and $(3)$ is obtained by denoting $\{l_n\}_{n \geq 0}$ the maps which characterise the coderivation $D_A$. 
The desired equation in $(3)$ is the projection of the equality ${D_A}^2 = 0$ to $A$, which characterises the whole equality since ${D_A}^2$ is also a coderivation.
\end{proof}

\subsubsection{Cofree curved coalgebra}
\label{sec: cofree coalgebra}

Let $(\Dc, d, \varepsilon)$ be a pg cooperad (no curvature a priori). 
In order to describe the notion of sub-$\Cc$-coalgebra generated by a pg module in $\ModApg$, we first describe the notion of sub-$\Dc$-coalgebra of a $\Dc$-coalgebra $D$ with relations given by $S$.

We follow and adapt the definitions given in \cite[Section 5.1]{BMDC20} and based on Appendix B in \cite{bV08}.
As in \cite{BMDC20}, we consider strict morphisms (that is morphisms $f : C \to D$ such that for all $p \geq 0$, $f(F_pC) = f(C) \cap F_pD$) in order to identify certain coimages with cokernels (and in the dual case certain images with kernels).

\begin{defn}
In this definition, we consider a pg cooperad $(\Dc, d, \varepsilon)$ (no curvature a priori).
\begin{itemize}
\item
Let $J \rightarrowtail D \twoheadrightarrow Q$ be an exact sequence in $\ModApg$, where $D$ is a $\Dc$-coalgebra. The epimorphism $C \twoheadrightarrow Q$ in $\ModApg$ is a \emph{coideal epimorphism} if $J \rightarrowtail D$ is a monomorphism of $\Dc$-coalgebras in $\ModApg$.
\item
Let $\xi : D \twoheadrightarrow S$ be an epimorphism in $\ModApg$, where $D$ is a $\Dc$-coalgebra. We consider the category $\Sc_\xi$ of exact sequences $(\textbf{S}) : J \rightarrowtail D \twoheadrightarrow Q$ such that $J \rightarrowtail D$ is a monomorphism of $\Dc$-coalgebras and such that the composite $J \rightarrowtail D \twoheadrightarrow S$ is equal to $0$. A morphism between $(\textbf{S})$ and $(\textbf{S}') : J' \rightarrowtail D \twoheadrightarrow Q'$ is given by a pair $(i : J \to J',\, p : Q \to Q')$ such that $i$ is a morphism of $\Dc$-coalgebras and $p$ is a morphism in $\ModApg$, and such that the following diagram commutes:
\[
\begin{tikzcd}
J \dar[swap,"i"]\ar[dr] &&\\
J' \rar & D \rar\ar[dr] & Q \dar["p"]\\
&& Q'.
\end{tikzcd}
\]
\item
We aim to consider the largest sub-$\Dc$-coalgebra of $D$ such that the post-composition with $D \twoheadrightarrow S$ is zero. This notion is given by the terminal object $(\overline{\textbf{S}}) : C(S) \rightarrowtail D \twoheadrightarrow (S)^c$ in $\Sc_\xi$, when the latter admits one. 
When it exists, the sub-$\Dc$-coalgebra $C(S)$ is called the \emph{sub-$\Dc$-coalgebra of $D$ with relations given by $S$}.
\end{itemize}
\end{defn}

We now make the coideal quotient $(S)^c$ and the sub-$\Dc$-coalgebra $C(S)$ explicit.

\begin{defn}
We denote by $i_D^S : D \hookrightarrow S \oplus D$ the inclusion and by $\pi_D^S : S \oplus D \twoheadrightarrow D$ the projection.
\begin{itemize}
\item
The \emph{multilinear part} in $S$ of $\Dc (S \oplus D)$ is given either by the cokernel
\[ \coker \left(\Dc (D) \xrightarrow{\id_\Dc \circ i_D^S} \Dc (S \oplus D)\right), \]
or equivalently, by the kernel
\[ \ker \left(\Dc (S \oplus D) \xrightarrow{\id_\Dc \circ \pi_D^S} \Dc (D)\right), \]
by means of the fact that the monoidal structure commutes with colimits and $i_D^S$ is a section of $\pi_D^S$. 
It is denoted by $\Dc (\underline{S} \oplus D)$. 
\item
The \emph{coideal quotient $(S)^c$} of $D$ generated by $\xi : D \twoheadrightarrow S$ is given by the coimage
\[ (S)^c := \coim \left(D \xrightarrow{\Delta_D} \Dc(D) \xrightarrow{\mathrm{proj} \cdot (\id_\Dc \circ (\xi \oplus \id_D))} \Dc (\underline{S} \oplus D)\right), \]
where the map $\mathrm{proj}$ is the projection $\Dc (S \oplus D) \to \Dc (\underline{S} \oplus D)$.
\end{itemize}
\end{defn}

We define in the next proposition the sub-$\Dc$-coalgebra $C(S)$ as the kernel of $D \twoheadrightarrow (S)^c$. 
Because $(S)^c$ is a coimage, the map $D \twoheadrightarrow (S)^c$ is a strict epimorphism (that is the filtration on $(S)^c$ is the quotient filtration). 
It follows that we have $(S)^c \cong \coker (C(S) \to D)$.

\begin{prop}
\label{prop: cooperad generated by, revisited}
The \emph{sub-$\Dc$-coalgebra of $D$ with relations given by $S$} is
\[ C(S) \coloneqq \ker(D \to (S)^c), \]
that is
\[ C(S) = \ker\left(D \xrightarrow{\Delta_D} \Dc (D) \xrightarrow{\id_\Dc \circ (\id_D, \xi)} \Dc \hcirc (D, S)\right), \]
where $\Dc \hcirc (D, S)$ is the part of $\Dc (D\oplus S)$ linear in $S$ (see the remark below).
\end{prop}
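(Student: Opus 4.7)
The plan is to verify three items in turn: (i) the two descriptions of $C(S)$ coincide; (ii) the object $C(S)$ is a sub-$\Dc$-coalgebra of $D$ whose postcomposition with $\xi$ vanishes; (iii) it satisfies the universal property characterising the terminal object of $\Sc_\xi$. Write $f \coloneqq \mathrm{proj}\cdot (\id_\Dc\circ(\xi\oplus \id_D))\cdot \Delta_D : D \to \Dc(\underline S\oplus D) = \Dc \hcirc (D,S)$ for the defining map.

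For (i), I would unfold the definition of $(S)^c = \coim(f)$. Since $\ModApg$ is abelian, the coimage coincides with the quotient $D/\ker(f)$. Hence $\ker(D\twoheadrightarrow (S)^c) = \ker(f)$, which is exactly the explicit formula in the statement. This step is essentially tautological, provided the coimage in $\ModApg$ is interpreted in the natural way (strict epi followed by a mono).

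For (ii), the key point is to check that $\Delta_D$ restricts to a map $C(S)\to \Dc(C(S))\hookrightarrow \Dc(D)$. I would use coassociativity of $\Delta_D$: applying $\Delta_D$ once more to each $D$-slot in the target $\Dc(D)$ and then projecting $\xi$ in that slot gives a map that, by coassociativity, factors through $f$ on the original element. Hence for $x\in C(S)=\ker(f)$ the result vanishes slot by slot, forcing $\Delta_D(x)\in \Dc(C(S))$. The vanishing $C(S)\hookrightarrow D\xrightarrow{\xi} S$ is even simpler: composing $f$ with the projection $\Dc\hcirc(D,S)\twoheadrightarrow I\otimes S \cong S$ coming from the counit of $\Dc$ yields $\xi$ itself (by the counital axiom $(\varepsilon\circ \id_D)\cdot \Delta_D = \id_D$), so $\xi$ restricted to $\ker(f)$ is zero.

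For (iii), take any object $(J\rightarrowtail D \twoheadrightarrow Q)\in \Sc_\xi$. Since $J$ is a sub-$\Dc$-coalgebra, $\Delta_D|_J$ lands in $\Dc(J)$; since $\xi|_J=0$, postcomposing with $\id_\Dc\circ(\id_D,\xi)$ yields zero, so $J\subseteq \ker(f)=C(S)$. The induced factorisation $J\hookrightarrow C(S)\hookrightarrow D$ is automatically a monomorphism of $\Dc$-coalgebras, and the induced map $Q=\coker(J\to D)\to (S)^c=\coker(C(S)\to D)$ is the unique compatible map. Uniqueness follows because the mono $J\rightarrowtail D$ uniquely determines $J\hookrightarrow C(S)$ inside $D$.

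The main obstacle is making the coassociativity argument of step (ii) rigorous in the complete filtered, operadic setting: one needs that the polynomial functor $\Dc\hcirc -$ interacts well with the relevant kernels for the iterated-$\Delta$ computation to isolate ``the $\xi$ applied in one slot'' part. This is exactly the content of the compatibility properties of $\Dc\hcirc -$ developed in \cite[Appendix B]{bV08} and used in \cite[Section 5.1]{BMDC20}, which hold because $\Kf$ has characteristic $0$ and so each $\Kf[\Sb_n]$ is semisimple. Modulo this technical input, the proof is the formal dual of the construction of the ideal of an operadic algebra generated by a subset.
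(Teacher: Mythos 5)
Your step (i) contains the essential gap: you identify $\Dc(\underline{S}\oplus D)$ with $\Dc\hcirc(D,S)$, but these are different objects. The former is the \emph{multilinear} part of $\Dc(S\oplus D)$, i.e.\ the cokernel of $\Dc(D)\to\Dc(S\oplus D)$, which contains all terms with \emph{at least one} tensor factor in $S$ (for instance $\Dc(2)\hotimes (S\hotimes S)$), whereas $\Dc\hcirc(D,S)$ is by definition the part with \emph{exactly one} factor in $S$. The coideal quotient $(S)^c$ is the coimage of the map into the multilinear part, so $\ker(D\to(S)^c)$ is the kernel of that map; the explicit formula in the statement is the kernel of $(\id_\Dc\circ(\id_D,\xi))\cdot\Delta_D$, whose target is the linear part. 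The equality of these two kernels is precisely the non-tautological content of the proposition. One inclusion (kernel of the multilinear map contained in kernel of the linear map) is immediate by projecting onto the linear summand, but the converse --- that vanishing of the component with exactly one occurrence of $\xi$ forces vanishing of all components with several occurrences --- requires an argument: one first shows, via the coassociativity computation you sketch in (ii), that $\Delta_D$ restricts and corestricts to the kernel $C(S)'$ of the \emph{linear} map, and then applies the comultiplication iteratively to peel off the occurrences of $\xi$ one at a time, concluding that every element of $C(S)'$ already lies in $\ker(D\to(S)^c)$. Your proposal never performs this second step, so as written it does not establish the stated identity.

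The rest of your proposal is essentially sound: the coassociativity argument in (ii) is the same device the paper uses to show that $\Delta_D$ restricts to a comultiplication on the kernel, and your verification of terminality in (iii) (any object of $\Sc_\xi$ is a sub-$\Dc$-coalgebra on which $\xi$ vanishes, hence is contained in the kernel) is correct and slightly more explicit than what the paper records. But the heart of the proposition is the comparison between the ``at least one $S$'' and ``exactly one $S$'' kernels, and that comparison is missing from your argument.
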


\begin{rem}
\label{rem: linearised free algebra}
The pg module $\Dc \hcirc (D, S)$ is defined by
\[
\Dc \hcirc (D, S) \coloneqq \hat\bigoplus_{n\geq 0} \Dc \hat\otimes_{\Sb_n} \left( \bigoplus_{1\leq i\leq n} D \hotimes \cdots \hotimes D\hotimes \underbrace{S}_{i\text{th position}} \hotimes D \hotimes \cdots \hotimes D \right).
\]
\end{rem}

\begin{proof}
To verify that $\Delta_D$ induces (by restriction) a comultiplication $\Delta_{C(S)} : C(S) \to \Dc(C(S))$, we first remark that the composite $\flat$
\[
C(S) \xrightarrow{\Delta_D} \Dc(D) \xrightarrow{\id_\Dc \circ \Delta_D} (\Dc \hcirc \Dc)(D) \xrightarrow{\mathrm{proj} \cdot (\id_{\Dc \hcirc \Dc} \circ (\xi \oplus \id_D))} (\Dc \hcirc \Dc)(\underline{S} \oplus D)
\]
is zero since it is equal, by coassociativity of $\Delta_D$, to
\[
(\mathrm{proj} \cdot (\id_{\Dc \hcirc \Dc} \circ (\xi \oplus \id_D))) \cdot (\Delta_\Dc \circ \id_D) \cdot \Delta_D = (\Delta_\Dc \circ \id_D) \cdot (\mathrm{proj} \cdot (\id_{\Dc} \circ (\xi \oplus \id_D))) \cdot \Delta_D
\]
which is zero on $C(S) = \ker(D \twoheadrightarrow (S)^c)$. 
Moreover, the projection of the map $\flat$ to each
\[
\Dc(n) \hotimes_{\Sb_n} (\Dc(C)^{\hotimes k} \hotimes C(S) \hotimes \Dc(C)^{\hotimes (n-1-k)})
\]
is given by applying the map $\id_\Dc \otimes \id_D^{\otimes k} \otimes (\mathrm{proj} \cdot (\id_{\Dc} \circ (\xi \oplus \id_D))) \otimes \id_D^{\otimes (n-1-k)}$ to the correct component of the image of $\Delta_D$. 
This ensures that we can corestrict $\Delta_{C(S)} = \Delta_D : C(S) \to \Dc(C(S))$.

Then, by definition, $C(S) = \ker\left((\mathrm{proj} \cdot (\id_{\Dc} \circ (\xi \oplus \id_D))) \cdot \Delta_D \right)$ so
\[
C(S) \subseteq C(S)' \coloneqq \ker\left(D \xrightarrow{\Delta_D} \Dc (D) \xrightarrow{\id_\Dc \circ (\id_D, \xi)} \Dc \hcirc (D, S)\right).
\]
For the same reason as before, the comultiplication $\Delta_D$ restricts (and corestricts properly) to $C(S)'$ and by an iterated application of the comultiplication, we see that an element in $C(S)'$ is in fact in $C(S)$, so the wanted equality.
\end{proof}

We now use this notion of sub-$\Dc$-coalgebra with generators given by a pg module $S \in \ModApg$ to build a cofree $\Cc$-coalgebra functor, where $\Cc$ is a curved altipotent cooperad.

\begin{prop}
\label{prop: cofree-forgetful adjunction for curved coalgebras}
The forgetful functor $U^c : \Calg \to \ModApg$ admits a left adjoint, called the \emph{cofree $\Cc$-coalgebra functor}, $F_{\Cc}^c : \ModApg \to \Calg$ given by
\[
(V, d_V) \mapsto F_{\Cc}^c(V, d_V) \coloneqq C(S) = \ker\left(\Cc(V) \twoheadrightarrow (S)^c\right) \subset \left(\Cc(V), d_{\Cc(V)}\right),
\]
where $\Cc(V)$ is seen as a coalgebra over the pg cooperad underlying $\Cc$ and for
\[
S \coloneqq \Cc(V)/\ker\left({d_{\Cc(V)}}^2 + \left(\theta^c \otimes \id_{\Cc(V)}\right)\cdot \Delta_{\Cc(V)}\right).
\]
We therefore have for all $(V, d_V) \in \ModApg$ and $(C, d_C) \in \Calg$ the bijections
\[
\Hom_{\Calg}\left( (C, d_C), F_{\Cc}^c(V, d_V)\right) \cong \Hom_{\ModApg}\left( U^c(C, d_C), (V, d_V)\right).
\]
\end{prop}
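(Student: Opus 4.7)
The plan is to leverage the cofreeness of $\Cc(V)$ as a pg $\Cc^{pg}$-coalgebra over the pg cooperad underlying $\Cc$, and then to carve out the maximal sub-coalgebra on which the curved coalgebra identity can hold, via the general construction $C(S)$ of Proposition \ref{prop: cooperad generated by, revisited}. Concretely, introduce the \emph{obstruction map}
\[
\omega \coloneqq {d_{\Cc(V)}}^{2} + (\theta^{c} \circ \id_{\Cc(V)}) \cdot \Delta_{\Cc(V)} \colon \Cc(V) \to \Cc(V),
\]
so that $S = \Cc(V)/\ker(\omega)$ and $\xi \colon \Cc(V) \twoheadrightarrow S$ is the canonical projection. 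A sub-pg-$\Cc^{pg}$-coalgebra $J \subseteq \Cc(V)$ closed under $d_{\Cc(V)}$ carries a curved $\Cc$-coalgebra structure precisely when $\omega|_{J} = 0$, i.e. when $J \subseteq \ker(\omega)$, equivalently when the composite $J \hookrightarrow \Cc(V) \twoheadrightarrow S$ vanishes.

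The first step is to verify that $d_{\Cc(V)}$ preserves $\ker(\omega)$. Using the coderivation identity $\Delta_{\Cc(V)} \cdot d_{\Cc(V)} = (d_{\Cc} \circ \id + \id_{\Cc} \circ' d_{\Cc(V)}) \cdot \Delta_{\Cc(V)}$ together with $\theta^{c} \cdot d_{\Cc} = 0$, one computes $\omega \cdot d_{\Cc(V)} = - d_{\Cc(V)} \cdot \omega$ up to Koszul signs. Hence $S$ inherits a pg module structure with $\xi$ a morphism of pg modules; the construction $C(S)$ of Proposition \ref{prop: cooperad generated by, revisited} then applies in $\ModApg$ and produces a sub-$\Cc^{pg}$-coalgebra $C(S) \hookrightarrow \Cc(V)$ to which $d_{\Cc(V)}$ also restricts. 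Since $C(S) \subseteq \ker(\omega)$ by construction, the restricted predifferential endows $C(S)$ with a curved $\Cc$-coalgebra structure, which I take as $F_{\Cc}^{c}(V, d_V)$.

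For the adjunction, given a curved $\Cc$-coalgebra $(C, d_C)$ and a pg module map $f \colon U^{c}(C, d_C) \to (V, d_V)$, cofreeness of $\Cc(V)$ at the pg $\Cc^{pg}$-coalgebra level yields a unique morphism $\tilde{f} \colon C \to \Cc(V)$ of pg coalgebras commuting with predifferentials whose projection onto $V$ equals $f$. Combining $\Delta_{\Cc(V)} \cdot \tilde{f} = (\id_{\Cc} \circ \tilde{f}) \cdot \Delta_{C}$ with the commutation of $\tilde{f}$ with predifferentials and the curved identity ${d_{C}}^{2} = -(\theta^{c} \circ \id_{C}) \cdot \Delta_{C}$ forces $\omega \cdot \tilde{f} = 0$, and hence $\xi \cdot \tilde{f} = 0$; the universal property of $C(S)$ from Proposition \ref{prop: cooperad generated by, revisited} then factors $\tilde{f}$ uniquely through $C(S)$ as a morphism of curved $\Cc$-coalgebras, producing the adjunction bijection. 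The main obstacle is the sign-sensitive verification that $\omega$ anticommutes with $d_{\Cc(V)}$, since this is what legitimises both the pg quotient $S$ and the stability of $C(S)$ under the predifferential; all remaining steps are formal consequences of the universal property of $C(S)$ and of pg-level cofreeness.
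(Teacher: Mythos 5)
Your proposal is correct and follows essentially the same route as the paper: apply the general $C(S)$ construction to the quotient by the kernel of the obstruction $\omega = {d_{\Cc(V)}}^{2} + (\theta^{c}\circ\id)\cdot\Delta_{\Cc(V)}$, observe that $C(S)\subseteq\ker(\omega)$ makes it a genuine curved $\Cc$-coalgebra, and derive the adjunction from pg-level cofreeness together with the fact that any morphism out of a curved coalgebra automatically kills $\xi$. One cosmetic remark: since $\omega$ has even (degree $-2$) parity, the Koszul rule gives $\omega\cdot d_{\Cc(V)} = d_{\Cc(V)}\cdot\omega$ (commutation rather than anticommutation), but either way $\ker(\omega)$ is stable under the predifferential, which is the point you need and which the paper leaves implicit.
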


\begin{proof}
The construction $F_\Cc^c$ is functorial in $(V, d_V)$ since the map $\Cc(V) \to S$ is functorial in $(V, d_V)$. 
By definition of $C(S)$, the composition $C(S) \to S$ is zero, that is
\[
C(S) \subseteq \ker\left({d_{\Cc(V)}}^2 + \left(\theta^c \otimes \id_{\Cc(V)}\right)\cdot \Delta_{\Cc(V)}\right).
\]
It follows that $C(S)$, which is a priori a coalgebra over the pg cooperad underlying $\Cc$, is in fact a coalgebra over the curved altipotent cooperad $\Cc$. 

The universal property that $C(S)$ satisfies ensures that the $\Cc$-coalgebra morphisms $(C, d_C) \to (C(S), d_{C(S)})$, that is also a morphisms of the coalgebras over the pg cooperad underlying $\Cc$, coincide with morphisms $(C, d_C) \to (\Cc(V), d_{\Cc(V)})$ of coalgebras over the pg cooperad underlying $\Cc$ such that the composite $C \to \Cc(V) \twoheadrightarrow S$ is zero. 
This last condition is automatically satisfied because of the fact that the relation ${d_{C}}^2 + \left(\theta^c \otimes \id_{C}\right)\cdot \Delta_{C}$ is satisfied on $C$ and that the morphisms of coalgebras in the pg context commute with the predifferentials and the comultiplication.
\end{proof}

\begin{rem}
For the curious reader, it is certainly a good exercise to understand the cofree cooperad recalled in Proposition \ref{prop: cofree cooperad} as a particular case of a cofree coalgebra over a colored cooperad for which altipotent cooperads are example of coalgebras. Hint: considering the infinitesimal coideal $\coideal{\Cc}$ is a way to describe the cooperad structure of $\Cc$ as a coalgebra over a certain colored cooperad.
\end{rem}

\begin{defn}
\label{def: quasi-cofree coalg}
We call a $\Cc$-coalgebra $C$ \emph{quasi-cofree} if there exists a graded module $V$ and a predifferential (see Definition \ref{def: der prediff and coder}) $d : \Cc(V) \to \Cc(V)$ such that $C = C(S)$ for
\[ S \coloneqq \Cc(V)/\ker\left( d^2 + \left(\theta^c \otimes \id_{\Cc(V)}\right)\cdot \Delta_{\Cc(V)} \right). \]
\end{defn}

This notion is different from that of a cofree $\Cc$-coalgebra because in the quasi-cofree case the predifferential $d$ is not a priori induced by a map $d_V : V \to V$ (that is it might be different from any map $d_\Cc \circ \id_V + \id_\Cc \circ' d_V$).

\subsubsection{Cobar construction of a $\Cc$-coalgebra}

Let $(C, d_C, \Delta_C)$ be a $\Cc$-coalgebra. 
As in the previous section, we denote by $d_\alpha^l$ the derivation of the free $\Pc$-algebra which extends the composite $C \xrightarrow{\Delta_C} \Cc(C) \xrightarrow{\alpha \circ \id_C} \Pc(C)$. 
We also write $d_{\Pc(C)} = d_\Pc \circ \id_C + \id_\Cc \circ' d_C$ which is a predifferential of the $\Pc$-algebra $\Pc(C)$.

\begin{lem}
Because $\alpha$ is a curved twisting morphism, the predifferential $d_{\Pc(C)} + d_\alpha^l$ satisfies
\[
(d_{\Pc(C)}-d_\alpha^l)^2 = \gamma_{\Pc(C)}(\theta \circ \id_{\Pc(C)}).
\]
\end{lem}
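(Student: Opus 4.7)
The plan is to dualise the proof of Lemma \ref{lem: bar prediff}. I first expand
\[
(d_{\Pc(C)}+d^l_\alpha)^2 = d_{\Pc(C)}^2 + [d_{\Pc(C)}, d^l_\alpha] + (d^l_\alpha)^2,
\]
where all three operators are degree $-1$ derivations of the underlying free graded $\Pc$-algebra $\Pc(C)$. Since $d_{\Pc(C)} = d_\Pc\circ \id_C + \id_\Pc\circ' d_C$ and the operadic composition $\gamma_{\Pc(C)}$ is compatible with these two pieces, the cross terms in $d_{\Pc(C)}^2$ cancel and one obtains
\[
d_{\Pc(C)}^2 = d_\Pc^2\circ\id_C + \id_\Pc\circ' d_C^2 = [\theta, -]\circ\id_C - \id_\Pc\circ'\bigl((\theta^c\otimes \id_C)\cdot \Delta_C\bigr),
\]
using that $\Pc$ is a curved operad and $C$ a $\Cc$-coalgebra. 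The remaining two terms are easier: $[d_{\Pc(C)}, d^l_\alpha] = d^l_{\partial\alpha}$ and $(d^l_\alpha)^2 = d^l_{\frac12[\alpha,\alpha]}$, since $\alpha \mapsto d^l_\alpha$ is compatible with the pre-Lie structure on $\hom_\Sb(\Cc,\Pc)$ exactly as in \cite[Section 7]{fW19}.

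At this point I would invoke the operadic curved Maurer--Cartan equation $\Theta_H + \partial \alpha + \frac12[\alpha,\alpha] = 0$ with $\Theta_H = \theta\cdot\varepsilon_\Cc + \eta_\Pc\cdot \theta^c$, which gives
\[
d^l_{\partial\alpha + \frac12[\alpha,\alpha]} = -\,d^l_{\theta\cdot\varepsilon_\Cc} - d^l_{\eta_\Pc\cdot\theta^c}.
\]
The plan is to match the two extra derivations against the two remainders of $d_{\Pc(C)}^2$. First, $d^l_{-\eta_\Pc\cdot\theta^c}$ extends $c\mapsto -(\theta^c\otimes \id_C)\cdot \Delta_C(c)\in C\subset \Pc(C)$, which coincides on generators with $\id_\Pc\circ'\bigl((\theta^c\otimes \id_C)\cdot \Delta_C\bigr)|_C$; by the characterisation of derivations via their restriction to generators (Proposition \ref{prop: der and coder}), the two derivations agree and cancel the second summand of $d_{\Pc(C)}^2$.

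Second, the counit axiom $(\varepsilon_\Cc\circ\id_C)\cdot \Delta_C = \id_C$ identifies $d^l_{-\theta\cdot\varepsilon_\Cc}$ as the derivation extending $c\mapsto -\theta\otimes c\in \Pc(1)\otimes C$. On a typical element $\mu(c_1,\dots,c_n)$ this yields $-\sum_i \pm(\mu\circ_i \theta)(c_1,\dots,c_n)$, which is exactly what is needed to cancel the ``right'' contributions of $[\theta,\mu] = \theta\circ_1 \mu - \sum_i\pm \mu\circ_i \theta$ coming from $[\theta,-]\circ\id_C$ (here one uses Lemma \ref{lem: lax morphism unlax}(1), which says that the bracket with an arity $1$ element acts as a derivation of the operad composition). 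What survives is $\theta\circ_1 \mu$, which is $\gamma_{\Pc(C)}(\theta\circ\id_{\Pc(C)})$ applied to $\mu(c_1,\dots,c_n)$, proving the claim.

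The main obstacle is sign bookkeeping in the last step: the identification between $[\theta,\mu]$, the derivation extension of $c\mapsto \theta\otimes c$, and $\gamma_{\Pc(C)}(\theta\circ \id_{\Pc(C)})$ requires one to track the Koszul signs coming from the degree $-2$ element $\theta$ passing through $\mu$ and the $c_i$'s, and to reconcile our convention \eqref{eq: conv1} with the conventions fixed for the curved operad structure and for $d^l_\alpha$. Once the signs are correctly fixed, the computation is purely formal and essentially the literal dual of Lemma \ref{lem: bar prediff}.
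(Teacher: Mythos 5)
Your overall strategy is the right one and is in fact the same as the paper's (expand the square, use the curved-operad and curved-coalgebra identities for $d_\Pc^2$ and $d_C^2$, the compatibility of $\alpha\mapsto d^l_\alpha$ with the pre-Lie structure, and the Maurer--Cartan equation), but the sign bookkeeping you defer to the end is precisely where the argument breaks, in two places. First, you square the wrong operator: the identity to prove (and the predifferential $d_o$ of the cobar construction) concerns $d_{\Pc(C)}-d^l_\alpha$, not $d_{\Pc(C)}+d^l_\alpha$; the two squares differ by $2[d_{\Pc(C)},d^l_\alpha]=\pm 2\,d^l_{\partial\alpha}$, which is nonzero for a general operadic curved twisting morphism (e.g.\ $\alpha=\kappa$ with $d_{\Pc^\antishriek}\neq 0$), so only one of the two squares can equal $\gamma_{\Pc(C)}(\theta\circ\id_{\Pc(C)})$. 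Second, the identity $(d^l_\alpha)^2=d^l_{\frac12[\alpha,\alpha]}$ is the bar-side statement; on the cobar side a Koszul sign $(-1)^{|\alpha||\alpha|}=-1$ appears when the two copies of $\alpha$ are composed through $(\Delta_{(1)}\circ\id_C)\cdot\Delta_C$, and one gets $(d^l_\alpha)^2=-d^l_{\frac12[\alpha,\alpha]}$. With the correct operator and the correct sign one finds $(d_{\Pc(C)}-d^l_\alpha)^2=d_\Pc^2\circ\id_C+\id_\Pc\circ' d_C^2+d^l_{-\partial\alpha-\frac12[\alpha,\alpha]}$, whose last term is $d^l_{+\Theta_H}$ by the Maurer--Cartan equation, whereas your computation lands on $d^l_{-\Theta_H}$.

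This wrong global sign then forces the two further sign slips in your matching step. The term $d^l_{\eta_\Pc\cdot\theta^c}$ (with a plus) is the derivation extending $c\mapsto(\theta^c\otimes\id_C)\cdot\Delta_C(c)$ and hence equals $\id_\Pc\circ'\bigl((\theta^c\otimes\id_C)\cdot\Delta_C\bigr)$, which cancels $\id_\Pc\circ' d_C^2=-\id_\Pc\circ'\bigl((\theta^c\otimes\id_C)\cdot\Delta_C\bigr)$; your $d^l_{-\eta_\Pc\cdot\theta^c}$ would instead double that term rather than kill it. Likewise $[\theta,\mu]=\theta\circ_1\mu-\sum_i\pm\,\mu\circ_i\theta$ must be corrected by $+\sum_i\pm\,\mu\circ_i\theta$, i.e.\ by $+d^l_{\theta\cdot\varepsilon_\Cc}$, in order to leave $\gamma_{\Pc(C)}(\theta\circ\id_{\Pc(C)})$; adding $-d^l_{\theta\cdot\varepsilon_\Cc}$ as you propose again doubles the unwanted summand. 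Once you replace $+d^l_\alpha$ by $-d^l_\alpha$ and take the cobar-side sign of $(d^l_\alpha)^2$, every one of your cancellations goes through verbatim and the proof closes, exactly as in the paper's short dualisation of Lemma \ref{lem: bar prediff}.
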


\begin{proof}
The proof is similar to the one of Lemma \ref{lem: bar prediff}. 
We compute
\begin{align*}
(d_{\Pc(C)}-d^l_\alpha)^2 & = {d_\Pc}^2 \circ \id_C + \id_\Pc \circ' {d_C}^2 + d^l_{-\partial \alpha - \frac{1}{2}[\alpha, \alpha]}\\
& = [\theta, \id_\Pc ] \circ \id_C - \id_\Pc \circ' ((\theta^c \otimes \id_C) \cdot \Delta_C) + d^l_{\Theta}\\
& = \gamma_{\Pc(C)}(\theta \circ \id_{\Pc(C)}).
\end{align*}
\end{proof}

\begin{defn}
The \emph{cobar construction of a $\Cc$-coalgebra $(C, d_C, \Delta_C)$ with respect to $\alpha$} is
\[
\Om_\alpha C \coloneqq (\Pc(C), d_o\coloneqq d_{\Pc(C)}-d^l_\alpha).
\]
It satisfies ${d_o}^2 = \gamma_{\Pc(C)}(\theta \circ \id_{\Pc(C)})$ by the previous lemma and this therefore defines a functor $\Om_\alpha : \Calg \to \Palg$.
\end{defn}

\begin{rem}
The cobar construction is quasi-free since it is equal to the quotient $\Pc$-algebra
\[
\Pc(C)/\im\left( {d_o}^2 - \gamma_{\Pc(C)}(\theta \circ \id_{\Pc(C)})\right) = \Pc(C).
\]
\end{rem}

\subsection{Convolution algebra}
\label{sec: convolution algebra}

Let $(\Pc, \gamma_\Pc, d_\Pc, \theta_\Pc)$ be a curved sa operad and let $(\Cc, d_\Cc, \Delta_\Cc, \theta_\Cc^c)$ be a curved altipotent cooperad. \\

Let $C$ be a $\Cc$-coalgebra and $A$ be a $\Pc$-algebra. We consider the graded module $\homf(C, A)$ of filtered $\Rf$-linear maps from $C$ to $A$. 
As for the convolution operad, there is a natural filtration on $\homf(C, A)$ given by
\[
F_p \homf(C, A) \coloneqq \{ \varphi \in \homf(C, A) \textrm{ s.t. } \forall q\geq 0,\ \varphi(F_qC) \subset F_{p+q}A\}.
\]
For $f \in \Hom(\Cc, \Pc)(k)$ and $\varphi_1, \ldots, \varphi_k \in \homf(C, A)$, we define a $\Hom(\Cc, \Pc)$-algebra structure $\gamma_h$ by the composite
\begin{multline*}
\gamma_h(f ; \varphi_1, \ldots, \varphi_k) : C \xrightarrow{\Delta_C} \Cc(C) \twoheadrightarrow \Cc(k) \hotimes_{\Sb_k} C^{\hotimes k}\\
\xrightarrow{\sum_{\sigma \in \Sb_k} f^\sigma \otimes \varphi_{\sigma(1)}\otimes \cdots \otimes \varphi_{\sigma(k)}} \Pc(k)\hotimes_{\Sb_k} A^{\hotimes k} \xrightarrow{\gamma_A} A,
\end{multline*}
which is well defined and which is a filtered map (when we consider the filtration $F_\bullet$ on $\Hom(\Cc, \Pc)$) since all the involved maps are. 
For $\varphi \in \homf(C, A)$, we define the degree $-1$ map
\[
\partial(\varphi) \coloneqq d_A \cdot \varphi - (-1)^{|\varphi|} \varphi \cdot d_C.
\]

For our filtration on $\homf(C, A)$ to be compatible with the filtration $\Gamma_\bullet$ on $\Hom(\Cc, \Pc)$, we need a second filtration. 
We consider on $\homf(C, A)$ all the filtrations $G_\bullet$ compatible with the filtration $\Gamma_\bullet$ on $\Hom(\Cc, \Pc)$, with $\gamma_h$, and with $\partial$ such that
\begin{equation}
\label{cond: filtration conv alg}
	\left\lbrace
\begin{aligned}
G_0 \homf(C, A) & = \homf(C, A)\\
G_1 \homf(C, A) & \supset F_1 \homf(C, A) + \gamma_h \left(\Gamma_1 \Hom(\Cc, \Pc) (\homf(C, A))\right)\\
G_p \homf(C, A) & \supset F_p\homf(C, A) \text{ for all } p \geq 2.
\end{aligned}
	\right.
\end{equation}
The constant filtration given by $G_p \homf(C, A) = \homf(C, A)$, for $p \geq 0$, is a filtration satisfying the above condition \eqref{cond: filtration conv alg}. 
Therefore there exists a smallest such filtration, namely the intersection of all filtrations satisfying \eqref{cond: filtration conv alg}. 
We call it the \emph{lower central series} of $\homf(C, A)$ and we denote it by $\varGamma_\bullet$. 

For $a : I \to \Gamma_1 \Hom(\Cc, \Pc)(1)$, we fix $\partial_a \coloneqq \partial + \gamma_h(a;-)$ on $\homf(C, A)$.

\begin{lem}
\label{lem: convolution algebra is complete}
\begin{enumerate}
\item
The gr-dg module $(\homf(C, A), \partial)$ is complete for the filtration $\varGamma_\bullet$.
\item
Similarly the gr-dg module $(\homf(C, A), \partial_a)$ is complete for the filtration $\varGamma_\bullet$.
\end{enumerate}
\end{lem}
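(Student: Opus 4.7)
The plan is to mirror the strategy of Lemma \ref{lem: the lower central filtration is complete}, using the completeness of $\homf(C,A)$ for the base filtration $F_\bullet$ together with a reduction to the completeness statement already proven for $(\Hom(\Cc,\Pc),\Gamma_\bullet)$. First I would decompose $C$ as a completed direct sum of finite-dimensional $\Kf$-subspaces
\[
C\cong \hoplus_{\fk\in \Fk} C^\fk
\]
compatible with the filtration (this is possible because $\Kf$ is a field and the filtration on $C$ is complete); this gives $\homf(C,A)\cong \prod_\fk \homf(C^\fk,A)$ under which $F_\bullet$ restricts. For each $\fk$ there is a smallest $q(\fk)$ with $C^\fk\cap F_{q(\fk)+1}C=0$. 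Next, by associativity of the action of the convolution operad $\Hom(\Cc,\Pc)$ on $\homf(C,A)$, any $\psi \in \varGamma_v \homf(C,A)$ can be rewritten (exactly as in Lemma \ref{lem: the lower central filtration is complete}) as a sum of terms $\gamma_h(f;\varphi_1,\ldots,\varphi_k)$ with $f\in \Gamma_{p_0}\Hom(\Cc,\Pc)(k)$, $\varphi_j\in F_{q_j}\homf(C,A)$, and a partition $\{1,\ldots,k\}=E_1\sqcup E_2$ such that $p_0+\sum_{j\in E_1}q_j+|E_2|\geq v$.

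The key technical step is then to prove that, for any $\fk$ and any $p$, there exists an integer $v(\fk,p)$ such that $\psi\in\varGamma_v\homf(C,A)$ with $v\geq v(\fk,p)$ restricts to an element of $F_p\homf(C^\fk,A)$. Given such a term $\gamma_h(f;\varphi_1,\ldots,\varphi_k)$ evaluated on $c\in C^\fk$, one has
\[
\psi(c)=\gamma_A\cdot(f\otimes \varphi_1\otimes\cdots\otimes\varphi_k)\cdot\Delta_C(c).
\]
Since $\Cc(C)$ is complete and $\Delta_C$ is filtered, the image $\Delta_C(c)$ modulo $F_{p+q(\fk)}\Cc(C)$ is a finite sum whose $\Cc$-components lie in a finite-dimensional sub-$\Sb$-module $\Cc^\mathfrak{g}=\Cc^\mathfrak{g}(\fk,p)\subset \Cc$ (depending only on $\fk$ and $p$, obtained by varying $c$ over a basis of $C^\fk$). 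Applying the argument inside the proof of Lemma \ref{lem: the lower central filtration is complete} to $\Cc^\mathfrak{g}$ yields an integer $w$ such that $\Gamma_w\Hom(\Cc,\Pc)$ restricted to $\Cc^\mathfrak{g}$ lands in $F_{p+q(\fk)}\Hom(\Cc^\mathfrak{g},\Pc)$. Combining this with the tracking of filtration degrees through $\gamma_A$ and the partition condition (as in the previous lemma), one sees that if $v$ is chosen large enough in terms of $w$, $p$ and $q(\fk)$ then each contribution of $\psi(c)$ lies in $F_p A$. Completeness of $\varGamma_\bullet$ on $\homf(C,A)$ then follows exactly as at the end of Lemma \ref{lem: the lower central filtration is complete} by the interchange of limits
\[
\homf(C^\fk,A)\cong\lim_p\homf(C^\fk,A)/F_p\cong \lim_v\homf(C^\fk,A)/(i_\fk^*\varGamma_v\homf(C,A))
\]
and by taking the product over $\fk$. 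Part (2) is immediate from part (1): $\partial_a-\partial=\gamma_h(a;-)$ with $a\in\Gamma_1\Hom(\Cc,\Pc)(1)$ preserves $\varGamma_\bullet$, so the completeness of the underlying filtered module is insensitive to the choice of $\partial$ vs. $\partial_a$.

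The main obstacle is this key bound: unlike in Lemma \ref{lem: the lower central filtration is complete}, where altipotence of $\Cc$ itself directly controlled iterated coproducts via Lemma \ref{lem: coproduct of altipotent cooperad is bounder mod p}, the coalgebra $C$ carries no such altipotence hypothesis, so one cannot bound iterations of $\Delta_C$ in the same way. The resolution is to exploit completeness of $\Cc(C)$ so that a \emph{single} application of $\Delta_C$ to a fixed $c\in C^\fk$ is finite modulo $F_{p+q(\fk)}$, thereby confining the relevant cooperadic information to a finite-dimensional sub-$\Sb$-module of $\Cc$ to which the previous lemma's internal estimate applies.
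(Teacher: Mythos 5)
Your proposal proves only half of the lemma. The statement asserts that $(\homf(C,A),\partial)$ and $(\homf(C,A),\partial_a)$ are \emph{gr-dg} modules for $\varGamma_\bullet$, i.e.\ that the squares of these predifferentials raise the filtration degree by one, and this is in fact what the paper's own proof is almost entirely devoted to; you never check it. For $\partial$ one has $\partial^2(\varphi)={d_A}^2\cdot\varphi-\varphi\cdot{d_C}^2=\gamma_h(\Theta_H;\varphi)$ with $\Theta_H\in\Gamma_1\Hom(\Cc,\Pc)(1)$, whence $\partial^2(\varGamma_p)\subset\varGamma_{p+1}$ by the defining compatibility of $\varGamma_\bullet$ with $\gamma_h$ and $\Gamma_\bullet$. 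For $\partial_a$ the claim is \emph{not} ``immediate from part (1)'': one must compute
\[
{\partial_a}^2=\partial^2+\partial\,\gamma_h(a;-)+\gamma_h(a;\partial\,-)+\gamma_h\bigl(a;\gamma_h(a;-)\bigr)=\partial^2+\gamma_h\Bigl(\partial a+\tfrac12[a,a];-\Bigr)
\]
and then use $a\in\Gamma_1\Hom(\Cc,\Pc)(1)$, hence $\partial a+\tfrac12[a,a]\in\Gamma_1\Hom(\Cc,\Pc)$, to conclude. Your observation that completeness of the underlying filtered module is insensitive to the choice of predifferential is true but beside the point for this part of the statement.

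On the completeness itself your route is the one the paper intends (the paper compresses it to one sentence, invoking completeness of $\Gamma_\bullet$ on $\Hom(\Cc,\Pc)$ together with the filteredness of $\Delta_C$ and $\gamma_A$), and your key observation --- that $\Delta_C(c)$ is a finite sum modulo $F_N\,\Cc(C)$, so the relevant cooperad components are confined to a finite-dimensional sub-$\Sb$-module $\Cc^{\mathfrak g}$ to which the internal estimate of Lemma \ref{lem: the lower central filtration is complete} applies --- is the right substitute for the absence of any altipotence hypothesis on $C$. Two details should be tightened. First, $\homf(C,A)$ consists of $\Rf$-linear maps, so a decomposition of $C$ into finite-dimensional $\Kf$-subspaces does not immediately yield $\homf(C,A)\cong\prod_\fk\homf(C^\fk,A)$ unless $\Rf=\Kf$ or the $C^\fk$ are chosen as $\Rf$-submodules (in which case the existence of the integer $q(\fk)$ needs a separate justification). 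Second, the partition $E_1\sqcup E_2$ and the summand $|E_2|$ that you import from the operadic lemma have no counterpart in condition \eqref{cond: filtration conv alg}: the generators of $\varGamma_v$ are simply the $\gamma_h(f;\varphi_1,\dots,\varphi_k)$ with $f\in\Gamma_{p_0}\Hom(\Cc,\Pc)(k)$, $\varphi_j\in F_{q_j}\homf(C,A)$ and $p_0+\sum_j q_j\geq v$, the ``$|E_2|$'' bookkeeping being already absorbed into $p_0$ through the lower central filtration on the convolution operad.
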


\begin{proof}
\begin{enumerate}
\item
The square of $\partial$ increases the filtration degree because $\theta_\Pc \in F_1\Pc$ and $\theta_\Cc^c \cdot \varepsilon_\Pc \in \Gamma_1 \Hom(\Cc, \Pc)$. 
The fact the filtration is complete follows from the fact that the lower central filtration $\Gamma_\bullet$ on $\Hom(\Cc, \Pc)$ is complete and that the maps $\Delta_C$ and $\gamma_A$ and the applications that we consider in $\homf(C, A)$ are filtered.
\item
We have
\begin{align*}
{\partial_a}^2 & = \partial_a(\partial + \gamma_h(a ; -)) = \partial^2 + \partial \gamma_h(a ; -) + \gamma_h(a ; \partial -) + \gamma_h(a ; \gamma_h(a ; -))\\
& = \partial^2 + \gamma_h(\partial a + \frac12 [a, a] ; -)
\end{align*}
so $\partial_a$ is also a gr-differential by the definition of $\varGamma_\bullet$ and the fact that $a \in \Gamma_1 \Hom(\Cc, \Pc)(1)$.
\end{enumerate}
\end{proof}

\begin{prop}\
\label{prop: algebra over the convolution curved operad}
\begin{enumerate}
\item
The gr-dg module $(\homf(C, A), \varGamma_\bullet, \gamma_h, \partial)$ is an algebra over the convolution curved sa operad $(\Hom(\Cc, \Pc), \Gamma_\bullet, \gamma_H, \partial, \Theta_H)$.
\item
Similarly, given a map
\[
a : I \to \Gamma_1 \Hom(\Cc, \Pc)(1) = \Hom(\coideal{\Cc}, \Pc)(1),
\]
the gr-dg module $(\homf(C, A), \varGamma_\bullet, \gamma_h, \partial_{a})$ is an algebra over the convolution curved sa operad $\Hom(\Cc, \Pc)_{a} = (\Hom(\Cc, \Pc), \Gamma_\bullet, \gamma_H, \partial_{a}, \Theta_a)$.
\end{enumerate}
\end{prop}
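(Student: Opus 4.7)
The plan is to split the verification into the four standard axioms of an algebra over a curved operad and to handle part (2) as a formal consequence of part (1) via the twisting construction of Lemma \ref{lem: lax morphism unlax}. Namely, for part (1), I must check (i) compatibility of $\gamma_h$ with the operadic composition $\gamma_H$, the unit, and the symmetric group actions; (ii) filtration compatibility between $\varGamma_\bullet$ on $\homf(C,A)$ and $\Gamma_\bullet$ on $\Hom(\Cc,\Pc)$; (iii) that $\partial$ is a predifferential of the $\Hom(\Cc,\Pc)$-algebra structure, i.e., $\partial\cdot\gamma_h = \gamma_h\cdot(\partial_H\circ\id + \id\circ'\partial)$; and (iv) the curvature condition $\partial^{2} = \gamma_h(\Theta_H;-)$.

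For (i), the coherence $\gamma_h(\gamma_H(-;-,\ldots,-);-) = \gamma_h(-;\gamma_h(-;-),\ldots)$ follows by expanding both sides and then invoking coassociativity of $\Delta_C$, associativity of $\gamma_A$, and the definition of $\gamma_H$ in terms of $\Delta$ and $\gamma$; the equivariance is built into the definition of the $\Sb_n$-action on $\Hom(\Cc,\Pc)$ by conjugation, and the unit axiom reduces to the pair of identities $(\varepsilon_\Cc\otimes \id_C)\Delta_C = \id_C$ and $\gamma_A(\eta_\Pc;-) = \id_A$. Point (ii) is essentially by construction: $\varGamma_\bullet$ is defined as the smallest filtration making $\gamma_h$ compatible with $\Gamma_\bullet$, and Lemma \ref{lem: convolution algebra is complete}(1) provides completeness. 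Point (iii) reduces by direct expansion of $\partial(\gamma_h(f;\varphi_1,\ldots,\varphi_k))$ (using $\partial\varphi = d_A\varphi - (-1)^{|\varphi|}\varphi d_C$), together with the fact that $d_A$ is a predifferential on $A$ and $d_C$ is a predifferential on $C$; the cross-terms that appear rearrange into $\gamma_h$ applied to $\partial_H(f) = d_\Pc f - (-1)^{|f|}f d_\Cc$ and to the Koszul-signed sum of $\partial\varphi_i$'s.

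The crucial computation is (iv): expanding $\partial^{2}(\varphi)$ and collapsing the cross terms yields the clean identity $\partial^{2}(\varphi) = d_A^{2}\circ\varphi - \varphi\circ d_C^{2}$. Substituting $d_A^{2} = \gamma_A(\theta_\Pc\circ\id_A)$ from the $\Pc$-algebra structure and $d_C^{2} = -(\theta^c_\Cc\otimes\id_C)\cdot\Delta_C$ from the $\Cc$-coalgebra structure, and unpacking the definition of $\gamma_h$ on arity-$1$ elements, the first summand matches $\gamma_h(\theta_\Pc\cdot\varepsilon_\Cc;\varphi)$ (via $(\varepsilon_\Cc\otimes\id_C)\Delta_C = \id_C$) and the second matches $\gamma_h(\eta_\Pc\cdot\theta^c_\Cc;\varphi)$ (using $\gamma_A(\eta_\Pc(\lambda);-) = \lambda\cdot\id$ and the fact that $\theta^c_\Cc\cdot\eta_\Cc = 0$ for degree reasons). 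This gives $\partial^{2} = \gamma_h(\Theta_H;-)$ as required.

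Finally, part (2) follows formally from part (1): Lemma \ref{lem: lax morphism unlax}(1) shows that $\Hom(\Cc,\Pc)_a$ is again a curved sa operad, and the same map $\gamma_h$ makes $\homf(C,A)$ into an algebra over it with predifferential $\partial_a = \partial + \gamma_h(a;-)$. The curvature identity for $\partial_a$ is then
\[
\partial_a^{2} = \partial^{2} + \gamma_h\bigl(\partial a + \tfrac{1}{2}[a,a];-\bigr) = \gamma_h(\Theta_H + \partial a + \tfrac{1}{2}[a,a];-) = \gamma_h(\Theta_a;-),
\]
which is exactly the computation already carried out in the proof of Lemma \ref{lem: convolution algebra is complete}(2). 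The filtration $\varGamma_\bullet$ is unchanged and remains complete by the same lemma. The main obstacle I expect is purely bookkeeping in (iii): keeping the Koszul signs from the definition of $\partial$, the signs in the definition of $\gamma_H$ (which involves a sum over $\Sb_k$), and the sign from interchanging $d_\Pc$ or $d_\Cc$ past $f$ all consistent; this is the only place where a careless sign would break the compatibility.
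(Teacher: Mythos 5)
Your proposal is correct and follows essentially the same route as the paper's proof: associativity of $\gamma_h$ from coassociativity of $\Delta_C$ and associativity of $\gamma_A$, the curvature identity via $\partial^{2}(\varphi)=d_A^{2}\cdot\varphi-\varphi\cdot d_C^{2}=\gamma_h(\Theta_H;\varphi)$, and part (2) deduced from the twisted-operad Lemma \ref{lem: lax morphism unlax} together with the computation of ${\partial_a}^{2}$ already done in Lemma \ref{lem: convolution algebra is complete}. The only difference is that you spell out the unit, equivariance and filtration checks that the paper leaves implicit.
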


\begin{proof}
\begin{enumerate}
\item
The algebra structure $\gamma_h$ is associative since $\Delta_C$ is coassociative and $\gamma_A$ is associative. 
The composition $\gamma_h$ commutes with the predifferentials.
Finally we compute
\begin{align*}
\partial^2(\varphi) & = {d_A}^2 \cdot \varphi - \varphi \cdot {d_C}^2\\
& = \gamma_A \cdot (\theta ; \varphi) + (\theta^c \otimes \varphi) \cdot \Delta_C\\
& = \gamma_h (\Theta_H ; \varphi).
\end{align*}
This shows the first result.
\item
Similarly, $\gamma_h$ commutes with the predifferentials $\partial_a$ since its commutes with the predifferentials $\partial$ and since $\Delta_C$ is coassociative and $\gamma_A$ is associative (the proof is similar to the proof given in Lemma \ref{lem: lax morphism unlax}). 
Moreover by the computation made in the proof of Lemma \ref{lem: convolution algebra is complete}, we have on $\homf(C, A)$
\[
{\partial_{a}}^2 = \partial^2 + \gamma_h(\partial a + \frac12 [a, a] ; -) = \gamma_h(\Theta_a ; -).
\]
\end{enumerate}
\end{proof}

\begin{defn}
Let $\alpha : \Cc \to \Pc$ be an operadic curved twisting morphism. 
\begin{enumerate}
\item
We denote by $\{ l_n^\alpha \}_{n\geq 0}$ the $\scLi$-algebra structure on $(\homf(C, A), \partial_{a_\alpha})$ associated with $\alpha$ by means of Propositions \ref{prop: twisting morphisms as morphisms}, \ref{prop: algebra over the convolution curved operad} and \ref{prop: equivalent definition of homotopie cLie algebras}. 
In particular $l_1^\alpha =\partial_{a_\alpha}$, where $a_\alpha : I \to \Gamma_1 \Hom(\Cc, \Pc)$ is given by $a_\alpha(1) = \alpha(1) : \Cc(1) \to \Pc(1)$. 
We denote this shifted curved $\Li$-algebra by $\homa(C, A)$.
\item
An \emph{algebraic twisting morphism with respect to $\alpha$} is a map $\varphi : C \rightarrow A$ in $\homa(C, A)$ to degree $0$ solution of the Maurer--Cartan equation
\[
\sum_{n \geq 0} \frac{1}{n!}l_n^{\alpha}(\varphi, \ldots, \varphi) = 0.
\] 
We denote by $\Tw_{\alpha}(C, A)$ the set of algebraic twisting morphisms with respect to $\alpha$.
\end{enumerate}
\end{defn}

\begin{rem}
\begin{itemize}
\item
Going through the isomorphisms, we get that the map $l_k^\alpha$ applied to $\varphi_1 \otimes \cdots \otimes \varphi_k$ is given by the composite
\[
C \xrightarrow{\Delta_C} \Cc(C) \twoheadrightarrow \Cc(k) \hotimes_{\Sb_k} C^{\hotimes k} \xrightarrow{\sum_{\sigma \in \Sb_k} \alpha(k)^\sigma \otimes \varphi_{\sigma(1)}\otimes \cdots \otimes \varphi_{\sigma(k)}} \Pc(k)\hotimes_{\Sb_k} A^{\hotimes k} \xrightarrow{\gamma_A} A.
\]
\item
The presence of an infinite sum in the Maurer--Cartan equation doesn't cause any trouble since it corresponds to the application of an infinite sum to different terms of a convergent sum.
\end{itemize}
\end{rem}

As for operadic curved twisting morphisms, we have the following theorem which generalises the bar-cobar adjunction given in \cite[Proposition 2.18]{GJ94} when there are no curvatures.

\begin{thm}
\label{thm: algebraic bar-cobar adjunction}
Let $\alpha : \Cc \to \Pc$ be an operadic curved twisting morphism. 
For any coalgebra $C$ over the curved altipotent cooperad $\Cc$ and any algebra $A$ over the curved sa operad $\Pc$, there are natural bijections
\[
\Hom_{\Palg}(\Om_\alpha C, A) \cong \Tw_\alpha(C, A) \cong \Hom_{\Calg}(C, \B_\alpha A).
\]
\end{thm}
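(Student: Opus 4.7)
The plan is to prove each bijection separately by exploiting the quasi-free (resp.\ quasi-cofree) presentations of $\Om_\alpha C$ and $\B_\alpha A$. In both cases one applies the appropriate (co)free--forgetful adjunction (Propositions \ref{prop: free-forgetful adjunction for curved algebras} and \ref{prop: cofree-forgetful adjunction for curved coalgebras}) to reduce a morphism to a degree $0$ graded $\Rf$-linear map $\varphi\colon C\to A$, and then shows that the remaining predifferential-compatibility equation is exactly the Maurer--Cartan equation defining $\Tw_\alpha(C,A)$.

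For the first bijection, the graded $\Pc$-algebra underlying $\Om_\alpha C$ is the free algebra $\Pc(C)$, so $\Pc$-algebra morphisms $\Pc(C)\to A$ correspond bijectively to degree $0$ graded maps $\varphi\colon C\to A$ via $f(\mu(c_1,\ldots,c_k))=\gamma_A(\mu;\varphi(c_1),\ldots,\varphi(c_k))$. The commutation condition $d_A\cdot f=f\cdot d_o$ is, using $f\cdot\gamma_{\Pc(C)}=\gamma_A\cdot(\id_\Pc\circ f)$ and the fact that both $d_{\Pc(C)}$ and $d_A$ are predifferentials of $\Pc$-algebras, an $f$-twisted derivation-type identity on $\Pc(C)$; it is therefore determined by its restriction to the generating submodule $C\subset\Pc(C)$. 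On $c\in C$, one has $d_o(c)=d_C(c)-(\alpha\circ\id_C)\Delta_C(c)$, hence
\[(d_A\cdot f-f\cdot d_o)(c)=\partial(\varphi)(c)+\sum_{n\geq 0}\gamma_A\cdot(\alpha(n)\otimes\varphi^{\otimes n})\cdot\Delta_C^{(n)}(c).\]
By the explicit formula for $l_n^\alpha$ in the remark following its definition, setting all inputs equal to $\varphi$ produces a factor of $n!$ when summing over $\sigma\in\Sb_n$ (inside the $\Sb_n$-coinvariants), so this vanishes if and only if $\sum_{n\geq 0}\frac{1}{n!}l_n^\alpha(\varphi,\ldots,\varphi)=0$, the arity-one contribution being absorbed into $l_1^\alpha=\partial_{a_\alpha}$.

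For the second bijection the argument is dual. By Definition \ref{def: quasi-cofree coalg} and the construction of $\B_\alpha A$, the $\Cc$-coalgebra $\B_\alpha A$ is the largest sub-coalgebra of $\Cc(A)$ (over the pg cooperad underlying $\Cc$) on which the curved equation ${d_b}^2=-(\theta^c\otimes\id)\cdot\Delta$ holds, and Proposition \ref{prop: cofree-forgetful adjunction for curved coalgebras} gives its universal property. Consequently a $\Cc$-coalgebra morphism $g\colon C\to\B_\alpha A$ amounts to a morphism $C\to\Cc(A)$ of coalgebras over the pg cooperad compatible with $d_b=d_{\Cc(A)}+d_\alpha^r$, which in turn, by the cofree--forgetful adjunction, amounts to a degree $0$ graded map $\varphi\colon C\to A$ such that a certain coderivation-type identity holds. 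By Proposition \ref{prop: der and coder} this identity is determined by its projection onto the cogenerators $A$, and the same computation as before identifies this projection with the Maurer--Cartan equation. Naturality in $C$ and $A$ is manifest from the explicit formulas.

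The main obstacle is the book-keeping: checking that the compatibility equation really is determined by its restriction to generators (resp.\ projection to cogenerators), ensuring that the curvature contributions $\theta$ and $\theta^c$ hidden in $d_o$ and $d_b$ assemble correctly into the $n=0$ term $l_0^\alpha$ via the curved twisting morphism equation $\Theta_H+\partial\alpha+\tfrac12[\alpha,\alpha]=0$, and that all signs match the conventions of Section \ref{section: sign conventions}.
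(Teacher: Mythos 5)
Your proposal is correct and follows essentially the same route as the paper: both reduce a morphism out of the quasi-free $\Om_\alpha C$ (resp.\ into the quasi-cofree $\B_\alpha A$) to its (co)restriction $\varphi\colon C\to A$ via (co)freeness of the underlying graded structure, observe that the predifferential-compatibility defect is a (co)derivation and hence determined on (co)generators, and identify the resulting equation with the Maurer--Cartan equation $\sum_{n\geq 0}\frac{1}{n!}l_n^\alpha(\varphi,\ldots,\varphi)=0$. Your explicit tracking of the $n!$ normalization and of the curvature contributions is a welcome elaboration of details the paper leaves implicit, but it is not a different argument.
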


\begin{proof}
Let us denote by $\Pc^{g}$ the underlying graded operad to the curved sa operad $\Pc$. 
By definition, a morphism of $\Pc$-algebras is a morphism of the underlying graded $\Pc^{g}$-algebras commuting with the predifferentials. 
Since $\Pc(C)$ is a free $\Pc^{g}$-algebra, any morphism of $\Pc^{g}$-algebras $\Phi : \Pc(C) \to A$ is characterised by its restriction $\varphi : C\to A$ on $C$. 
The morphism $\Phi$ commutes with the predifferentials when $\Phi \cdot d_o = d_A \cdot \Phi$. 
As already used implicitly in the proof of Proposition \ref{prop: cobar const is a functor}, the difference $\Phi \cdot d_o - d_A \cdot \Phi$ is a derivation. 
Therefore to prove that it is zero, it is enough to prove that the restriction to $C$ is zero. 
We have
\[
(\Phi \cdot d_\alpha^l)_{|C} : C \xrightarrow{\Delta_C} \Cc(C) \xrightarrow{\alpha \circ \varphi} \Pc (A) \xrightarrow{\gamma_A} A.
\]
It follows that $\Phi$ commutes with the predifferentials if and only if $0 = (\Phi \cdot d_o - d_A \cdot \Phi)_{|C} = \varphi \cdot d_C - \gamma \cdot (\alpha \circ \varphi) \cdot \Delta_C - d_A \cdot \varphi = -\sum_{n \geq 0} \frac{1}{n!}l_n^{\alpha}(\varphi, \ldots, \varphi)$, that is $\varphi$ is an algebraic twisting morphism with respect to $\alpha$.

Similarly, a $\Cc^g$-coalgebra morphism $C \to \Cc(A)$ is characterised by its corestriction $\varphi : C \to A$ to $A$ and it commutes with the predifferentials if and only if $\varphi$ is an algebraic twisting morphism with respect to $\alpha$.
\end{proof}

\begin{example}
\begin{enumerate}
\item
Through the bar-cobar adjunction, the identity morphism $\id : \B_\alpha A \to \B_\alpha A$ corresponds to the algebraic twisting morphism $\pi_\alpha : \B_\alpha A \twoheadrightarrow A$.
\item
Through the bar-cobar adjunction, the identity morphism $\id : \Om_\alpha C \to \Om_\alpha C$ corresponds to the algebraic twisting morphism $\iota_\alpha : C \rightarrowtail \Om_\alpha C$.
\end{enumerate}
\end{example}

\subsection{Cofibrant resolution of an algebra over a curved operad}
\label{section: resolution of an algebra over a curved operad}

In this section, given a Koszul inhomogeneous quadratic curved operad $\Pc$, we provide a functorial cofibrant resolution $R\to A$ of a $\Pc$-algebra $A$. Explicitly, $\Pc(0) \to R$ is a cofibration of $\Pc$-algebras and
$R \to A$ is a graded quasi-isomorphism and a strict surjection. This resolution is the counit of the bar-cobar adjunction.

\begin{prop}
\label{prop: algebraic bar-cobar resolution}
Let $\Pc$ be a curved sa operad, $\Cc$ be a curved altipotent cooperad and $\alpha : \Cc \to \Pc$ be an operadic curved twisting morphism. 
We assume that $\Gr \Pc \circ_{\Gr \alpha} \Gr \Cc \circ_{\Gr \alpha} \Gr \Pc$ (described for example in \cite[Section 5]{HM12}) is cofibrant as a $\Gr \Pc$-right module (in the model category structure described in \cite{bF09a}) and that $\Gr \Pc \circ_{\Gr \alpha} \Gr \Cc \circ_{\Gr \alpha} \Gr \Pc \to \Gr \Pc$ is a quasi-isomorphism.

In this situation, the bar-cobar construction $\Om_{\alpha} \B_{\alpha} A$ is a resolution of the $\Pc$-algebra $A$, that is, the counit of the bar-cobar adjunction
\[
\Phi_\alpha : \Om_{\alpha} \B_{\alpha} A \xrightarrow{\sim} A
\]
is a graded quasi-isomorphism and a strict surjection.
\end{prop}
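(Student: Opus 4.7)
The plan is to dispose of the strict surjection part directly and then reduce the graded quasi-isomorphism assertion to the uncurved bar-cobar resolution theorem via the associated graded functor.

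First, the counit $\Phi_\alpha$ factors as the composite of $\Pc$-algebra strict surjections
\[
\Om_\alpha \B_\alpha A = \Pc(\Cc(A)) \xrightarrow{\id_\Pc \circ \,(\varepsilon_\Cc \circ \id_A)} \Pc(A) \xrightarrow{\gamma_A} A,
\]
since $\varepsilon_\Cc : \Cc \to I$ is a strict epimorphism of curved altipotent cooperads (it admits the gr-coaugmentation as a section) and $\gamma_A$ is a strict surjection because $A$ is a $\Pc$-algebra and $\eta_\Pc \circ \id_A : A \hookrightarrow \Pc(A)$ gives a section on the underlying gr-dg module. Hence $\Phi_\alpha$ is a strict surjection.

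For the graded quasi-isomorphism, one applies the associated graded functor $\Gr$. Since $\Kf$ is a field of characteristic $0$, $\Gr$ is strong monoidal on complete filtered modules, so it commutes with the free $\Pc$-algebra and cofree $\Cc$-coalgebra constructions. A careful inspection of the three summands defining the twisted predifferentials $d_b = d_{\Cc(A)} + d_\alpha^r$ and $d_o = d_{\Pc(C)} - d_\alpha^l$ (each term is filtered and the curvature terms land in filtration degree $\geq 1$ so vanish on the associated graded) yields a natural isomorphism of dg $\Gr\Pc$-algebras
\[
\Gr\bigl(\Om_\alpha \B_\alpha A\bigr) \;\cong\; \Omega_{\Gr\alpha}\,\mathrm{B}_{\Gr\alpha}(\Gr A),
\]
where $\Omega_{\Gr\alpha}$ and $\mathrm{B}_{\Gr\alpha}$ denote the usual dg bar-cobar constructions relative to the (uncurved) operadic twisting morphism $\Gr\alpha : \Gr\Cc \to \Gr\Pc$ obtained from $\alpha$. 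Moreover $\Gr\Phi_\alpha$ identifies under this isomorphism with the classical dg counit map.

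The problem is thereby reduced to showing that the dg counit $\Omega_{\Gr\alpha}\,\mathrm{B}_{\Gr\alpha}(\Gr A) \to \Gr A$ is a quasi-isomorphism. This is the content of the standard comparison theorem for operadic twisting morphisms: as in \cite[Section 5]{HM12}, one identifies this map with the base change along $\Gr A$ of the map of $\Gr\Pc$-bimodules $\Gr\Pc \circ_{\Gr\alpha} \Gr\Cc \circ_{\Gr\alpha} \Gr\Pc \to \Gr\Pc$, and the two hypotheses on $\Gr\Pc \circ_{\Gr\alpha} \Gr\Cc \circ_{\Gr\alpha} \Gr\Pc$ (being a quasi-isomorphism and being cofibrant as a right $\Gr\Pc$-module in the model category of \cite{bF09a}) are precisely what is needed for the derived base change to preserve this quasi-isomorphism.

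The main technical obstacle is the precise matching between the curved twisted predifferentials $d_b, d_o$ on the curved side and the classical dg differentials on the associated graded: one must verify that the curvature contributions $\theta_\Pc \cdot \varepsilon_\Cc$ and $\eta_\Pc \cdot \theta^c_\Cc$ (as well as the $a_\alpha$-twists) are all strictly filtration-increasing, so they drop out of $\Gr$, leaving the usual operadic formulas. Once this matching is established, the conclusion follows from the classical theorem and the fact that a strict surjection which is a graded quasi-isomorphism is precisely what the statement asserts.
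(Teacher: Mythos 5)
Your proposal is correct and follows essentially the same route as the paper: the strict surjection is read off from the projection $\Pc\hcirc\Cc(A)\twoheadrightarrow A$, and the graded quasi-isomorphism is obtained by applying $\Gr$, identifying $\Gr(\Om_\alpha\B_\alpha A)$ with the dg bar-cobar construction $\Omega_{\Gr\alpha}\mathrm{B}_{\Gr\alpha}(\Gr A)$, and invoking \cite[Proposition 5.2.8]{HM12} together with \cite[Theorem 15.1.A]{bF09a} via the two hypotheses. One small imprecision: the cooperad curvature $\theta^c_\Cc$ carries no filtration assumption and does \emph{not} drop out of the associated graded --- $\Gr\alpha$ is a twisting morphism from the still-curved cooperad $\Gr\Cc$ to the dg operad $\Gr\Pc$, exactly the setting of \cite{HM12} --- but this does not affect the argument.
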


\begin{proof}
The functor $\Gr$ is strong monoidal since $\Kf$ is a field of characteristic 0 so $\Gr (\Om_{\alpha} \B_{\alpha} A) \cong \Omega_{\Gr \alpha} \mathrm{B}_{\Gr \alpha} \Gr A$ as $\Gr \Pc$-algebras (by a careful inspection of the differentials). The map $\Gr \alpha : \Gr \Cc \to \Gr \Pc$ is an operadic twisting morphism between the curved cooperad $\Gr \Cc$ and the semi-augmented dg operad $\Gr \Pc$. 
In the proof of Proposition 5.2.8 in \cite{HM12}, we use Theorem 15.1.A in \cite{bF09a} to prove that the bar-cobar construction is a quasi-isomorphism. Assuming that $\Gr \Pc \circ_{\Gr \alpha} \Gr \Cc \circ_{\Gr \alpha} \Gr \Pc$ is cofibrant as a $\Gr \Pc$-right module permits to use \cite[Theorem 15.1.A]{bF09a} more directly and to prove, as in \cite[Proposition 5.2.8]{HM12}, that we have a quasi-isomorphism $\Omega_{\Gr \alpha} \mathrm{B}_{\Gr \alpha} \Gr A \xrightarrow{\sim} \Gr A$. 
The morphism $\Phi_\alpha$ is a strict surjection since it coincides with the projection $\Pc \hcirc \Cc (A) \twoheadrightarrow I\circ (A) \cong A$.
\end{proof}

We recall that $\B_\alpha A = (\Cc(A), d_b = d_{\Cc(A)} + d_\alpha^r)$ and that the predifferential on $\Om_\alpha \B_\alpha A$ is $d_o = d_\Pc \circ \id_{\B_\alpha A} + \id_\Pc \circ' d_b - d_\alpha^l$. 

\begin{prop}
\label{prop: cofibrant bar-cobar resolution}
Assume that $\Rf = \Kf$. 
Let $\Pc$ be a curved sa operad and $\Cc$ be a curved altipotent cooperad. Let $\alpha : \Cc \to \Pc$ be an operadic curved twisting morphism. 
Assume that the filtration $F_\bullet$ on $\Cc(A)$ comes from a graduation $\Gr_\bullet$ (on the module level). 
Assume that $\Cc(A) = \oplus_{l\geq 0} T_l$ with $T_0 = \{0\}$ and the predifferential $d_b$ admits a decomposition $d_b = \delta_0 + \delta_1$ such that for any $l \geq 0$, we have 
\begin{enumerate}
\item
$\delta_0 : \Gr_\bullet T_l \to \Gr_{\bullet + 1}(\oplus_{k\geq l+1}T_{k})$ and $\im \delta_0 = \ker \delta_0$,
\item
\label{eq: delta1}
$\delta_1-d_\alpha^l : T_l \to \Pc(\oplus_{k \leq l}T_k)$ with $\Gr_\bullet \delta_1 : \Gr_\bullet T_l \to \Gr_\bullet \Pc(\oplus_{k < l}T_k)$.
\end{enumerate}
Then the map $\Pc(0) \to \Om_\alpha \B_\alpha A$ is a cofibration in the model category structure on $\Pc$-algebras defined in \cite[Appendix C]{BMDC20}.
\end{prop}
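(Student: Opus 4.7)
The plan is to adapt the filtration argument used in the proof of Theorem~\ref{thm: bar-cobar resolution}, replacing the decomposition of $\B\Pc$ by number of vertices with the abstract decomposition $\Cc(A) = \bigoplus_{l\geq 0} T_l$ supplied by the hypothesis. Since the underlying $\Pc^g$-algebra of $\Om_\alpha \B_\alpha A$ is free on $\Cc(A)$, it is enough to exhibit an exhaustive increasing filtration of the generators $\Cc(A)$ whose successive quotients realize the cellular presentation of cofibrant quasi-free $\Pc$-algebras (i.e.\ the algebra analogue of \cite[Proposition~C.33]{BMDC20}).

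First I would use Maschke's theorem (we are in characteristic zero, and $\Rf = \Kf$ so the forgetful functor to $\Sb$-modules preserves splittings) to choose, for each $l\geq 1$, a direct complement $W_l$ of $\ker(\Gr\delta_0|_{T_l})$ in $T_l$ as graded $\Sb$-modules. Condition~(1) together with $\im\delta_0 = \ker\delta_0$ plays exactly the role of the contracting homotopy for $d_0$ used in Theorem~\ref{thm: bar-cobar resolution}: on the associated graded, $\delta_0$ identifies $W_l$ with a summand of $\bigoplus_{k\geq l+1}T_k$ sitting one filtration degree higher, and $\ker(\Gr\delta_0|_{T_l})$ is free to be paired by $\delta_1$ with material in strictly earlier $T_k$'s.

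I would then define inductively a filtration $0 = S_0 \subset S_1 \subset S_2 \subset \cdots$ of $\Cc(A)$ by attaching, at the even step $S_{2l}$, the pair $W_l \oplus \delta_0(W_l)$, and at the odd step $S_{2l-1}$, the pair $\ker(\Gr\delta_0|_{V_l}) \oplus \delta_1\!\left(\ker(\Gr\delta_0|_{V_l})\right)$, where $V_l$ is the complement of what has already been placed inside $T_l$. By construction each successive quotient $S_n/S_{n-1}$ splits as a direct sum of cells $\xi^\alpha\cdot\Kf[\Sb_{m_\alpha}] \oplus \zeta^\alpha\cdot\Kf[\Sb_{m_\alpha}]$ with $\xi^\alpha$ and $\zeta^\alpha$ in adjacent bi-degrees, matching exactly the description of generating cofibrations in the model structure on $\Pc$-algebras. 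A short check using ${d_o}^2 = \gamma_{\Pc(\Cc(A))}(\theta\circ\id)$ determines $d_o(\zeta^\alpha)$ automatically from $d_o(\xi^\alpha)$.

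The main obstacle will be tracking the interaction between the bar differential $\delta_1$ and the twisting derivation $d_\alpha^l$. Condition~(2) is phrased for the difference $\delta_1 - d_\alpha^l$, whereas the cobar predifferential on $\Om_\alpha\B_\alpha A$ is $d_o = d_\Pc\circ\id + \id\circ' d_b - d_\alpha^l$, so the piece $\id\circ'\delta_1 - d_\alpha^l$ of $d_o$ must be shown to land in $\Pc(S_{n-1})$ for every newly attached cell $\xi^\alpha \in S_n$. This is where the hypothesis that the filtration $F_\bullet$ on $\Cc(A)$ comes from an honest graduation is crucial: it ensures that $\delta_0$ strictly raises the filtration by one, so that $d_o(\xi^\alpha) + \zeta^\alpha$ truly lies in $\Pc(S_{n-1})$ rather than merely in $\Pc(\Cc(A))$, which is precisely what the cellular criterion of \cite[Appendix~C]{BMDC20} requires.
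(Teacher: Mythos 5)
Your overall strategy is the same as the paper's: build an exhaustive increasing filtration $S_0 \subset S_1 \subset \cdots$ of the generators $\Cc(A)$ by attaching cells two at a time, use Maschke to split each $T_l$, let condition (1) play the role of the contracting homotopy and condition (2) locate the image of the twisted differential, and conclude via the cellular criterion of \cite[Appendix C]{BMDC20} together with ${d_o}^2 = \gamma(\theta\circ\id)$. However, two of your implementation choices do not survive scrutiny. First, the directness of the sums $S_{n-1}\oplus(\text{new cells})$ is asserted (``by construction'') but never proved; it is not automatic, since the partner cells are images of maps into $\Cc(A)$ and could a priori collide with generators already attached. The paper devotes a specific argument to this, showing $U_l\cap\ker\delta_0 = U_l\cap\im\delta_0 = \{0\}$ by combining $\im\delta_0=\ker\delta_0$ with the constraint $\delta_1(\oplus_{k\le l-1}T_k)\subset\oplus_{k\le l-1}T_k$ from condition (2); without this the cokernel description $S_n/S_{n-1}\cong\bigoplus_\alpha(\xi^\alpha\Kf\oplus\zeta^\alpha\Kf)$ is unjustified.

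Second, your splitting and pairing of cells is off. Since the filtration comes from a graduation and $\delta_0$ raises the graded degree by exactly one, $\ker(\Gr\delta_0)$ is just $\ker\delta_0$, and the directness lemma above shows precisely that $\ker\delta_0$ meets the not-yet-attached part $U_l$ of $T_l$ trivially; so your odd-step cells $\ker(\Gr\delta_0|_{V_l})\oplus\delta_1(\cdots)$ are empty and the generators that should be paired through $\delta_1$ are never produced in the right bidegrees. The splitting the argument actually requires is by $\ker(\Gr d_o|_{U_l})$, the kernel of the \emph{filtration-preserving} part of the full twisted differential (which on generators is governed by $\delta_1-d_\alpha^l$, not by $\delta_0$). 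Relatedly, the partner $\zeta^\alpha$ of a generator $\xi^\alpha$ of filtration degree $q_\alpha$ cannot be taken to be $\delta_0\xi^\alpha$ or $\delta_1\xi^\alpha$ alone: the criterion demands $d_o(\xi^\alpha)+\zeta^\alpha\in\Pc(S_{n-1})$ with $\zeta^\alpha$ in filtration degree $q_\alpha+1$, and $(\delta_1-d_\alpha^l)(\xi^\alpha)$ only lands in $\Pc(\oplus_{k\le l}T_k)$, so its filtration-raising part can hit not-yet-attached generators of $T_l$ itself. One must therefore take $\zeta^\alpha$ to be the filtration-degree-$(q_\alpha+1)$ component of $d_o(\xi^\alpha)$ modulo $\Pc(S_{n-1})$, i.e.\ $\delta_0\xi^\alpha$ \emph{plus} the degree-$(q_\alpha+1)$ part of $(\delta_1-d_\alpha^l)(\xi^\alpha)$, as the paper does (equivalently, attach $d_b W_l$ and $d_b\ker(\Gr d_o|_{U_l})$ rather than $\delta_0 W_l$ and $\delta_1\ker(\cdots)$). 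With your pairing the condition $d_o(\xi^\alpha)+\zeta^\alpha\in\Pc(S_{n-1})$ fails in general.
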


\begin{proof}
We define on $\B_\alpha A$ a filtration $(S_l)_{l \geq 0}$ as follows
\[
\left\{\begin{array}{lcll}
S_0 & \coloneqq & \{0\},\\
S_1 & \coloneqq & \ker(\Gr ({d_o}_{|T_1})) \oplus d_b\ker(\Gr ({d_o}_{|T_1})), &\\ 
S_2 & \coloneqq & S_1 \oplus W_1 \oplus d_b W_1 & \supset T_1,
\end{array}\right.
\]
where $W_1$ is a direct complement of $\ker({\Gr d_o}_{|T_1})$ into $T_1$ (such a direct complement exists since $\Rf = \Kf$ is a field),  so $T_1 = \ker({\Gr d_o}_{|T_1}) \oplus W_1$.
A direct sum appears in the definitions of $S_1$ and $S_2$ since $T_1 \cap \ker \delta_0 = T_1 \cap \im {\delta_0}_{|T_0} = \{ 0\}$. 
We remark that $T_1 \oplus d_b T_1 \subset S_2$. 
For $l \geq 2$, we prove by induction that $S_{2(l-1)} \supset T_0 \oplus \cdots \oplus T_{l-1}$ and we fix by induction a direct complement $U_l \subset T_l$ of $S_{2(l-1)} \cap (T_0 \oplus \cdots \oplus T_l)$ in $T_0 \oplus \cdots \oplus T_l$, which implies
\[
T_0 \oplus \cdots \oplus T_l \subset S_{2(l-1)} \oplus U_l.
\]
Let also $W_l$ be a direct complement of $\ker (\Gr ({d_o}_{|U_l}))$ into $U_l$, so
\[
U_l = \ker (\Gr ({d_o}_{|U_l})) \oplus W_l. 
\]
Then we define for each $l \geq 2$
\[
\left\{\begin{array}{lcll}
S_{2l-1} & \coloneqq & S_{2(l-1)} \oplus \ker({\Gr d_o}_{|U_l}) \oplus d_b\ker({\Gr d_o}_{|U_l}), & \\
S_{2l} & \coloneqq & S_{2l-1} \oplus W_l \oplus d_b W_l & \supset T_1 \oplus \cdots \oplus T_{l}.
\end{array}\right.
\]
We remark that $T_l + d_b T_l \subset S_{2l}$. 
Direct sums appear in the definitions of $S_{2l-1}$ and $S_{2l}$ since $U_l \cap \ker \delta_0 = U_l \cap \im ({\delta_0}_{|\oplus_{k\leq l-1}T_{k}}) = \{0\}$. 
Indeed, let $t \in \oplus_{k\leq l-1}T_{k}$ such that $\delta_0(t) \in U_l$, then $d_b(t) = \delta_0(t) + \delta_1(t) \in S_{2(l-1)}$ and, by assumption on $\delta_1$, $\delta_1(t) \in \oplus_{k\leq l-1}T_{k} \subset S_{2(l-1)}$. It follows that $\delta_0(t) \in S_{2(l-1)}  \cap U_l = \{0\}$ by definition of $U_l$. 

We have thus constructed an exhaustive increasing filtration $(S_l)_l$ on $\B_\alpha A$ of free $\Kf$-modules such that $S_{l-1} \hookrightarrow S_l$ are split monomorphisms of (complete) modules with cokernels isomorphic to a sum of (complete) modules
\[
S_l / S_{l-1} \cong \oplus_\alpha \left(\xi^\alpha \cdot \Kf \oplus \zeta^\alpha \cdot \Kf\right)
\]
where $\xi^\alpha$ is in some homological degree $n_\alpha +1$ and some filtration degree $q_\alpha$ and $\zeta^\alpha$ is in homological degree $n_\alpha $ and filtration degree $q_\alpha+1$ (because (1) and (2)). 
The predifferential $d_o$ is such that $d_o(\xi^\alpha) + \zeta^\alpha \in \left(\Pc (S_{l-1}),\, d_o\right)$ because $\delta_0$ increases the filtration degree exactly by 1 and because of condition \eqref{eq: delta1} ($\zeta^\alpha$ is defined to be the sum of $\delta_0 \xi^\alpha$ with the filtration degree $q_\alpha + 1$ part of $(\delta_1-d_\alpha^l)(\xi^\alpha)$), and $d_o(\zeta^\alpha)$ is obtained by the fact that ${d_o}^2(\xi^\alpha) = \theta\otimes \xi^\alpha$. 
It follows by \cite[Corollary C.41]{BMDC20} that the map $\Pc(0) \to \Om_\alpha \B_\alpha A$ is a cofibration.
\end{proof}

\begin{rem}
In the previous Proposition, Conditions (1) and (2) and the fact that $\Om_\alpha \B_\alpha A$ is a $\Pc$-algebra imply that $\theta_\Pc \in \Gr_1 \Pc$.
\end{rem}

We now prove three corollaries corresponding to the following situations:
\begin{itemize}
\item
$\pi : \B\Pc \to \Pc$,
\item
$\iota : \Pc^\antishriek \to \hat \Omega \Pc^\antishriek = \Pc_\infty$, for $\Pc = \cAs$, $\cuAs$, $\cLie$ or $\cCx$,
\item
$\kappa : \Pc^\antishriek \to \Pc$ 
for $\Pc = \cAs$, $\cuAs$, $\cLie$ or $\cCx$.
\end{itemize}

\begin{cor}
\label{cor: cof res pi}
Let $\Pc$ be a curved sa operad such that:
\begin{enumerate}
\item
$\Gr \Pc$ is a connected bounded below weight filtered operad,
\item
there exists $d \geq 0$ such that for each $w$, $(\Gr \Pc)^{(\leq w)}$ is concentrated in homological degree greater than or equal to $-dw$.
\end{enumerate} 
 Then its bar construction $\B \Pc$ is a curved altipotent cooperad such that $\Gr \B \Pc \cong \Bm \Gr \Pc$ is connected bounded below weight filtered (by the weight filtration induced by the one on $\Pc$). 

Let $\pi : \B\Pc \to \Pc$ be the operadic curved twisting morphism defined in Example \ref{ex: twisting morph}. 
Let $A$ be a $\Pc$-algebra. 
We assume that $\Rf = \Kf$. 
Assume moreover that:
\begin{enumerate}
\setcounter{enumi}{2}
\item
the filtration $F_\bullet$ on $\Pc$ and on $A$ comes from a graduation $\Gr_\bullet$ (on the module level) so that the induced filtration on $\B \Pc (A)$ also comes from a graduation,
\item
$\theta_\Pc \in \Gr_1 \Pc$.
\end{enumerate}

Then the counit of the bar-cobar adjunction
\[
\Phi_\pi : \Om_\pi \B_\pi A \to A
\]
is a cofibrant resolution, that is an acyclic fibration such that $\Pc(0) \to \Om_\pi \B_\pi A$ is a cofibration.
\end{cor}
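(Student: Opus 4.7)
The corollary asserts two things: first that $\Phi_\pi$ is a graded quasi-isomorphism and a strict surjection, and second that the canonical map $\Pc(0) \to \Om_\pi \B_\pi A$ is a cofibration of $\Pc$-algebras. The plan is to establish each by invoking the corresponding abstract result from the preceding subsection.

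For the first part, I would apply Proposition~\ref{prop: algebraic bar-cobar resolution} with $\Cc = \B\Pc$ and $\alpha = \pi$. Because $\Kf$ is a field of characteristic zero, the functor $\Gr$ is strong monoidal, so $\Gr(\B\Pc) \cong \Bm\,\Gr\Pc$ as gr-coaugmented curved cooperads, and consequently $\Gr\Pc \circ_{\Gr\pi}\Gr\B\Pc \circ_{\Gr\pi}\Gr\Pc$ identifies with the classical uncurved two-sided bar-cobar construction of $\Gr\Pc$. The hypothesis that $\Gr\Pc$ is connected bounded below weight filtered, sharpened by assumption~(2) to ensure each weight piece is homologically bounded below, places us in the setting of the classical uncurved bar-cobar resolution theorem: by the proof of \cite[Proposition~5.2.8]{HM12} combined with \cite[Theorem~15.1.A]{bF09a}, this two-sided construction is both quasi-isomorphic to $\Gr\Pc$ and cofibrant as a right $\Gr\Pc$-module. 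Proposition~\ref{prop: algebraic bar-cobar resolution} then gives the graded quasi-isomorphism, while strict surjectivity of $\Phi_\pi$ follows from its factorisation through the canonical projection $\Pc\hcirc \B\Pc(A) \twoheadrightarrow I\circ A = A$.

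For the second part, I would apply Proposition~\ref{prop: cofibrant bar-cobar resolution}, specialised to $\Cc = \B\Pc$ and $\alpha = \pi$. Decompose $\B\Pc(A) = \Tc^c(s\overline{\Pc})(A)$ by internal vertex count by setting $T_0 \coloneqq 0$ and $T_l \coloneqq \Tc^c(s\overline{\Pc})^{(l-1)}(A)$ for $l \geq 1$. Hypothesis~(3) guarantees that the induced filtration $F_\bullet$ on $\B_\pi A$ comes from a graduation. Split the bar predifferential $d_b = (d_0 + d_1 + d_2)\circ\id_A + \id_{\B\Pc}\circ' d_A + d_\pi^r$ as $\delta_0 + \delta_1$ with $\delta_0 \coloneqq d_0 \circ \id_A$. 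Hypothesis~(4), $\theta_\Pc \in \Gr_1\Pc$, ensures that $\delta_0$ raises $F_\bullet$ by exactly one while sending $T_l$ into $T_{l+1}$; acyclicity $\im\delta_0 = \ker\delta_0$ follows from the contracting homotopy used in the proof of Theorem~\ref{thm: bar-cobar resolution}, now applied leafwise to trees over $A$, which removes a bottom-most $s\theta$-vertex. This gives condition~(1) of Proposition~\ref{prop: cofibrant bar-cobar resolution}.

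The main obstacle will be verifying condition~(2), namely that $\delta_1 - d_\pi^l$ lands in $\Pc(\bigoplus_{k\leq l}T_k)$ and that $\Gr_\bullet\delta_1$ lands in $\Gr_\bullet\Pc(\bigoplus_{k<l}T_k)$. The first statement is straightforward: the components $(d_1+d_2)\circ\id_A$ and $\id_{\B\Pc}\circ'd_A$ of $\delta_1$ stay in $T_l\oplus T_{l-1}$, $d_\pi^r$ lands in $T_{l-1}$, and $d_\pi^l$, defined via $\Delta_{\Cc(A)}$ followed by $\pi$ (nonzero only on one-vertex outer subtrees), yields elements of $\Pc(\bigoplus_{k\leq l-1}T_k)$. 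The graded statement is the delicate point: one must track that, modulo the associated graded, the vertex-preserving contributions to $\delta_1$ are absorbed by the leading piece of $d_\pi^l$ coming from the trivial one-vertex decomposition of a tree, leaving only terms that strictly decrease $l$. Once condition~(2) is established, Proposition~\ref{prop: cofibrant bar-cobar resolution} yields that $\Pc(0) \to \Om_\pi\B_\pi A$ is a cofibration, completing the proof of the corollary.
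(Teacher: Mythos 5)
Your treatment of the first half (that $\Phi_\pi$ is a graded quasi-isomorphism and a strict surjection) is essentially the paper's argument: reduce via strong monoidality of $\Gr$ to the uncurved two-sided bar-cobar construction of $\Gr\Pc$ and feed it into Proposition~\ref{prop: algebraic bar-cobar resolution}. The only imprecision is that you fold the right-module cofibrancy of $\Gr\Pc\circ_{\Gr\pi}\Bm\Gr\Pc\circ_{\Gr\pi}\Gr\Pc$ into a citation of the classical resolution theorem; the paper actually has to prove this cofibrancy separately via \cite[Proposition~14.2.2]{bF09a}, using the explicit exhaustion $K_\lambda=\oplus_{m+dw\leq\lambda}(\Gr\Pc\circ_{\Gr\pi}\Gr\B\Pc)^{(\leq w)}_m$ --- this is precisely where hypothesis~(2) (the bound $\geq -dw$ on homological degrees) is consumed, so it deserves to be spelled out rather than absorbed into the reference.

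The second half has a genuine gap: the decomposition $T_l\coloneqq\Tc^c(s\overline{\Pc})^{(l-1)}(A)$ by vertex count alone does not satisfy condition~(2) of Proposition~\ref{prop: cofibrant bar-cobar resolution}. That condition demands $\Gr_\bullet\delta_1:\Gr_\bullet T_l\to\Gr_\bullet\Pc(\oplus_{k<l}T_k)$, i.e.\ that $\delta_1$ \emph{strictly decrease} the index on the associated graded. But the summands $d_1\circ\id_A$ and $\id_{\B\Pc}\circ' d_A$ of $\delta_1$ preserve the number of vertices and are in general nonzero on $\Gr$ (they induce the differentials of $\Gr\Pc$ and $\Gr A$), so with your indexing they map $\Gr_\bullet T_l$ back into $\Gr_\bullet T_l$. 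Your proposed repair --- that these terms are ``absorbed by the leading piece of $d_\pi^l$'' --- cannot work: $d_\pi^l$ splits off a root vertex into $\Pc$ and hence strictly lowers the vertex count, so it produces no term that could cancel a vertex-preserving one; moreover the graded clause of condition~(2) constrains $\Gr_\bullet\delta_1$ itself, not $\Gr_\bullet(\delta_1-d_\pi^l)$. The paper's fix is to refine the index set: set $\tilde T_{l+1}\coloneqq\B\Pc^{(l)}(A)$ and split $\tilde T_l=T_{2l-1}\oplus T_{2l}$ with $T_{2l-1}\coloneqq\im\bigl((d_1\circ\id_A+\id_{\B\Pc}\circ' d_A)_{|\tilde T_l}\bigr)$. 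Then the vertex-preserving part of $\delta_1$ sends $T_{2l}$ into the strictly lower-indexed $T_{2l-1}$, and on $T_{2l-1}$ it vanishes in the associated graded because $(d_1\circ\id_A+\id_{\B\Pc}\circ' d_A)^2$ raises the filtration degree by $1$ (hypothesis~(4), $\theta_\Pc\in\Gr_1\Pc$, together with ${d_A}^2=\gamma_A(\theta_\Pc\circ\id_A)$). Without this refinement of the decomposition the hypotheses of Proposition~\ref{prop: cofibrant bar-cobar resolution} are simply not met, so your proof of the cofibration statement does not go through as written.
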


\begin{proof}
Using Assumption (1), we obtain by \cite[Proposition 5.1.5]{HM12} that $\Gr \Pc \circ_{\Gr \pi} \Bm \Gr\Pc \circ_{\Gr \pi} \Gr \Pc \to \Gr\Pc$ is a quasi-isomorphism. 
Moreover, by Assumption (2), both domain and codomain are cofibrant as $\Gr \Pc$-right modules. 
For the domain, this follows from \cite[Proposition 14.2.2]{bF09a} with the filtration on $K \coloneqq \Gr \Pc \circ_{\Gr \pi} \Gr \B\Pc$ (starting at $\lambda \geq 0$) given by
\[
K_\lambda \coloneqq \oplus_{m+dw\leq \lambda} (\Gr \Pc \circ_{\Gr \pi} \Gr \B\Pc)^{(\leq w)}_m. 
\]
Indeed, the differentials on $\Gr \Pc$ and $\Bm \Gr \Pc$ decreases the homological degree by 1 and do not increase the weight degree so gives a map $K_\lambda \to K_{\lambda-1}$ and the differentials $d_{\Gr \alpha}^l$ and $d_{\Gr \alpha}^r$ can at most increase the homological degree by $dw-1$ when it decreases the weight degree by $w$ (for $d_{\Gr \alpha}^r$) so it gives a map $K_\lambda \to K_{\lambda-1}$. 
It follows by Proposition \ref{prop: algebraic bar-cobar resolution} that $\Phi_\alpha$ is a graded quasi-isomorphism and a strict surjection.

Then, assuming (3) and (4), we have to check the hypotheses of Proposition \ref{prop: cofibrant bar-cobar resolution}. 
We define $\tilde T_{l+1} \coloneqq \B \Pc^{(l)}(A)$ the module built from trees with $l$ vertices and
\[
T_{2l-1} \coloneqq \im\left((d_1 \circ \id_A + \id_{\B \Pc} \circ' d_A)_{|\tilde T_{l}}\right) \subset \tilde T_l
\]
Then let $T_{2l}$ be a direct complement of $T_{2l-1}$ in $\tilde T_{l}$. 
We therefore have obtained a decomposition of $\B \Pc(A)$ as the direct sum $\oplus_{l\geq 0}T_l$ with $T_0 = \{0\}$ and satisfying the following properties. 
We fix $\delta_0 \coloneqq d_0 \circ \id_A$ and $\delta_1 \coloneqq (d_1 + d_2) \circ \id_A + \id_{\B \Pc} \circ' d_A$. 
Then $\delta_0$ satisfies $\delta_0 : \Gr_\bullet T_l \to \Gr_{\bullet + 1}(\oplus_{k \geq l+1}T_{k})$ since it increases the number of vertices in $\B \Pc$ (and $\theta_\Pc \in \Gr_1 \Pc$). 
Moreover, using the homotopy $h : \B \Pc \to \B \Pc$ defined in Theorem \ref{thm: bar-cobar resolution}, we get that $h\circ \id_A$ is a homotopy for the differential $d_0 \circ \id_A$. 
It follows that $\im \delta_0 = \ker \delta_0$. 
We also have $\delta_1-d_\pi^l : T_l \to \Pc(\oplus_{k \leq l}T_k)$ with $\Gr_\bullet \delta_1 : \Gr_\bullet T_l \to \Gr_\bullet \Pc(\oplus_{k < l}T_k)$ by construction (since ${d_1}^2$ and ${d_A}^2$ increases the filtration degree by 1). 
It follows that Proposition \ref{prop: cofibrant bar-cobar resolution} applies so $\Om_\pi \B_\pi A$ is cofibrant.
\end{proof}

To prove the next corollary, we require the following proposition.

\begin{prop}
\label{prop: augbarres}
Let $\Cc$ be a curved coaugmented conilpotent cooperad which is connected bounded below weight filtered. The operadic curved twisting morphism $\iota : \Cc \rightarrow \Omega \Cc$ provides a quasi-isomorphism
\[
\xi : \Omega \Cc \circ_{\iota} \Cc \circ_{\iota} \Omega \Cc \xrightarrow{\sim} \Omega \Cc.
\]
\end{prop}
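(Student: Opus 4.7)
The plan is to adapt the argument for the analogous dg statement (see for instance \cite[Proposition 5.2.8]{HM12}) to the curved setting, relying on the connected bounded below weight filtration and a spectral sequence argument.

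First, I would unfold the construction. As a graded $\Sb$-module, $T \coloneqq \Omega\Cc\circ_{\iota}\Cc\circ_{\iota}\Omega\Cc$ is identified with $\Tc(s^{-1}\overline{\Cc})\circ\Cc\circ\Tc(s^{-1}\overline{\Cc})$, equipped with a predifferential $D = d_{\mathrm{int}}+d_{\iota}^{l}+d_{\iota}^{r}$, where $d_{\mathrm{int}}$ is assembled from the cobar predifferentials $D_0+D_1+D_2$ on each of the two $\Omega\Cc$ factors together with $d_{\Cc}$ on the middle factor, and where $d_{\iota}^{l}$, $d_{\iota}^{r}$ are the Koszul-type twisting contributions that insert a copy of $s^{-1}\overline{\Cc}$ at the left (respectively right) interface between the middle $\Cc$ and the corresponding outer cobar factor, using $\Delta_\Cc$ and $\iota$. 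The map $\xi$ is induced by applying $\varepsilon_{\Cc}$ to the middle factor and then using the operadic composition in $\Omega\Cc$ to multiply together the two outer cobar factors.

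Second, I would define an increasing filtration $F_\bullet T$ by total weight: $F_p T$ is spanned by composites for which the sum of the weights of all $\Cc$-labels occurring anywhere (in the middle factor \emph{and} inside the trees of the two outer cobar factors) is at most $p$. Under the connected bounded below weight assumption, this filtration is exhaustive and bounded below in each homological bidegree, so the associated spectral sequence converges to $H_\bullet(\Gr T)$. On the $E^0$-page, the parts of $D$ that strictly decrease the weight drop out; in particular the curvature contributions $D_0$ coming from $\theta^c$ on each $\Omega\Cc$ factor and the curvature term in $d_\Cc$ vanish on $E^0$, and one is left with the classical (non-curved) bar-cobar Koszul differential on the associated weight-graded cooperad $\Gr\Cc$.

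Third, the main step is to produce a contracting homotopy on the $E^0$-complex onto the image of $\xi$. The standard device is to slide the label sitting at the topmost vertex adjacent to the middle in (say) the left outer cobar factor back into the middle $\Cc$ factor via $\Delta_\Cc$, thereby defining a degree $+1$ operator $h$ with the property that $[D_{E^0},h]=\mathrm{id}-\iota_{E^0}\cdot\xi_{E^0}$, where $\iota_{E^0}$ is the inclusion induced by $\eta_\Cc$ in the middle. This is formally identical to the dg argument and uses only the coassociativity of $\Delta_\Cc$ and the formula $\iota = s^{-1}\cdot\pi_{\overline{\Cc}}$ from Example \ref{ex: twisting morph}.

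The main obstacle is the bookkeeping: ensuring that the chosen filtration is compatible with the curvature (which obliges us to count weights both in the middle and inside the outer trees), that it is complete and exhaustive in the appropriate sense so that the spectral sequence converges in the gr-dg category, and that $h$ is well-defined on $\Sb$-invariants. Once these technicalities are verified, the spectral sequence collapses at $E^1$ onto $\Gr\Omega\Cc$, proving that $\xi$ is a graded quasi-isomorphism.
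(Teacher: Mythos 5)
Your proposal follows essentially the same route as the paper: filter by total weight, observe that the curvature terms strictly decrease the weight and hence vanish on the $E^0$-page, identify $E^0$ with the uncurved complex $\Omega\Cc^{(\bullet)}\circ_{\iota}\Cc^{(\bullet)}\circ_{\iota}\Omega\Cc^{(\bullet)}$ for the weight-graded dg coaugmented conilpotent cooperad, contract it by the standard sliding homotopy, and conclude by convergence of the spectral sequence. The one point the paper makes explicit that you only gesture at (via ``well-defined on $\Sb$-invariants'') is that, since $\Cc(0)\neq 0$ in general, the contracting homotopy must be symmetrised to be compatible with the symmetric group actions; the paper handles this by invoking Theorem 4.18 of \cite{bV07} together with the explicit symmetrised homotopy from the proof of Theorem 3.4.4 in \cite{HM12}.
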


\begin{proof}
The proof is similar to the proof of \cite[Proposition 5.1.5]{HM12}. 
We first remark that the curvature $\theta^c : \Cc \to I$ strictly decreases the weight because the weight filtration on $\Cc$ is connected ($\Cc^{(\leq 0)} = I$). 
The weight filtration on $\Cc$ induces a weight filtration on $\Omega \Cc$ given by the total weight. This gives a weight filtration $(\Omega \Cc \circ_{\iota} \Cc \circ_{\iota} \Omega \Cc)^{(\leq p)}$ on $\Omega \Cc \circ_{\iota} \Cc \circ_{\iota} \Omega \Cc$ and a weight filtration $\Omega \Cc^{(\leq p)}$ on $\Omega \Cc$. These weight filtrations are filtrations of chain complexes since the differentials either preserve or decrease the weight. The filtrations are exhaustive and bounded below and the map $\Omega \Cc \circ_{\iota} \Cc \circ_{\iota} \Omega \Cc \rightarrow \Omega \Cc$ preserves the filtrations. We apply the classical theorem of convergence of spectral sequences (cf. Theorem 5.5.1 of \cite{cW94}) to obtain
\[
\left\{ \begin{array}{l}
E_{p,q}^{\bullet} \Rightarrow \mathrm{H}_{p+q}(\Omega \Cc \circ_{\iota} \Cc \circ_{\iota} \Omega \Cc)\\
{E'}_{p,q}^{\bullet} \Rightarrow \mathrm{H}_{p+q}(\Omega \Cc).
\end{array} \right.
\]
Since the differential of $E_{\bullet, \bullet}^{0}$ is the weight preserving part of the differential of $\Omega \Cc \circ_{\iota} \Cc \circ_{\iota} \Omega \Cc$, the isomorphism of graded vector spaces $E_{\bullet, \bullet}^{0} \cong \Omega \Cc^{(\bullet)} \circ_{\iota} \Cc^{(\bullet)} \circ_{\iota} \Omega \Cc^{(\bullet)}$, where $\Cc^{(\bullet)} = \Cc^{(\leq \bullet)}/\Cc^{(\leq\bullet -1)}$ is a dg coaugmented conilpotent cooperad (since $\theta^c$ strictly decreases the weight), is an isomorphism of dg modules. Because $\Cc^{(\bullet)}$ is a dg coaugmented conilpotent cooperad, it remains to prove that $E_{\bullet, \bullet}^{0} \xrightarrow{\sim} \Omega \Cc^{(\bullet)}$ for a dg coaugmented conilpotent cooperad $\Cc^{(\bullet)}$. 
This follows from the same proof as Theorem 4.18 in \cite{bV07}  (where the author proves the quasi-isomorphism $\Omega \Cc \circ_{\iota} \Cc \xrightarrow{\sim} I$) with a slight modification on the homotopy map which has to be symmetrise in order to apply to non reduced cooperad ($\Cc(0) \neq \{0\}$ in general, see the proof of Theorem 3.4.4 in \cite{HM12} for an explicit description). 
As a consequence, $E^{1}_{p,q} = \mathrm{H}_{p+q}(\Omega \Cc^{(p)}) = {E'}_{p,q}^{1}$. 
Then $E_{p,q}^{r}$ and ${E'}_{p,q}^{r}$ coincide for $r \geq 1$ and $\xi$ induces an isomorphism between $E_{p,q}^{\infty}$ and ${E'}_{p,q}^{\infty}$. This concludes the proof.
\end{proof}

\begin{cor}
\label{cor: cof res iota}
Let $\Cc$ be a curved altipotent cooperad such that:
\begin{enumerate}
\item
$\Gr \Cc$ is a connected bounded below weight filtered cooperad,
\item
there exists $d \geq 0$ such that for each $w$, $(\Gr \Cc)^{(\leq w)}$ is concentrated in homological degree greater than or equal to $-dw$. 
\end{enumerate}
Then its cobar construction $\Om \Cc$ is a curved sa operad such that $\Gr \Om \Cc \cong \Omega \Gr \Cc$ is connected bounded below weight filtered (by the weight filtration induced by the one on $\Cc$). 

Let $\iota : \Cc \to \Om \Cc$ be the operadic curved twisting morphism defined in Example \ref{ex: twisting morph}. 
Let $A$ be a $\Om \Cc$-algebra. 
Then the counit of the bar-cobar adjunction
\[
\Phi_\iota : \Om_\iota \B_\iota A \to A
\]
is a resolution, that is an acyclic fibration.

We assume that $\Rf = \Kf$. 
Assume moreover that:
\begin{enumerate}
\setcounter{enumi}{2}
\item
$\Pc$ be one of the curved sa operad $\cAs$, $\cuAs$, $\cLie$ or $\cCx$, and $\Cc = \Pc^\antishriek$ is the curved altipotent Koszul dual cooperad, and
\item
the filtration $F_\bullet$ on the $\Pc_\infty$-algebra $A$ comes from a graduation $\Gr_\bullet$ (on the module level) so that the induced filtration on $\Pc^\antishriek (A)$ also comes from a graduation. 
\end{enumerate}
In these situations, the filtration on $\Pc$ comes from a graduation $\Gr_\bullet$, $\theta_\Pc \in \Gr_1 \Pc$, and the filtration $F_\bullet$ on $\Cc$ also comes from a graduation $\Gr_\bullet$ (on the module level). 
The underlying $\Sb$-module of $\Cc$ is also weight graded by the number of generators describing the elements in $\Pc^\antishriek$. 

Then $\Om_\iota \B_\iota A$ is cofibrant, that is $\Pc_\infty(0) \to \Om_\iota \B_\iota A$ is a cofibration. 

Finally, in these situations $\Phi_\iota : \Om_\iota \B_\iota A \to A$ is a cofibrant resolution of the $\Pc_\infty$-algebra $A$.

\end{cor}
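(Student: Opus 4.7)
The plan is to prove the three sub-claims in order, each reducing to one of the general results already in hand. For the first, the isomorphism $\Gr\Om\Cc \cong \Omega\Gr\Cc$ follows from strong monoidality of $\Gr$ (over a field of characteristic zero) and a direct comparison of the three summands $D_0, D_1, D_2$ of the cobar differential with the corresponding summands of the differential of $\Omega\Gr\Cc$. The total-weight filtration on $\Omega\Gr\Cc$ is inherited from the one on $\Gr\Cc$; connectedness follows from $\Cc^{(\leq 0)} = I$, and the ``bounded below in each weight'' property passes through the free-operad construction because a fixed total weight bounds the number of vertices in the associated tree modules. That $\Om\Cc$ itself is a curved sa operad is then Proposition \ref{prop: cobar const is a functor}.

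For the second sub-claim, that $\Phi_\iota$ is a graded quasi-isomorphism and strict surjection, I would apply Proposition \ref{prop: algebraic bar-cobar resolution} with $(\Pc, \alpha) = (\Om\Cc, \iota)$. The required quasi-isomorphism $\Gr\Om\Cc \circ_{\Gr\iota} \Gr\Cc \circ_{\Gr\iota} \Gr\Om\Cc \xrightarrow{\sim} \Gr\Om\Cc$ is precisely Proposition \ref{prop: augbarres} applied to $\Gr\Cc$, which is a curved coaugmented conilpotent cooperad (altipotence restricts to conilpotence on $\Gr$) that is connected bounded below weight filtered by the first step. Cofibrancy of this object as a right $\Gr\Om\Cc$-module follows from a filtration argument formally identical to the one in the proof of Corollary \ref{cor: cof res pi}: assumption (2) bounds the total homological degree in terms of the total weight, so that the index $\lambda$ used there produces a filtration by cofibrant submodules to which \cite[Proposition 14.2.2]{bF09a} applies. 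Strict surjectivity of $\Phi_\iota$ is immediate since it is the counit $\Om\Cc \circ \Cc \circ A \twoheadrightarrow I \circ I \circ A \cong A$.

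For the third sub-claim, I would verify the hypotheses of Proposition \ref{prop: cofibrant bar-cobar resolution} with $\alpha = \iota$. The graduation hypothesis on both $\Pc$ and $\Cc = \Pc^\antishriek$ holds for each of $\cAs, \cuAs, \cLie, \cCx$ by Theorems \ref{thm: Koszul dual cuAs} and \ref{thm: cCxa} (and the analogous statements for the two uncurved-unit cases already treated in \cite{BMDC20}), where the filtration is by the number of curvature-like generators; in particular $\theta_{\Om\Cc} \in \Gr_1 \Om\Cc$. The decomposition $\Cc(A) = \hoplus_l T_l$ combines the weight grading of $\Cc = \Pc^\antishriek$ (number of cogenerators in $sE$) with an internal split of each weight layer into the image of the internal predifferential and a chosen $\Kf$-linear complement, in exact parallel with Corollary \ref{cor: cof res pi}. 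The splitting $d_b = \delta_0 + \delta_1$ places on the $\delta_0$ side the part of $d_b$ that strictly raises the curvature-filtration via the interaction between the curvature of $A$ and the decomposition maps of $\Cc$, and leaves all other components in $\delta_1$. One then needs $\im\delta_0 = \ker\delta_0$, obtained by constructing an explicit contracting homotopy case by case from the presentations given in Theorems \ref{thm: Koszul dual cuAs} and \ref{thm: cCxa}.

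The hard part will be precisely this contractibility step. In Theorem \ref{thm: bar-cobar resolution} the corresponding homotopy was simple because $\B\Pc$ is freely cogenerated by $s\bar\Pc$ and the curvature lives separately as the map $d_0 : I \to s\bar\Pc$, so the section ``deletes the bottom curvature vertex''. For $\Cc = \Pc^\antishriek$ the curvature has been absorbed into the shape of $\Pc^\antishriek$ via the map $\varphi_0$, and the contracting homotopy is genuinely combinatorial: for $\cuAs^\antishriek$ it takes the form of ``removing a $\nu_2^{\{i\}}$ grafted at a distinguished place'' and for $\cCx^\antishriek$ the analogous operation on the generators $\jmath_{k_1,\ldots,k_n}^c$. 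Once these homotopies are in place, Proposition \ref{prop: cofibrant bar-cobar resolution} yields the cofibration $\Pc_\infty(0) \to \Om_\iota \B_\iota A$, and combining with the second step gives the cofibrant resolution $\Phi_\iota$.
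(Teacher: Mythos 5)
Your overall architecture matches the paper's: the first two claims are handled exactly as in the paper (strong monoidality of $\Gr$, Proposition \ref{prop: augbarres} applied to $\Gr\Cc$, the filtration $K_\lambda$ indexed by $m+dw$ to get right-module cofibrancy via \cite[Proposition 14.2.2]{bF09a}, then Proposition \ref{prop: algebraic bar-cobar resolution}), and the cofibrancy claim is correctly reduced to Proposition \ref{prop: cofibrant bar-cobar resolution} with the same decomposition $\Cc(A)=\hoplus_l T_l$ and the same splitting $d_b=\delta_0+\delta_1$, where $\delta_0 = d_\iota^{r,1}$ is the filtration-raising part of $d_\iota^r$ coming from $\Delta_{(1)}=\Delta_{(1)}^0+\Delta_{(1)}^1$.

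The genuine gap is the verification $\im\delta_0=\ker\delta_0$, which you explicitly defer, and your sketch of the contracting homotopy points in the wrong direction. Since $\delta_0$ is obtained by applying $\iota$ followed by $\gamma_A$ to the arity-zero, filtration-degree-one cofactor produced by $\Delta_{(1)}^1$, its effect on an element of $\Cc(A)$ is to \emph{insert the algebra element} $\theta_A \coloneqq \gamma_A(\curvA)\in F_1A$ into the input slots of a larger cogenerator; it does not produce or consume the unit-type cogenerators $\nu_2^{\{i\}}$ of $\cuAs^\antishriek$, so ``removing a $\nu_2^{\{i\}}$ grafted at a distinguished place'' is not a section of $\delta_0$. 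The correct homotopy lives at the level of the algebra inputs, not the cooperad cogenerators: choose a complement $V$ with $A=\Kf\theta_A\oplus V$ and set, e.g.\ for $\cAs$,
\[
h\bigl(\tilde\mu_n^c(\theta_A,a_2,\ldots,a_n)\bigr)=\tilde\mu_{n-1}^c(a_2,\ldots,a_n),
\qquad
h\bigl(\tilde\mu_n^c(a_1,\ldots,a_n)\bigr)=0 \text{ for } a_1\in V,
\]
i.e.\ delete a $\theta_A$ from the first slot. This is the algebra-level analogue of the homotopy used for $d_0$ in Theorem \ref{thm: bar-cobar resolution}, and a direct check gives $\delta_0h+h\delta_0=\id$ on $\overline{\Cc(A)}$, hence $\im\delta_0=\ker\delta_0$. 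For $\cLie$ and $\cCx$ one must additionally use that $\theta_A$ has odd homological degree while the relevant cogenerators are based on the degree-zero cocommutative corolla, so $\mu_n^c(\ldots,\theta_A,\ldots,\theta_A,\ldots)=0$ and no sign obstruction arises. Without this step the hypotheses of Proposition \ref{prop: cofibrant bar-cobar resolution} are not verified and the cofibrancy conclusion does not follow.
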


\begin{proof}
Because $\Gr \Om \Cc \cong \Omega \Gr \Cc$ and $\Gr \Cc$ is a curved coaugmented conilpotent cooperad  that is connected bounded below weight filtered (Assumption (1)), $\Gr \Om \Cc \circ_{\Gr \iota} \Gr\Cc \circ_{\Gr \iota} \Gr \Om\Cc \to \Gr\Om\Cc$ is a quasi-isomorphism by Proposition \ref{prop: augbarres}. 
Moreover both domain and codomain are cofibrant as $\Gr \Om\Cc$-right modules (by Assumption (2)). 
As in Corollary \ref{cor: cof res pi}, for the domain, this follows from \cite[Proposition 14.2.2]{bF09a} with the filtration on $K \coloneqq \Gr \Om\Cc \circ_{\Gr \iota} \Gr \Cc$ (starting at $\lambda \geq 0$) given by
\[
K_\lambda \coloneqq \oplus_{m+dw\leq \lambda} (\Gr \Om \Cc \circ_{\Gr \iota} \Gr \Cc)^{(\leq w)}_m.
\]  
It follows by Proposition \ref{prop: algebraic bar-cobar resolution} that $\Phi_\alpha$ is a graded quasi-isomorphism and a strict surjection.
The four examples satisfy the conditions of the corollary so $(1)$ applies and $\Om_\iota \B_\iota A \to A$ is a resolution. 

Then we have to check that under Assumptions (3) and (4), the hypotheses of Proposition \ref{prop: cofibrant bar-cobar resolution}. 
Remark that in the four examples $d_{\Pc^\antishriek} = 0$. 
We define $\tilde T_{l+1} \coloneqq \Cc^{(l)}(A)$ and
\[
T_{2l-1} \coloneqq \im\left((\id_{\Cc} \circ' d_A)_{|\tilde T_{l}}\right) \subset \tilde T_l
\]
Then let $T_{2l}$ be a direct complement of $T_{2l-1}$ in $\tilde T_{l}$. 
We therefore have obtained a decomposition of $\Cc(A)$ as the direct sum $\oplus_{l\geq 0}T_l$ with $T_0 = \{0\}$. 
For the four examples, the partial decomposition product of $\Cc$, used in the definition of $d_\iota^r$ decomposes as $\Delta_{(1)} = \Delta_{(1)}^0 + \Delta_{(1)}^1$ where $\Delta_{(1)}^0 : \Gr_\bullet \Cc \to \Gr_\bullet (\Cc \circ_{(1)} \Cc)$ and $\Delta_{(1)}^1 : \Gr_\bullet \Cc \to \Gr_{\bullet+1} (\Cc \circ_{(1)} \Cc)$. 
This gives a decomposition $d_\iota^r = d_\iota^{r, 0} + d_\iota^{r, 1}$. 
We fix $\delta_0 \coloneqq d_\iota^{r, 1}$ and $\delta_1 \coloneqq \id_{\Cc} \circ' d_A + d_\iota^{r, 0}$. 
Then $\delta_0$ satisfies $\delta_0 : \Gr_\bullet T_l \to \Gr_{\bullet + 1}(\oplus_{k \geq l+1}T_{k})$ since it increases the weight in $\Cc$ by 1. 

Let us describe the case $\Pc = \cAs$ and $\Cc = \cAs^\antishriek$. We denote by $\theta_A$ the degree $-2$ element $\gamma_A(\curvA) \in A$. Let $V \subset A$ such that $A = \Kf \theta_A \oplus V$. On $\Cc(A)$, we define
\begin{align*}
h\left( \tilde\mu_n^c(\theta_A, a_2, \ldots, a_n)\right) & = \tilde\mu_{n-1}^c(a_2, \ldots, a_n);\\
h\left( \tilde\mu_n^c(a_1, a_2, \ldots, a_n)\right) & = 0 \text{ when } a_1 \in V.
\end{align*}
A direct computation shows that this is a homotopy for $\delta_0$. 
It follows that $\im \delta_0 = \ker \delta_0$. 
We also have $\delta_1-d_\iota^l : T_l \to \Om \Cc(\oplus_{k \leq l}T_k)$ with $\Gr_\bullet \delta_1 : \Gr_\bullet T_l \to \Gr_\bullet \Pc(\oplus_{k < l}T_k)$ by construction (since ${d_1}^2$ and ${d_A}^2$ increases the filtration degree by 1). 
It follows by Proposition \ref{prop: cofibrant bar-cobar resolution} that $\Om_\iota \B_\iota A$ is cofibrant.

The cases $\Pc = \cuAs$, $\cLie$  and $\cCx$ are similar (for $\cLie$ and $\cCx$, we stresses that $\theta_A$ as homological degree $-1$ and that the generators of $\Cc$ are based on the cocommutative corolla $\mu_n^c$ of homological degree 0 which satisfies therefore $\mu_n^c(\ldots, \theta_A, \ldots, \theta_A, \ldots) = 0$).
\end{proof}

\begin{cor}
\label{cor: cof res kappa}
We assume that $\Rf = \Kf$. 
Let $\Pc$ be one of the curved sa operad $\cAs$, $\cuAs$, $\cLie$ or $\cCx$, $\Cc = \Pc^\antishriek$ be the curved altipotent Koszul dual cooperad and $\kappa : \Pc^\antishriek \to \Pc$ be the associated operadic curved twisting morphism (see Definition \ref{def: Koszul}). 

In these situations, the filtration $F_\bullet$ on $\Cc$ comes from a graduation $\Gr_\bullet$ (on the module level) and the underlying $\Sb$-module of $\Cc \cong \Gr_\bullet \Cc$ is also weight graded by the number of generators describing the elements in $\Pc^\antishriek$. 
Let $A$ be a $\Pc$-algebra whose filtration $F_\bullet$ comes from a graduation $\Gr_\bullet$ (on the module level) so that the induced filtration on $\Pc^\antishriek (A)$ also comes from a graduation. 

Then the counit of the bar-cobar adjunction
\[
\Phi_\alpha : \Om_\kappa \B_\kappa A \to A
\]
is a cofibrant resolution, that is an acyclic fibration such that $\Pc(0) \to \Om_\kappa \B_\kappa A$ is a cofibration.
\end{cor}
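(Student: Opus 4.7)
My plan is to mirror the two-part structure used in Corollaries \ref{cor: cof res pi} and \ref{cor: cof res iota}: first show that $\Phi_\kappa$ is an acyclic fibration via Proposition \ref{prop: algebraic bar-cobar resolution}, then show that $\Pc(0) \to \Om_\kappa \B_\kappa A$ is a cofibration via Proposition \ref{prop: cofibrant bar-cobar resolution}.

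For the first part, I would apply Proposition \ref{prop: algebraic bar-cobar resolution} to the operadic curved twisting morphism $\kappa : \Cc \to \Pc$. Since $\Gr$ is strong monoidal over our field of characteristic $0$, we have $\Gr \Pc \circ_{\Gr \kappa} \Gr \Cc \circ_{\Gr \kappa} \Gr \Pc \cong (\Gr\Pc) \circ_{\bar\kappa} (\Gr\Cc) \circ_{\bar\kappa} (\Gr\Pc)$, with $\bar\kappa$ the restriction to the quadratic part (the associated graded Koszul twisting morphism of the quadratic operad $\q\Gr\Pc$, since $\Pc$ is Koszul in the inhomogeneous sense of Definition \ref{def: Koszul}). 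In each of the four cases $\cAs,\cuAs,\cLie,\cCx$, the quadratic operad $\q\Gr\Pc$ is Koszul, so by the classical Koszul duality for dg operads (Theorem 4.3.1 of \cite{HM12}, following \cite[Section 5]{HM12}) the two-sided Koszul complex $\Gr\Pc \circ_{\Gr\kappa} \Gr\Cc \circ_{\Gr\kappa} \Gr\Pc \to \Gr\Pc$ is a quasi-isomorphism. For the cofibrancy as a $\Gr\Pc$-right module, I would filter $\Gr\Pc \circ_{\Gr\kappa} \Gr\Cc$ by the total weight (each of $\cAs^\antishriek,\cuAs^\antishriek,\Cxa,\cCxa$ is connected bounded below weight graded), using a filtration of the form $K_\lambda := \bigoplus_{m+dw\leq \lambda}(\Gr\Pc\circ_{\Gr\kappa}\Gr\Cc)^{(\leq w)}_m$, and invoke \cite[Proposition 14.2.2]{bF09a}, checking as in Corollaries \ref{cor: cof res pi} and \ref{cor: cof res iota} that the differentials $d_{\Gr\Pc}$, $d_{\Gr\kappa}^l$ and $d_{\Gr\kappa}^r$ all decrease the filtration index by at most $1$. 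Proposition \ref{prop: algebraic bar-cobar resolution} then delivers the graded quasi-isomorphism, and strict surjectivity of $\Phi_\kappa$ comes from its factorisation $\Pc\hcirc\Cc(A)\twoheadrightarrow I(A) \cong A$.

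For the second part, I would verify the hypotheses of Proposition \ref{prop: cofibrant bar-cobar resolution} by reusing the decomposition strategy of Corollary \ref{cor: cof res iota}, which adapts directly to $\kappa$ in place of $\iota$. Set $\tilde T_{l+1} := \Cc^{(l)}(A)$ using the weight grading of $\Cc = \Pc^\antishriek$ (which is a subcooperad of the cofree cooperad generated by $sE$), then define $T_{2l-1} := \im\bigl((\id_\Cc \circ' d_A)_{|\tilde T_l}\bigr)$ and choose $T_{2l}$ to be a direct complement inside $\tilde T_l$, giving $\Cc(A) = \bigoplus_{l\geq 0}T_l$ with $T_0 = 0$. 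Since $d_\Cc = 0$ for each of the four Koszul dual cooperads, the bar predifferential reduces to $d_b = \id_\Cc \circ' d_A + d_\kappa^r$. I would split the partial decomposition as $\Delta_{(1)} = \Delta_{(1)}^0 + \Delta_{(1)}^1$ where the piece $\Delta_{(1)}^1$ detects the curvature contribution coming from $\eta_{\Pc^\antishriek}$ (which lies in filtration degree $1$ by the Poincaré--Birkhoff--Witt-type description used in Theorems \ref{thm: Koszul dual cuAs} and \ref{thm: cCxa}) and strictly increases the graduation by $1$, while $\Delta_{(1)}^0$ preserves it. Setting $\delta_0 := d_\kappa^{r,1}$ and $\delta_1 := \id_\Cc \circ' d_A + d_\kappa^{r,0}$ gives a decomposition $d_b = \delta_0 + \delta_1$ satisfying the two conditions of Proposition \ref{prop: cofibrant bar-cobar resolution}: condition (2) holds by construction, and condition (1) requires constructing a contracting homotopy $h$ for $\delta_0$.

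The main step will be exhibiting $h$. I would define $h$ on $\Cc(A)$ by a formula of the type used in Corollary \ref{cor: cof res iota}, distinguishing leaves labelled by $\theta_A := \gamma_A(\vartheta)$ from leaves in a complement $V$ with $A = \Kf \theta_A \oplus V$: on a generator of $\Cc$ evaluated on arguments whose leftmost (or appropriately chosen) leaf is $\theta_A$, $h$ removes that leaf and lands on the corresponding generator of $\Cc$ in one lower arity; on arguments whose distinguished leaf is in $V$, $h$ vanishes. The computation that $h\delta_0 + \delta_0 h = \id$ on the relevant subspace parallels the four case-by-case verifications in Corollary \ref{cor: cof res iota}, and in the Lie-type cases $\cLie,\cCx$ one uses graded skew-symmetry to ensure $\mu_n^c(\ldots,\theta_A,\ldots,\theta_A,\ldots) = 0$ so that only one $\theta_A$-leaf needs to be tracked. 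This yields $\im\delta_0 = \ker\delta_0$, after which Proposition \ref{prop: cofibrant bar-cobar resolution} gives the cofibration $\Pc(0)\to \Om_\kappa\B_\kappa A$, completing the cofibrant resolution. The hard part will be checking case by case that the explicit homotopy $h$ interacts correctly with the partial decomposition formulas computed in Theorems \ref{thm: Koszul dual cuAs} and \ref{thm: cCxa} (and Corollary \ref{cor: cCxa infinitesimal}), in particular accounting for the signs introduced by the passage between operadic and algebraic sides via $\gamma_A$.
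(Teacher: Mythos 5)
Your proposal is correct and follows essentially the same route as the paper: the paper's own proof simply states that the argument runs along the same lines as Corollaries \ref{cor: cof res pi} and \ref{cor: cof res iota}, with the only new input being that the quasi-isomorphism $\Gr \Pc \circ_{\Gr \kappa} \Gr\Pc^\antishriek \circ_{\Gr \kappa} \Gr \Pc \to \Gr\Pc$ is \cite[Proposition 5.1.6]{HM12} (rather than Theorem 4.3.1, which is the citation-level detail where you are slightly off). Everything else you spell out — the weight filtration giving right-module cofibrancy, the decomposition $d_b = \delta_0 + \delta_1$ reusing the $T_l$ and the contracting homotopy $h$ from Corollary \ref{cor: cof res iota} — is exactly what the paper intends by ``the same lines.''
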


\begin{proof}
The proof follows the same lines as the two previous ones (Corollaries \ref{cor: cof res pi} and \ref{cor: cof res iota}). 
The fact that $\Gr \Pc \circ_{\Gr \kappa} \Gr\Pc^\antishriek \circ_{\Gr \kappa} \Gr \Pc \to \Gr\Pc$ is a quasi-isomorphism is proved in \cite[Proposition 5.1.6]{HM12}.
\end{proof}

\begin{rem}
The corollaries \ref{cor: cof res pi}, \ref{cor: cof res iota} and \ref{cor: cof res kappa} extend to the context of $\Pc$-algebras in $\ModA$ by adding the assumption that $A$ is a cofibrant as an $\Rf$-module (where $\Rf$ is a $\Kf$-cdga) because Theorem 15.1.A in \cite{bF09a} works with this assumption (and because $\Kf$ is a field).
\end{rem}

\subsection{$\infty$-morphisms between curved algebras}

Given a curved $\Pc$ as in Theorem \ref{thm: Koszul resolution}, we obtain an $\Sb$-cofibrant resolution $\Om \Pc^\antishriek \to \Pc$ and we note $\Pc_\infty \coloneqq \Om \Pc^\antishriek$. 
Let $A$ and $B$ be two $\Pc_\infty$-algebras such that $\Om_\iota \B_\iota A \to A$ is a graded quasi-isomorphism, then we can compare $A$ and $B$ in the homotopy category of $\Pc_\infty$-algebras (instead of the category of $\Pc_\infty$-algebras) by means of the zig-zag
\[
A \xleftarrow{\sim} \Om_\iota \B_\iota A \to B.
\]
By the adjunction presented in Theorem \ref{thm: algebraic bar-cobar adjunction}, the map $ \Om_\iota \B_\iota A \to B$ is characterised by a $\Pc^\antishriek$-coalgebra morphism $\B_\iota A \to \B_\iota B$. 

We therefore define the notion of $\infty$-morphism as follows.

\begin{defn}
Let $A$ and $B$ be two $\Pc_{\infty}$-algebras. An \emph{$\infty$-morphism $A \rightsquigarrow B$} is a $\Pc^{\antishriek}$-coalgebra map
\[
F: \B_\iota A \to \B_\iota B.
\]
By Theorem \ref{thm: algebraic bar-cobar adjunction}, this is equivalent to an algebraic twisting morphism $\B_\iota A \to B$ with respect to $\iota$. 
We denote by $\infty \text{-}\mor(A, B)$ the set of $\infty$-morphisms from $A$ to $B$.
\end{defn}

\section{Cohomology of curved algebras}

André--Quillen cohomology is a cohomology theory first developed for commutative algebras \cite{mA74, dQ70} and then extended to algebras over an operad \cite{GH00, jM11}. 
In this section, we developed the theory for algebras over a curved operad. 
We make some general computations that we use in the next section to compute the André--Quillen cohomology in the context of $\cCxi$-algebras.

\subsection{André--Quillen cohomology}

We adapt the definitions given in \cite{GH00} and \cite{jM11} to the curved setting.

Let $(\Pc, \gamma_\Pc, \eta_\Pc, \varepsilon_\Pc, d_\Pc, \theta_\Pc)$ be a semi-augmented curved operad. 
Let $(A, \gamma_A, \eta_A, d_A)$ be a $\Pc$-algebra (whose underlying gr-dg module is a $\Rf$-module).

\subsubsection{Module over a $\Pc$-algebra}

Let $(M, d_M)$ be a gr-dg $\Rf$-module. 
We define $\Pc(A, M)$ to be the submodule of $\Pc(A\oplus M)$ made of terms linear in $M$, that is
\[
\Pc(A, M) \coloneqq
\bigoplus_n \Pc(n) \otimes_{\Sb_n} \left( A \hotimes \cdots \hotimes \underbrace{M}_{j\text{th position}} \hotimes \cdots \hotimes A\right).
\]
(It is also well-defined when $\Pc$ is a gr-dg $\Sb$-module without a curved operad structure.)

\begin{defn}
\label{def: A-mod over P}
An \emph{$A$-module} $(M, \gamma_M, \iota_M, d_M)$, or \emph{$A$-module over $\Pc$}, is a gr-dg $\Rf$-module $(M, d_M)$ endowed with two maps
\[
\gamma_M : \Pc(A, M) \to M \text{ and } \iota_M : M \to \Pc(A, M)
\]
such that the following diagrams of associativity and unitarity commute
\[
\xymatrix@R=2pt@C=16pt{(\Pc \circ \Pc) (A, M) \cong \Pc (\Pc A, \Pc (A, M)) \ar[rr]^{\hspace{54pt} \id_{\Pc} (\gamma_A, \gamma_M)} \ar[dd]_{\gamma_\Pc(\id_A, \id_M)} & & \Pc (A, M) \ar[dd]^{\gamma_M}\\
& & & \textrm{and} \\
\Pc (A, M) \ar[rr]_{\hspace{14pt} \gamma_M} & & M}
\xymatrix@R=16pt{M \ar[r]^{\hspace{-12pt} \iota_M} \ar[dr]_= & \Pc (A, M) \ar[d]^{\gamma_M}\\
& M.}
\]
We moreover require the following equation to hold: ${d_M}^2 = \gamma_M(\theta_\Pc \otimes \id_M)$. 

A \emph{morphism of $A$-modules} $f : (M, \gamma_M, \iota_M, d_M) \to (N, \gamma_N, \iota_N, d_N)$ is an $\Rf$-modules map $f : (M, d_M) \to (N, d_N)$ such that the following diagrams commutes
\[\begin{tikzcd}
	{\Pc(A, M)} & M \\
	{\Pc(A, N)} & N
	\arrow["{\id_\Pc (\id_A, f)}"', from=1-1, to=2-1]
	\arrow["{\gamma_M}", from=1-1, to=1-2]
	\arrow["{\gamma_N}"', from=2-1, to=2-2]
	\arrow["f", from=1-2, to=2-2]
\end{tikzcd} \quad\text{ and }\quad \begin{tikzcd}
	M & {\Pc(A, M)} \\
	N & {\Pc(A, N).}
	\arrow["{\id_\Pc (\id_A, f)}", from=1-2, to=2-2]
	\arrow["{\iota_M}", from=1-1, to=1-2]
	\arrow["f"', from=1-1, to=2-1]
	\arrow["{\iota_N}"', from=2-1, to=2-2]
\end{tikzcd}\]

We denote by $\ModAP$ (resp. by $\ModAPz$) the category of $A$-modules over the operad $\Pc$ in which morphisms are morphisms of $A$-modules (resp. in which morphisms are degree 0 gr-dg morphisms of $A$-modules). 
\end{defn}

\begin{example}
Let $\Kf =  \Rf = \Rb$, $\Pc= \cCx$ and $A$ be a $\cCx$-algebra. Then $A$ is a Lie algebra (with a bracket of cohomological degree  $-1$) and a $\Cb$-module such that the Lie bracket is $\Cb$-linear. An $A$-module is a $\Cb$-module endowed with a $\Cb$-linear action of the Lie algebra $A$. 
\end{example}

Let's now construct what will be the free $A$-module on a gr-dg $\Rf$-module $(N, d_N)$. We define the coequalizer in $\grModR$
\[
A\hotimes^{\Pc} N \coloneqq \coeq\left(\xymatrix@C=50pt{\Pc (\Pc (A), N) \ar@<0.5ex>[r]^{\widetilde{\gamma_\Pc}} \ar@<-0.5ex>[r]_{\overline{\id_\Pc(\gamma_A, \id_N)}} & \overline{\Pc (A, N)}}\right),
\]
where
\[
\overline{\Pc(A, N)} \coloneqq \Pc(A, N)/\left(\Pc(A, N) \cap \left(\im\left(\eta_\Pc \otimes ({d_{A\oplus N}}^2) - \theta_\Pc \otimes \id_{A\oplus N}\right)\right)\right)
\]
and
\[
\widetilde{\gamma_\Pc} : \Pc (\Pc (A), N) \hookrightarrow \Pc( \Pc A, \Pc(A, N)) \xrightarrow{\gamma_\Pc (\id_{A}, \id_{N})} \Pc (A, N) \twoheadrightarrow \overline{\Pc(A, N)}.
\]
With respect to the dg situation, the quotient $\overline{\Pc(A, N)}$ appears to get the equation ${d_M}^2 = \gamma_M(\theta_\Pc \otimes \id_M)$ that an $A$-module $M=A\hotimes^{\Pc} N$ have to satisfy. 
The predifferentials $d_{A\hotimes^\Pc N}$  is induced by the predifferentials on $\Pc$, $A$ and $N$.

As in the classical case (see for example Lemma 12.3.3 in \cite{LV12}), the $A$-module structure is obtained from the map
\[
\Pc (A, \Pc(A, N)) \rightarrowtail \Pc ( \Pc A, \Pc(A, N)) \xrightarrow{\gamma_\Pc (\id_A, \id_N)} \Pc (A, N)
\]
and the map
\[
\Pc(A, N) \xrightarrow{\cong} \Pc (A, I\circ N) \xrightarrow{\id_\Pc (\id_A, \eta_\Pc \circ \id_N)} \Pc (A, \Pc(A, N)) 
\]
which pass to the quotient $\Pc(A, N) \twoheadrightarrow A \hotimes^\Pc N$ (by  associativity of $\gamma_\Pc$ and $\gamma_A$, and by the fact that $d_\Pc$ and $d_A$ are  derivations with respect to the composition structures). 

We denote by $(\grModR)^0$ the category of gr-dg modules with morphisms the degree 0 gr-dg morphisms.

\begin{prop}
\label{prop: free AP-mod}
The forgetful functor $U : \ModAPz \to (\grModR)^0$ has a left adjoint, denoted by
\[
(N, d_N) \mapsto (A\hotimes^{\Pc} N, d_{A\hotimes^\Pc N}).
\]
That is, we have an isomorphism of $\Rf$-modules
\[
\Hom_{\ModAPz}\left( A \hotimes^{\Pc} N, M\right) \cong \Hom_{(\grModR)^0} \left( N, U M\right),
\]
for all $N \in (\grModR)^0$ and $M \in \ModAPz$.
\end{prop}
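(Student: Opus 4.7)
The plan is to proceed by direct construction, mimicking the classical free-module argument recalled in \cite[Lemma 12.3.3]{LV12}, with the modification that the curvature equation must be enforced at the level of the underlying module. The two extra ingredients, compared to the dg-operadic case, are the quotient by $\im\bigl(\eta_\Pc \otimes ({d_{A \oplus N}}^2) - \theta_\Pc \otimes \id_{A \oplus N}\bigr)$ appearing in $\overline{\Pc(A, N)}$ and the need to verify that the whole construction descends through both this quotient and the coequalizer.

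First, I would verify that $(A \hotimes^\Pc N, d_{A \hotimes^\Pc N})$ is genuinely an object of $\ModAPz$. The action $\gamma_{A \hotimes^\Pc N}$ and the unit $\iota_{A \hotimes^\Pc N}$ are defined as stated right after the coequalizer, and the usual checks (associativity and unitarity) follow from the associativity of $\gamma_\Pc$ and $\gamma_A$ together with the universal property of the coequalizer, as in the uncurved case. The essential new point is the curvature equation ${d_{A\hotimes^\Pc N}}^2 = \gamma_{A\hotimes^\Pc N}(\theta_\Pc \otimes \id)$. This follows from the definition of $\overline{\Pc(A, N)}$: the predifferential $d_{A\hotimes^\Pc N}$ is induced by $d_\Pc \circ \id_{A \oplus N} + \id_\Pc \circ' d_{A \oplus N}$, so its square on $\Pc(A, N)$ equals $\gamma_\Pc(\theta_\Pc \otimes \id) \circ \id_{A\oplus N} + \id_\Pc \circ' {d_{A \oplus N}}^2$, and the difference with $\gamma_{A\hotimes^\Pc N}(\theta_\Pc \otimes \id)$ is precisely a term lying in the image killed by the quotient.

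Next, I would build the bijection $\Hom_{\ModAPz}(A \hotimes^\Pc N, M) \cong \Hom_{(\grModR)^0}(N, UM)$. Given $f : N \to UM$ of degree $0$ commuting with predifferentials, define $\widehat{f} : \Pc(A, N) \to M$ by $(\mu; a_1, \dots, n_i, \dots, a_k) \mapsto \gamma_M(\mu; a_1, \dots, f(n_i), \dots, a_k)$. I would then check successively that $\widehat{f}$ (i) descends to $\overline{\Pc(A, N)}$ since $M$ itself satisfies ${d_M}^2 = \gamma_M(\theta_\Pc \otimes \id_M)$, so the defining relation of the quotient lies in the kernel of $\widehat{f}$; (ii) coequalises the two parallel arrows $\widetilde{\gamma_\Pc}$ and $\overline{\id_\Pc(\gamma_A, \id_N)}$ by the associativity axiom of $\gamma_M$; and (iii) yields a morphism of $A$-modules, again by the associativity axiom. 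The inverse takes an $A$-module map $\varphi : A \hotimes^\Pc N \to M$ and restricts it through $N \cong I \hotimes N \hookrightarrow \Pc(A, N) \twoheadrightarrow A \hotimes^\Pc N \xrightarrow{\varphi} M$; naturality in $N$ and $M$, as well as the two triangle identities, are immediate by the universal property of the coequalizer.

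The main obstacle will be item (i): checking that the quotient defining $\overline{\Pc(A, N)}$ is compatible with $\widehat{f}$ and with the parallel pair $(\widetilde{\gamma_\Pc}, \overline{\id_\Pc(\gamma_A, \id_N)})$ appearing in the coequalizer. Concretely, one must verify that the operator $\eta_\Pc \otimes {d_{A \oplus N}}^2 - \theta_\Pc \otimes \id_{A \oplus N}$ behaves well with respect to the composition $\gamma_\Pc$, so that $\widetilde{\gamma_\Pc}$ is well-defined at the level of the quotients and $\widehat{f}$ kills it. Both properties ultimately reduce to the identities ${d_A}^2 = \gamma_A(\theta_\Pc \otimes \id_A)$ and ${d_M}^2 = \gamma_M(\theta_\Pc \otimes \id_M)$ together with the fact that $d_\Pc$ is an operadic derivation and $\theta_\Pc$ is closed in $\Pc$. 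Once these compatibilities are unpacked, the adjunction falls out formally as in the classical setting.
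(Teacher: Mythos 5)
Your proposal is correct and follows essentially the same route as the paper: the extension of $f : N \to UM$ is the composite $\Pc(A,N) \to \Pc(A,M) \to A\hotimes^\Pc M \to M$ (your $\widehat{f}$), the inverse is restriction along $N \cong I \circ N \to \Pc(A,N) \twoheadrightarrow A\hotimes^\Pc N$, and the key compatibility is that the curvature relation defining $\overline{\Pc(A,N)}$ is killed because $f$ commutes with the predifferentials and $M$ satisfies ${d_M}^2 = \gamma_M(\theta_\Pc \otimes \id_M)$. You are somewhat more explicit than the paper about verifying that $A\hotimes^\Pc N$ itself satisfies the curvature equation, but the argument is the same.
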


\begin{proof}
To any morphism $f : A\hotimes^\Pc N \to M$, we associate its restriction
\[
N \cong I\circ N \xrightarrow{\eta_\Pc (\id_A, \id_N)} \Pc(A, N) \twoheadrightarrow A \hotimes^\Pc N \xrightarrow{f} M.
\]
In the other direction, given $g : N \to U(M)$, we define the map
\[
\Pc(A, N) \xrightarrow{\id_\Pc(\id_A, g)} \Pc(A, M) \twoheadrightarrow A\hotimes^\Pc M \to M.
\]
This map factors through the quotient map $\Pc(A, N) \twoheadrightarrow A\hotimes^\Pc N$ (because $g$ commutes with $d_N$) to give the wanted map of $A$-modules. 

The two processes are natural linear isomorphisms between the spaces of modules maps. Provided $f$ (resp. $g$) commutes with the predifferential, the same is true for the restriction (resp. extension) since $\gamma_\Pc$, $\gamma_A$ and $\gamma_M$ commute with the predifferentials.
\end{proof}

We also define a quasi-free $A$-module. 
Let $N$ be an $\Rf$-module and $\tau : N \to \Pc(A, N)$ be a degree $-1$ map which induces a gr-differential still denoted $\tau$ on $\Pc(A, N)$. 
Similarly as above, we define the coequalizer in $\grModR$
\[
A\hotimes^{\Pc}_\tau N \coloneqq \coeq\left(\xymatrix@C=50pt{\Pc (\Pc (A), N) \ar@<0.5ex>[r]^{\widetilde{\gamma_\Pc}} \ar@<-0.5ex>[r]_{\overline{\id_\Pc(\gamma_A, \id_N)}} & \overline{\Pc (A, N)}^\tau}\right),
\]
where
\[
\overline{\Pc(A, N)}^\tau \coloneqq \Pc(A, N)/\left(\Pc(A, N) \cap \left(\im\left(\eta_\Pc \otimes (d_A + \tau)^2 - \theta_\Pc \otimes \id_{A\oplus N}\right)\right)\right).
\]
It is endowed with the gr-differential $d_{A\hotimes^{\Pc}_\tau N}$ depending on $d_A$, $d_\Pc$ and $\tau$.

\begin{prop}
\label{prop: quasi-free AP-mod}
For all $N \in \grModR$ and $M \in \ModAP$, we have an isomorphism of $\Rf$-modules
\[
\Hom_{\ModAPz}\left( A \hotimes^{\Pc}_\tau N, M\right) \cong \ker\left( \partial_\tau : \Hom_{\grModR}^0 \left( N, U M\right) \to \Hom_{\grModR}^{1} \left( N, U M\right) \right),
\]
where $\partial_\tau(f) \coloneqq d_M \cdot f - (-1)^{|f|}\gamma_M \cdot (\id_\Pc(\id_A, f)) \cdot \tau$.
\end{prop}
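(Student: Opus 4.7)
The plan is to mimic the proof of Proposition \ref{prop: free AP-mod}, replacing the untwisted predifferential on $\Pc(A, N)$ by the twisted one $d_A + \tau$, and tracking the extra $\tau$-term that appears when expressing commutation with the predifferential.

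First I would construct the two maps of the desired bijection. Given an $A$-module morphism $f \colon A \hotimes^{\Pc}_\tau N \to M$ (a degree $0$ morphism commuting with predifferentials), its restriction
\[ \bar f \colon N \cong I \circ N \xrightarrow{\eta_\Pc(\id_A, \id_N)} \Pc(A, N) \twoheadrightarrow A\hotimes^\Pc_\tau N \xrightarrow{f} M \]
is a degree $0$ graded $\Rf$-linear map. Conversely, given $g \colon N \to UM$ of degree $0$, I would define
\[ \tilde g \colon \Pc(A, N) \xrightarrow{\id_\Pc(\id_A, g)} \Pc(A, M) \twoheadrightarrow A\hotimes^\Pc M \xrightarrow{\gamma_M} M, \]
and check that $\tilde g$ factors through the coequalizer $A \hotimes^{\Pc}_\tau N$. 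Associativity of $\gamma_\Pc, \gamma_A, \gamma_M$ handles the two parallel arrows, exactly as in Proposition \ref{prop: free AP-mod}; the only subtlety is that the relations defining $\overline{\Pc(A,N)}^\tau$ involve $(d_A+\tau)^2$ rather than $d_A^2$, which is compatible because $M$ satisfies ${d_M}^2 = \gamma_M(\theta_\Pc \otimes \id_M)$ and $g$ only needs to be graded, not to commute with differentials, to make the induced map well-defined.

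Next I would verify the two assignments are mutually inverse, which is formal and identical to the untwisted case: $\bar{\tilde g} = g$ follows from the unit axiom of $\Pc$, and $\widetilde{\bar f} = f$ follows from the fact that an $A$-module morphism out of a free-type object is determined by its restriction to generators, together with the $A$-module compatibility of $f$.

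The main point, and essentially the only nontrivial calculation, is to show that $f$ commutes with the predifferentials if and only if $\partial_\tau(\bar f) = 0$. The predifferential $d_{A\hotimes^\Pc_\tau N}$ restricted to (the image of) $N$ equals $d_N$ plus the twist $\tau$, where $\tau$ lands in $\Pc(A,N)$. So on $n \in N$,
\[ f\bigl(d_{A\hotimes^\Pc_\tau N}(n)\bigr) - d_M f(n) = \gamma_M \cdot (\id_\Pc(\id_A, \bar f)) \cdot \tau(n) - d_M \bar f(n), \]
which vanishes precisely when $\partial_\tau(\bar f)(n) = 0$ (with $|\bar f|=0$, the sign $(-1)^{|\bar f|} = 1$). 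Because $f$ is an $A$-module morphism and both sides are $A$-module derivations over the same map on generators, the vanishing on $N$ extends to all of $A\hotimes^\Pc_\tau N$. Conversely, any $g$ with $\partial_\tau(g) = 0$ extends to a morphism commuting with predifferentials by the same argument.

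The main obstacle I expect is bookkeeping: making precise in what sense $d_{A\hotimes^\Pc_\tau N}$ restricts to $d_N + \tau$ on the image of $N$ (since the image of $N$ inside $A\hotimes^\Pc_\tau N$ is not preserved by the predifferential, the twist $\tau$ produces terms in $\Pc(A,N)$ of higher arity), and checking the sign convention so that the formula for $\partial_\tau$ matches exactly. The curvature compatibility is automatic because the relations defining $\overline{\Pc(A, N)}^\tau$ were chosen so that $(d_A + \tau)^2$ equals $\theta_\Pc \otimes \id$, which mirrors ${d_M}^2 = \gamma_M(\theta_\Pc \otimes \id_M)$ on $M$ and therefore ensures the extension $\tilde g$ is well-defined on the coequalizer.
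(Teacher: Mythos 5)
Your proposal is correct and follows essentially the same route as the paper, which simply adapts the proof of Proposition \ref{prop: free AP-mod} with the gr-differential $\tau$ accounting for the only differences: restriction and extension along the unit, well-definedness on the coequalizer via the $(d_A+\tau)^2$ relation, and the observation that commuting with predifferentials translates into $\partial_\tau(\bar f)=0$. One small wording slip: the predifferential of $A\hotimes^\Pc_\tau N$ restricted to the image of $N$ is governed by $\tau$ alone (any ``internal'' part of the differential being a component of $\tau$ landing in $I\circ N\cong N$), not by ``$d_N$ plus $\tau$'' --- which is why $\partial_\tau$ contains no separate $d_N$ term, and your displayed computation is consistent with this correct reading.
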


\begin{proof}
The proof is similar to the proof of Proposition \ref{prop: free AP-mod}. The only differences come from the gr-differential $\tau$.
\end{proof}
 
Given two $A$-modules over a curved operad $\Pc$, $(N, d_N, \gamma_N)$ and $(M, d_M, \gamma_M)$, the graded space of maps $\Hom_{\ModAP}(M, N)$ is naturally endowed with a predifferential $\partial$ defined by $\partial(f) \coloneqq d_N \cdot f - (-1)^{|f|}f \cdot d_M$. 
Gr-dg morphisms coincide with maps in the kernel of $\partial$. 

\begin{lem}
\label{lem: partial differential for Amodules}
Let $(N, d_N, \gamma_N)$ and $(M, d_M, \gamma_M)$ be two $A$-modules over a curved operad $\Pc$.
The predifferential $\partial$ on the graded $\Rf$-module $\Hom_{\ModAP}(N, M)$ is in fact a differential, that is $\partial^2 = 0$.
\end{lem}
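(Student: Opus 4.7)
The plan is a direct computation that reduces to the curvature compatibility on the two $A$-modules. First, I would expand
\[
\partial^2(f) = d_M\cdot\partial(f) - (-1)^{|\partial f|}\partial(f)\cdot d_N
\]
using $|\partial f| = |f|-1$ and $\partial(f) = d_M\cdot f - (-1)^{|f|} f\cdot d_N$. The two mixed terms $d_M\cdot f\cdot d_N$ carry opposite signs (namely $-(-1)^{|f|}$ and $-(-1)^{|f|-1}(-1)^{|f|} = (-1)^{|f|}$) and cancel, leaving
\[
\partial^2(f) = {d_M}^2\cdot f - f\cdot {d_N}^2.
\]

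Next, I would invoke the curved $A$-module axiom from Definition \ref{def: A-mod over P}, namely ${d_M}^2 = \gamma_M(\theta_\Pc\otimes\id_M)$ and ${d_N}^2 = \gamma_N(\theta_\Pc\otimes\id_N)$. Since the curvature $\theta_\Pc : I \to F_1\Pc(1)$ sits in arity $1$, the $A$-module morphism condition applied to $\theta_\Pc \otimes \id_N$ (the first commutative square of Definition \ref{def: A-mod over P}) gives
\[
f\cdot\gamma_N(\theta_\Pc\otimes\id_N) = \gamma_M\cdot\id_\Pc(\id_A,f)\cdot(\theta_\Pc\otimes\id_N) = \gamma_M(\theta_\Pc\otimes\id_M)\cdot f.
\]
Substituting this into the expression for $\partial^2(f)$ yields $\partial^2(f)=0$, which is the claim.

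There is no genuine obstacle here; the only thing to be careful about is the sign bookkeeping in the first step, which works out cleanly precisely because $\partial$ has odd degree $-1$, making the two mixed cross-terms cancel. The conceptual content is simply that an $A$-module morphism intertwines the arity-$1$ action of $\theta_\Pc$, which is exactly the obstruction to $\partial^2=0$ in the curved setting.
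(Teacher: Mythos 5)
Your proof is correct and follows essentially the same route as the paper's: reduce $\partial^2(f)$ to ${d_M}^2\cdot f - f\cdot{d_N}^2$, replace each square by the curvature action via the $A$-module axiom, and cancel the two terms using the fact that an $A$-module morphism intertwines the arity-$1$ action of $\theta_\Pc$ (with no Koszul sign since $\theta_\Pc$ has even degree). You are merely more explicit than the paper about the cancellation of the mixed terms and about which square of Definition \ref{def: A-mod over P} supplies the intertwining.
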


\begin{proof}
Since $N$ and $M$ are $A$-modules, we have for any $f : N\to M$ of $A$-modules
\[
\partial^2(f) = {d_N}^2 \cdot f - f\cdot {d_M}^2 = \gamma_N(\theta_\Pc \otimes f)-f\cdot \gamma_M(\theta_\Pc \otimes \id_M) = 0.
\]
\end{proof}

\begin{rem}
This lemma is not true when $(N, d_N)$ and $(M, d_M)$ are only assumed to be gr-dg $\Rf$-modules.
\end{rem}

\begin{lem}
\label{lem: U creates limits and colimits}
 The forgetful functor $\ModAPz \to (\grModR)^0$ creates all limits and colimits, and $\ModAPz$ is an additive subcategory of $(\grModR)^0$.
\end{lem}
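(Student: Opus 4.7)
The plan is to prove the three statements — existence of the additive structure, creation of limits, creation of colimits — more or less in that order, all by exploiting that the structural data of an $A$-module ($\gamma_M$, $\iota_M$, $d_M$ and the equation ${d_M}^2=\gamma_M(\theta_\Pc\otimes \id_M)$) is prescribed by a finite list of equations whose ingredients are $\Rf$-linear, and that the functor $\Pc(A,-)$ is itself $\Rf$-linear in its second slot. The additivity is immediate: the zero $\Rf$-module acquires a unique $A$-module structure (and is both initial and terminal), and a binary biproduct $M\oplus N$ gets componentwise structure maps, which are readily checked to turn $M\oplus N$ into the biproduct in $\ModAPz$; both are preserved and reflected by $U$.

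For creation of limits, I would take a small diagram $D\colon \mathcal I\to\ModAPz$, form the limit $L$ of $UD$ in $(\grModR)^0$ with projections $p_i\colon L\to L_i$, and define $\gamma_L\colon\Pc(A,L)\to L$ as the unique map induced by the cone $\{\gamma_{L_i}\cdot\id_\Pc(\id_A,p_i)\colon\Pc(A,L)\to L_i\}_{i\in\mathcal I}$, whose compatibility is guaranteed by the fact that the maps $D(f)$ commute with the structure maps $\gamma_{L_i}$. The same argument (applied to $\iota_L$, the associativity square, the unit triangle, and the equation ${d_L}^2=\gamma_L(\theta_\Pc\otimes\id_L)$) gives the remaining axioms: each equation holds after post-composition with every $p_i$, hence by the uniqueness clause of the universal property it holds in $L$. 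Finally, if $(M,\gamma_M)$ is any $A$-module equipped with a cone $f_i\colon M\to L_i$ of $A$-module maps, the unique $\Rf$-linear $f\colon M\to L$ automatically intertwines $\gamma_M$ and $\gamma_L$ by the same uniqueness argument, showing that $L$ is the limit in $\ModAPz$.

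For creation of colimits the plan is identical in spirit but relies on the key fact that $\Pc(A,-)$ preserves colimits of $\Rf$-modules, since it is a coproduct of functors of the form $M\mapsto\Pc(n)\hotimes_{\Sb_n}(A^{\hotimes(j-1)}\hotimes M\hotimes A^{\hotimes(n-j)})$, each of which is a tensor product with a fixed object in $\ModApg$ and therefore cocontinuous in the gr-dg subcategory. Given a colimit $C=\colim UD$ with coprojections $q_i\colon L_i\to C$, the maps $q_i\cdot\gamma_{L_i}$ form a cocone out of $\{\Pc(A,L_i)\}$, hence factor through $\colim\Pc(A,L_i)\cong\Pc(A,C)$ to yield $\gamma_C$, and the $A$-module axioms transfer from the $L_i$ by the dual universal property; the universal property in $\ModAPz$ then follows exactly as in the limit case. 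The step I expect to be the main obstacle is the verification that $\Pc(A,-)$ is cocontinuous in the complete filtered setting: because $\hotimes$ involves a completion, one has to check that the colimits used in the construction of $\Pc(A,C)$ are already complete, which is true here because they are computed degree by degree in $\Nb$-filtration and in weight, and the relevant filtered quotients are preserved by tensor products over $\Rf$.
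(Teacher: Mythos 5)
Your proposal is correct and follows essentially the same route as the paper: additivity via componentwise structure on direct sums, limits via the universal property of the limit cone (the paper phrases this through products, but it is the same argument), and colimits via the key fact that $M\mapsto\Pc(A,M)$, being a coproduct of functors of the form $-\hotimes X$, preserves colimits. Regarding the completeness worry you flag at the end, the cleanest justification is simply that the monoidal structure on the complete filtered modules is closed, so $-\hotimes X$ is a left adjoint and preserves all colimits; no degree-by-degree analysis is needed.
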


\begin{proof}
Because the direct sum and the tensor product preserve colimits in $(\grModR)^0$, it  follows that $M \mapsto \Pc(A, M)$ preserves colimits in $(\grModR)^0$. 
If $M : I \to \ModAPz$ is any diagram, the colimit in $(\grModR)^0$, $\colim_I M$, acquires a map
\[
\Pc(A, \colim_I M) \cong \colim_I \Pc(A, M) \to \colim_I F_\Pc(A, M) \to \colim_I M.
\]
Because ${d_A}^2 = \gamma_A(\theta_\Pc, -)$ and ${d_{M(i)}}^2 = \gamma_{M(i)}(\theta_\Pc, -)$ for all $i \in I$, the same equation is satisfied by $\colim_I M$. 
This endows $\colim_I M$ with an $A$-module structure over $\Pc$. With this structure, $\colim_I M$ becomes the colimit in $\ModAPz$. 
The case of limits can be treated in a similar way since the limit in $(\grModR)^0$ is a sub-$\Rf$-module of the direct product of the modules involved and $M \mapsto \Pc(A, M)$ preserves products. 
It follows that the forgetful functor creates all limits and colimits. 

Then, the category $\ModAPz$ is additive since the direct sum of two objects is made in $(\grModR)^0$.
\end{proof}

\begin{lem}
\label{lem: moduleAP and monad}
The category $\ModAPz$ is isomorphic to the category of algebras over the monad $U(A\hotimes^{\Pc} -)$. 
\end{lem}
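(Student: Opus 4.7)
The plan is to recognise the statement as an instance of the strict (Beck) monadicity theorem, applied to the free/forgetful adjunction $A\hotimes^{\Pc}- \dashv U$ established in Proposition \ref{prop: free AP-mod}.

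First, I would assemble the formal setup. The adjunction of Proposition \ref{prop: free AP-mod} automatically produces a monad $T := U(A\hotimes^{\Pc}-)$ on $(\grModR)^0$, whose unit and multiplication come from the unit and counit of the adjunction. There is then a canonical comparison functor
\[
K : \ModAPz \longrightarrow \mathsf{Alg}_T,
\qquad
(M,\gamma_M,\iota_M,d_M) \longmapsto \bigl(UM,\,U(\epsilon_M)\bigr),
\]
where $\epsilon_M : A\hotimes^{\Pc}UM \to M$ is the counit at $M$. The claim is that $K$ is an isomorphism of categories.

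Next, I would invoke the strict version of Beck's monadicity theorem: the comparison functor $K$ is an isomorphism of categories if and only if $U$ creates coequalizers of $U$-split parallel pairs. By Lemma \ref{lem: U creates limits and colimits}, $U$ creates \emph{all} colimits, hence in particular creates coequalizers of $U$-split pairs, which verifies the hypothesis of Beck's theorem and yields the asserted isomorphism.

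The one part that requires care, and which I view as the main (though entirely routine) bookkeeping obstacle, is unpacking the translation between the two descriptions of the structure. One must check that the $T$-algebra structure map on $UM$ corresponds, under the coequalizer presentation of $A\hotimes^{\Pc}N$, to the pair $(\gamma_M,\iota_M)$ satisfying the associativity and unitarity diagrams of Definition \ref{def: A-mod over P}, and that the curvature constraint ${d_M}^2=\gamma_M(\theta_\Pc\otimes\id_M)$ is automatically satisfied since it is already built into the quotient $\overline{\Pc(A,N)}$ defining the free $A$-module. Once this dictionary is written out, both Beck's theorem and Lemma \ref{lem: U creates limits and colimits} apply verbatim and conclude the proof.
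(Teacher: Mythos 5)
Your proof is correct and follows essentially the same route as the paper: the paper's own proof is a one-line appeal to Beck's monadicity theorem together with Lemma \ref{lem: U creates limits and colimits}, exactly as you do. Your additional remarks on the comparison functor and the translation of the curvature constraint are sound and in fact spell out more detail than the paper provides.
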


\begin{proof}
This follows from Lemma \ref{lem: U creates limits and colimits} and from Beck's Theorem (see \cite[VI, \S 7]{ML98}).
\end{proof}

Let $A$ be a curved associative algebra. By making explicit the notion of $A$-modules over $\cAs$, we get that it coincides, for an $A$-module $M$, with the following data:
\[
\gamma_l : A \hotimes M \to M \text{ and } \gamma_r : M \hotimes A \to M,
\]
satisfying the usual associativity relations and such that
\[
{d_M}^2 = \gamma_l(\theta_A\otimes \id_M) - \gamma_r(\id_M \otimes \theta_A) = [\theta_A, -].
\]
It is natural to extract from this definition the notions left-$A$-module and right-$A$-module. The gr-dg module $M$ is a left-$A$-module if there exists a map $\gamma_l : A\hotimes M \to M$ satisfying the usual associativity relation such that ${d_M}^2 = \gamma_l(\theta_A \otimes \id_M) = \theta_A \cdot -$. 
The notion of right-$A$-module is similar but a minus sign appears in the relation that $d_M$ satisfies: ${d_M}^2 = -( -\cdot \theta_A)$. 
These notions already appear in the literature, see for example \cite[Section 4 of Chapter 5]{PP05}. \\

We now define the enveloping algebra of a $\Pc$-algebra $A$. It is a curved associative algebra. 

The coequaliser $U_\Pc(A) \coloneqq \coeq\left(\xymatrix@C=50pt{\Pc (\Pc (A), \Rf) \ar@<0.5ex>[r]^{\widetilde{\gamma_\Pc}} \ar@<-0.5ex>[r]_{\id_\Pc(\gamma_A, \id_\Rf)} & \Pc (A, \Rf)}\right)$ is endowed with a filtration coming from the one on $\Pc$ and $A$, and a multiplication coming from the map
\[
\Pc (A, \Rf) \otimes \Pc (A, \Rf) \cong \Pc (A, \Pc (A, \Rf)) \to \Pc (A, \Rf).
\]
This multiplication is associative since $\gamma_\Pc$ and $\gamma_A$ are. It has a unit coming from the map $\Rf \to \Pc(A, \Rf)$ and a predifferential $d_{U_\Pc(A)}$ induced by the predifferentials $d_\Pc$ and $d_A$ and the differential $d_\Rf$. 
The element $\theta_{U_\Pc(A)} \coloneqq \overline{\theta_\Pc \otimes 1_\Rf} \in U_\Pc(A)$ has homological degree $-2$ and filtration degree 1 (as $\theta_\Pc$) and we have ${d_{U_\Pc(A)}}^2 = [\theta_{U_\Pc(A)}, -]$ because $\Pc$ and $A$ are curved.

\begin{defn}
Let $A$ be a $\Pc$-algebra. The \emph{enveloping algebra} of the $\Pc$-algebra $A$ is the curved associative algebra 
\[
U_\Pc(A) \coloneqq \left(\coeq\left(\xymatrix@C=50pt{\Pc (\Pc (A), \Rf) \ar@<0.5ex>[r]^{\widetilde{\gamma_\Pc}} \ar@<-0.5ex>[r]_{\id_\Pc(\gamma_A, \id_\Rf)} & \Pc (A, \Rf)}\right), d_{U_\Pc(A)}, \theta_{U_\Pc(A)}\right).
\]
\end{defn}

\begin{prop}
\label{prop: APmod and left mod over the enveloping algebra}
The category of $A$-modules over $\Pc$ is isomorphic to the category of left-modules over the (curved associative) enveloping algebra $U_\Pc(A)$.
\end{prop}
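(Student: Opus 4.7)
The plan is to construct explicit functors in both directions between $\ModAP$ and the category of left $U_\Pc(A)$-modules (in the curved sense), and then check that they are mutually inverse. The starting observation is that the bifunctor $\Pc(-,-)$ is $\Rf$-linear in its second slot, so there is a natural isomorphism $\Pc(A,\Rf) \hotimes_\Rf M \cong \Pc(A,M)$ for every $\Rf$-module $M$. Under this identification the coequalizer defining $U_\Pc(A)$ becomes visibly compatible with the coequalizer defining $A \hotimes^\Pc M$.

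First I would go from $A$-modules to $U_\Pc(A)$-modules. Given $(M,\gamma_M,\iota_M,d_M)\in\ModAP$, I consider the composite
\[
\Pc(A,\Rf)\hotimes_\Rf M \xrightarrow{\cong} \Pc(A,M) \xrightarrow{\gamma_M} M.
\]
The associativity of $\gamma_M$ with respect to $\gamma_\Pc$ and $\gamma_A$ (the first axiom in Definition \ref{def: A-mod over P}) translates precisely into the statement that this composite equalizes the two parallel arrows $\widetilde{\gamma_\Pc}$ and $\id_\Pc(\gamma_A,\id_\Rf)$. Therefore it factors through $U_\Pc(A)\hotimes_\Rf M \to M$. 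Checking the unit and associativity axioms of a left action is routine: the unit axiom comes from $\iota_M$ and the second coherence diagram; associativity comes from iterating the first coherence diagram together with the multiplication on $U_\Pc(A)$ (which itself is built from $\gamma_\Pc$ and $\gamma_A$). The curvature condition ${d_M}^2=\gamma_M(\theta_\Pc\otimes\id_M)$ becomes ${d_M}^2=\theta_{U_\Pc(A)}\cdot-$ via the identification $\theta_{U_\Pc(A)}=\overline{\theta_\Pc\otimes 1_\Rf}$.

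Conversely, given a curved left $U_\Pc(A)$-module $(M,\rho_M,d_M)$ (so in particular ${d_M}^2=\theta_{U_\Pc(A)}\cdot-$), I would define
\[
\gamma_M:\Pc(A,M)\cong\Pc(A,\Rf)\hotimes_\Rf M \twoheadrightarrow U_\Pc(A)\hotimes_\Rf M \xrightarrow{\rho_M} M,
\]
and $\iota_M$ from the unit of $U_\Pc(A)$. Reversing the argument above, the axioms of a curved left $U_\Pc(A)$-module translate into those of an $A$-module over $\Pc$. That these two assignments are mutually inverse is immediate because in each direction we postcompose (or precompose) with the same canonical isomorphism $\Pc(A,M)\cong\Pc(A,\Rf)\hotimes_\Rf M$. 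Morphisms are handled in the same way: a map commutes with all $\gamma_M$ if and only if it commutes with the induced $\rho_M$.

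The main obstacle I anticipate is bookkeeping around the curvature quotient. The coequalizer for $A\hotimes^\Pc N$ is taken inside $\overline{\Pc(A,N)}$, while the coequalizer defining $U_\Pc(A)$ is taken in $\Pc(A,\Rf)$ without any overline. The compatibility amounts to showing that, once $U_\Pc(A)$ is viewed as a curved associative algebra with curvature $\theta_{U_\Pc(A)}=\overline{\theta_\Pc\otimes 1_\Rf}$, the relation killed in $\overline{\Pc(A,N)}$ is exactly the one forced on any curved left $U_\Pc(A)$-module by the equality ${d_M}^2=\theta_{U_\Pc(A)}\cdot-$. Equivalently, via Lemma \ref{lem: moduleAP and monad} and a monadic argument, it suffices to observe that the monad $U(A\hotimes^\Pc -)$ on $(\grModR)^0$ agrees with the monad $M\mapsto U_\Pc(A)\hotimes_\Rf M$ restricted to those $M$ satisfying the curvature constraint; the identification of these two monads is the core technical content, and once in hand the isomorphism of categories follows formally.
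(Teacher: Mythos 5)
Your proposal is correct and takes essentially the same approach as the paper: both rest on the identification $U_\Pc(A)\hotimes_\Rf M \cong \coeq\bigl(\Pc(\Pc(A),M)\rightrightarrows\Pc(A,M)\bigr)$ coming from linearity of $\Pc(A,-)$ in the module slot, after which associativity of $\gamma_M$ is exactly factorization through the coequalizer and the two curvature conditions ${d_M}^2=\gamma_M(\theta_\Pc\otimes\id_M)$ and ${d_M}^2=\theta_{U_\Pc(A)}\cdot-$ translate into one another. The worry in your last paragraph about the quotient $\overline{\Pc(A,N)}$ is unnecessary for this statement, since the proposition concerns structure maps $\Pc(A,M)\to M$ on arbitrary modules rather than the free-module construction, so no monadic detour is needed.
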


\begin{proof}
We have
\[
U_\Pc(A) \hotimes M \cong \coeq\left(\xymatrix@C=50pt{\Pc (\Pc (A), M) \ar@<0.5ex>[r]^{\widetilde{\gamma_\Pc}} \ar@<-0.5ex>[r]_{\id_\Pc(\gamma_A, \id_M)} & \Pc (A, M)}\right) \eqqcolon \coeq
\]
therefore a left-$U_\Pc(A)$-module structure $\gamma_l$ gives a map $\gamma_M : \Pc(A, M) \twoheadrightarrow \coeq \xrightarrow{\gamma_l} M$ which endows $M$ with a structure of $A$-module over $\Pc$ (the associativity diagram is satisfied by the fact that the map factors through $\coeq$). 
The equality ${d_M}^2 = \gamma_l(\theta_{U_\Pc(A)}, -)$ implies the equality ${d_M}^2 = \gamma_M(\theta_\Pc, -)$.

Conversely, a structure $\gamma_M$ of $A$-module over $\Pc$ is precisely a map which factors through $\coeq$ to give a map $\gamma_l$, and which satisfies ${d_M}^2 = \gamma_l(\theta_{U_\Pc(A)}, -)$ (because ${d_M}^2 = \gamma_M(\theta_\Pc, -)$ and associativity of $\gamma_A$). 
This endows $M$ with a left-$U_\Pc(A)$-module structure.
\end{proof}

\begin{rem}
Compared with the situation when there is no curvature, we emphasise that in the curved setting the free $A$-module on $\Rf$ is a quotient of the enveloping algebra $U_\Pc(A)$.
\end{rem}

\begin{prop}
For any curved operad $\Pc$, the enveloping algebra construction provides a functor
\[
U_\Pc : \Pc\textsf{-Alg} \to \cuAs\textsf{-Alg},
\]
where $\cuAs$ is the operad encoding curved unital associative algebras.
\end{prop}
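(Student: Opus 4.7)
The plan is to split the verification into two parts: first, confirm that $U_\Pc(A)$ is indeed a $\cuAs$-algebra for each $\Pc$-algebra $A$, and second, construct $U_\Pc(f)$ for every $\Pc$-algebra morphism $f:A\to B$ and check functoriality.

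For the object level, almost everything has been established in the construction preceding the statement. The coequalizer $U_\Pc(A)$ is a complete filtered unital associative algebra in $\Rf$-modules, with multiplication coming from the isomorphism $\Pc(A,\Rf)\hotimes \Pc(A,\Rf)\cong \Pc(A,\Pc(A,\Rf))$ followed by composition via $\gamma_\Pc$, unit $\Rf\to \Pc(A,\Rf)\twoheadrightarrow U_\Pc(A)$, predifferential $d_{U_\Pc(A)}$ induced by $d_\Pc$, $d_A$ and $d_\Rf$, and distinguished element $\theta_{U_\Pc(A)}=\overline{\theta_\Pc\otimes 1_\Rf}$ which sits in filtration degree $1$ (since $\theta_\Pc\in F_1\Pc$) and homological degree $-2$ and satisfies ${d_{U_\Pc(A)}}^2=[\theta_{U_\Pc(A)},-]$. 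To conclude that $U_\Pc(A)$ is a $\cuAs$-algebra in the sense of the paper, it remains only to observe that $\theta_{U_\Pc(A)}$ is closed, which is immediate from $d_\Pc(\theta_\Pc)=0$ and the definition of $d_{U_\Pc(A)}$.

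For the morphism level, let $f:A\to B$ be a morphism of $\Pc$-algebras. The map $\Pc(f,\id_\Rf):\Pc(A,\Rf)\to \Pc(B,\Rf)$ is a well-defined filtered morphism of graded $\Rf$-modules, and it intertwines the two parallel arrows defining the coequalizers $U_\Pc(A)$ and $U_\Pc(B)$: the compatibility with $\widetilde{\gamma_\Pc}$ is naturality of $\gamma_\Pc$ in its arguments, and the compatibility with $\overline{\id_\Pc(\gamma_A,\id_\Rf)}$ versus $\overline{\id_\Pc(\gamma_B,\id_\Rf)}$ is exactly the equation $f\cdot\gamma_A=\gamma_B\cdot(\id_\Pc\circ f)$ expressing that $f$ is a $\Pc$-algebra morphism. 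By the universal property of coequalizers we obtain the desired $\Rf$-module map $U_\Pc(f):U_\Pc(A)\to U_\Pc(B)$.

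It remains to verify that $U_\Pc(f)$ is a $\cuAs$-algebra morphism and that the assignment $f\mapsto U_\Pc(f)$ is functorial. Preservation of the unit is immediate from naturality of $\Rf\to\Pc(-,\Rf)$; preservation of the multiplication uses naturality of the isomorphism $\Pc(-,\Rf)\hotimes\Pc(-,\Rf)\cong\Pc(-,\Pc(-,\Rf))$ together with naturality of $\gamma_\Pc$; preservation of the predifferential reduces to $f\cdot d_A=d_B\cdot f$ and to the fact that $d_\Pc$ does not involve $A$; and preservation of the curvature holds because $\Pc(f,\id_\Rf)$ fixes the element $\theta_\Pc\otimes 1_\Rf$ (it has no $A$-inputs at all), so $U_\Pc(f)(\theta_{U_\Pc(A)})=\theta_{U_\Pc(B)}$. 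Finally, $U_\Pc(\id_A)=\id_{U_\Pc(A)}$ and $U_\Pc(g\cdot f)=U_\Pc(g)\cdot U_\Pc(f)$ follow at once from the universal property. There is no genuine obstacle here: the argument is entirely formal, the only point requiring minimal care being the naturality bookkeeping needed to descend the structure maps to the coequalizer.
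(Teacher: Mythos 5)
Your proof is correct and follows essentially the same route as the paper: the morphism $U_\Pc(f)$ is induced on the coequalizers by $\id_\Pc(f,\id_\Rf)$, and compatibility with the multiplication, unit, predifferential and curvature is checked to hold by construction/naturality. You merely spell out in more detail the checks that the paper compresses into ``by construction'' (including the closedness of $\theta_{U_\Pc(A)}$, which indeed follows from $d_\Pc(\theta_\Pc)=0$).
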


\begin{proof}
Let $B \to A$ be a morphism of $\Pc$-algebras. The map $U_\Pc(f) : U_\Pc(B) \to U_\Pc(A)$ induced by $\Pc(B, \Rf) \xrightarrow{\id_\Pc(f, \id_{\Rf})} \Pc(A, \Rf)$ is a map of unital associative algebras. 
This map commutes with the predifferentials and preserves the curvature by construction.
\end{proof}

Let $f : B \to A$ be a morphism of $\Pc$-algebras and $(N, \gamma_N) \in \ModBP$. 
The composite
\begin{multline*}
\Pc (A, \Pc (B, N)) \xrightarrow{\id_{\Pc} (\id_{A}, \id_{\Pc} (f, \id_{N}))} \Pc (A, \Pc (A, N)) \rightarrowtail (\Pc \hcirc \Pc)(A, N)\\
\xrightarrow{\gamma_\Pc (\id_{A}, \id_{N})} \Pc (A, N) \twoheadrightarrow A\hotimes^{\Pc} N
\end{multline*}
factors through the quotient $A\hotimes^{\Pc} (B\hotimes^{\Pc} N)$ to produce a map $\rho_l : A\hotimes^{\Pc} (B\hotimes^{\Pc} N) \to A\hotimes^\Pc N$. 
Similarly, the composite
\[
\Pc (A, \Pc (B, N)) \xrightarrow{\id_{\Pc} (\id_{A}, \gamma_{N})} \Pc (A, N) \twoheadrightarrow A\hotimes^{\Pc} N,
\]
induces a second map $\rho_r : A\hotimes^{\Pc} (B\hotimes^{\Pc} N) \to A\hotimes^\Pc N$. 
Using these two maps, we define the $A$-module $A\hotimes_{B}^{\Pc} N$ as the following coequalizer
\[
A\hotimes_{B}^{\Pc} N \coloneqq \coeq\left(\xymatrix{A\hotimes^{\Pc} (B\hotimes^{\Pc} N) \ar@<0.5ex>[r]^{\quad \rho_l} \ar@<-0.5ex>[r]_{\quad \rho_r} & A\hotimes^{\Pc} N}\right).
\]

\begin{prop}
\label{prop: extension and restriction of scalars}
Let $f : B \to A$ be a morphism of $\Pc$-algebras. The functor of restriction of scalars
\[
f^* : \ModAP \to \ModBP
\]
admits a left adjoint
\[
f_! \coloneqq A\hotimes^\Pc_B - : \ModBP \to \ModAP.
\]
We mean that we have natural isomorphisms of gr-dg $\Rf$-modules
\[
\left(\Hom_{\ModAP}\left( f_!(N), M\right), \partial\right) \cong \left(\Hom_{\ModBP}\left( N, f^*(M)\right), \partial\right),
\]
for any $A$-module $M$ and any $B$-module $N$. 

The same functors restrict to the kernels of $\partial$ to provide isomorphisms of $\Rf$-modules
\[
\Hom_{\ModAPz}\left( f_!(N), M\right) \cong \Hom_{\ModBPz}\left( N, f^*(M)\right).
\]
Moreover, for any gr-dg $\Rf$-module $L$ and map $\tau : L \to \Pc(B, L)$, we have the isomorphism of gr-dg $\Rf$-modules
\[
A\hotimes^\Pc_B(B\hotimes^\Pc_\tau L) \cong A\hotimes^\Pc_{\tau'} L,
\]
where $\tau'$ is the composite $L \xrightarrow{\tau} \Pc(B, L) \xrightarrow{\id_\Pc(f, \id_L)} \Pc(A, L)$.
\end{prop}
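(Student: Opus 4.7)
The plan is to reduce the statement to the universal property of the coequalizer defining $A\hotimes^\Pc_B N$ combined with the free and quasi-free representability results already established in Propositions \ref{prop: free AP-mod} and \ref{prop: quasi-free AP-mod}. First, unwinding the coequalizer, a morphism $f_!(N) \to M$ of $A$-modules is the same as a morphism $g : A\hotimes^\Pc N \to M$ in $\ModAPz$ satisfying $g \cdot \rho_l = g \cdot \rho_r$. By Proposition \ref{prop: free AP-mod} applied to the underlying gr-dg $\Rf$-module of $N$, such a $g$ corresponds to a degree $0$ gr-dg map $\bar g : N \to U(M)$. I would then translate the equality $g \cdot \rho_l = g \cdot \rho_r$, using the explicit formulas for $\rho_l$ (which factors through $\id_\Pc(f, \id_N)$) and $\rho_r$ (which factors through $\gamma_N$), into the statement that $\bar g$ intertwines the $B$-action on $N$ with the restricted $B$-action on $f^*(M)$ induced by $f$, i.e. that $\bar g$ lies in $\Hom_{\ModBPz}(N, f^*(M))$. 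Naturality in $N$ and $M$ is immediate from the construction.

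To upgrade this to the full gr-dg isomorphism (before passing to the kernels of $\partial$), I would repeat the argument forgetting the commutation with predifferentials: the same coequalizer and freeness reasoning work in the graded setting and produce a natural isomorphism of graded $\Rf$-modules. It then remains to check that the predifferential $\partial$ on each side corresponds under the bijection; since $\partial(g) = d_M \cdot g - (-1)^{|g|} g \cdot d_{f_!(N)}$ is computed entirely from the structure maps of $M$, $N$, $A$, $B$ and $\Pc$, and since both $f_!(N)$ and $A\hotimes^\Pc N$ are built so that their predifferentials are induced from those of $N$, $A$ and $\Pc$, this compatibility is a direct bookkeeping verification. Restricting to $\ker \partial$ then gives the second displayed isomorphism.

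For the final assertion, I would apply the established adjunction with $N = B\hotimes^\Pc_\tau L$ and note that, by Proposition \ref{prop: quasi-free AP-mod}, morphisms out of $B\hotimes^\Pc_\tau L$ into a $B$-module are the same as graded maps $L \to U$ annihilated by the twisted predifferential $\partial_\tau$. Composing with $f_!$, degree $0$ gr-dg maps $A\hotimes^\Pc_B(B\hotimes^\Pc_\tau L) \to M$ correspond to graded maps $L \to U(M)$ in the kernel of $\partial_{\tau'}$, where $\tau'$ is precisely the image of $\tau$ under $\id_\Pc(f, \id_L)$. By Proposition \ref{prop: quasi-free AP-mod} applied now over $A$, the same data describe morphisms $A\hotimes^\Pc_{\tau'} L \to M$, so Yoneda identifies the two $A$-modules. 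Alternatively, one can produce the isomorphism directly by comparing the two coequalizers in $\grModR$.

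The main technical obstacle I expect lies in the interaction with the curvature: the quotients defining $\overline{\Pc(A,N)}$ and its twisted variant $\overline{\Pc(A,N)}^\tau$ depend on the identity ${d}^2 = \theta_\Pc \otimes \id$, and one must check that these quotients are preserved under base change along $f$. This relies on the fact that $f$ is a strict morphism of $\Pc$-algebras and therefore sends $\theta_\Pc \otimes \id_B$ to $\theta_\Pc \otimes \id_A$ coherently, ensuring that the curvature relations match after applying $f_!$. Once this compatibility is isolated, the universal property arguments above go through without further difficulty.
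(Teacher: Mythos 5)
Your proposal is correct and follows essentially the same route as the paper: the paper constructs the bijection by restricting $h : A\hotimes^\Pc_B N \to M$ along $N \to A\hotimes^\Pc N \twoheadrightarrow A\hotimes^\Pc_B N$ and extending $g : N \to f^*(M)$ via $A\hotimes^\Pc N \to A\hotimes^\Pc M \to M$ (which factors through the quotient precisely because $g$ is $B$-linear — your translation of $g\cdot\rho_l = g\cdot\rho_r$), then checks compatibility with $\partial$ and deduces the last isomorphism from the two adjunctions and the Yoneda lemma, exactly as you do.
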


\begin{proof}
The proof is analog to the proof of Proposition \ref{prop: free AP-mod}. 
To any morphism $h : A\hotimes^\Pc_B N \to M$, we associate its restriction
\[
N \to A \hotimes^\Pc N \twoheadrightarrow A\hotimes^\Pc_B N \xrightarrow{h} M.
\]
In the other direction, given $g : N \to f^*(M)$, we define the map
\[
A\hotimes^\Pc N \xrightarrow{g} A\hotimes^\Pc M \to M.
\]
This maps factors through the quotient map $A\hotimes^\Pc N \twoheadrightarrow A \hotimes^\Pc_B N$ since $g$ is a morphism of $B$-modules to give the wanted map of $A$-modules.

The two processes are natural (graded) linear isomorphisms between the spaces of (graded) modules maps. Moreover, they commute with the differentials $\partial$ (see Lemma \ref{lem: partial differential for Amodules} for the fact that they are differentials) since we consider morphisms compatible with the structures of modules over $\Pc$ (and $\gamma_\Pc$, $\gamma_A$ and $\gamma_M$ commute with the predifferentials).  
Finally, when $h$ (resp. $g$) commutes with the predifferentials, the same is true for the restriction (resp. the extension).

The last isomorphism follows from the adjunctions just shown and from Proposition \ref{prop: free AP-mod} by the Yoneda Lemma.
\end{proof}

\subsubsection{Quasi-free $\Pc$-algebra}

Let $(M, d_M)$ be a gr-dg $\Rf$-module. 
We recall from \cite[Proposition C.35]{BMDC20} that the free $\Pc$-algebra on a gr-dg module $M$ is given by
\[
F_\Pc(M) = \Pc(M)/\left(\im\left(\eta_\Pc \otimes ({d_{M}}^2) - \theta_\Pc \otimes \id_{M}\right)\right)
\]
where $\left(\im\left(\eta_\Pc \otimes ({d_{M}}^2) - \theta \otimes \id_{M}\right)\right)$ is the ideal of $\Pc(M)$ generated by the image of $\eta_\Pc \otimes ({d_{M}}^2) - \theta \otimes \id_{M}$. 

The bar-cobar resolutions are not free $\Pc$-algebras since the predifferential in theses cases isn't internal to the generators. 
We define the notion of quasi-free $\Pc$-algebras in order to include these examples.

\begin{defn}
Let $\Pc$ be a curved operad.
We say that a $\Pc$-algebra $S$ is \emph{quasi-free} when there exists a complete filtered $\Rf$-module $V$ and a predifferential $d : \Pc(V) \to \Pc(V)$ such that
\[
S \cong \left(\Pc(V), \bar d\right)/\left( \im \left( d^2 - \theta_\Pc \otimes \id_V\right)_{|V}\right),
\]
where $\bar d$ is induced by $d$.
\end{defn}

We remark that even if we do not assume that $d$ is a gr-differential in the above definition
, a quasi-free $\Pc$-algebra is nevertheless a gr-dg $\Rf$-module. 
The notion of quasi-free $\Pc$-algebra is different from the notion of free $\Pc$-algebra 
since the predifferential $d$ isn't a priori induced by a map $V \to V$.

\subsubsection{Derivation and Kähler differential forms}

We call a \emph{$\Pc$-algebra over $A$} a $\Pc$-algebra $B$ endowed with a map of $\Pc$-algebras $B \xrightarrow{f} A$.

\begin{defn}
Let $B$ be a $\Pc$-algebra over $A$ and $M$ be an $A$-module. An \emph{$A$-derivation from $B$ to $M$} is a linear map $d : B \to M$ such that the following diagram commutes
\[
\xymatrix@C=2cm@R=0.5cm{\Pc (B) \ar[d]_{\gamma_{B}} \ar[r]^{\hspace{-1cm} \id_{\Pc} \circ' d} & \Pc (B, M) \ar[r]^{\id_{\Pc} \circ (f, \id_{M})} & \Pc (A, M) \ar[d]^{\gamma_{M}}\\
B \ar[rr]_d && M,}
\]
where the infinitesimal composite of morphisms $\circ'$ is defined in \cite[Section 6.1.3]{LV12}. We denote by $\Der_A(B, M)$ (resp. by $\Der_A^0(B, M)$) the set of $A$-derivations from $B$ to $M$ (resp. which moreover have degree 0 and commute with the predifferentials $d_B$ and $d_M$).
\end{defn}

\begin{lem}
Let $B$ be a $\Pc$-algebra over $A$ and $M$ be an $A$-module. 
The predifferential $\partial$ on the graded $\Rf$-module $\Der_A(B, M)$, defined by
\[
\partial(D) = d_M \cdot D - (-1)^{|D|}D \cdot d_B
\]
for $D \in \Der_A(B, M)$, is in fact a differential, that is $\partial^2 = 0$.
\end{lem}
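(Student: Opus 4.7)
The strategy is to mimic the proof of Lemma \ref{lem: partial differential for Amodules}, reducing the vanishing of $\partial^2$ to the combination of the curvature equations satisfied by $B$ and $M$ and the derivation axiom applied to the arity-$1$ element $\theta_\Pc$. First I would expand $\partial^2(D)$ term by term using $|d_M| = |d_B| = -1$ and $|\partial(D)| = |D|-1$: a direct calculation gives
\[
\partial^2(D) \;=\; d_M^2 \cdot D \;-\; D \cdot d_B^2,
\]
the two cross terms cancelling because of the shift in sign. At this point I would substitute the defining relations of the curved structures, namely $d_B^{\,2} = \gamma_B \cdot (\theta_\Pc \otimes \id_B)$ (which holds because $B$ is a $\Pc$-algebra) and $d_M^{\,2} = \gamma_M \cdot (\theta_\Pc \otimes \id_M)$ (which is the extra equation in Definition \ref{def: A-mod over P} distinguishing an $A$-module over a curved operad from an ordinary one).

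The key step is then to rewrite $D \cdot \gamma_B \cdot (\theta_\Pc \otimes \id_B)$ by feeding $\theta_\Pc \otimes \id_B \in \Pc(B)$ into the commutative square defining an $A$-derivation. Since $\theta_\Pc$ lies in arity~$1$, the infinitesimal composite $\id_\Pc \circ' D$ applied to $\theta_\Pc \otimes b$ produces the \emph{single} term $(-1)^{|D|\cdot|\theta_\Pc|}\,\theta_\Pc \otimes D(b) = \theta_\Pc \otimes D(b)$ (the Koszul sign is trivial since $|\theta_\Pc|=-2$), and the map $\id_\Pc \circ (f,\id_M)$ acts as the identity on this element because there is no $A$-input left. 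Composing with $\gamma_M$ and comparing with the first term, I would conclude
\[
D \cdot \gamma_B \cdot (\theta_\Pc \otimes \id_B) \;=\; \gamma_M \cdot (\theta_\Pc \otimes D) \;=\; d_M^{\,2} \cdot D,
\]
whence $\partial^2(D) = 0$.

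The only genuine subtlety is the sign bookkeeping: I would carefully verify that the Koszul sign $(-1)^{|D|\cdot|\theta_\Pc|}$ in the derivation identity is indeed $+1$ (which it is, since $|\theta_\Pc|$ is even), and that the sign rearrangement in the expansion of $\partial^2(D)$ really cancels the cross terms. Everything else is essentially formal, once one observes that the argument does \emph{not} require $D$ to be a morphism of $A$-modules (as in Lemma \ref{lem: partial differential for Amodules}) but only an $A$-derivation, which is precisely what is needed to commute $D$ past an arity-$1$ operation coming from $\Pc$.
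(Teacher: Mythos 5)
Your proposal is correct and follows the same route as the paper's own (much terser) proof: expand $\partial^2(D)$ to $d_M^2\cdot D - D\cdot d_B^2$, substitute the curvature equations for the $\Pc$-algebra $B$ and the $A$-module $M$, and use the $A$-derivation diagram on the arity-one element $\theta_\Pc$ to identify the two resulting terms. Your additional remarks on the trivial Koszul sign and on the fact that $\id_\Pc\circ(f,\id_M)$ acts as the identity on $\theta_\Pc\otimes D(b)$ are accurate details the paper leaves implicit.
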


\begin{proof}
We compute
\begin{align*}
\partial^2(D) & = {d_M}^2 \cdot D - D \cdot {d_B}^2\\
& = \gamma_M(\theta_\Pc, D(-)) - D \cdot \gamma_B(\theta_\Pc, -) = 0
\end{align*}
since $D$ is an $A$-derivation from $B$ to $M$.
\end{proof}

When $\Mc = \{ \Mc(0), \Mc(1), \ldots\}$ is a gr-dg $\Sb$-module, we denote by $\Mc_{>0}$ the gr-dg $\Sb$-module $\{ \{0\}, \Mc(1), \ldots\}$.

\begin{prop}
\label{prop: product for P-alg}
The category of $\Pc$-algebras admits finite products.
\begin{enumerate}
\item
The terminal object is given by $(\{0\}, \gamma = 0, d=0)$.
\item
The product of two $\Pc$-algebras $(A, \gamma_A)$ and $(B, \gamma_B)$ is given by $A \wedge B \coloneqq (A \oplus B, \gamma)$ where $\gamma$ is defined by the composite
\[
\Pc(A\wedge B) \twoheadrightarrow \Pc(0) \oplus \Pc_{>0}(A) \oplus \Pc_{>0}(B) \xrightarrow{(\gamma_A + \gamma_{B}) + \gamma_A + \gamma_{B}} A \oplus B.
\]
The maps from $A \wedge B$ to $A$ and $B$ are the projections.
\end{enumerate}
Similarly, given a $\Pc$-algebra $A$, the category $\PalgA$ of $\Pc$-algebras over $A$ admits finite products.
\begin{enumerate}
\item
The terminal object is the $\Pc$-algebra $A$.
\item
The product of two $\Pc$-algebras $(B, \gamma_B)$ and $(B', \gamma_B')$ over $A$ is given by $B \wedge_A B' \coloneqq (B \oplus_A B', \gamma)$ where $\gamma$ is defined by the composite
\[
\Pc(B\wedge_A B') \twoheadrightarrow \Pc(0) \oplus \Pc_{>0}(B) \oplus_{\Pc_{>0}(A)} \Pc_{>0}(B') \xrightarrow{(\gamma_B +_A \gamma_{B'}) + \gamma_B + \gamma_{B'}} B \oplus_A B'.
\]
\end{enumerate}
\end{prop}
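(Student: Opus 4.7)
The plan is to verify directly that the proposed constructions satisfy the defining universal properties of the terminal object and of binary products, first in $\Palg$ and then in $\PalgA$. The key structural observation that makes all checks short is the natural projection
\[
\pi\colon \Pc(A\oplus B) \twoheadrightarrow \Pc(0)\oplus \Pc_{>0}(A)\oplus \Pc_{>0}(B)
\]
which kills every ``mixed'' summand, i.e.\ every tree in $\Pc(n)\hotimes_{\Sb_n}(A\oplus B)^{\hotimes n}$ containing at least one input in $A$ and at least one in $B$. This projection is compatible with $\gamma_\Pc$, $\eta_\Pc$ and $d_\Pc$, so all axiom verifications for $A\wedge B$ factor through the corresponding verifications for $A$ and $B$ separately.

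That $(\{0\},0,0)$ is terminal is immediate: the $\Pc$-algebra axioms are vacuous (including ${d}^2=\gamma(\theta_\Pc\otimes \id)$ since both sides vanish) and the zero map is the unique morphism out of any $\Pc$-algebra. For $A\wedge B$, I would check that $\gamma$ is associative and unital (reduces via $\pi$ to the same for $\gamma_A,\gamma_B$), compatible with $d=d_A\oplus d_B$ (because $d_\Pc$ commutes with $\pi$), and that the curvature equation $d^2=\gamma(\theta_\Pc\otimes \id_{A\oplus B})$ holds; this last point uses that $\theta_\Pc\in \Pc(1)$, so $\theta_\Pc\otimes(a+b)$ lies in $\Pc_{>0}(A)\oplus \Pc_{>0}(B)$ with no mixed term, and the equation splits into the curvature conditions for $A$ and $B$.

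For the universal property, the projections $p_A, p_B$ are $\Pc$-algebra morphisms because $\id_\Pc\circ p_A$ sends mixed summands and pure-$B$ summands to trees in $\Pc(A)$ with a $0$ input, hence to zero. Given $f\colon C\to A$ and $g\colon C\to B$, the pairing $h=(f,g)\colon C\to A\oplus B$ is the unique candidate at the level of $\Rf$-modules; expanding $\mu\otimes(h(c_1)\hotimes\cdots\hotimes h(c_n))$ and applying $\pi$ annihilates mixed summands, leaving $\mu\otimes(f(c_\bullet))+\mu\otimes(g(c_\bullet))$, so $\gamma\cdot(\id_\Pc\circ h)=f\gamma_C+g\gamma_C=h\gamma_C$ as required, and compatibility with predifferentials is automatic since $f$ and $g$ already commute with them.

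The relative statement in $\PalgA$ follows the same pattern. The fibre product $B\oplus_A B'$ in gr-dg $\Rf$-modules is created by the forgetful functor to $\ModA$, and $\Pc(B\oplus_A B')$ sits inside $\Pc(B\oplus B')$; the analogous projection lands in $\Pc(0)\oplus\Pc_{>0}(B)\oplus_{\Pc_{>0}(A)}\Pc_{>0}(B')$ precisely because $B\to A$ and $B'\to A$ are $\Pc$-algebra morphisms, so pure-$B$ and pure-$B'$ outputs of $\gamma_B$ and $\gamma_{B'}$ remain compatible over $A$. There is no genuine obstacle; the one point to monitor is that the curvature and associativity equations descend correctly through the pullback, which again reduces to the already-established facts for $B$ and $B'$ over $A$, and the universal property follows exactly as in $\Palg$ using that $\PalgA$-morphisms to $B\wedge_A B'$ are the same as pairs of $\PalgA$-morphisms into $B$ and $B'$.
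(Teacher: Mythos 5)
Your proposal is correct and follows essentially the same route as the paper: the paper's proof simply observes that $A\oplus B$ is the product of the underlying modules, so the pairing $f_A+f_B$ is the only candidate, and leaves the verification that it (and the structure $\gamma$ itself) satisfies the $\Pc$-algebra axioms as a direct check. Your write-up fills in those checks explicitly (the mixed-summand-killing projection, the arity-$1$ curvature argument, the compatibility of the pullback structure in the relative case), but introduces no new idea beyond the paper's argument.
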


\begin{proof}
We first prove the universal property satisfied by $A \wedge B$. Let $f_A : C \to A$ and $f_B : C \to B$ be two $\Pc$-algebra maps. Using the fact that $A\oplus B$ is the product in modules, we get that the requested map $C\to A\wedge B$ is necessarily $f_A + f_B$. It is then a direct check to verify that $f_A + f_B$ is a $\Pc$-algebra map. 

We can show similarly that $B \wedge_A B'$ satisfies the universal property of the product.
\end{proof}

\begin{defn}
Let $A$ be a $\Pc$-algebra and let $M$ be an $A$-module. The \emph{abelian extension of $A$ by $M$}, denoted by $A\ltimes M$, is the $\Pc$-algebra over $A$ whose underlying $\Rf$-module is $A\oplus M$ and whose $\Pc$-algebra structure is given by
\[
\Pc(A\oplus M) \twoheadrightarrow \Pc(A) \oplus \Pc(A, M) \xrightarrow{\gamma_A + \gamma_M} A \oplus M.
\]
The morphism $A \ltimes M$ is the projection on the first summand.
\end{defn}

\begin{prop}
\label{prop: derivation and abelian extension}
Let $\Pc$ be a curved operad and $A$ be a $\Pc$-algebra. There are $\Rf$-linear isomorphisms
\[
\Der_A^0(B, M) \cong \Hom_{\PalgA}\left( B, A \ltimes M\right),
\]
for any $\Pc$-algebra $B$ over $A$ and any $A$-module $M$.
\end{prop}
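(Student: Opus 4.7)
The plan is to unravel the universal property on both sides and identify a $\Pc$-algebra morphism $B\to A\ltimes M$ over $A$ with the data of its two components, the first being forced and the second being exactly a degree $0$ $A$-derivation. Since the underlying $\Rf$-module of $A\ltimes M$ is $A\oplus M$ and the projection $A\ltimes M\to A$ is the structural map witnessing the object of $\PalgA$, a morphism $g:B\to A\ltimes M$ in $\PalgA$ decomposes as $g=(f,\delta)$ for a unique $\Rf$-linear map $\delta:B\to M$ of degree $0$. The assignments $g\mapsto \delta$ and $\delta\mapsto (f,\delta)$ are manifestly $\Rf$-linear and mutually inverse as soon as one checks that the $\Pc$-algebra morphism condition on $g$ is equivalent to the $A$-derivation condition on $\delta$ together with compatibility with the predifferentials.

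First I would check the algebra side. By definition of $\gamma_{A\ltimes M}$, the composite $\gamma_{A\ltimes M}\cdot\id_\Pc(g)$ equals $\gamma_A\cdot\id_\Pc(f) + \gamma_M\cdot(\id_\Pc\circ(f,\id_M))\cdot(\id_\Pc\circ'\delta)$ on $\Pc(B)$, where I have used the splitting $\Pc(A\oplus M)\twoheadrightarrow \Pc(A)\oplus\Pc(A,M)$ and the fact that $\id_\Pc\circ(g,g) = \id_\Pc\circ(f,f) + \id_\Pc\circ'\delta$ modulo higher order terms in $M$. The $A$-component of the equation $g\cdot\gamma_B=\gamma_{A\ltimes M}\cdot\id_\Pc(g)$ is automatic because $f$ is already a $\Pc$-algebra map, and the $M$-component reads precisely $\delta\cdot\gamma_B=\gamma_M\cdot(\id_\Pc\circ(f,\id_M))\cdot(\id_\Pc\circ'\delta)$, which is the definition of an $A$-derivation $B\to M$.

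Next I would address the predifferential side. Since $d_{A\ltimes M}=d_A+d_M$ on $A\oplus M$ (the abelian extension inherits the predifferentials componentwise, as one reads off from the fact that $\gamma_{A\ltimes M}$ respects them), the condition $g\cdot d_B=d_{A\ltimes M}\cdot g$ splits into the $A$-part (automatic from $f$) and the $M$-part $\delta\cdot d_B=d_M\cdot\delta$, i.e.\ $\partial(\delta)=0$. Combined with the previous paragraph, this shows the bijection $\Der_A^0(B,M)\cong \Hom_{\PalgA}(B,A\ltimes M)$, and the $\Rf$-linearity of the correspondence is immediate from the componentwise description.

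The only mildly delicate point will be a careful justification of the identity $\id_\Pc\circ(g,g)=\id_\Pc\circ(f,f)+\id_\Pc\circ'\delta$ in the quotient $\Pc(A)\oplus\Pc(A,M)$ of $\Pc(A\oplus M)$, which is where the infinitesimal composite notation really does the work: the quadratic-and-higher terms in $\delta$ lie in the kernel of the projection onto $\Pc(A)\oplus\Pc(A,M)$, so they drop out on applying $\gamma_{A\ltimes M}$. Everything else is a direct bookkeeping of the definitions of abelian extension, $A$-module over $\Pc$, and $A$-derivation, with no interaction with the curvature beyond the fact that the equation ${d_M}^2=\gamma_M(\theta_\Pc\otimes\id_M)$ holds on both sides and is therefore a consequence of, rather than an obstruction to, the bijection.
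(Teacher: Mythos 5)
Your proof is correct and follows essentially the same route as the paper's: decompose a morphism $B \to A\ltimes M$ over $A$ as $f+\delta$ with $f$ the structural map, and check that the $\Pc$-algebra morphism condition on $f+\delta$ is exactly the degree-$0$ $A$-derivation condition on $\delta$ (the paper's proof is just a terser version of this, leaving the bookkeeping about the projection $\Pc(A\oplus M)\twoheadrightarrow \Pc(A)\oplus\Pc(A,M)$ and the predifferential compatibility implicit). The extra care you take with the terms quadratic and higher in $\delta$ vanishing under that projection is precisely the right justification and matches the definition of the abelian extension.
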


\begin{proof}
Let $f : B \to A$ be the structural morphism of $B \in \PalgA$. Any morphism $B \to A \ltimes M$ in $\PalgA$ is of the form
\[
\xymatrix@R=4pt{B \ar[rr]^{f+d} \ar[rd]_f && A\ltimes M \ar@{->>}[dl]\\ & A, &}
\]
where $d : B \to M$. 
Then the map $f+d$ is a morphism of $\Pc$-algebras if and only if $d$ is an $A$-derivation.
\end{proof}

We denote by $\AbPA$ the category of abelian group objects in the category $(\PalgA, \wedge_A)$ of $\Pc$-algebras over $A$.

\begin{prop}
\label{prop: abelian group object and module}
Let $A$ be a $\Pc$-algebra. There is a canonical equivalence between the category of $A$-modules and the category of abelian group objects in $\PalgA$
\[
\left\{\begin{array}{ccc}
\AbPA & \simeq & \ModAPz,\\
(B \xrightarrow{f} A) & \mapsto & \ker(f).
\end{array}\right.
\]
\end{prop}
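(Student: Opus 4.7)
The plan is to exhibit a pair of functors
\[
A\ltimes - : \ModAPz \rightleftharpoons \AbPA : \ker
\]
and show they are mutually inverse equivalences. For the left-to-right functor, given an $A$-module $M$ I would equip the object $A\ltimes M$ of $\PalgA$ with an abelian group structure over $A$: a zero section $e_M : A \to A \ltimes M$, $a \mapsto (a,0)$, an addition $\mu_M : (A\ltimes M)\wedge_A (A\ltimes M) \to A\ltimes M$, and an inverse $\iota_M : A\ltimes M \to A\ltimes M$, $(a,m)\mapsto (a,-m)$. Using the explicit description of the product $\wedge_A$ from Proposition \ref{prop: product for P-alg}, one has $(A\ltimes M)\wedge_A (A\ltimes M) \cong A\ltimes (M\oplus M)$, so the addition is $(a,m,m')\mapsto (a, m+m')$. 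The group axioms are immediate in the underlying $\Rf$-modules, and the maps are morphisms of $\Pc$-algebras over $A$ because on $\Pc_{>0}(A\ltimes M)$ the structure map factors through $\gamma_A+\gamma_M$ and $M$ enters only linearly.

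For the right-to-left functor, let $(B,f,e,\mu,\iota)$ be an abelian group object of $\PalgA$. The zero section $e : A \to B$ provides a splitting $f\cdot e = \id_A$, so at the level of complete gr-dg $\Rf$-modules $B \cong A \oplus \ker(f)$, where $\ker(f)$ is taken in $\ModApg$. I would then transport the $\Pc$-algebra structure along this splitting: the composite
\[
\Pc(A,\ker(f)) \rightarrowtail \Pc(B) \xrightarrow{\gamma_B} B \twoheadrightarrow \ker(f)
\]
defines a candidate action $\gamma_{\ker(f)}$, together with the predifferential $d_{\ker(f)}$ induced by $d_B$. The associativity and unitarity diagrams of Definition \ref{def: A-mod over P} follow from associativity and unitarity of $\gamma_B$ combined with the fact that $f\cdot e=\id_A$, and the relation ${d_{\ker(f)}}^2 = \gamma_{\ker(f)}(\theta_\Pc\otimes\id)$ follows from the corresponding relation for $(B,d_B)$ after projection to $\ker(f)$. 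The key input is that the abelian group axioms force the higher-order (in the $\ker(f)$ variable) components of $\gamma_B$ to vanish: this is exactly the content of Proposition \ref{prop: derivation and abelian extension} applied to $B = \mu(\id,\id)$ and the projection $B\to \ker(f)$, which implies that an element of $B$ of the form $(a,m_1+m_2)$ is mapped by $\gamma_B$ to the sum of the actions on $(a,m_1)$ and $(a,m_2)$. Hence only linear-in-$\ker(f)$ components of $\gamma_B$ survive, giving an honest $A$-module structure.

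It remains to verify that the two functors are mutually inverse and to produce the natural isomorphisms. One direction is tautological: for an $A$-module $M$, the zero section $e_M$ makes $A\ltimes M$ into an abelian group object whose kernel is canonically $M$, with the $A$-module structure recovered from $\gamma_{A\ltimes M}$. The other direction uses the splitting $B \cong A \oplus \ker(f)$ above to identify $B$ with $A \ltimes \ker(f)$ as a $\Pc$-algebra over $A$, and one checks that the abelian group structure of $B$ transported through this isomorphism is the canonical one on $A\ltimes \ker(f)$ (uniqueness of abelian group structures compatible with $e$ and $f$). Morphisms correspond on the nose since a morphism of abelian group objects $B\to B'$ over $A$ restricts to a morphism of the kernels commuting with the transported $A$-module structures. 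The main technical obstacle, and the only place the curved context genuinely intervenes, is verifying that the induced predifferential on $\ker(f)$ satisfies ${d}^2 = \gamma(\theta_\Pc\otimes-)$; this is where one must use that the splitting is a splitting of gr-dg $\Rf$-modules and that $\theta_\Pc$ acts through $\gamma_B$ by the same formula in both summands of $B\cong A\oplus \ker(f)$.
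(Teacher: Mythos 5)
Your overall route is the same as the paper's: identify abelian group objects over $A$ with square-zero (abelian) extensions $A\ltimes M$, send an $A$-module $M$ to $A\ltimes M$ with the evident addition coming from $(A\ltimes M)\wedge_A(A\ltimes M)\cong A\ltimes(M\oplus M)$, and send $(B\xrightarrow{f}A,\eta,m)$ to $\ker(f)$ with the action induced by $\gamma_B$ through the splitting $B\cong A\oplus\ker(f)$. The curvature check ${d_{\ker(f)}}^2=\gamma_{\ker(f)}(\theta_\Pc\otimes -)$ is handled as in the paper (restriction of ${d_B}^2=\gamma_B(\theta_\Pc\circ\id_B)$ to $\ker(f)$, which is $d_B$-stable since $f$ commutes with predifferentials).

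There is, however, one misplaced justification at the crux of the argument. The vanishing of the components of $\gamma_B$ that are at least quadratic in $\ker(f)$ — i.e.\ the fact that $\gamma_B$ kills $\hoplus_{k\geq 2}\Pc(k)\hotimes_{\Sb_k}(\ker(f)^{\hotimes 2}\hotimes B^{\hotimes k-2})$, so that $B$ really is an abelian extension — is \emph{not} a consequence of Proposition \ref{prop: derivation and abelian extension}. That proposition identifies $\Der_A^0(B,M)$ with $\Hom_{\PalgA}(B,A\ltimes M)$ and already presupposes that $M$ carries an $A$-module structure and that $A\ltimes M$ is a square-zero extension; invoking it here is circular. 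The correct source of the vanishing is that the abelian multiplication $m:B\wedge_A B\to B$ is a morphism of $\Pc$-algebras: using the explicit structure on $B\wedge_A B\cong A\oplus\ker(f)\oplus\ker(f)$ from Proposition \ref{prop: product for P-alg}, compatibility of $m$ with the structure maps forces $\gamma_B$ to be additive in each $\ker(f)$-slot, and additivity of a multilinear-in-$\ker(f)$ expression kills every component with two or more inputs in $\ker(f)$. You state the right conclusion ("only linear-in-$\ker(f)$ components survive") but you should derive it from the axiom $m\cdot\gamma_{B\wedge_A B}=\gamma_B\cdot(\id_\Pc\circ m)$ rather than from the derivation/abelian-extension adjunction. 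With that correction the proof matches the paper's.
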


\begin{proof}
As we will see, the standard proof works in the setting of curved operads. 
Let $B \xrightarrow{f} A$ be an abelian group object in $\PalgA$, that is there is a $\Pc$-algebra map $\eta : A \to B$ such that $f \cdot \eta = \id_A$ and an abelian multiplication $m : B \wedge_A B \to B$. 
We define on $\ker (f)$ an $A$-module structure $\gamma_{\ker}$ by the composite
\[
\Pc(A, \ker (f)) \xrightarrow{\id_\Pc \circ (f, \mathrm{incl})} \Pc(B, B) \twoheadrightarrow \Pc(B) \xrightarrow{\gamma_B} B.
\]
Using the fact that $f$ is a $\Pc$-algebra map, we obtain $\im(\gamma_{\ker}) \subset \ker (f)$. Moreover, since $B$ is a $\Pc$-algebra and $\eta$ is a $\Pc$-algebra map, we get the commutativity of the associativity and unitarity diagrams. 
For $b \in \ker(f)$, we also have ${d_{\ker (f)}}^2(b) = \gamma_B(\theta_B, b) = \gamma_B(f(\theta_A), b) = \gamma_{\ker(f)}(\theta_A, b)$ as required.

We have the $\Rf$-module isomorphism $B \wedge_A B \simeq A\oplus \ker(f) \oplus \ker(f)$. By the fact that $m$ is a $\Pc$-algebra map, we obtain that $m$ is the (abelian) $\Rf$-module sum structure and that the $\Pc$-algebra $\gamma_B$ on $B \simeq A\oplus \ker(f)$ vanishes on the module $\hoplus_{k \geq 2} \Pc(k) \hotimes_{\Sb_k} (\ker(f)^{\hotimes 2} \hotimes B^{\hotimes k-2})$. 
This says that $B$ is an abelian extension of $A$ by $\ker(f)$. Conversely, every abelian extension provides an abelian group object and the two processes are opposite to each other.
\end{proof}

As in the dg context, we define now the $A$-module of Kähler differential forms in order to represent the module of $A$-derivations. 
It is given by the following coequalizer
\begin{equation}
\label{eq: Kahler diff forms}
\xymatrix{A\hotimes^{\Pc} \Pc(A) \ar@<0.5ex>[r]^{\quad A\hotimes^\Pc \gamma_A} \ar@<-0.5ex>[r]_{\quad \tilde \gamma_{\Pc, (1)}} & A\hotimes^{\Pc} A \ar@{->>}[r] & \Omega_\Pc A,}
\end{equation}
where $\tilde \gamma_{\Pc, (1)}$ is the map induced by the composite
\[
\Pc (A, \Pc (A)) \to \Pc (A, \Pc (A, A)) \rightarrowtail (\Pc \hcirc \Pc)(A, A) \xrightarrow{\gamma_\Pc (\id_{A}, \id_{A})} \Pc (A, A) \twoheadrightarrow A\hotimes^{\Pc}A
\]
since it factors through $A\hotimes^{\Pc}\Pc (A)$.

\begin{prop}
\label{prop: derivation and Kahler differential form}
Let $\Pc$ be any curved operad and $A$ be any $\Pc$-algebra. 
There are natural gr-dg $\Rf$-linear isomorphisms
\[
\left(\Der_{A}(B, M), \partial\right) \cong \left(\Hom_{\ModBP}(\Omega_{\Pc}B, f^*(M)), \partial\right),
\]
for any $\Pc$-algebra $B$ over $A$ and any $A$-module $M$. 
The isomorphisms restrict to the kernels of $\partial$ to provide isomorphisms of $\Rf$-modules
\[
\Der_{A}^0(B, M) \cong \Hom_{\ModBPz}(\Omega_{\Pc}B, f^*(M)).
\]
Moreover, when $B$ is a quasi-free $\Pc$-algebra on $(V, \tau : \Pc(V) \to \Pc(V))$, we get $\Omega_{\Pc}B \cong B\otimes^{\Pc}_\tau V$ as gr-dg $\Rf$-modules.
\end{prop}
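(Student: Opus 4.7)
The plan is to construct the isomorphism by combining the free--forgetful adjunction (Proposition~\ref{prop: free AP-mod}), the definition of $\Omega_\Pc B$ as a coequalizer (Equation~\eqref{eq: Kahler diff forms}), and the adjunction $f_!\dashv f^*$ (Proposition~\ref{prop: extension and restriction of scalars}). First I would treat the $B = A$ and graded case, that is, produce a bijection
\[ \Der^{\mathrm{gr}}(B, f^*M) \cong \Hom_{\ModBP}^{\mathrm{gr}}(\Omega_\Pc B, f^*M) \]
at the level of graded $\Rf$-modules, and only then check that both sides carry compatible differentials $\partial$ and that the isomorphism restricts to the full $A$-derivation / $A$-module morphism subspaces (the $A$-linearity will be automatic from the $f^*$-side once derivations are recovered as equalizers).

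The key step is to apply $\Hom_{\ModBP}(-, f^*M)$ to the coequalizer presentation of $\Omega_\Pc B$, which turns it into an equalizer of $\Rf$-modules
\[ \Hom_{\ModBP}(\Omega_\Pc B, f^*M) = \mathrm{eq}\!\left( \Hom_{\ModBP}(B\hotimes^\Pc B, f^*M) \rightrightarrows \Hom_{\ModBP}(B\hotimes^\Pc \Pc(B), f^*M) \right). \]
Using Proposition~\ref{prop: free AP-mod} to identify $\Hom_{\ModBP}(B\hotimes^\Pc V, f^*M) \cong \Hom_{\grModR}(V, M)$ for $V = B$ and $V = \Pc(B)$, this equalizer becomes the equalizer of two maps $\Hom_{\grModR}(B, M) \rightrightarrows \Hom_{\grModR}(\Pc(B), M)$: one obtained by precomposition with $\gamma_B : \Pc(B) \to B$, the other by postcomposition with $\gamma_M \cdot (\id_\Pc \circ (f,\id_M)) \cdot (\id_\Pc \circ' -)$ from the $f^*M$-module structure. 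Unwinding the definitions, a graded linear map $d : B \to M$ lies in this equalizer if and only if the diagram appearing in the definition of an $A$-derivation commutes, so the equalizer is exactly $\Der_A(B, M)$ as a graded $\Rf$-module. Compatibility with $\partial$ follows because the isomorphism is natural in $B$ and $M$ and both predifferentials are induced by $d_B$ and $d_M$; restricting to $\ker \partial$ gives the second isomorphism.

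For the quasi-free case, writing $B = \Pc(V)/(\im(d^2 - \theta_\Pc \otimes \id_V)_{|V})$ with predifferential $\bar d$, I would produce a natural map $B\hotimes^\Pc_\tau V \to \Omega_\Pc B$ extending the canonical $V \to \Omega_\Pc B$, and use the derivation property to check that a graded linear map $V \to M$ extends uniquely to a graded $A$-derivation $B \to M$, with the $\partial$-closed extensions corresponding exactly to the $\tau$-compatible maps described by Proposition~\ref{prop: quasi-free AP-mod}. Concretely, the correspondence sends $g : V \to M$ to the $A$-derivation $D_g : B \to M$ uniquely characterized by $D_g \cdot \gamma_B \cdot (\id_\Pc \circ' \iota_V) = \gamma_M \cdot (\id_\Pc \circ (f,\id_M)) \cdot (\id_\Pc \circ' g)$; then the Yoneda lemma combined with the already-established isomorphism for derivations forces $\Omega_\Pc B \cong B\hotimes^\Pc_\tau V$ as $B$-modules.

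The main obstacle I expect is the careful bookkeeping of the curvature-quotient in $\overline{\Pc(A,M)}^\tau$ appearing in the quasi-free case: one must verify that the extension of a derivation is well-defined on the quotient by $\im(d^2 - \theta_\Pc \otimes \id_V)$, which comes down to the identity ${d_M}^2 = \gamma_M(\theta_\Pc \otimes \id_M)$ satisfied by any $A$-module, ensuring that derivations automatically respect the curvature relation. Once this compatibility is in place, naturality in $B$ and $M$ and the universal property of $\Omega_\Pc B$ give the stated natural isomorphism.
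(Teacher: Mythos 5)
Your argument is correct and rests on the same two ingredients as the paper's proof: the universal property of the coequalizer defining $\Omega_\Pc B$ and the free-module adjunction of Proposition \ref{prop: free AP-mod}. The only substantive difference is in presentation and in the quasi-free case. For the main isomorphism, the paper constructs the two mutually inverse maps explicitly (extension of a derivation $d$ to $B\hotimes^\Pc B \to M$ and verification that it kills the coequalizer relations; restriction of a $B$-module map along $\eta_\Pc \circ \id_B$), whereas you obtain the same bijection by applying $\Hom_{\ModBP}(-,f^*M)$ to the coequalizer and identifying the resulting equalizer with $\Der_A(B,M)$ — this is the same computation packaged through the universal property. One small point to make explicit: Proposition \ref{prop: free AP-mod} is stated for degree-zero morphisms commuting with predifferentials, so you need its graded enhancement (which the paper's proof of that proposition establishes en route, before restricting to $\ker\partial$) to get the isomorphism of gr-dg modules with $\partial$ on both sides, rather than only the degree-zero statement. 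For the quasi-free case your route is genuinely different: the paper computes the coequalizer explicitly, exhibiting $\Omega_\Pc B$ as spanned by elements $\mu(b_1,\ldots,db_j,\ldots,b_n)$ modulo the Leibniz relation and checking that the curvature relation $\tau^2=\gamma_B(\theta_\Pc,\id_B)$ imposes nothing new because $\eta_\Pc\otimes d_B^2=\theta_\Pc\otimes\id_B$ already holds in $B\otimes^\Pc B$; you instead show that $B\hotimes^\Pc_\tau V$ and $\Omega_\Pc B$ corepresent the same functor (via Proposition \ref{prop: quasi-free AP-mod} on one side and the already-proved derivation isomorphism on the other) and conclude by Yoneda. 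Your approach trades the explicit description of $\Omega_\Pc B$ for a cleaner formal argument; the curvature compatibility you flag as the main obstacle is exactly the point the paper also singles out, and your resolution via $d_M^2=\gamma_M(\theta_\Pc\otimes\id_M)$ is the right one.
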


\begin{proof}
Let $d : B \to M$ be an $A$-derivation. Then the composite
\[
B\hotimes^\Pc B \xrightarrow{B\hotimes^\Pc d} B\hotimes^\Pc M \to M
\]
factors through the quotient $B\hotimes^\Pc B \twoheadrightarrow \Omega_\Pc B$ since $d$ is an $A$-derivation $B\to M$ and since the last map is induced by the $B$-module structure on $f^*(M)$. The resulting map is a map of $B$-modules since the above composite is. 

In the other way round, given a $B$-module map $\rho : \Omega_\Pc B \to f^*(M)$, the restriction $B \cong I\hcirc B \xrightarrow{\eta_\Pc \circ \id_B} \Omega_\Pc B \xrightarrow{\rho} f^*(M)$ gives a map $B \to M$.
Using the definition of $\Omega_\Pc B$, the module structure on $f^*(M)$ and the fact that $\rho$ is a map of $B$-modules, we get that its restriction is an $A$-derivation.

The two processes are natural (graded) linear isomorphisms between the spaces of (graded) modules maps. As in Proposition \ref{prop: extension and restriction of scalars}, they are compatible with the differentials $\partial$.

When
\[
B = \Pc (V)/ \left( \im (\tau^2 - \theta_\Pc \otimes \id_V)_{|V} \right)
\]
is a quasi-free $\Pc$-algebra on $(V, \tau)$, we can compute $\Omega_\Pc B$ by computing the coequaliser \eqref{eq: Kahler diff forms}. It says that elements in $\Omega_\Pc B$ are elements $\mu(b_1, \ldots, d b_j, \ldots, b_n) \in B \hotimes^\Pc B$ subject to the relation
\begin{multline*}
\mu(b_1, \ldots, b_{j-1}, d (\nu(v_1, \ldots, v_m)), b_{j+1}, \ldots, b_n) \approx\\
\sum_{i=1}^m (\mu \circ_j \nu)(b_1, \ldots, b_{j-1}, v_1, \ldots, v_{i-1}, dv_i, v_{i+1}, \ldots, v_m, b_{j+1}, \ldots, b_n).
\end{multline*}
The relation $\tau^2 = \gamma_B(\theta_\Pc, \id_B)$ in $B$ doesn't create any extra relation since we already have the relation $\eta_\Pc \otimes {d_B}^2 = \theta_\Pc \otimes \id_B$ in $B\otimes^{\Pc} B$. 
This gives the isomorphism $\Omega_{\Pc}B \cong B\otimes^{\Pc}_\tau V$.
\end{proof}

\subsubsection{Cotangent complex and André--Quillen cohomology}

We consider here that $\Rf = \Kf$ is a field of characteristic 0. 
It follows that every curved operad $\Pc$ is $\Sb$-split. 
Then Theorem C.39 in \cite{BMDC20} (see Theorem \ref{thm: model cat struct for curved algebras}) endows the category of $\Pc$-algebras with a model category structure. 
By Theorem 7.6.5 in \cite{pH03}, the category $\PalgA$ of $\Pc$-algebras over $A$ admits a model category structure such that the weak equivalences (resp. fibrations) are the graded quasi-isomorphisms (resp. strict surjections) when we forget the maps over $A$. 
We use the same notation $\PalgA$ for the category of $\Pc$-algebras over $A$ endowed with this model structure. 
Similarly, we prove in Theorem \ref{thm: model cat struct on modules} that the category of $A$-modules over $\Pc$ is endowed with a model category structure by transferring the model category structure from gr-dg $\Rf$-modules. 
Every objet in $\AbPA$ is fibrant since fibration are strict surjections.

\begin{thm}\label{thm: adjunction alg-mod}
Let $\Pc$ be a curved operad and $A$ be a $\Pc$-algebra. 
The following two functors
\[
A\hotimes_{-}^{\Pc}\Omega_{\Pc} -\  :\ \PalgA \rightleftharpoons \ModAPz \cong \AbPA\ :\ A\ltimes -
\]
form an adjoint pair which is a Quillen adjunction. 
The two functors represent the bi-functor of $A$-derivations, \emph{i.e.} there is an isomorphism of (graded) modules
\[
\Hom_{\ModAPz}\left(A\hotimes_{B}^{\Pc} \Omega_{\Pc}B, M\right) \cong \emph{Der}_{A}^0(B, M) \cong \Hom_{\PalgA}\left( B, A\ltimes M\right),
\]
for all $\Pc$-algebra $B$ over $A$ and for all $A$-module $M$.
\end{thm}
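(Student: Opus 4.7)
The plan is to deduce the triple natural isomorphism by concatenating three results already at hand, extract the adjunction via Yoneda, and finish by checking that $A \ltimes -$ preserves fibrations and trivial fibrations.

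First, given $B \in \PalgA$ with structural map $f : B \to A$ and $M \in \ModAPz \cong \AbPA$, Proposition \ref{prop: derivation and abelian extension} provides
\[
\Hom_{\PalgA}(B, A\ltimes M) \cong \Der_A^0(B, M).
\]
Next, an $A$-derivation $B \to M$ is, by inspection of the definitions, exactly a $B$-derivation $B \to f^*(M)$ with respect to the $B$-module structure $\gamma_{f^*M} = \gamma_M \cdot (\id_\Pc \circ (f, \id_M))$, so the degree-zero, differential-preserving restriction of Proposition \ref{prop: derivation and Kahler differential form} gives
\[
\Der_A^0(B, M) \cong \Hom_{\ModBPz}(\Omega_\Pc B, f^*(M)).
\]
Finally, the extension–restriction of scalars adjunction along $f : B \to A$ of Proposition \ref{prop: extension and restriction of scalars} yields
\[
\Hom_{\ModBPz}(\Omega_\Pc B, f^*(M)) \cong \Hom_{\ModAPz}(A\hotimes^\Pc_B \Omega_\Pc B, M).
\]
Concatenating these three produces the advertised triple isomorphism.

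Second, each of the three pointwise isomorphisms is natural in $B$ and $M$, inheriting its naturality from the constituent propositions. Since $A\ltimes -$ is visibly functorial $\ModAPz \to \PalgA$, the Yoneda lemma upgrades the natural isomorphism $\Hom_{\PalgA}(B, A\ltimes M) \cong \Hom_{\ModAPz}(A\hotimes^\Pc_B \Omega_\Pc B, M)$ into the statement that $B \mapsto A\hotimes^\Pc_B \Omega_\Pc B$ is a functor $\PalgA \to \ModAPz$ left adjoint to $A\ltimes -$.

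Third, for the Quillen property, recall that both model structures are transferred from the underlying category of gr-dg $\Rf$-modules: the one on $\PalgA$ via Theorem C.39 of \cite{BMDC20} together with \cite[Theorem 7.6.5]{pH03}, and the one on $\ModAPz$ via Theorem \ref{thm: model cat struct on modules} (using the monadic presentation of Lemma \ref{lem: moduleAP and monad}). Consequently, on both sides fibrations are strict surjections and weak equivalences are graded quasi-isomorphisms of underlying gr-dg $\Rf$-modules. Since the underlying gr-dg $\Rf$-module of $A\ltimes M$ is $A\oplus M$ with predifferential $d_A\oplus d_M$, a morphism $g : M \to N$ in $\ModAPz$ is sent to $\id_A\oplus g : A\ltimes M \to A\ltimes N$, which is a strict surjection (resp.\ a graded quasi-isomorphism) as soon as $g$ is. Hence $A\ltimes -$ preserves fibrations and trivial fibrations, and the adjunction is Quillen.

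There is essentially no obstacle beyond careful bookkeeping: the three representability statements are already in the preceding propositions, so the theorem reduces to a formal assembly, and the Quillen verification is immediate because both model structures are detected on underlying gr-dg modules. The only point demanding a moment's care is the identification of $A$-derivations $B \to M$ with $B$-derivations $B \to f^*(M)$, which is needed to pass from Proposition \ref{prop: derivation and abelian extension} to Proposition \ref{prop: derivation and Kahler differential form}.
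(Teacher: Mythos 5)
Your proposal is correct and follows essentially the same route as the paper's proof: the triple isomorphism is assembled from the same chain of propositions (abelian extension, Kähler differentials, and extension/restriction of scalars), and the Quillen property is verified by the same direct check that $A\ltimes -$ preserves fibrations and acyclic fibrations, both model structures being detected on underlying gr-dg modules. You merely spell out a few steps (the Yoneda upgrade to functoriality and the identification of $A$-derivations $B\to M$ with derivations valued in $f^*(M)$) that the paper leaves implicit.
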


\begin{proof}
The part on the fact that we have an adjoint pair of functors follows from Propositions \ref{prop: derivation and Kahler differential form}, \ref{prop: free AP-mod}, \ref{prop: extension and restriction of scalars}, \ref{prop: derivation and abelian extension} and \ref{prop: abelian group object and module}.

To prove that we have a Quillen adjunction, it is enough by Lemma 1.3.4 of \cite{mH99} to show that the right adjoint functor $A\ltimes -$ preserves the fibrations and the acyclic fibrations. Since $A \ltimes M = A\oplus M$, the proof is straightforward form the definition of the two model category structures considered here.
\end{proof}

This adjunction provides therefore an adjunction on the level of the homotopy categories. We define the cohomology of an algebra over a curved operad $\Pc$ by means of its derived abelianisation.

\begin{defn}
\label{def: cotangent complex and AQ cohom}
The \emph{cotangent complex} of a $\Pc$-algebra $A$, denoted by $\Lb_A$, is defined to be the derived abelianisation $\Lb_A \coloneqq \Lb (A\hotimes_{-}^{\Pc}\Omega_{\Pc} -)(A)$ of $A$. 

Given an $A$-module $M$ in $\AbPA \cong \ModAPz$, the \emph{André--Quillen cohomology of $A$ with coefficients in $M$} is defined by
\[
H_{AQ}^n \left(A, M \right) \coloneqq \Hom_{\Ho(\ModAPz)}\left( \Lb_A, M[n]\right) = H^n \left(\Hom_{\ModAP}\left( \Lb_A, M\right), \partial\right).
\]
(The last equality follows from the fact that $\partial f = 0$ if and only if $f$ is a morphism of gr-dg modules, that is commutes with the predifferentials, and because the homotopy relation between two maps in the category of $A$-modules is given by the existence of an $h$ such that $\partial h$ is the difference of the two maps.)
\end{defn}

\subsection{Computing the André--Quillen cohomology}

We can make explicit one representative of the cotangent complex by means of a cofibrant replacement $R \xrightarrow{\sim} A$ in the category of $\Pc$-algebras over $A$. It is given by the class of
\[
\Lb_{R/A} \coloneqq A\hotimes_{R}^{\Pc}\Omega_{\Pc} R \in \Ho\left( \ModAP\right).
\]

As in \cite{jM14}, we use the bar-cobar resolutions $\Om_\alpha \B_\alpha A \xrightarrow{\sim} A$ described in Section \ref{section: resolution of an algebra over a curved operad} to describe the complex computing the André--Quillen cohomology.

\begin{thm}
\label{thm: cotangent complex}
Let $\Pc$ be a curved sa operad, $\Cc$ be a curved altipotent cooperad and $\alpha : \Cc \to \Pc$ be an operadic curved twisting morphism. Let $A$ be a $\Pc$-algebra. Assume that $R = \Om_\alpha \B_\alpha A$ be a cofibrant resolution of $A$. 
We denote by $\tau$ the composite $\B_\alpha A \xrightarrow{\Delta_{(1)} \circ \id_A} (\Cc \hcirc_{(1)} \Cc)(A) \xrightarrow{-(\alpha \circ_{(1)} \id_\Cc) \circ \id_A} \Pc(A, \B_\alpha A)$. 
Then the corresponding cotangent complex has the form
\[
\Lb_{\Om_\alpha \B_\alpha A/A} \cong (A\otimes^{\Pc}_\tau \B_\alpha A, d_\alpha),
\]
where $A\otimes^{\Pc}_\tau \B_\alpha A$ is defined before Proposition \ref{prop: quasi-free AP-mod} and the predifferential $d_{\alpha}$ is induced by $d_{\Pc(A, \B_\alpha A)}+\tau$.
\end{thm}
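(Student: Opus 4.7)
The plan is to identify the cotangent complex $\Lb_{R/A} = A\hotimes^\Pc_R \Omega_\Pc R$, for $R = \Om_\alpha \B_\alpha A$, by combining two inputs from earlier in the paper: the description of Kähler forms on a quasi-free $\Pc$-algebra at the end of Proposition \ref{prop: derivation and Kahler differential form}, and the base-change isomorphism for free $A$-modules at the end of Proposition \ref{prop: extension and restriction of scalars}.

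First I would recognize $R = \Pc(\B_\alpha A)$ as a quasi-free $\Pc$-algebra on the gr-dg module $V \coloneqq \B_\alpha A$, with predifferential $d_o = d_\Pc \circ \id_V + \id_\Pc \circ' d_b - d_\alpha^l$ on $\Pc(V)$. Proposition \ref{prop: derivation and Kahler differential form} then yields an isomorphism
\[ \Omega_\Pc R \;\cong\; R \hotimes^\Pc_{d_o} V \]
of gr-dg $\Rf$-modules, the twisting being induced by the restriction $d_o|_V : V \to \Pc(V)$ linearized at one distinguished slot. Next I would apply the base-change isomorphism of Proposition \ref{prop: extension and restriction of scalars} along the bar–cobar counit $f_\alpha : R \to A$ (which exists as a cofibrant resolution by assumption) to get
\[ A \hotimes^\Pc_R \Omega_\Pc R \;\cong\; A \hotimes^\Pc_R \bigl(R \hotimes^\Pc_{d_o} V\bigr) \;\cong\; A \hotimes^\Pc_{\tau'} V, \]
where $\tau' : V \to \Pc(A, V)$ is obtained from $d_o|_V$ by pulling all non-distinguished entries to $A$ through $f_\alpha$, equivalently through the augmentation $\pi_\alpha : \B_\alpha A \twoheadrightarrow A$ of Example \ref{ex: twisting morph}.

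The final step is to identify $\tau'$ with the map $\tau$ of the statement. I would decompose $d_o|_V$ into its three summands: $d_\Pc \circ \id_V$ vanishes on $V \cong I \circ V$ since $d_\Pc$ preserves the unit of the operad; $\id_\Pc \circ' d_b$ restricts on $V$ to $d_b : V \to V \hookrightarrow \Pc(A, V)$, which is the internal predifferential of $V$ and is therefore absorbed into $d_{\Pc(A, V)}$; and finally $-d_\alpha^l|_V = -(\alpha \circ \id_V)\cdot \Delta_V$, linearized at a single $V$-slot while all remaining entries are pushed through $\pi_\alpha$, contributes exactly the map $\tau$ of the statement. Assembling these three contributions produces the announced predifferential $d_\alpha = d_{\Pc(A, \B_\alpha A)} + \tau$.

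The main obstacle will be making the last identification rigorous. The comultiplication $\Delta_V = \Delta_\Cc \circ \id_A$ of the cofree $\Cc$-coalgebra on $A$ takes values in $(\Cc \hcirc \Cc)(A)$; linearizing at one $V$-slot and applying $\pi_\alpha$ elsewhere kills every term except those in which every non-distinguished internal vertex already lies in the $I$-part of $\Cc$, and this selection is precisely what the infinitesimal decomposition $\Delta_{(1)} : \Cc \to \Cc \circ_{(1)} \Cc$ extracts. Combined with the fact that $\pi_\alpha$ restricts to the identity on the $I(A) = A$ summand (see Example \ref{ex: twisting morph} and the proof of Theorem \ref{thm: algebraic bar-cobar adjunction}), this shows that the linearization of $-d_\alpha^l|_V$ to $\Pc(A, V)$ coincides with $-(\alpha \circ_{(1)} \id_\Cc) \circ \id_A \cdot \Delta_{(1)} \circ \id_A = \tau$. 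Once this combinatorial identification is secured, matching the predifferentials on both sides is a routine bookkeeping on the quasi-free $A$-module $A\hotimes^\Pc_\tau \B_\alpha A$.
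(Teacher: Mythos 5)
Your proposal follows essentially the same route as the paper's proof: recognize $\Om_\alpha\B_\alpha A$ as quasi-free on $\B_\alpha A$, apply the Kähler-forms identification of Proposition \ref{prop: derivation and Kahler differential form} and then the base-change isomorphism of Proposition \ref{prop: extension and restriction of scalars} along the counit, and finally read off $\tau$ from the cobar predifferential $d_o$. Your unpacking of why only the $\Delta_{(1)}$-terms survive after pushing the non-distinguished slots through $\pi_\alpha$ is correct and simply makes explicit what the paper's proof leaves implicit.
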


\begin{proof}
The $\Pc$-algebra $R = \Om_\alpha \B_\alpha A$ is quasi-free  on $\B_\alpha A$. 
It follows that the cotangent complex is isomorphic to
\[
\begin{array}{llll}
A\otimes_{R}^{\Pc}\Omega_{\Pc}R & \cong & A\otimes_{R}^{\Pc}(R\otimes^{\Pc}_\tau\B_\alpha A) & (\textrm{Proposition \ref{prop: derivation and Kahler differential form}})\\
& \cong & A\otimes^{\Pc}_\tau\B_\alpha A & (\textrm{Proposition \ref{prop: extension and restriction of scalars}}).
\end{array}
\]
The map $\tau$ is induced by the map $d_\alpha^l$ on the cobar construction and the isomorphism $A\otimes_{R}^{\Pc} (R\otimes^{\Pc}_\tau\B_\alpha A) \cong A\otimes^{\Pc}_\tau\B_\alpha A$ is obtained (on the lifts) by the maps $\eta_\Pc$ and the projection $\Om_\alpha \B_\alpha A$. 
The Theorem finally follows from the definition of the cobar predifferential $d_o$.
\end{proof}

\begin{prop}
\label{prop: AQ cochain complex}
Let $\Pc$ be a curved sa operad, $\Cc$ be a curved altipotent cooperad and $\alpha : \Cc \to \Pc$ be an operadic curved twisting morphism. Let $A$ be a $\Pc$-algebra. Assume that $R = \Om_\alpha \B_\alpha A$ be a cofibrant resolution of $A$. 
Let $M$ be an $A$-module. 
Then the André--Quillen cohomology is equal to
\[
H_{AQ}^n (A, M) =H^n \left(\Hom_{\ModAP}\left(A\hotimes^\Pc_\tau \B_\alpha A, M\right), \partial \right),
\]
where $\partial f = d_M \cdot f - (-1)^{|f|} f \cdot d_\alpha$.
\end{prop}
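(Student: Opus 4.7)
The plan is that this proposition follows almost immediately by combining Definition \ref{def: cotangent complex and AQ cohom} with Theorem \ref{thm: cotangent complex}, so there is no real obstacle. I would proceed as follows.

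First, unpacking the definition, one has
\[
H_{AQ}^n(A, M) = H^n\left(\Hom_{\ModAP}(\Lb_A, M), \partial\right),
\]
where the cotangent complex $\Lb_A$ is the derived abelianisation of $A$. Since by assumption $R = \Om_\alpha \B_\alpha A$ is a cofibrant resolution of $A$ in $\PalgA$, the cotangent complex $\Lb_A$ is represented in $\Ho(\ModAPz)$ by $\Lb_{R/A} = A \hotimes_R^\Pc \Omega_\Pc R$. Theorem \ref{thm: cotangent complex} then supplies the explicit quasi-free identification
\[
\Lb_{R/A} \cong \left(A\hotimes^\Pc_\tau \B_\alpha A,\, d_\alpha\right)
\]
as $A$-modules over $\Pc$, where $\tau$ and $d_\alpha$ are as in the statement of that theorem. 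Substituting this representative for $\Lb_A$ into the formula above yields
\[
H_{AQ}^n(A, M) \cong H^n\left(\Hom_{\ModAP}\left(A\hotimes^\Pc_\tau \B_\alpha A,\, M\right), \partial\right),
\]
which is the desired equality at the level of graded modules.

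It remains to identify the differential $\partial$ on the Hom-complex. For any two $A$-modules $(N, d_N)$ and $(M, d_M)$ over $\Pc$, the graded module $\Hom_{\ModAP}(N, M)$ carries the natural predifferential $f \mapsto d_M \cdot f - (-1)^{|f|} f \cdot d_N$, and by Lemma \ref{lem: partial differential for Amodules} this predifferential squares to zero precisely because both source and target are $A$-modules (so that the curvature contributions $\gamma_N(\theta_\Pc \otimes -)$ and $\gamma_M(\theta_\Pc \otimes -)$ cancel in $\partial^2$). Applying this to $N = A\hotimes^\Pc_\tau \B_\alpha A$ with $d_N = d_\alpha$ recovers exactly the formula $\partial f = d_M \cdot f - (-1)^{|f|} f \cdot d_\alpha$, completing the identification.
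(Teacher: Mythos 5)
Your proof is correct and follows exactly the paper's route: the paper's own argument is the one-line observation that the statement follows from Definition \ref{def: cotangent complex and AQ cohom} and Theorem \ref{thm: cotangent complex}, which is precisely what you carry out. Your extra paragraph identifying the differential via Lemma \ref{lem: partial differential for Amodules} is a welcome elaboration of a step the paper leaves implicit, but it is not a different approach.
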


\begin{proof}
This follows from Definition \ref{def: cotangent complex and AQ cohom} and Theorem \ref{thm: cotangent complex}.
\end{proof}

We propose another description for the André--Quillen cohomology that we use in the next section.

\begin{thm}
\label{thm: AQ cochain complex convolution}
Let $\Pc$ be a curved sa operad, $\Cc$ be a curved altipotent cooperad and $\alpha : \Cc \to \Pc$ be an operadic curved twisting morphism. Let $A$ be a $\Pc$-algebra and $M$ be an $A$-module. 

There is an isomorphism of $\Rf$-modules
\begin{multline*}
\Hom_{\ModAPz}\left( A \hotimes^{\Pc}_\tau \B_\alpha A, M\right) \cong\\
\ker\left( \partial_\tau : \Hom_{\grModR}^0 \left( \B_\alpha A, U M\right) \to \Hom_{\grModR}^{1} \left( \B_\alpha A, U M\right) \right),
\end{multline*}
where $\partial_\tau \coloneqq \partial + \delta_\alpha^l$ with $\delta_\alpha^l(f)$ given by the composite
\begin{multline*}
\Cc(A) \xrightarrow{\Delta_{(1)} \circ \id_A} (\Cc \hcirc_{(1)} \Cc)(A) \xrightarrow{(\alpha \circ_{(1)} \id_{\Cc}) \circ \id_{A}} (\Pc \hcirc_{(1)} \Cc)(A) \to \Pc(A, \Cc(A))\\
\xrightarrow{(-1)^{|f|}\id_\Pc (\id_A, f)} \Pc(A, M) \xrightarrow{\gamma_M} M.
\end{multline*}
The predifferential $\partial_\tau$ satisfies
\[
{\partial_\tau}^2 = \delta_{\Theta + \partial \alpha + \frac12 [\alpha, \alpha]}^l = 0
\]
since $\alpha$ is an operadic twisting morphism, so it is a differential. 
It follows that when $R = \Om_\alpha \B_\alpha A$ is a cofibrant resolution of $A$, we have
\[
H_{AQ}^n \left(A, M \right) = H^n \left( \Hom_{\grModR} \left( \B_\alpha A, M\right), \partial_\tau\right).
\]
\end{thm}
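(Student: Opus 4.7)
The plan is to recognize that this theorem is essentially the combination of three prior results: the description of the cotangent complex as a quasi-free module (Theorem \ref{thm: cotangent complex}), the extension/restriction adjunction for quasi-free $A$-modules (Proposition \ref{prop: quasi-free AP-mod}), and the cochain formula from Proposition \ref{prop: AQ cochain complex}. The first isomorphism in the statement is what needs genuine verification; once it is established, the computation of ${\partial_\tau}^2$ and the identification with $H^n_{AQ}(A,M)$ follow formally.

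First I would apply Proposition \ref{prop: quasi-free AP-mod} with $N = \B_\alpha A$ (viewed as a mere gr-dg $\Rf$-module) and with the twisting map $\tau$ being exactly the one appearing in Theorem \ref{thm: cotangent complex}, namely the composite
\[
\B_\alpha A \xrightarrow{\Delta_{(1)} \circ \id_A} (\Cc \hcirc_{(1)} \Cc)(A) \xrightarrow{-(\alpha \circ_{(1)} \id_\Cc) \circ \id_A} \Pc(A, \B_\alpha A).
\]
Proposition \ref{prop: quasi-free AP-mod} directly yields
\[
\Hom_{\ModAPz}\left( A \hotimes^{\Pc}_\tau \B_\alpha A, M\right) \cong \ker\!\left( \partial_\tau : \Hom_{\grModR}^0(\B_\alpha A, UM) \to \Hom_{\grModR}^{1}(\B_\alpha A, UM)\right),
\]
where by definition $\partial_\tau(f) = d_M\cdot f - (-1)^{|f|} \gamma_M \cdot (\id_\Pc(\id_A,f))\cdot \tau$. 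The task is then to check that this second term coincides with $\delta_\alpha^l(f)$ as written in the statement: this is simply an unpacking of $\tau$, since postcomposing with $\gamma_M$ transforms the factor $\Pc(A,\B_\alpha A) \xrightarrow{\id_\Pc(\id_A, f)} \Pc(A,M)$ precisely into the composite displayed for $\delta_\alpha^l(f)$. The minus sign embedded in the definition of $\tau$ and the Koszul sign $(-1)^{|f|}$ combine to give the sign appearing in $\delta_\alpha^l$.

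Next I would verify ${\partial_\tau}^2 = 0$. The square decomposes as $\partial^2 + \partial\delta_\alpha^l + \delta_\alpha^l \partial + (\delta_\alpha^l)^2$, and a direct computation (analogous to the proofs of Lemmas \ref{lem: bar prediff} and \ref{lem: convolution algebra is complete}) shows that this sum equals $\delta^l_{\Theta + \partial\alpha + \tfrac{1}{2}[\alpha,\alpha]}$. The curved Maurer--Cartan equation satisfied by $\alpha$ forces this to vanish. Geometrically, this is the same identity that makes $d_o = d_{\Pc(\B_\alpha A)} - d^l_\alpha$ a predifferential on $\Om_\alpha\B_\alpha A$ with ${d_o}^2 = \gamma(\theta_\Pc \otimes \id)$, restated after applying $\Hom(-,M)$.

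Finally, to conclude the cohomological statement, I would observe that the isomorphism established above upgrades naturally to the $\Z$-graded level: by replacing the $(\B_\alpha A, M)$-hom by its full graded analogue, we obtain a gr-dg $\Rf$-linear isomorphism
\[
\bigl(\Hom_{\ModAP}(A\hotimes^\Pc_\tau \B_\alpha A, M),\partial\bigr) \cong \bigl(\Hom_{\grModR}(\B_\alpha A, M),\partial_\tau\bigr),
\]
whose degree-zero cycles recover the isomorphism proven above. Combining this with Proposition \ref{prop: AQ cochain complex}, which identifies $H^n_{AQ}(A,M)$ with the cohomology of the left-hand side when $\Om_\alpha\B_\alpha A$ is a cofibrant resolution of $A$, yields the final equality. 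The main obstacle is the careful bookkeeping of signs and the verification that the extension-of-scalars adjunction on quasi-free modules really does intertwine $\tau$ with $\delta_\alpha^l$ in the manner claimed; once this is done correctly, the vanishing of ${\partial_\tau}^2$ becomes a clean restatement of the operadic Maurer--Cartan equation.
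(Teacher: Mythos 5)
Your proposal is correct and follows essentially the same route as the paper: the isomorphism is obtained by applying Proposition \ref{prop: quasi-free AP-mod} to $N=\B_\alpha A$ with the $\tau$ of Theorem \ref{thm: cotangent complex}, the vanishing of ${\partial_\tau}^2$ is established via the same three-term decomposition $\partial^2=\delta^l_{\Theta}$, $\partial\delta_\alpha^l+\delta_\alpha^l\partial=\delta^l_{\partial\alpha}$, $(\delta_\alpha^l)^2=\delta^l_{\frac12[\alpha,\alpha]}$ together with the Maurer--Cartan equation, and the cohomological conclusion comes from Proposition \ref{prop: AQ cochain complex}. Your explicit tracking of how the sign in $\tau$ combines with the Koszul sign of Proposition \ref{prop: quasi-free AP-mod} to produce the $(-1)^{|f|}$ in $\delta_\alpha^l$ is a detail the paper leaves implicit, but it is consistent with the stated formulas.
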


\begin{proof}
The theorem follows from Propositions \ref{prop: AQ cochain complex} and \ref{prop: quasi-free AP-mod}. 
The equality ${\partial_\tau}^2 = \delta_{\Theta + \partial \alpha + \frac12 [\alpha, \alpha]}^l$ comes from the equalities:
\[
\partial^2 = \delta_{\Theta}^l,\qquad \partial \delta_\alpha^l + \delta_\alpha^l \partial = \delta_{\partial \alpha}^l \qquad \text{ and }\qquad {\delta_\alpha^l}^2 = \delta_{\frac12 [\alpha, \alpha]}^l
\]
using the coassociativity of $\Delta$ and the fact that $\alpha$ has degree $-1$ for the last equality.
\end{proof}

\section{The case of $\cCxi$-algebras and some perspectives}

\subsection{Bar construction of a $\Cxi$-algebra}

In this section, we fix $\Kf = \Rf = \Rb$. We make the bar construction associated with the operadic twisting morphism $\iota : \cCxa \to \cCxi = \hat \Omega \cCxa$ explicit. We recall that it is a functor
\[ \B_{\iota} : \cCxi \textsf{-algebras} \rightarrow \cCxa \textsf{-coalgebras}. \]

\begin{prop}
We have
\[
\B_{\iota} A \cong \left(\Sym\left(\hoplus_{k \geq 0}A[-k]\right), d_b \right),
\]
where $A[k]$ is the graded module shifted in cohomological degree as follows: $A[k]^n \coloneqq A^{n-k}$. 
In the specific case where $d_A = 0$ and the $\cCxi$-algebra structure is a $\cCx$-algebra structure given by a Lie bracket $\{-, -\}$ of cohomological degree $-1$ and an endomorphism $j$ of cohomological degree $0$, we have that
\begin{multline*}
d_b \left(a_1[k_1] \cdots a_n[k_n]\right) = \\
\sum_{i < j}\pm \alpha_{k_i, k_j} \{a_i, a_j\}[k_i + k_j] \cdot a_1[k_1] \cdots \check{a}_i[k_i] \cdots \check{a}_j[k_j] \cdots a_n[k_n]\quad +\\
\sum_{i} \pm j(a_i)[k_i-1] \cdot a_1[k_1] \cdots \check{a}_i[k_i] \cdots a_n[k_n],
\end{multline*}
where the sign is given by the Koszul sign rule for the elements $a_i[k_i]$.

In general, we have $d_b = d_{\cCx^\antishriek(A)}+d_\iota^r$ with
\begin{multline*}
d_\iota^r \left(a_1[k_1] \cdots a_n[k_n]\right) = \sum_{\substack{p+q=n+1\\ p, q \geq 1}} \sum_{\sigma \in Sh_{q, p-1}} \sum_{K} \alpha^{\sigma}_{k'_{j}, k''_{j}} \times\\
\hat{\jmath}_{k', k_{\sigma(q+1)}, \ldots , k_{\sigma(n)}}^{c}(\hat{\jmath}_{k''_{\sigma(1)}, \ldots , k''_{\sigma(q)}}^{c}(a_{\sigma(1)},\ldots, a_{\sigma(q)}), a_{\sigma(q+1)},\ldots, a_{\sigma(n)}),
\end{multline*}
where the coefficient $\alpha^{\sigma}_{k'_{j}, k''_{j}}$ is given in  Corollary \ref{cor: cCxa infinitesimal}. 

The endomorphism $d_b$ is a coderivation satisfying
\[
{d_b}^{2} = -(\theta_{\cCxa} \circ id_{\B_\iota A}) \cdot \Delta_{\B_\iota A}.
\]
\end{prop}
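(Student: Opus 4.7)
The plan is to instantiate the general bar construction $\B_\alpha A = (\Cc(A), d_{\Cc(A)} + d_\alpha^r)$ with $\alpha = \iota : \cCxa \to \cCxi$, so that the equation ${d_b}^2 = -(\theta_{\cCxa} \circ \id) \cdot \Delta$ is then immediate from Lemma \ref{lem: bar prediff} (since $\iota$ is an operadic curved twisting morphism by Example \ref{ex: twisting morph}) and $d_b$ is a coderivation by construction. The content of the proposition lies entirely in making the underlying graded coalgebra and the coderivation $d_\iota^r$ explicit.

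To identify the underlying graded coalgebra, I would use the description of $\cCxa$ from Theorem \ref{thm: cCxa}: since $\sLie^\antishriek(n)$ is one-dimensional with trivial $\Sb_n$-action and $\Rb[\qJc]$ is concentrated in arity $1$ as the polynomial cooperad on the unary element $\qJc$, the isomorphism $\cCxa \cong \sLie^\antishriek \hcirc \Rb[\qJc]$ identifies the arity $n$ part with the $\Sb_n$-coinvariants of $\bigoplus_{k_1,\ldots,k_n \geq 0} \Rb \cdot \jmath^c_{k_1,\ldots,k_n}$, where $\Sb_n$ permutes the exponents. Taking tensor products with $A^{\hotimes n}$ and coinvariants under the diagonal action yields, after the cohomological degree shift (each $\qJc^{k_i}$ contributes a degree $-k_i$), the natural isomorphism $\cCxa(A) \cong \Sym(\hoplus_{k \geq 0}A[-k])$, under which the generator $\jmath^c_{k_1,\ldots,k_n}(a_1,\ldots,a_n)$ corresponds to the symmetric product $a_1[k_1]\cdots a_n[k_n]$.

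Next, by Proposition \ref{prop: der and coder} any coderivation of the cofree $\cCxa$-coalgebra is determined by its projection on the cogenerators, so $d_\iota^r$ is the unique coderivation extending the composite $\cCxa(A) \xrightarrow{\iota \circ \id_A} \cCxi(A) \xrightarrow{\gamma_A} A$. Its value on a general element is computed via the infinitesimal decomposition map $\Delta_{(1)}$ given explicitly in Corollary \ref{cor: cCxa infinitesimal}: one applies $\Delta_{(1)}$ to $\jmath^c_{k_1,\ldots,k_n}$ and then contracts the ``inner'' vertex with $\iota$ followed by $\gamma_A$. The splittings $k_j = k'_j + k''_j$ and the $(q, p-1)$-unshuffles $\sigma$ appearing in $\Delta_{(1)}$ translate directly into the sum in the statement, and the coefficient $\alpha^\sigma_{k'_j, k''_j}$ is inherited verbatim.

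Finally, for a genuine $\cCx$-algebra $A = (A, d_A = 0, \{-,-\}, j)$ seen as a $\cCxi$-algebra through $f_\kappa : \cCxi \to \cCx$, the structural operations $\hat\jmath^c_{k_1,\ldots,k_n}$ on $A$ vanish except on the quadratic generator $\jmath^c_{0,0}$ (giving the bracket) and the unary generator $\jmath^c_1$ (giving $j$); substituting these into the general formula for $d_\iota^r$ and combining with $d_{\cCxa(A)}=0$ (since $d_A=0$ and $d_{\cCxa}=0$) yields the explicit expression stated. The only real care needed is with signs: for the $\cCx$ formula they follow from tracking the Koszul signs in the reindexing $\jmath^c_{k_1,\ldots,k_n}(a_1,\ldots,a_n) \leftrightarrow a_1[k_1]\cdots a_n[k_n]$ through the unshuffle $\sigma$, and this is the only mildly technical point in an otherwise formal verification.
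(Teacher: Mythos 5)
Your proposal is correct and follows essentially the same route as the paper's own (very terse) proof: identify the underlying module via Theorem \ref{thm: cCxa}, recall that $d_\iota^r$ is the coderivation built from $\Delta_{(1)}$, $\iota$ and $\gamma_A$ as in Section \ref{sec: bar constr alg}, and read off the coefficients from Corollary \ref{cor: cCxa infinitesimal}. The extra details you supply (the symmetric-algebra identification, the specialisation to a strict $\cCx$-structure) are exactly the computations the paper leaves implicit.
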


\begin{proof}
Theorem \ref{thm: cCxa} gives the isomorphism $\cCx^\antishriek (A) \cong \Sym\left(\hoplus_{k \geq 0}A[-k]\right)$. 
Then we recall from Section \ref{sec: bar constr alg} that the predifferential $d_{\cCxa (A)}$ is induced by the predifferentials on $\cCxa$ and on $A$, and
\begin{multline*}
d_{\iota}^{r} : \cCxa (A) \xrightarrow{\Delta_{(1)} \circ id_{A}} (\cCxa \hcirc_{(1)} \cCxa) (A) \xrightarrow{(id_{\cCxa} \circ_{(1)} \iota) \circ id_{A}}\\
\cCxa \hcirc \cCx (A) \xrightarrow{id_{\cCxa}\circ \gamma_{A}} \cCxa (A).
\end{multline*}
By calculating the coefficients by means of Corollary \ref{cor: cCxa infinitesimal}, we get the advertised formula.
\end{proof}

\subsection{A resolution of the $\infty$-morphisms between some $\cCxi$-algebras}

In this section, we again fix $\Kf = \Rf = \Rb$, we consider $\cCxi$-algebras and the operadic twisting morphism $\iota : \cCx^\antishriek \to \Om \cCx^\antishriek$.

\begin{lem}
\begin{enumerate}
\item
The multiplication by $i$ on the $\Rb$-algebra on $\Cb$ endows the $\Rb$-module $\Cb$ with a $\cCx$-algebra structure denoted $\gamma_\Cb$: the action of the shifted Lie bracket on $\Cb$ is zero and the action of $\jn$ is given by the multiplication by $i$. 
\item
Let $(A, \gamma_A)$ and $(B, \gamma_B)$ be $\cCxi$-algebras and let $f : A \to B$ be a $\cCxi$-algebra morphism. 
Then we endow $B$ with an $A$-module structure $\gamma_B^A$ defined by the restriction of the $\cCxi$-algebra structure $\gamma_B$ along $f$, that is
\[
\cCxi(A, B) \xrightarrow{\id_{\cCxi}(f, \id_B)} \cCxi(B, B) \twoheadrightarrow \cCxi(B) \xrightarrow{\gamma_B} B.
\]
\item
In the case where $B = \Cb$ and $f = 0$, we obtain that $\Cb$ is  endowed with the $A$-module structure $\gamma_\Cb^A$ defined on the generators of $\cCxi$ by
\[
\gamma_\Cb^A(j_{k_1, \ldots, k_n} \otimes a_1 \otimes \cdots \otimes c \otimes \otimes a_{n-1}) \coloneqq
\left\{\begin{array}{ll}
c & \text{when } (n, k_1) = (1, 0),\\
i \cdot c & \text{when } (n, k_1) = (1, 1),\\
0 & \text{otherwise.}
\end{array}\right.
\]
\end{enumerate}
\end{lem}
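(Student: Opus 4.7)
For Part~(1), by Lemma~\ref{lem: alternative presentation of cCx} a $\cCx$-algebra structure on $(\Cb,0)$ amounts to a morphism of curved operads $\gamma_\Cb\colon\cCx\to\End_\Cb$, which is determined by prescribing images for the generators ${\scriptstyle s^{-1}}\curvA$, $\jn$ and $\slie$ subject to the relations in $R$ and to the curvature constraint. I would set $\jn\mapsto(c\mapsto ic)$, $\slie\mapsto 0$ and ${\scriptstyle s^{-1}}\curvA\mapsto 0$, and check the four resulting identities. The Jacobi relation and the $\Cb$-linearity $\sjlr-\sljr$ of the bracket are vacuous since the bracket vanishes; the relation $\jnr$ becomes $j^{2}+\id_\Cb=0$, which holds because $i^{2}=-1$; and the curvature equation ${d_\Cb}^{2}=[\theta_\Cb,-]$ holds with both sides zero.

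For Part~(2), the cleanest approach is to recognise $\gamma_B^A$ as the restriction of scalars along $f$ (Proposition~\ref{prop: extension and restriction of scalars}) applied to the canonical $B$-module-over-$B$ structure on $B$ itself, namely $\cCxi(B,B)\twoheadrightarrow\cCxi(B)\xrightarrow{\gamma_B}B$. With this identification, the associativity and unitarity diagrams for $\gamma_B^A$ descend from those of $\gamma_B$ combined with the operadic intertwining $\gamma_B\cdot\id_\cCxi(f)=f\cdot\gamma_A$ granted by $f$ being a $\cCxi$-algebra morphism. The curvature condition ${d_B}^{2}=\gamma_B^A(\theta_\cCxi\otimes\id_B)$ reduces to the curvature condition ${d_B}^{2}=\gamma_B(\theta_\cCxi\otimes\id_B)$ of the $\cCxi$-algebra $B$, since $\theta_\cCxi\in\cCxi(1)$ carries no $A$-slot on which $\id_\cCxi(f,\id_B)$ could act.

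Part~(3) is then a direct substitution. With $f=0$, the first arrow in the composite sends
\[
j_{k_1,\ldots,k_n}\otimes a_{1}\otimes\cdots\otimes c\otimes\cdots\otimes a_{n-1}\longmapsto j_{k_1,\ldots,k_n}\otimes 0\otimes\cdots\otimes c\otimes\cdots\otimes 0.
\]
For $n\geq 2$ this vanishes by multilinearity. For $n=1$ the expression reduces to $j_{k_1}\otimes c\in\cCxi(1)\otimes\Cb$, whose image under $\gamma_\Cb$ is dictated by the operad map $\cCxi\to\cCx$ followed by the $\cCx$-algebra action of Part~(1). The case $k_{1}=0$ corresponds to the operadic unit (the bar-cobar dual of the counit of $\cCxa$) and hence acts by the identity, returning $c$; the case $k_{1}=1$ sends $s^{-1}\jmath^{c}_{1}$ to $\jn\in\cCx(1)$, which acts by multiplication by $i$, returning $ic$; and for $k_{1}\geq 2$ the element $s^{-1}\jmath^{c}_{k_{1}}$ sits in strictly negative cohomological degree in arity $1$, so its image in $\End_\Cb(1)$ is forced to vanish because $\Cb$ is concentrated in cohomological degree $0$.

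The only delicate point is the dictionary used in Part~(3) that identifies the entry $(n,k_{1})=(1,0)$ with the operadic unit rather than with a genuine cobar generator of $\cCxi$; once this bookkeeping is made explicit, the remaining cases are forced either by the killing of the $A$-slots through $f=0$ or by the degree constraint on the one-dimensional module $\Cb$.
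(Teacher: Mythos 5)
Your proof is correct and follows essentially the same route as the paper's (very terse) argument: part (1) amounts to checking the defining relations of $\cCx$ on $(\Cb,0)$ — the paper instead just cites its earlier characterisation of $\cCx$-algebras as curved Lie $\Cb$-algebras (Lemma~\ref{lem: cCx-algebra}), which encapsulates the same verification — and parts (2) and (3) are, as you say, restriction of scalars along $f$ and a direct explicitation when $f=0$. Your extra care in identifying the $(n,k_1)=(1,0)$ entry with the operadic unit rather than a cobar generator, and the degree argument forcing the vanishing for $k_1\geq 2$, correctly fill in details the paper leaves implicit.
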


\begin{proof}
\begin{enumerate}
\item
The assertion follows for example from Lemma \ref{lem: cCx-algebra}.
\item
The described map defines an $A$-module map since $f$ is a $\cCxi$-algebra morphism between $\cCxi$-algebras.
\item
This is an explicitation of the previous definition in the specified  situation.
\end{enumerate}
\end{proof}

\begin{prop}
\label{prop: H0 AQ}
Let $(A, \gamma_A)$ be a $\cCxi$-algebra and $\B_\iota A$ its bar construction. 
The $\Rb$-module $\Cb$ is endow with its $\Cx$-algebra structure $\gamma_\Cb$ (which can also be viewed as a $\cCxi$ structure) and with its trivial $A$-module structure $\gamma_\Cb^A$ described in the previous lemma. 
We have the isomorphism of $\Rb$-modules
\[
Z^{0}_{AQ} ((A, \gamma_A), (\Cb, \gamma_\Cb^A)) \cong \infty\text{-}\mor((A, \gamma_A), (\Cb, \gamma_\Cb))
\]
where $Z^0_{AQ} ((A, \gamma_A), (\Cb, \gamma_\Cb^A))$ is 
\[
\ker\left( \partial_\tau : \Hom_{\grModRb}^0 \left( \B_\alpha A, U \Cb\right) \to \Hom_{\grModRb}^{1} \left( \B_\alpha A, U \Cb\right) \right)
\]
described in Theorem \ref{thm: AQ cochain complex convolution}.
\end{prop}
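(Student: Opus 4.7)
The plan is to identify both sides of the claimed isomorphism as the same set of degree-$0$ filtered $\Rb$-linear maps $\varphi: \B_\iota A \to \Cb$, characterised by a closure condition that, for this very rigid target, reduces to the same explicit equation on both sides. First, by the algebraic bar-cobar adjunction (Theorem \ref{thm: algebraic bar-cobar adjunction}) applied to $\iota$, one rewrites
\[
\infty\text{-}\mor(A,\Cb) = \Hom_{\cCxa\text{-}\mathsf{coalg}}(\B_\iota A, \B_\iota \Cb) \cong \Tw_\iota(\B_\iota A, \Cb),
\]
and the latter consists of degree-$0$ maps $\varphi$ satisfying the Maurer--Cartan equation
\[
\sum_{n\geq 0}\tfrac{1}{n!}\,l_n^\iota(\varphi,\ldots,\varphi)=0,
\]
where $\{l_n^\iota\}$ is the $\scLi$-structure on $\homa(\B_\iota A,\Cb)$ produced from $\iota$ via Propositions \ref{prop: twisting morphisms as morphisms}, \ref{prop: algebra over the convolution curved operad} and \ref{prop: equivalent definition of homotopie cLie algebras}. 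On the other side, $Z^0_{AQ}(A,\Cb)$ is by Theorem \ref{thm: AQ cochain complex convolution} the kernel at degree $0$ of $\partial_\tau=\partial+\delta_\iota^l$ on the same $\Rb$-module of degree-$0$ filtered maps, with $\partial \varphi = d_\Cb \cdot \varphi - (-1)^{|\varphi|}\varphi\cdot d_b$ the convolution predifferential built from $d_\Cb=0$ and the bar predifferential $d_b=d_{\cCxa(A)}+d_\iota^r$.

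I would then use the rigidity of the target to collapse both closure conditions to the equation $l_1^\iota(\varphi)=\partial_\tau(\varphi)=0$. Since $\gamma_\Cb$ factors through $\cCxi\to\cCx$ and has vanishing shifted bracket and vanishing curvature, it annihilates every $\cCxi$-operation of arity $\geq 2$ as well as the $0$-ary curvature element; consequently the Maurer--Cartan sum collapses to $l_0^\iota+l_1^\iota(\varphi)=l_1^\iota(\varphi)=\partial_{a_\iota}(\varphi)$. Symmetrically, since $\gamma_\Cb^A$ is the restriction of $\gamma_\Cb$ along the zero map $f=0:A\to\Cb$, it kills any outer cooperation of arity $\geq 2$ in $\delta_\iota^l(\varphi)$ (the extra $A$-inputs being sent to $0$); only the outer-arity-$1$ part of $\Delta_{(1)}$ survives. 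The matching $\gamma_h(a_\iota,\varphi)=\delta_\iota^l(\varphi)$ on this surviving part is then immediate from comparing the two constructions: both apply $\iota$ in arity $1$ to the outer factor, then $\varphi$ to the inner factor, then $\gamma_\Cb$ on $\cCxi(1)\otimes \Cb$, and for outer arity $1$ the splittings produced by $\Delta$ and $\Delta_{(1)}$ coincide. Hence $l_1^\iota(\varphi)=\partial_\tau(\varphi)$, so the reduced Maurer--Cartan equation is exactly $\partial_\tau\varphi=0$, and the resulting bijection is manifestly $\Rb$-linear.

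The main obstacle lies in the careful bookkeeping: tracking signs through the chain of identifications of Propositions \ref{prop: twisting morphisms as morphisms}--\ref{prop: equivalent definition of homotopie cLie algebras} using the conventions of Section \ref{section: sign conventions}, confirming that the $\partial$ appearing as a summand of $\partial_\tau$ genuinely agrees with the $\partial$ inside $\partial_{a_\iota}$ once $d_\iota^r$ is absorbed into $d_b$, and verifying that the arity-$1$-outer components of $\Delta$ and $\Delta_{(1)}$ produce identical terms. Once these verifications are made, the vanishing of the higher Maurer--Cartan terms on both sides follows automatically from the structure of $(\Cb,\gamma_\Cb)$ and $(\Cb,\gamma_\Cb^A)$, and the desired isomorphism falls out.
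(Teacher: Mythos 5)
Your proposal is correct and follows essentially the same route as the paper: both arguments reduce the Maurer--Cartan equation and the cocycle condition $\partial_\tau\varphi=0$ to the same expression by observing that $\gamma_\Cb$ and $\gamma_\Cb^A$ factor through $I\circ\Cb\oplus\,\jn\otimes\Cb$, so that only the arity-one contributions survive on either side. The paper states this more tersely as the single identity $\partial(\varphi)+\delta_\iota^l(\varphi)=\sum_{n\geq 0}\frac{1}{n!}l_n^\iota(\varphi,\ldots,\varphi)$, but the content is the same as your term-by-term collapse.
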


\begin{proof}
Let $\varphi \in \Hom_{\grModRb}^0(\B_\iota A, \Cb)$ such that $0 = \partial_\tau (\varphi) = \partial (\varphi) + \delta_\iota^l(\varphi)$, where $\delta_\iota^l(\varphi)$ is the composite
\begin{multline*}
\cCx^\antishriek(A) \xrightarrow{\Delta_{(1)} \circ \id_A} (\cCx^\antishriek \hcirc_{(1)} \cCx^\antishriek)(A) \xrightarrow{(\iota \circ_{(1)} \id_{\cCx^\antishriek}) \circ \id_{A}} (\cCxi \hcirc_{(1)} \cCx^\antishriek)(A) \to \\
\cCxi(A, \cCx^\antishriek(A)) \xrightarrow{\id_{\cCxi} (\id_A, \varphi)} \cCxi(A, \Cb) \xrightarrow{\gamma_\Cb^A} \Cb.
\end{multline*}
The map $\cCxi(A, \Cb) \xrightarrow{\gamma_\Cb^A} \Cb$ factors through $\cCxi(A, \Cb) \to I\circ \Cb \oplus \jn \otimes \Cb \to \Cb$. 
Similarly, the composition product $\gamma_\Cb : \cCxi(\Cb) \to \Cb$ factors through $\cCxi(\Cb) \to I\circ \Cb \oplus \jn \otimes \Cb \to \Cb$. 
We therefore have
\[
0 = \partial_\tau(\varphi) = \partial (\varphi) + \delta_\iota^l(\varphi) = \sum_{n \geq 0} \frac1{n!} l_n^\iota(\varphi, \ldots, \varphi)
\]
and $\varphi$ is an algebraic twisting morphism $\B_\iota A \to \Cb$, or equivalently an $\infty$-morphism $A \rightsquigarrow \Cb$.
\end{proof}

\begin{rem}
In the light of Theorem 4.1 in \cite{jM14}, when $A$ is the algebra encoding a complex structure on a formal pointed manifold (in particular, it is concentrated in degree 0), we get that the 0-th André--Quillen cohomology group computes formal holomorphic functions on it:
\begin{multline*}
H^{0}_{AQ} ((A, \gamma_A), (\Cb, \gamma_\Cb^A)) \cong \\
\ker \left(\partial_\tau : \Hom_{\grModRb}^0 \left( \B_\iota A, U\Cb\right) \to \Hom_{\grModRb}^{1} \left( \B_\iota A, U\Cb\right) \right) \cong \\
\infty\text{-}\mor((A, \gamma_A), (\Cb, \gamma_\Cb))
\end{multline*}
since $\B_\iota A$ is concentrated in non positive cohomological degrees (and therefore $\Hom_{\grModRb}^{-1} \left( \B_\iota A, U\Cb\right) = \{ 0\}$).
\end{rem}

\subsection{A $\Cb$-cdga structure on the André--Quillen complex in the context of $\cCxi$-algebras}

As in the two previous sections, we fix $\Kf = \Rf = \Rb$. 
In the following proposition, we use the usual associative and commutative structure on $\Cb$ to endow the André--Quillen cochain complex $\left(\Hom_{\grModRb}(\B_\iota A, \Cb), \partial_\tau \right)$ with a $\Cb$-cdga structure.

\begin{prop}
\label{prop: C-algebra structure}
Let $A$ be a $\cCxi$-algebra and $\B_\iota A$ its bar construction. 
We endow $\Cb$ with its trivial $A$-module structure $\gamma_\Cb^A$ obtained by restriction of $A \xrightarrow{0} \Cb$ (and described above). 
The dg module $\left(\Hom_{\grModRb}(\B_\iota A, \Cb), \partial_\tau \right)$ is endowed with a structure of $\Cb$-cdga denoted by $m$ and defined, for $\varphi_1$, \ldots, $\varphi_n$ in $\Hom_{\grModRb}(\B_\iota A, \Cb)$, by
\begin{multline*}
\B_\iota A \cong \cCx^\antishriek (A) \xrightarrow{\Delta_{\cCx^\antishriek} \circ \id_A} \cCxa(\B_\iota A) \twoheadrightarrow \sLie^\antishriek (\B_\iota A) \cong \Com^c(\B_\iota A)\\
\twoheadrightarrow \Com^c(n) \otimes_{\Sb_n} (\B_\iota A)^{\otimes n} \xrightarrow{\varphi_1 \otimes \cdots \otimes \varphi_n} \Com^c(n) \otimes_{\Sb_n} \Cb^{\otimes n} \xrightarrow{m_\Cb} \Cb,
\end{multline*}
where $m_\Cb$ is the usual (commutative) multiplication on $\Cb$ (we recall that $\Com^c(n) \cong \Rb$). 
The $\Cb$-action is given by the action of $\Cb$ on the codomain $\Cb$.
\end{prop}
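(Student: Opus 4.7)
The plan is to verify the four axioms (associativity, commutativity, $\Cb$-linearity, Leibniz rule) by reducing the structure on $\Hom_{\grModRb}(\B_\iota A, \Cb)$ to a convolution-algebra structure associated with a commutative coalgebra structure on $\B_\iota A$ coming from the shifted cocommutative part of $\cCxa$. The key observation is that the cooperad morphism $\cCxa\twoheadrightarrow\sLie^\antishriek$, obtained by picking the top corolla and forgetting the decorations in $\Rb[\qJc]$, induces on $\B_\iota A$ the structure of a $\sLie^\antishriek\cong\Com^c$-coalgebra; combined with the commutative $\Rb$-algebra structure on $\Cb$ (which is in particular commutative, associative, and unital), it yields a commutative associative multiplication on the convolution module $\Hom_{\grModRb}(\B_\iota A,\Cb)$. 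This is exactly the map $m$ defined in the statement.

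First I would verify associativity and commutativity. Associativity follows from coassociativity of $\Delta_{\cCxa}$ together with associativity of $m_\Cb$, after noting that the projection $\cCxa\twoheadrightarrow\sLie^\antishriek$ is compatible with the coproduct (so that iterated application of $m$ is computed by iterated application of $\Delta_{\cCxa}$ followed by the projection, which by coassociativity gives the same result). Commutativity follows from the fact that $\sLie^\antishriek(n)\cong\Rb$ carries the trivial $\Sb_n$-action (so the cooperad is cocommutative) together with commutativity of $m_\Cb$, using the Koszul sign rule to check signs. The $\Cb$-linearity is immediate since the $\Cb$-action on the codomain commutes with $m_\Cb$. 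The unit is the map $\B_\iota A\twoheadrightarrow I\circ A\cong A\xrightarrow{0}\Cb\xrightarrow{1}\Cb$, which works through the projection onto the $\sLie^\antishriek(0)$-component.

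The main obstacle will be the Leibniz rule $\partial_\tau(m(\varphi_1,\varphi_2))=m(\partial_\tau\varphi_1,\varphi_2)+(-1)^{|\varphi_1|}m(\varphi_1,\partial_\tau\varphi_2)$. I would split $\partial_\tau=\partial+\delta_\iota^l$ and treat each piece separately. For the first piece, I would show that $d_b$ on $\B_\iota A$ induces a coderivation of the $\Com^c$-coalgebra structure on $\B_\iota A$; more precisely, that the composite
\[
\B_\iota A\xrightarrow{d_b}\B_\iota A\xrightarrow{\Delta_{\cCxa}}\cCxa(\B_\iota A)\twoheadrightarrow\sLie^\antishriek(\B_\iota A)
\]
equals the infinitesimal coderivation extension of $d_b$ on the target. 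This follows because $d_b$ is by construction a $\cCxa$-coalgebra coderivation (Lemma \ref{lem: bar prediff}) and the cooperad projection commutes with the coproducts. Combined with $d_\Cb=0$, this yields the $\partial$-Leibniz rule.

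For the second piece, I would use that $\delta_\iota^l(\varphi)=\gamma_\Cb^A\cdot(\id_{\cCxi}(\id_A,\varphi))\cdot(\iota\circ_{(1)}\id_{\cCxa})\cdot(\Delta_{(1)}\circ\id_A)$ together with the crucial fact that the trivial $A$-module structure $\gamma_\Cb^A$ factors through the unary $I\oplus\Rb\cdot\jn\subset\cCxi$. This reduces the sum defining $\delta_\iota^l$ to contributions where the inner factor of the partial decomposition $\Delta_{(1)}$ picks out a unary operation. From the explicit formula for $\Delta_{(1)}$ on $\cCxa$ in Corollary \ref{cor: cCxa infinitesimal}, one can check that these unary contributions fit inside the cocommutative projection and therefore satisfy the Leibniz rule when composed with the commutative product on $\Cb$. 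This uses that for unary factors the shuffle is trivial and the cocommutative projection commutes with the infinitesimal decomposition. After collecting the contributions, $\delta_\iota^l$ is a derivation of $m$, completing the proof.
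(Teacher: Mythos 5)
Your overall route coincides with the paper's: $m$ is the convolution product induced by the cooperad projection $\cCxa\twoheadrightarrow\sLie^\antishriek\cong\Com^c$ together with the commutative product on $\Cb$; associativity, commutativity and $\Cb$-linearity follow from coassociativity, cocommutativity and the definition; and the Leibniz rule is checked by splitting $\partial_\tau=\partial+\delta_\iota^l$, the $\partial$-part being handled exactly as in the paper via coassociativity of $\Delta_{\cCxa}$ and $d_{\cCxa}=0$.

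The gap is in the $\delta_\iota^l$ step, which is where essentially all of the content of the paper's proof lies. A first, minor point: in $\delta_\iota^l$ the twisting morphism $\iota$ is applied to the \emph{root} factor of $\Delta_{(1)}$ (the one that subsequently acts through $\gamma_\Cb^A$), so it is that factor, not the ``inner'' one, which is forced to be unary by the triviality of the module structure. The substantive point is what happens after this reduction: the trees computing $\delta_\iota^l\bigl(m(\varphi_1,\dots,\varphi_n)\bigr)$ and those computing $\sum_h \pm\, m(\varphi_1,\dots,\delta_\iota^l\varphi_h,\dots,\varphi_n)$ are \emph{different} trees --- the unary $\jmath_1^c$ sits on opposite sides of the cocommutative corolla $\jmath^c_{0,\dots,0}$ --- so they are not identified by any commutation of the projection with $\Delta_{(1)}$. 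They evaluate to the same element of $\Cb$ only because $\gamma_\Cb^A$ is $\Cb$-linear, i.e.\ multiplication by $i$ commutes with $m_\Cb$; your sketch never invokes this in the Leibniz-rule argument, and without it the two sides cannot be compared. Even granting it, one must still check that the two families of trees occur with \emph{matching coefficients}: since $\Delta^\theta$ on $\cCxa$ carries explicit $\tfrac1{p!}$ factors, shuffle sums and signs $\beta^\sigma_{k'_j,k''_j}$, this is a genuine computation (the paper verifies that both coefficients reduce to the plain Koszul sign), not a formal consequence of ``fitting inside the cocommutative projection''. As written, your justification asserts the conclusion at exactly the point where the proof has to do work. (A smaller issue: your proposed unit, the composite through $A\xrightarrow{0}\Cb$, is the zero map; the unit should come from the arity-zero component $\Com^c(0)\cong\Rb\to\Cb$ of the decomposition.)
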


\begin{proof}
The morphism $\Com(\Hom_{\grModRb}(\B_\iota A, \Cb)) \to \Hom_{\grModRb}(\B_\iota A, \Cb)$ described above defines a commutative and associative structure on $\Hom_{\grModRb}(\B_\iota A, \Cb)$ because the projection $\cCxa \twoheadrightarrow \sLie^\antishriek \cong \Com^c$ is a morphism of curved cooperads. 
The compatibility with the $\Cb$-action is direct to check.

It remains to show that the differential $\partial_\tau$ is a derivation with respect to the $\Com$-algebra structure $m$. 
We have
\[
\partial \cdot m(\varphi_1, \ldots, \varphi_n) = m(\varphi_1, \ldots, \varphi_n) \cdot d_{\B_\iota A}  = \sum_h m(\varphi_1, \ldots, \varphi_h \cdot d_{\B_\iota A}, \cdots, \varphi_n)
\]
since $\Delta_{\cCx^\antishriek}$ is coassociative and $d_{\cCx^\antishriek} = 0$. 
Moreover, using the fact that the action $\gamma_\Cb^A$ is trivial and the fact that $\iota_{|I} = 0$, we get that the trees appearing in the computation of $\delta_\iota^l (m(\varphi_1, \ldots, \varphi_n))$ have the form
\[
t_1 = {\tiny \vcenter{\xymatrix@R=10pt@C=2pt{
& \jmath_{k''_1, \ldots}^c &&\\
\jmath_{k_1, \ldots}^c & \jmath_{1}^c \ar@{-}[d] \ar@{-}[u] & \jmath_{k_{q_1}, \ldots}^c & \jmath_{k_{q_1+\cdots}, \ldots}^c \\
& \jmath_{0, \ldots, 0}^c \ar@{-}[d] \ar@{-}[ul] \ar@{-}[u] \ar@{-}[ur] \ar@{-}[urr] && \\
&&&}}}.
\]
Similarly, the trees appearing in the computation of $\sum_j m(\varphi_1, \ldots, \delta_\iota^l (\varphi_h), \ldots, \varphi_n)$ have the form
\[
t_2 = {\tiny \vcenter{\xymatrix@R=10pt@C=2pt{
\jmath_{k_1, \ldots}^c & \jmath_{k''_1, \ldots}^c & \jmath_{k_{q_1}, \ldots}^c & \jmath_{k_{q_1+\cdots}, \ldots}^c \\
& \jmath_{0, \ldots, 0}^c \ar@{-}[d] \ar@{-}[ul] \ar@{-}[u] \ar@{-}[ur] \ar@{-}[urr] && \\
& \jmath_{1}^c \ar@{-}[u] \ar@{-}[d] &&\\
&&&}}}.
\]
Because $\gamma_\Cb^A$ is $\Cb$-linear, applying the maps $\varphi_j$ and the action $\gamma_\Cb^A$, the two corresponding trees give the same result. 
It remains to compute the coefficients in front of these trees. These coefficients appear after applying the decomposition map two times. 
We shortly recall the formula (given in Theorem \ref{thm: cCxa}) here
\begin{multline*}
\Delta^\theta \left( \jmath_{k_1, \ldots, k_n}^c\right) =\\
\sum \frac{1}{p!} \beta^{\sigma}_{k'_{j}, k''_{j}} \times \left( \jmath_{l'_1, \ldots , l'_p}^{c} ; \jmath_{k''_{\sigma(1)}, \ldots , k''_{\sigma(q_1)}}^{c}, \jmath_{k''_{\sigma(q_1+1)}, \ldots , k''_{\sigma(q_1+q_2)}}^{c} , \ldots \right)^{\sigma^{-1}}
\end{multline*}
where the sum $\sum$ is given by $\sum_{q_{1}+\cdots +q_{p}= n} \sum_{\sigma \in Sh_{q_{1}, \ldots , q_{p}}} \sum_{k'_{j}+k''_{j} = k_{j}}$, the integers $l'_i$ are defined by $l'_i := k'_{\sigma(q_1+\cdots +q_{i-1}+1)}+ \cdots +k'_{\sigma(q_1+\cdots +q_i)}$ and the number $\beta^{\sigma}_{k'_{j}, k''_{j}}$ is equal to
\[
\beta^{\sigma}_{k'_{j}, k''_{j}} \coloneqq \sgn_{k_{1}, \ldots , k_{n}}\sigma \times \varepsilon_{k'_{j}, k''_{j}}^{\sigma} \times \prod_{i=1}^p \alpha_{k'_{\sigma(q_1+\cdots +q_{i-1}+1)},\, \ldots , k'_{\sigma(q_1+\cdots +q_i)}}
\]
where $\sgn_{k_{1}, \ldots , k_{n}}\sigma$ is the signature of the restriction of $\sigma$ to the indices $j$ such that $k_{j}$ is odd (after relabeling the remaining $\sigma(j)$ in a way that the order of the $\sigma(j)$ does not change) and where
\[
\left\{ \begin{array}{lcl}
\varepsilon_{k'_{j},\, k''_{j}}^{\sigma} & \coloneqq & (-1)^{\sum_{i=1}^{n} k_{\sigma(i)}''(k_{\sigma(i+1)}' + \cdots + k_{\sigma(n)}')}\\
\alpha_{k'_{1},\, \ldots ,\, k'_{q}} & \coloneqq & \sum_{\sigma' \in Sh_{k'_{1},\, \cdots ,\,k'_{q}}} \sgn \sigma',\, \textrm{with convention } \alpha_{0,\, \ldots ,\, 0} := 1.
\end{array}\right.
\]
First we compute the coefficient in front of the term $t_1$. 
We calculate $\Delta^\theta \left( \jmath_{k_1, \ldots, k_n}^c\right)$ and compute the terms having the form $(\jmath_{0, \ldots, 0}^c ; \jmath_{k_1'', \ldots}^c, \ldots)$. 
We get the sum
\begin{multline*}
\sum_{q_{1}+\cdots +q_{p}= n} \sum_{\sigma \in Sh_{q_{1}, \ldots , q_{p}}} \frac{1}{p!} \sgn_{k_{1}, \ldots , k_{n}}\sigma \times \varepsilon_{0, k_{j}}^{\sigma} \times \prod_{i=1}^p 1 \times\\
\left( \jmath_{0, \ldots , 0}^{c} ; \jmath_{k_{\sigma(1)}, \ldots , k_{\sigma(q_1)}}^{c}, \jmath_{k_{\sigma(q_1+1)}, \ldots , k_{\sigma(q_1+q_2)}}^{c} , \ldots \right)^{\sigma^{-1}}
\end{multline*}
\[
= \sum_{q_{1}+\cdots +q_{p}= n} \sum_{\sigma \in Sh_{q_{1}, \ldots , q_{p}}} \frac{1}{p!} \sgn_{k_{1}, \ldots , k_{n}}\sigma \times \left( \jmath_{0, \ldots , 0}^{c} ; \jmath_{k_{\sigma(1)}, \ldots , k_{\sigma(q_1)}}^{c}, \ldots \right)^{\sigma^{-1}}.
\]
Then we apply the infinitesimal decomposition on a term $\jmath_{k_{\sigma(q_1+\ldots +q_s+1)}, \ldots}^{c}$ and compute the terms having the form of $t_1$. 
In front of the tree $(j_1^c ; \jmath_{k''_{\sigma(q_1+\ldots +q_s+1)}, \ldots}^c)$, we obtain the coefficient
\[
\varepsilon_{k_{\sigma(q_1+\ldots +q_s+j)}', k_{\sigma(q_1+\ldots +q_s+j)}''}^{\id}
\]
corresponding to the Koszul sign coming from the fact that the tree $\jn$ goes down and where only one $k'_j$ is equal to 1 and the others are equal to 0. 
Finally, every tree $t_1$ which appears has a coefficient equal to the sign given by the Koszul sign rule (due to the fact that we permute the generators $\jn$).

We now compute the coefficient in front of the term $t_2$. 
In the sum $\Delta^\theta \left( \jmath_{k_1, \ldots, k_n}^c\right)$, the coefficient in front of the tree $(\jmath_1^c ; \jmath_{k_1, \ldots, k_h-1, \ldots, k_n}^c)$ is $\varepsilon_{k_j', k_j''}^{\id}$ corresponding to the Koszul sign coming from the fact that the tree $\jn$ goes down. (All the  other coefficients in the formula are equal to 1.) 
In the decompositions $k'_{j}+k''_{j} = k_{j}$, only $k'_h$ is equal to 1 and the others $k_j'$ are equal to 0. 
Then applying $\Delta^\theta$ to $\jmath_{k_1, \ldots, k_h-1, \ldots, k_n}^c$ and considering the terms $(\jmath_{0, \ldots, 0}^c ; \ldots)$ gives the sum
\[
\sum_{q_{1}+\cdots +q_{p}= n} \sum_{\sigma \in Sh_{q_{1}, \ldots , q_{p}}} \frac{1}{p!} \sgn_{k_{1}, \ldots, k_h-1, \ldots, k_{n}}\sigma \times \left( \jmath_{0, \ldots , 0}^{c} ; \jmath_{k_{\sigma(1)}, \ldots , k_{\sigma(q_1)}}^{c}, \ldots \right)^{\sigma^{-1}}.
\]
Again, every tree $t_2$ which appears has a coefficient equal to the sign given by the Koszul sign rule due to the fact that we permute the generators $\jn$. 

It follows form the previous explanations that
\[
\delta_\iota^l (m(\varphi_1, \ldots, \varphi_n)) = \sum_h m(\varphi_1, \ldots, \delta_\iota^l (\varphi_h), \ldots, \varphi_n). 
\]
As a consequence, the differential $\partial_\tau$ is a derivation with respect to the $\Com$-algebra structure $m$. This concludes the proof.
\end{proof}

\subsection{Relation with dg algebras with entire functional calculus}

We conclude this section by showing that the functor given by the André--Quillen cochains from $\cCxi$-algebras with $\infty$-morphisms to EFC-dga in the sense of \cite[Definitions 1.4 and 2.10]{Pri20} is fully faithful.

We recall the definition of dg algebra with entire functional calculus.

\begin{defn}
\begin{enumerate}
\item
Let $\mathsf{EFC}$ the category with objects $\{ \Cb^n \}_{n \geq 0}$ and morphisms consisting of complex-analytic maps.
\item
A $\Cb$-algebra $A$ is a \emph{EFC $\Cb$-algebra} if it endowed with a product-preserving set-valued functor $\Psi  : \mathsf{EFC} \to \mathsf{Set}$.
\item
A chain complex $(A, d_A)$ of $\Qb$-vector spaces is an \emph{EFC-dga} if it is a dg commutative algebra such that $Z^0 A = \ker(d_A : A^0 \to A^1)$ is an EFC $\Cb$-algebra.
\end{enumerate}
\end{defn}

\begin{thm}
\label{thm: link EFC}
The functor
\[ \begin{array}{rccl}
C_{AQ}^\bullet : & (\cCxi\textsf{-alg.}, \infty\text{-}\mor) & \to & \textsf{EFC-dga},\\
& (A, \gamma_A, d_A) & \mapsto & \left(\Hom_{\grModRb}(\B_\iota A, \Cb), \partial_\tau \right)
\end{array} \]
is well-defined and fully faithful.
\end{thm}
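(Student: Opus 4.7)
The proof has four parts: (i) that $C_{AQ}^\bullet(A)$ is genuinely an EFC-dga, (ii) functoriality on $\infty$-morphisms, (iii) faithfulness, and (iv) fullness; parts (i) and (iv) carry the substance.

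For (i), Proposition \ref{prop: C-algebra structure} already equips $C_{AQ}^\bullet(A)$ with the structure of a $\Cb$-cdga, hence in particular of a $\Qb$-cdga. The missing datum is the EFC $\Cb$-algebra structure on $Z^0\coloneqq\ker\bigl(\partial_\tau\colon(C_{AQ}^\bullet)^0\to(C_{AQ}^\bullet)^1\bigr)$, which I will define using the identification $Z^0\cong\infty\text{-}\mor(A,\Cb)$ from Proposition \ref{prop: H0 AQ}. For $\varphi_1,\dots,\varphi_n\in Z^0$, let $c_i\coloneqq\varphi_i(\jmath^c_\emptyset)\in\Cb$ denote the ``constant'' value on the $0$-ary summand $\cCxa(0)\subset\B_\iota A$, and set $\tilde\varphi_i\coloneqq\varphi_i-c_i\varepsilon_{\B_\iota A}$. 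For any entire $f\in\mathcal{O}(\Cb^n)$, I define
\[ f(\varphi_1,\dots,\varphi_n)\coloneqq\sum_{\mathbf{k}\in\Nb^n}\frac{\partial^{\mathbf{k}}f(c_1,\dots,c_n)}{\mathbf{k}!}\,m\bigl(\tilde\varphi_1^{k_1}\cdots\tilde\varphi_n^{k_n}\bigr). \]
Evaluation at $x\in\B_\iota A\cong\Sym(\bigoplus_{k\geq 0}A[-k])$ reduces to a finite sum modulo each step of the complete filtration: altipotence of $\cCxa$ bounds the arity of the iterated coproduct of $x$, and $\tilde\varphi_i$ vanishes on the $\varepsilon$-summand, so only finitely many $\mathbf{k}$ contribute. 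Cocycle-ness, recovery of $m$ from polynomial $f$, and the EFC composition and projection axioms then follow from the corresponding identities for holomorphic functions together with the cdga axioms.

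For (ii), an $\infty$-morphism $F\colon A\rightsquigarrow B$ is by definition a $\cCxa$-coalgebra map $\tilde F\colon\B_\iota A\to\B_\iota B$; precomposition $\tilde F^{*}$ commutes with $\partial_\tau$ by Theorem \ref{thm: algebraic bar-cobar adjunction} and respects the projection $\cCxa\twoheadrightarrow\Com^c$ used in Proposition \ref{prop: C-algebra structure}, hence is a $\Qb$-cdga morphism, and the power-series formula above is manifestly natural, so $\tilde F^{*}$ preserves the EFC structure on $Z^0$. For (iii), if $\tilde F_1^{*}=\tilde F_2^{*}$, then $\psi\circ\tilde F_1=\psi\circ\tilde F_2$ for every $\psi\in\Hom_{\grModRb}(\B_\iota B,\Cb)$; since $\Cb$-valued $\Rb$-linear functionals separate points of an $\Rb$-vector space, $\tilde F_1=\tilde F_2$.

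Fullness (iv) is the main obstacle. Given an EFC-dga morphism $\Phi\colon C_{AQ}^\bullet(B)\to C_{AQ}^\bullet(A)$, the plan is to reconstruct $\tilde F\colon\B_\iota A\to\B_\iota B$ layer-by-layer via $\Cb$-linear duality on the graded pieces of $\B_\iota B\cong\Sym(\bigoplus_{k\geq 0}B[-k])$. The linear part of $\Phi$ yields candidate components $\tilde F^{(k)}\colon\B_\iota A\to B[-k]$; multiplicativity of $\Phi$ for $m$ assembles these into a $\Com^c$-coalgebra morphism. The delicate and essential point is that compatibility of the restriction $\Phi_0$ on $Z^0\cong\infty\text{-}\mor(B,\Cb)$ with the EFC operation ``multiplication by $i\in\Cb$'' (the only non-commutative datum carried by $\Cb$ as a $\cCx$-algebra) is precisely what forces $\tilde F$ to intertwine the $\Rb[\qJc]$-factor of $\cCxa$, promoting the $\Com^c$-coalgebra morphism to a genuine $\cCxa$-coalgebra one. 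Compatibility with $\partial_\tau$ then lifts it to a morphism of curved altipotent $\cCxa$-coalgebras, that is, an $\infty$-morphism. The technical heart is therefore showing that the EFC datum captures exactly the integrable-almost-complex content of the $\jn$-generator of $\cCx$.
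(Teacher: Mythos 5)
Your parts (i)--(iii) are sound and essentially follow the paper's route: the paper also builds the EFC-structure on $Z^0\cong\infty\text{-}\mor(A,\Cb)$ by turning the Taylor expansion of an entire $f$ into an $\infty$-morphism $T_0^f:\B_\iota \Cb^n\to\B_\iota\Cb$ (via Theorem 4.1 of \cite{jM14}) and forming the composite $T_0^f\cdot(\varphi_1,\dots,\varphi_n)\cdot\Delta^{n-1}_{diag}$ of $\cCxa$-coalgebra maps; your expansion at $(c_1,\dots,c_n)$ is a hands-on version of the same construction, and your altipotence argument for convergence is a reasonable substitute for the paper's appeal to the functoriality of the identification in \cite{jM14}.

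The genuine gap is in (iv). What you have written there is a plan, not a proof: the two steps that you yourself flag as ``the delicate and essential point'' and ``the technical heart'' --- (a) that the linear part of an EFC-dga morphism $\Phi$ dualizes to components $\tilde F^{(k)}:\B_\iota A\to B[-k]$, and (b) that compatibility with the EFC operation ``multiplication by $i$'' forces the reconstructed map to intertwine the $\Rb[\qJc]$-factor of $\cCxa$ --- are asserted, not established. Step (a) is not automatic: for infinite-dimensional modules a linear map $\Hom_{\grModRb}(\B_\iota B,\Cb)\to\Hom_{\grModRb}(\B_\iota A,\Cb)$ between full duals need not be a transpose, so before any candidate $\tilde F$ exists you need an argument (extra structure preserved by $\Phi$, or a filtration/finiteness property of $\B_\iota B$) that you do not supply. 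Step (b) is exactly the hard content of your strategy and is left entirely open. The paper takes a much shorter path that bypasses both issues: full faithfulness is deduced directly from the splitting $\Hom_{\grModRb}(\B_\iota A,\Cb)\cong\Hom_{\grModRb}(\B_\iota A,\Rb)\oplus\Hom_{\grModRb}(\B_\iota A,\Rb)$ together with full faithfulness of $\Hom_{\grModRb}(-,\Rb)$, so that an EFC-dga morphism is already pinned down by the underlying $\Rb$-linear duality and there is no need to reconstruct the $\Com^c$-coalgebra structure from $m$, nor the $\jn$-content from the EFC operations. If you wish to keep your reconstruction strategy you must actually carry out (a) and (b); as written, the fullness half of the theorem is not proved.
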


\begin{proof}
Proposition \ref{prop: C-algebra structure} ensures that $C_{AQ}^\bullet (A)$ is endowed with a $\Cb$-cdga structure. 
By Proposition \ref{prop: H0 AQ}, we obtain that $Z^0 C_{AQ}^\bullet (A) \cong \infty\text{-}\mor(A, \Cb)$. 
From \cite[Theorem 4.1]{jM14}, we know that complex-analytic (or equivalently holomorphic) functions on a formal pointed manifold endowed with a complex structure coincide with $\infty$-morphisms $A \rightsquigarrow \Cb$, where $A$ is the $\Cxi$-algebra encoding the (formal) complex structure. Because we can compose $\infty$-morphisms, we get that $Z^0 C_{AQ}^\bullet (A)$ is endowed with an EFC $\Cb$-algebra structure. 
Indeed, $B_\iota$ is a right adjoint functor so $\B_\iota (\Cb^n) \cong (\B_\iota \Cb)^n$ and given a complex-analytic function $f : \Cb^n \to \Cb$, its Taylor series at 0 is given by an $\infty$-morphism $T_0^f :\B_\iota \Cb^n \to \B_\iota \Cb$. 
Given $\cCx^\antishriek$-coalgebra morphisms $\varphi_1, \ldots, \varphi_n : \B_\iota A \to \B_\iota \Cb$, the composition $T_0^f \cdot (\varphi_1, \ldots, \varphi_n) \cdot \Delta^{n-1}_{diag}$ is the desired $\infty$-morphism (where $\Delta^{n-1}_{diag} : \B_\iota A \to (\B_\iota A)^n$ is the set diagonal morphism). 
It is compatible with the composition of complex-analytic functions since the equivalence given \cite[Theorem 4.1]{jM14} is functorial. 
The association $C_{AQ}^\bullet$ is functorial since the bar construction and the Hom-functor are and  since the compatibility with the differentials follows from the fact that morphisms of algebras preserve the algebra structures and the predifferentials. 
It follows that $C_{AQ}^\bullet$ is a well-defined functor.

The functor $C_{AQ}^\bullet$ is fully faithful because
\[
\Hom_{\grModRb}(\B_\iota A, \Cb) \cong \Hom_{\grModRb}(\B_\iota A, \Rb) \oplus \Hom_{\grModRb}(\B_\iota A, \Rb)
\]
and $\Hom_{\grModRb}(-, \Rb)$ is fully faithful. (We recall that we consider $\infty$-morphisms between $\cCxi$-algebras, that is $\cCx^\antishriek$-coalgebra morphisms between the bar constructions.)
\end{proof}

\appendix

\section{Model category structures}
\label{appendix: model cat struct}

In this section, we recall the model category structure on gr-dg modules, on curved operads and on algebras over a curved operad presented in \cite[Appendix C]{BMDC20}, and we extend it by replacing the category of gr-dg $\Kf$-modules by the category of gr-dg $\Rf$-modules (where $\Rf$ is a unital $\Kf$-algebra).

\subsection{Reminder on a model category structures on gr-dg modules}

Let $\Kf$ be a field. 
In \cite[Appendix C]{BMDC20}, the authors endow the category of gr-dg $\Kf$-modules with a proper cofibrantly generated symmetric monoidal model category structure. 
We recall this structure.

We denote by $\Kf^{(q)}$ the complete $\Kf$-module given by
\[ \Kf^{(q)} = F_0 \Kf^{(q)} = F_q \Kf^{(q)} \supset F_{q+1} \Kf^{(q)} = 0. \]
(The filtration is induced by a graduation concentrated in degree $q$.) 
The notation $\Kf_n^{(q)}$ means that we consider it in degree $n$ within a (complete) filtered complex. 

We define, for all $n\in \Zb$ and $q \in \Nb$, the complete gr-dg $\Kf$-modules
\begin{align*}
\hat \Zc^{0, \infty}_{q, n} \coloneqq & \left( \Kf^{(q)}_n \xrightarrow{1} \Kf^{(q)}_{n-1} \xrightarrow{1} \Kf^{(q+1)}_{n-2} \xrightarrow{1} \Kf^{(q+1)}_{n-3} \xrightarrow{1} \Kf^{(q+2)}_{n-4} \to \dots \right)^{\wedge}\\
= & \hoplus_{k \in \Nb} \Kf_{n - k}^{\left(q + \lfloor \frac{k}{2} \rfloor \right)},
\end{align*}
and 
\begin{align*}
\hat \Zc^{1, \infty}_{q, n} \coloneqq & \left( \Kf^{(q)}_n \xrightarrow{1} \Kf^{(q+1)}_{n-1} \xrightarrow{1} \Kf^{(q+1)}_{n-2} \xrightarrow{1} \Kf^{(q+2)}_{n-3} \xrightarrow{1} \Kf^{(q+2)}_{n-4} \to \dots \right)^{\wedge}\\
= & \hoplus_{k \in \Nb} \Kf_{n - k}^{\left(q + \lceil \frac{k}{2} \rceil \right)}.
\end{align*}
We also define the complete gr-dg $\Kf$-module
\[
\hat \Bc^{1, \infty}_{q, n} \coloneqq \hat \Zc^{0, \infty}_{q, n+1} \oplus \hat \Zc^{0, \infty}_{q+1, n}.
\]
We denote by $\varphi^\infty_{q, n} : \hat \Zc^{1, \infty}_{q, n} \to \hat \Bc^{1, \infty}_{q, n}$ the morphism of complete gr-dg $\Kf$-modules defined by the following diagram
\[ \xymatrix@C=16pt{
& \Kf_n^{(q)} \ar[r] \ar[d]^{{\tiny \begin{pmatrix} 1\\ 1 \end{pmatrix}}} & \Kf_{n-1}^{(q+1)} \ar[r] \ar[d]^{{\tiny \begin{pmatrix} 1\\ 1 \end{pmatrix}}} & \Kf_{n-2}^{(q+1)} \ar[d]^{{\tiny \begin{pmatrix} 1\\ 1 \end{pmatrix}}} \ar[r] & \cdots\\
\Kf_{n+1}^{(q)} \ar[r] & \Kf_n^{(q)} \oplus \Kf_n^{(q+1)} \ar[r] & \Kf_{n-1}^{(q+1)} \oplus \Kf_{n-1}^{(q+1)} \ar[r] & \Kf_{n-2}^{(q+1)} \oplus \Kf_{n-2}^{(q+2)} \ar[r] & \cdots.
} \]
We fix the sets
\begin{align*}
I_0^\infty & \coloneqq \{ \varphi^\infty_{q, n} : \hat \Zc^{1, \infty}_{q, n} \to \hat \Bc^{1, \infty}_{q, n} \}_{n \in \Zb,\, q \in \Nb}\\
J_0^\infty & \coloneqq \{ 0 \to \hat \Zc^{0, \infty}_{q, n} \}_{n \in \Zb,\, q \in \Nb}.
\end{align*}

We say that a map $p : (X,\, F) \to (Y,\, F')$ is \emph{strict} when it satisfies $p(F_qX) = p(X) \cap F'_qY$ for all $q$. When $p$ is a surjection, this means that $p(F_qX) = F'_qY$ for all $q$. 

Then (Theorems C.15 and Proposition C.21 in \cite{BMDC20}), the category of (complete) gr-dg $\Kf$-modules is endowed with a symmetric monoidal model category structure where:
\begin{enumerate}
\item
weak equivalences are graded quasi-isomorphisms (that is a quasi-isomorphism after application of the graded functor),
\item
fibrations are strict surjections, and
\item
$I_0^\infty$ and $J_0^\infty$ are the sets of generating cofibrations and generating acyclic cofibrations respectively.
\end{enumerate}

Let $\Rf$ be a unital $\Kf$-algebra. 

\begin{prop}
\label{prop: new model structure on complete gr-dg A-modules}
By means of the free-forgetful adjunction
\[
\xymatrix{\Rf \otimes_\Kf - : \ModKgr \ar@<.5ex>@^{->}[r] & \ModAgr : U \ar@<.5ex>@^{->}[l]}
\]
we can transfer the model category structure presented above on the category $\ModRgr$ of gr-dg $\Kf$-modules to the category $\ModRgr$ of gr-dg $\Rf$-modules. 
Weak equivalences (resp. fibrations) are graded  quasi-isomorphisms (resp. strict surjections) and the model structure is right proper.
\end{prop}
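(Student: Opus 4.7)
The plan is to invoke the Crans--Kan transfer theorem (as in \cite[Theorem 11.3.2]{pH03}) along the free--forgetful adjunction $\Rf\otimes_\Kf - \dashv U$, promoting the cofibrantly generated model structure on $\ModKgr$ recalled above. I would take as generating cofibrations and generating acyclic cofibrations in $\ModAgr$ the images $\Rf\otimes_\Kf I_0^\infty$ and $\Rf\otimes_\Kf J_0^\infty$. By the adjunction identity $\Hom_{\ModAgr}(\Rf\otimes_\Kf X,-) \cong \Hom_{\ModKgr}(X, U-)$, a morphism of $\ModAgr$ will then have the right lifting property against $\Rf\otimes_\Kf J_0^\infty$ (resp.\ $\Rf\otimes_\Kf I_0^\infty$) if and only if its underlying map of gr-dg $\Kf$-modules is a strict surjection (resp.\ a strict surjection and graded quasi-isomorphism), giving the stated characterisation of fibrations and weak equivalences.

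The main obstacle will be the acyclicity condition of the transfer theorem: every relative $(\Rf\otimes_\Kf J_0^\infty)$-cell complex must be a weak equivalence. Since $\Kf$ is a field, $\Rf$ is $\Kf$-flat, so $F \coloneqq \Rf\otimes_\Kf -$ is exact, preserves the filtration $F_\bullet$, and commutes with the associated graded functor $\Gr$. Consequently the graded-acyclic objects $\hat\Zc^{0,\infty}_{q,n}$ are sent to graded-acyclic objects of $\ModAgr$. A pushout along $0\to F(\hat\Zc^{0,\infty}_{q,n})$ is simply the inclusion of a direct summand with graded-acyclic complement, hence a strict monomorphism and a graded quasi-isomorphism. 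Closure under transfinite compositions then reduces, after applying $\Gr$, to the fact that filtered colimits of acyclic chain complexes of $\Kf$-vector spaces are acyclic; this is where I expect the verification to require the most care, since one must check that the filtration on the colimit behaves well and that $\Gr$ commutes with the transfinite composition in play.

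The remaining hypotheses are routine. Bicompleteness of $\ModAgr$ follows because $U$ creates limits and filtered colimits from the bicomplete category $\ModKgr$. Smallness of the domains of $F(I_0^\infty)$ and $F(J_0^\infty)$ relative to the corresponding cell complexes is inherited from the smallness of $I_0^\infty$ and $J_0^\infty$ in $\ModKgr$ via the same adjunction identity. Finally, right properness transfers easily: pullbacks, fibrations, and weak equivalences in $\ModAgr$ are all detected and created by $U$, and right properness is already established in $\ModKgr$ in \cite[Appendix C]{BMDC20}, so the pullback of a graded quasi-isomorphism along a strict surjection in $\ModAgr$ is a graded quasi-isomorphism.
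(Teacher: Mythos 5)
Your proposal is correct and follows essentially the same route as the paper: both apply the Crans transfer theorem along $\Rf\otimes_\Kf - \dashv U$, with the acyclicity condition settled by the observation that $\Rf$ is free (hence flat) over the field $\Kf$. The only cosmetic difference is in the right-properness step, where the paper simply notes that every object is fibrant and invokes \cite[Corollary 13.1.3]{pH03}, while you transfer right properness directly from $\ModKgr$ via $U$; both arguments are valid.
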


\begin{proof}
We apply Theorem 3.3 in \cite{sC95} to the above adjunction. 
The category $\ModRgr$ admits limits and colimits which are computed in sets. 
It is enough to remark that $U$ preserve all colimits so filtered $\aleph_1$-colimits and that it maps relative $\Rf \otimes_\Kf J_0^\infty$-cell complexes to weak equivalences ($\Kf$ is a field so $\Rf$ is a free $\Kf$-module).

Every object is fibrant, so by \cite[Corollary 13.1.3]{pH03} the model category structure is right proper.
\end{proof}

\subsection{Model category structures on curved operads and algebras over them}

Applying Proposition \ref{prop: new model structure on complete gr-dg A-modules} to the ring $\Kf [\Sb_m]$, $m \in \Nb$, we obtain a right proper cofibrantly generated model category structure on the category $\Mc_m$ of complete gr-dg $\Kf[\Sb_m]$-modules. 
Considering this collection $(\Mc_m,\, W_m,\, I_m,\, J_m)_{m \in \Nb}$ of cofibrantly generated model category structures, we have that the product
\[ \SMod \coloneqq \left(\prod_{m \in \Nb} \Mc_m,\, \prod_{m \in \Nb} W_m,\, \prod_{m \in \Nb} I_m \right) \]
is also a cofibrantly generated model category structure (see for example \cite[11.6]{pH03}). A morphism $f : M \to N$ in $\Mc$ is a weak equivalence (resp. fibration) if the underlying collection of morphisms $\{ M(m) \to N(m)\}_{m \in \Nb}$ consists of weak equivalences (resp. fibrations). Moreover the set $\texttt{I}$ (resp. $\texttt{J}$) of generating cofibrations (resp. acyclic cofibrations) can be described explicitly as follows:
\[
\texttt{I} = \{ \hat \Zc^{1, \infty}_{q, n}(m) \to \hat \Bc^{1, \infty}_{q, n}(m)\} \text{ and } \texttt{J} = \{ 0 \to \hat \Zc^{0, \infty}_{q, n}(m)\},
\]
where $\hat \Zc^{k, \infty}_{q, n}(m)$ (resp. $\hat \Bc^{1, \infty}_{q, n}(m)$) is the complete gr-dg $\Sb$-module obtained by the complete free gr-dg $\Kf[\Sb_m]$-module $\hat \Zc^{k, \infty}_{q, n} \otimes \Kf[\Sb_m]$ (resp. $\hat \Bc^{1, \infty}_{q, n} \otimes \Kf[\Sb_m]$) put in arity $m$. 
We denote by $\Wc$ the subcategory of weak equivalences. 
Notice that the domains of elements of $\texttt{I}$ or $\texttt{J}$ are small ($\aleph_1$-small) in the category $\SMod$.

The free curved operad is described in \cite[Section 2]{BMDC20} and is described as the left adjoint of the free-forgetful adjunction
\[
\xymatrix{\cfree : \SMod \ar@<.5ex>@^{->}[r] & \mathsf{Curved\ operads} : U, \ar@<.5ex>@^{->}[l]}
\]
where $\cfree(M, d_M) = \left(\Tc(M \oplus \vartheta I)/\left(\im({d_M}^2 - [\vartheta,\, -])\right),\, \bar d_M,\, \bar \vartheta\right)$ for $\vartheta$ is a formal parameter in homological degree $-2$ and in filtration degree $1$, the map $\bar d_M$ is the derivation induced by $d_M$ and $\left(\im({d_M}^2 - [\vartheta,\, -])\right)$ is the ideal generated by the image of the map ${d_M}^2 - [\vartheta,\, -] : M \to \Tc(M \oplus \vartheta I)$. 
By means of Theorem 3.3 of \cite{sC95}, we transfer along this adjunction the cofibrantly generated model category structure on $\SMod$ to the category of curved operads.

\begin{thm}[Theorem C.30 in \cite{BMDC20}]
\label{thm: new model structure on curved operads}
The category of curved operads is endowed with a cofibrantly generated model category structure where the generating (resp. acyclic) cofibrations are $\cfree \texttt{I}$ (resp. $\cfree \texttt{J}$). A map $f : \Pc \to \Qc$ is a \emph{weak equivalence} if and only if, in every arity, it is a graded quasi-isomorphism of gr-dg $\Sb$-modules.
Moreover $(\cfree,\, U)$ is a Quillen pair with respect to the cofibrantly model structures.
\end{thm}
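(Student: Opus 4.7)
The strategy is to apply Crans' transfer theorem (Theorem 3.3 of \cite{sC95}) to the free-forgetful adjunction $\cfree : \SMod \rightleftarrows \mathsf{Curved\ operads} : U$, transferring the cofibrantly generated model structure on $\SMod$ described above. I would organize the verification into three hypotheses: existence of small limits and colimits in curved operads, smallness of the generating objects, and the acyclicity condition for relative $\cfree\texttt{J}$-cell complexes.

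First, I would check that the category of curved operads has all small limits and colimits. Limits are computed at the level of the underlying gr-dg $\Sb$-module and inherit a canonical curved operad structure (the curvature is obtained by the universal property applied to the cone maps). For colimits, filtered colimits can be computed at the $\Sb$-module level since $\hcirc$ preserves filtered colimits in each variable and all structure maps involve only finite arity combinations; reflexive coequalizers can be handled using the explicit presentation $\cfree(M) = \Tc(M \oplus \vartheta I)/(\im({d_M}^2 - [\vartheta,-]))$, from which general colimits are constructed as quotients of coproducts of free curved operads. Second, the domains of maps in $\cfree\texttt{I}$ and $\cfree\texttt{J}$ are small in the category of curved operads: by adjunction it suffices that the domains $\hat\Zc^{1,\infty}_{q,n}(m)$ and $0$ are $\aleph_1$-small in $\SMod$, which was noted above, combined with the observation that $U$ preserves $\aleph_1$-filtered colimits.

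The main technical step—and the principal obstacle—is the third hypothesis: every relative $\cfree\texttt{J}$-cell complex must be a weak equivalence after applying $U$. I would analyze a single pushout
\[
\begin{tikzcd}
\cfree(0) \rar \dar & \Pc \dar \\
\cfree(\hat\Zc^{0,\infty}_{q,n}(m)) \rar & \Pc'
\end{tikzcd}
\]
by endowing $\Pc' \cong \Pc \sqcup \cfree(\hat\Zc^{0,\infty}_{q,n}(m))$ with a filtration indexed by the number of generators from the newly added piece. Each graded subquotient is isomorphic to an iterated $\Sb$-equivariant tensor construction built from $\Pc$ and copies of $\hat\Zc^{0,\infty}_{q,n}(m)$. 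Since $\Kf$ has characteristic zero, Maschke's theorem implies that taking $\Sb_k$-coinvariants is exact and therefore preserves graded quasi-isomorphisms; since $\hat\Zc^{0,\infty}_{q,n}(m)$ is graded acyclic (each $\Gr_p$-component has a contracting homotopy coming from the chosen generator $\varphi^\infty_{q,n}$), tensoring with it yields graded-acyclic subquotients for all positive filtration degrees. A convergent filtration argument then gives that $\Pc \to \Pc'$ is a graded quasi-isomorphism. This property is stable under transfinite compositions (using completeness of the filtrations and commutation with graded quasi-isomorphisms in characteristic zero) and retracts, hence holds for all relative $\cfree\texttt{J}$-cell complexes.

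Crans' theorem then produces the cofibrantly generated model structure with $\cfree\texttt{I}$ and $\cfree\texttt{J}$ as generating (acyclic) cofibrations and with weak equivalences and fibrations detected by $U$, which matches the description in the statement (graded quasi-isomorphisms and strict surjections). The Quillen pair assertion is then immediate: $U$ preserves fibrations and acyclic fibrations by the very definition of these classes, so $(\cfree, U)$ is a Quillen adjunction.
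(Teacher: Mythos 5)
Your proposal is correct and follows essentially the same route as the paper: the result is obtained by transferring the cofibrantly generated model structure on $\SMod$ along the free--forgetful adjunction $(\cfree, U)$ via Theorem 3.3 of \cite{sC95}, with the acyclicity condition for relative $\cfree\texttt{J}$-cell complexes verified by a filtration-by-generators argument using exactness of $\Sb$-coinvariants in characteristic zero and the graded contractibility of $\hat\Zc^{0,\infty}_{q,n}(m)$, exactly as in \cite[Appendix C]{BMDC20} to which the paper defers the details.
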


Following Hinich \cite{vH97}, we similarly endow the category $\Palg$ of algebras over an $\Sb$-split curved operad $(\Pc,\, d,\, \theta)$ with a model category structure. We apply Theorem 3.3 of \cite{sC95} to the free-forgetful functor adjunction given in \cite[Proposition C.35]{BMDC20} between the categories of gr-dg $\Kf$-modules and $\Palg$
\[
\xymatrix{F_\Pc : \ModKgr \ar@<.5ex>@^{->}[r] & \Palg : \#, \ar@<.5ex>@^{->}[l]}
\]
where
\[ (V,\, d_V) \mapsto F_{\Pc}(V,\, d_V) \coloneqq \left(\Pc(V)/\left(\im\left(\eta \otimes ({d_{V}}^2) - \theta \otimes \id_V\right) \right),\, d_{\overline{\Pc(V)}}\right). \]

\begin{thm}[Theorem C.39 in \cite{BMDC20}]
\label{thm: model cat struct for curved algebras}
Let $(\Pc,\, d,\, \theta)$ be a curved operad which is $\Sb$-split. The category $\Palg$ of $(\Pc,\, d,\, \theta)$-algebras is a cofibrantly generated model category with generating cofibrations $F_\Pc (I_0^\infty)$ and generating acyclic cofibrations $F_\Pc(J_0^\infty)$. The weak equivalences are the maps that are graded quasi-isomorphisms. 
\end{thm}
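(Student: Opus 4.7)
The plan is to apply the Crans--Hinich transfer theorem (\cite[Theorem 3.3]{sC95}) to the free-forgetful adjunction $F_\Pc \dashv \#$ between $\ModKgr$ and $\Palg$. The model structure on $\ModKgr$ has already been recalled, with generating cofibrations $I_0^\infty$ and generating acyclic cofibrations $J_0^\infty$. To apply transfer, I need to verify two conditions: (i) the domains of $F_\Pc(I_0^\infty)$ and $F_\Pc(J_0^\infty)$ are small in $\Palg$, and (ii) every relative $F_\Pc(J_0^\infty)$-cell complex is sent by $\#$ to a graded quasi-isomorphism.

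For condition (i), smallness transfers along the adjunction in a routine way: the domains $\hat{\mathcal{Z}}^{1,\infty}_{q,n}$ and $0$ are $\aleph_1$-small in $\ModKgr$ because they are countable colimits of finite-dimensional pieces, and since $\#$ preserves filtered colimits (colimits in $\Palg$ being computed on the underlying gr-dg module for the sequential/filtered case relevant here), the free algebras $F_\Pc(\hat{\mathcal{Z}}^{1,\infty}_{q,n})$ and $F_\Pc(0) = \Pc(0)$ remain $\aleph_1$-small in $\Palg$.

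The heart of the argument is condition (ii). First I would reduce to analysing a single pushout of the form
\[
\begin{tikzcd}
F_\Pc(Z) \ar[r] \ar[d,"F_\Pc(j)"'] & A \ar[d]\\
F_\Pc(B) \ar[r] & A'
\end{tikzcd}
\]
with $j : Z \to B$ in $J_0^\infty$, and show that $A \to A'$ is a graded quasi-isomorphism. Since $j$ has the form $0 \to \hat{\mathcal{Z}}^{0,\infty}_{q,n}$, the cokernel $B/Z \cong B$ is itself a contractible gr-dg module. The standard technique is to construct a natural exhaustive filtration on $A'$ whose associated graded, at stage $k$, has the form $A \otimes_\Sb \Pc(k+\bullet) \otimes_{\Sb_k} B^{\otimes k}$ (up to the usual curvature corrections from the quotient in the definition of $F_\Pc$). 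The $\Sb$-splitness assumption on $\Pc$ ensures that the relevant $\Sb_k$-coinvariants are exact functors, so each graded piece of the filtration is a tensor product involving the contractible module $B$ and therefore contractible. A spectral sequence or filtered quasi-isomorphism argument then shows $A \to A'$ is a graded quasi-isomorphism; passing to transfinite composites and retracts handles the full class of relative $F_\Pc(J_0^\infty)$-cell complexes.

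The main obstacle will be controlling the interaction between the curvature relation $\eta \otimes d_V^2 - \theta \otimes \id_V$ imposed in the definition of $F_\Pc(V)$ and the filtration one puts on the pushout $A'$. In the uncurved case, $F_\Pc(V) = \Pc(V)$ and the filtration by the number of ``$B$-factors'' is straightforward; in the present setting one must check that the quotient in the definition of $F_\Pc$ is compatible with this filtration, which should follow from the fact that $\theta \in F_1 \Pc$ and that $d^2$ strictly increases the filtration, so the curvature correction appears only in higher filtration degree. Once this is verified, the acyclicity condition follows as in the dg case, and the transfer theorem yields the model structure with the stated description of generating (acyclic) cofibrations and weak equivalences.
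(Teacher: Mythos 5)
Your proposal is correct and follows the same strategy as the paper, which (deferring the details to Theorem C.39 of \cite{BMDC20}) applies the transfer theorem of \cite[Theorem 3.3]{sC95} to the free-forgetful adjunction $F_\Pc \dashv \#$, with $\Sb$-splitness guaranteeing exactness of the coinvariants in the acyclicity check. The only cosmetic difference is in that check: where you filter the pushout directly in the curved world and track the curvature relation by hand, the paper's pattern (visible in the proof of Theorem \ref{thm: model cat struct on modules}) is to apply the strong monoidal functor $\Gr$ first, which collapses the curvature relation and reduces the verification to the classical uncurved Hinich argument over a field.
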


By the same arguments, we prove that the category of $A$-module over $\Pc$, for $\Pc$ a curved operad and $A$ a $\Pc$-algebra, is endowed with a similar model structure.

\begin{thm}
\label{thm: model cat struct on modules}
The category $\ModAPz$ of $A$-modules over $\Pc$, where $\Pc$ is a curved operad and $A$ a $\Pc$-algebra, is endowed with a cofibrantly generated model category structure such that
\begin{itemize}
\item
the weak equivalences are the graded quasi-isomorphisms,
\item
the fibrations are the strict surjections.
\end{itemize}
\end{thm}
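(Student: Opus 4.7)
The plan is to transfer the cofibrantly generated model structure on $\grModR$ of Proposition \ref{prop: new model structure on complete gr-dg A-modules} to $\ModAPz$ along the free--forgetful adjunction
\[
A\hotimes^{\Pc}-\ :\ \grModR \rightleftharpoons \ModAPz\ :\ U
\]
of Proposition \ref{prop: free AP-mod}, by applying Crans' transfer theorem (Theorem 3.3 in \cite{sC95}), following the same template as the proofs of Proposition \ref{prop: new model structure on complete gr-dg A-modules} and Theorem \ref{thm: model cat struct for curved algebras}. The generating cofibrations and generating acyclic cofibrations will be $(A\hotimes^{\Pc}-)(I_0^\infty)$ and $(A\hotimes^{\Pc}-)(J_0^\infty)$ respectively.

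First I would collect the formal input. The category $\ModAPz$ is complete and cocomplete by Lemma \ref{lem: U creates limits and colimits}, and in fact coincides with the category of algebras over the monad $U(A\hotimes^{\Pc}-)$ by Lemma \ref{lem: moduleAP and monad}. Moreover, by Proposition \ref{prop: APmod and left mod over the enveloping algebra}, $\ModAPz$ is isomorphic to the category of left modules over the curved associative enveloping algebra $U_{\Pc}(A)$; this identification lets me replace operadic free modules by an extension of scalars $U_{\Pc}(A)\hotimes_{\Rf}-$ (modulo the curvature relation), which is where the cell-attachment calculus takes place.

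Then I would check the two hypotheses of the transfer theorem. Smallness of the domains of the generating (acyclic) cofibrations in $\ModAPz$ follows because $U$ preserves filtered colimits: this is immediate from Lemma \ref{lem: U creates limits and colimits} (which states that $U$ creates all colimits) together with the fact that filtered colimits of complete filtered $\Rf$-modules are computed at the underlying module level. The second and main hypothesis is that every relative $(A\hotimes^{\Pc}-)(J_0^\infty)$-cell complex is a weak equivalence. Concretely, for any pushout square
\[
\begin{tikzcd}
A\hotimes^{\Pc}\hat{\Zc}^{0,\infty}_{q,n} \ar[r]\ar[d] & M \ar[d,"j"]\\
A\hotimes^{\Pc}\mathrm{Cone}(\hat{\Zc}^{0,\infty}_{q,n}) \ar[r] & M'
\end{tikzcd}
\]
(and for transfinite composites of such), I need $j$ to be a graded quasi-isomorphism and a strict monomorphism. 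Transporting the pushout along Proposition \ref{prop: APmod and left mod over the enveloping algebra}, it becomes $M' \cong M \oplus \bigl(U_{\Pc}(A)\hotimes_\Rf \mathrm{Cone}(\hat{\Zc}^{0,\infty}_{q,n})/\!\sim\bigr)$ as a filtered graded $\Rf$-module, where $\sim$ identifies the attaching data and the curvature relation. Applying the strong monoidal functor $\Gr$ reduces the claim to showing that attaching cones of the (graded) acyclic complexes $\Gr\hat{\Zc}^{0,\infty}_{q,n}$ to a $\Gr U_{\Pc}(A)$-module preserves quasi-isomorphisms, which is the standard argument already carried out in the proofs of Proposition \ref{prop: new model structure on complete gr-dg A-modules} and Theorem \ref{thm: model cat struct for curved algebras}.

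Once the transfer applies, the identification of fibrations is automatic: by definition a fibration is a map with the right lifting property against $(A\hotimes^{\Pc}-)(J_0^\infty)$, equivalently a map whose underlying morphism in $\grModR$ has the right lifting property against $J_0^\infty$, which is precisely a strict surjection. The main obstacle is the cell-attachment step; the only delicate point there is compatibility of the filtration $F_\bullet$ with the curvature relation ${d_M}^2 = \gamma_M(\theta_\Pc\otimes\id_M)$, but this is built into our definition of the free $A$-module and survives pushouts because $\theta_\Pc \in F_1\Pc$ and the attaching cells $\hat{\Zc}^{0,\infty}_{q,n}$ already encode the filtration behaviour on the $\Rf$-module side.
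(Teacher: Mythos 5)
Your proposal is correct and follows essentially the same route as the paper: transfer along the free--forgetful adjunction of Proposition \ref{prop: free AP-mod} via Crans' theorem, using Lemma \ref{lem: U creates limits and colimits} for (co)completeness and smallness, and reducing the key hypothesis to the graded level, where $\Gr(A\hotimes^{\Pc}-)\cong U_{\Gr\Pc}(\Gr A)\otimes\Gr(-)$ is exact because $\Kf$ is a field. One cosmetic slip: the generating acyclic cofibrations are $0\to\hat\Zc^{0,\infty}_{q,n}$, not cone inclusions, so the relevant pushouts are simply direct sums $M\to M\oplus\bigl(A\hotimes^{\Pc}\hat\Zc^{0,\infty}_{q,n}\bigr)$, which makes the cell-attachment step even easier than the square you wrote down.
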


\begin{proof}
We transfer the model category structure from the category of gr-dg $\Kf$-modules to $A$-modules over $\Pc$ by means of Theorem 3.3 in \cite{sC95} using the free-forgetful adjunction given in Proposition \ref{prop: free AP-mod}. 
The category $\ModAP$ is complete and cocomplete by Lemma \ref{lem: U creates limits and colimits}. 
It follows that the forgetful functor $U$ preserves all colimits; in particular it preserves filtered $\aleph_1$-colimits. 
We finally have to show that $U$ maps relative $A\hotimes^\Pc(J_0^\infty)$-cell complexes to weak equivalences. 
We have $\Gr (A\hotimes^\Pc -) \cong \Gr A \otimes^{\Gr \Pc} \Gr (-) \cong U_{\Gr \Pc}(\Gr A) \otimes \Gr (-)$ which is exact since $\Kf$ is a field (so $U_{\Gr \Pc}(\Gr A)$ is flat). 
This concludes the proof.
\end{proof}

\bibliographystyle{alphaabbr}
\bibliography{../bib}

\end{document}